\documentclass[11pt, a4paper,reqno]{amsart}

\usepackage{amsmath}
\usepackage{amssymb}
\usepackage{amsthm}
\usepackage{mathtools}
\usepackage{mathrsfs}
\usepackage{stmaryrd}
\usepackage[shortlabels]{enumitem}
\usepackage[british]{babel}
\usepackage{hyperref}
\usepackage{xcolor}
\usepackage{bbm}
\usepackage[normalem]{ulem}
\mathtoolsset{showonlyrefs=true}
\usepackage{cancel}

\calclayout
\hypersetup{colorlinks=true,linkcolor=blue,citecolor=blue,urlcolor=blue}
\newtheorem{theorem}{Theorem}[section]
\newtheorem{corollary}[theorem]{Corollary}
\newtheorem{lemma}[theorem]{Lemma}
\newtheorem{proposition}[theorem]{Proposition}
\theoremstyle{definition}
\newtheorem{remark}[theorem]{Remark}
\numberwithin{equation}{section}

\newcommand{\abs}[1]{\left\lvert#1\right\rvert}
\newcommand{\norm}[1]{\left\|#1\right\|}
\newcommand{\R}{\mathbb R}
\newcommand{\CC}{\mathbb C}
\newcommand{\N}{\mathbb N}
\newcommand{\Sph}{\mathbb S^2}
\newcommand{\dd}{\, \mathrm d}
\newcommand{\ddd}{\mathrm d}
\newcommand{\id}{\mathrm{Id}}
\renewcommand{\Re}{\operatorname{Re}}
\renewcommand{\Im}{\operatorname{Im}}

\newcommand{\cB}{\mathcal B}
\newcommand{\cC}{\mathcal C}
\newcommand{\cD}{\mathcal D}

\newcommand{\cF}{\mathcal F}
\newcommand{\cG}{\mathcal G}

\newcommand{\cK}{\mathcal K}
\newcommand{\cL}{\mathcal L}
\newcommand{\cM}{\mathcal M}

\newcommand{\cO}{\mathcal O}

\newcommand{\cR}{\mathcal R}
\newcommand{\cS}{\mathcal S}

\newcommand{\cU}{\mathcal U}
\newcommand{\cV}{\mathcal V}
\newcommand{\cX}{\mathcal X}
\newcommand{\scrA}{\mathscr A}
\newcommand{\scrE}{\mathscr E}
\newcommand{\scrH}{\mathscr H}

\newcommand{\scrJ}{\mathscr J}
\newcommand{\scrK}{\mathscr K}
\newcommand{\scrL}{\mathscr L}
\newcommand{\scrN}{\mathscr N}

\newcommand{\scrR}{\mathscr R}

\newcommand{\frB}{\mathfrak B}

\newcommand{\frP}{\mathfrak P}

\newcommand{\idproj}{\mathsf Q}
\newcommand{\Qin}{\mathcal Q^{\rm in}}
\newcommand{\Qout}{\mathcal Q^{\rm out}}
\renewcommand{\L}{\operatorname{L}}
\newcommand{\C}{\operatorname{C}}
\renewcommand{\H}{\operatorname{H}}
\newcommand{\din}{\rho^{\rm in}}
\newcommand{\dout}{{\rho^{\rm out}}}
\newcommand{\ein}{\phi^{\rm in}}
\newcommand{\eout}{\phi^{\rm out}}
\DeclareRobustCommand{\eoutdot}{\dot{\phi}\protect{\vphantom{\phi}}^{\rm out}}
\newcommand{\qin}{q^{\rm in}}
\newcommand{\qout}{q^{\rm out}}
\newcommand{\nablaS}{\nabla_{\!\Sph}}
\DeclareMathOperator{\Span}{span}
\newcommand{\Div}{\operatorname{div}}
\newcommand{\avg}[1]{\left\langle #1\right\rangle}
\newcommand{\jump}[1]{\left\llbracket #1\right\rrbracket}
\newcommand{\proj}{\mathsf P}
\newcommand{\perpProj}{\pi_{\{e_3\}^{\perp}}}
\newcommand{\eps}{\varepsilon}
\newcommand{\so}{\operatorname{SO}(2)}

\renewcommand{\epsilon}{\varepsilon}

\DeclareRobustCommand{\Hdot}{\dot{\H}\protect{\vphantom{H}}}

\newcommand\set[1]{\left\{#1\right\}}
\newcommand{\Oin}{\cD^{\rm in}}
\newcommand{\Oout}{\cD^{\rm out}}
\newcommand{\Oinout}{\cD^{\rm in /\rm out}}
\newcommand{\Omegain}{\Omega^{\rm in}}
\newcommand{\Omegainout}{\Omega^{\rm in / \rm out}}
\newcommand{\Omegaout}{\Omega^{\rm out}}
\newcommand{\uin}{u^{\rm in}}
\newcommand{\uout}{u^{\rm out}}
\newcommand{\uinout}{u^{\rm in / \rm out}}
\newcommand{\pin}{p^{\rm in}}
\newcommand{\pout}{p^{\rm out}}
\newcommand{\pinout}{p^{\rm in / \rm out}}
\newcommand{\tphiin}{\tilde{\phi}^{\rm in}}
\newcommand{\tphiout}{\tilde{\phi}^{\rm out}}

\newcommand{\phiin}{\phi^{\rm in}}
\newcommand{\phiout}{\phi^{\rm out}}
\newcommand{\phinout}{\phi^{\rm in / \rm out}}
\DeclareRobustCommand{\phinoutdot}{\dot{\phi}\protect{\vphantom{\phi}}^{\rm in / \rm out}}

\DeclareMathOperator{\divv}{div}
\DeclareMathOperator{\curl}{curl}

\newcommand{\We}{\operatorname{We}}

\title{On Leonardo da Vinci's paradox}

\author{Bj\"orn Gebhard}

\address[Bj\"orn Gebhard]{Institut f\"ur Analysis und Numerik, Universit\"at M\"unster Orl\'eans-Ring 10, 48149 M\"unster, Germany.}
\email{bjoern.gebhard@uni-muenster.de}

\author{Lukas Niebel}

\address[Lukas Niebel]{ETH Z\"urich, Department of Mathematics, R\"amistrasse 101, 8092 Z\"urich, Switzerland.}
\email{lukas.niebel@math.ethz.ch}

\author{Christian Seis}

\address[Christian Seis]{Institut f\"ur Analysis und Numerik, Universit\"at M\"unster Orl\'eans-Ring 10, 48149 M\"unster, Germany.}
\email{seis@uni-muenster.de}

\date{\today}

\subjclass[2020]{76B45, 35R35 (Primary) 35Q31, 76B07, 35B32, 35C07 (Secondary)} 

\keywords{Leonardo da Vinci's paradox, free boundary Euler equations, surface tension, two-phase inviscid flow, helical relative equilibria, capillary bubbles and drops, Lyapunov--Schmidt bifurcation}

\begin{document}
\allowdisplaybreaks

\begin{abstract}
	Buoyancy points straight upward, yet in the absence of external forces, a rising air bubble may trace a spiral through the water. This phenomenon was already observed by Leonardo da Vinci. We show that the gravity-free, three-dimensional two-phase incompressible Euler equations with surface tension already admit such a motion. For every nonnegative inner-to-outer density ratio and every sufficiently small positive axial Weber number, we construct smooth near-spherical bubbles and drops with compact interfaces and phasewise irrotational flow. They are stationary in a frame translating along and rotating about a fixed axis, while their centroids lie off that axis and therefore trace genuine circular helices in laboratory coordinates. These helical relative equilibria form a symmetry-breaking branch that issues from the axisymmetric family of rectilinearly translating solutions and is generated by the spherical harmonic mode of degree two and azimuthal number one.

	The mathematical construction exploits an overdetermined free-boundary formulation, its variational structure, and an $\mathrm{SO}(2)$-equivariant Lyapunov--Schmidt reduction.
	Particular difficulties in the form of resonances arise at a discrete set of density ratios, including the frequently studied vacuum case with vanishing inner density.

	Complementary to our existence result, in the absence of surface tension and if the inner density is not larger than the outer density we prove that the model admits neither nontrivial translating relative equilibria nor relative equilibria with helical centroid trajectories.
	As a byproduct, we show that every regular finite-energy stationary
	vortex sheet of spherical topology for the homogeneous three-dimensional Euler equations, with irrotational flow on both sides, is trivial.
\end{abstract}

\maketitle
\vspace{-2\baselineskip}
\tableofcontents

\section{Introduction}

``The air which is submerged together with the water \dots{}
returns to the air, penetrating the water in sinuous movement.''
With these words, Leonardo da Vinci described air rising through water in the
\emph{Codex Leicester} \cite[vol.~I, p.~112]{LeonardoMacCurdy1938}. The upward
motion is explained by buoyancy. The phrase ``sinuous movement'' records the
less immediate part of the observation: although buoyancy singles out the
vertical direction, the bubble does not need to follow it. A drawing in the same
notebook shows a bubble following a spiral path towards the surface
\cite[p.~136]{christies1994leonardo}. Long after da Vinci's observations the physics literature refers to ``the failure
of bubbles to follow a straight path [as] Leonardo's paradox''
\cite[p.~1861]{MR2060292}.

The upward motion itself is not paradoxical. What requires explanation is the loss of symmetry in the absence of external forces.
As long as the bubble and the surrounding flow are axisymmetric, no horizontal
direction is distinguished. Nevertheless, the bubble may acquire a lateral
velocity and then follow a zigzag or spiral path.

A freely moving bubble is not a rigid body. Its interface responds to the
surrounding pressure, its curvature is constrained by surface tension, and
each change of shape affects the fluid motion and vice versa. In a viscous liquid, the wake
adds another part to this interaction. Early studies considered
terminal speed, bubble shape, and the onset of non-rectilinear paths together
\cite{Saffman_1956,MR88963,Moore_1965,MR627227,MR687006}.

During the 1980s, a complementary line of work isolated inviscid aspects of
the problem. Tsamopoulos and Brown studied nonlinear oscillations of inviscid
drops and bubbles \cite{Tsamopoulos_Brown_1983}. Benjamin gave a Hamiltonian
formulation for capillary bubbles in potential flow \cite{MR905816}. Meiron
then investigated the stability of such bubbles numerically. He was careful
not to identify the inviscid model with the laboratory experiment, but
suggested that it might possess equilibria representing steady spiralling
\cite[p.~113]{MR982188}; see also \cite{Pozrikidis_1989}.

More recent work has returned to the path instability of bubbles rising in
water. Herrada and Eggers emphasise the coupling between pressure and
time-dependent deformation \cite{MR4548739}. Bonnefis, Fabre and Magnaudet
find that a fixed-shape model already captures the qualitative instability,
while deformation improves the predicted threshold \cite{MR4581313}. Both
analyses concern the dynamical loss of a straight path in a gravity-driven,
viscous problem.

We take the inviscid question on its own terms. We remove gravity and
viscosity, retain the two-phase incompressible Euler equations and surface
tension, and ask only for existence of a steady-spiralling bubble solution
as observed by Leonardo da Vinci. We neither address the stability of the
resulting motions nor investigate the instability mechanism at the moment of their emergence.

The motions considered here have a simple description. The interface is
stationary in a frame which translates and rotates about the same axis. In
laboratory coordinates it undergoes a screw motion. For the spiralling solutions
constructed below, the centroid is displaced from the axis and traces a circular helix.
We call such a solution a \emph{relative equilibrium}.
The interface itself has a near-spherical but asymmetric shape.

Rectilinearly translating, near-spherical bubbles and drops were recently
constructed in the same two-phase Euler model with surface tension \cite{MR4972964}. The present
paper finds a non-axisymmetric branch emerging from those solutions. It thereby
gives a rigorous near-spherical realisation of the steady-spiralling scenario
proposed in the inviscid literature and observed already by Leonardo da Vinci.

We call the inner phase a \emph{bubble} when it is lighter than the outer phase and a
\emph{drop} when it is heavier. The axial Weber number compares the inertial pressure
associated with translation with the capillary pressure, see \eqref{C1}. Small Weber number is
therefore the regime in which surface tension keeps the interface close to a
sphere.

We first state a non-specific version of our main results.

\begin{theorem}
	\label{thm:main-informal}
	Let $\din \ge 0$ and $\dout>0$ denote the mass densities of the inner and outer fluids,
	respectively. Furthermore, let $ \sigma>0$ be the surface-tension coefficient, and
	$ m>0$ be a prescribed volume.
	Then, for every sufficiently small and positive
	(axial) Weber number (defined in \eqref{C1} below),
	\[
		0<\We\ll1,
	\]
	the two-phase capillary Euler equations admit a relative-equilibrium solution describing a
	near-spherical bubble/drop of volume $m$ whose centroid moves along a true helical trajectory.
	For each sufficiently small fixed Weber number, this non-axisymmetric solution branch is locally unique
	near the $(2,1)$ Rayleigh--Lamb frequency,
	up to translations along and rotations about the screw axis.
\end{theorem}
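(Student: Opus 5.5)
We plan to reduce the existence of helical relative equilibria to a bifurcation problem for the interface alone. In a frame translating with speed $U$ along $e_3$ and rotating with angular velocity $\omega$ about the $e_3$-axis, a stationary interface $\Gamma=\{(1+h(x))x : x\in\Sph\}$ (after rescaling to volume $m$) with phasewise irrotational flow is determined by harmonic velocity potentials $\phiin,\phiout$. These satisfy the kinematic Neumann conditions $\partial_n\phinout = (U e_3+\omega e_3\times x)\cdot n$ on $\Gamma$, with $\nabla\phiout\to0$ at infinity. The only remaining equation is the dynamic (Bernoulli plus Young--Laplace) condition: the jump of $\rho\bigl(\tfrac12|\nabla\phi|^2-(Ue_3+\omega e_3\times x)\cdot\nabla\phi\bigr)$ across $\Gamma$ equals $\sigma$ times the mean curvature, up to a constant. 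We write this as $F(h,\omega;\We)=0$, where $F$ maps $\H^{s}(\Sph)$ into $\H^{s-2}(\Sph)$ modulo constants, with the volume constraint imposed and translations quotiented out. The map $F$ is real-analytic, since the Dirichlet--Neumann-type solution operators depend analytically on $h$. It is equivariant under rotations $R_\theta\in\so$ about $e_3$, and it has a variational structure: it is the gradient of the energy $\sigma|\Gamma|$ minus the kinetic energy, minus $U$ times the impulse, minus $\omega$ times the angular impulse.

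First, we take from \cite{MR4972964} the axisymmetric branch $h_0(\We)$ with $\omega=0$, which is of size $O(\We)$. Second, we compute the linearisation $L(\omega,\We)=D_hF(h_0,\omega;\We)$. At $\We=0$ it is diagonal in spherical harmonics $Y_{\ell}^{k}$, with symbol $\sigma(\ell-1)(\ell+2)$. The rotation term, however, enters through $\omega$ times the azimuthal derivative, and it is conveniently absorbed by treating the problem as a rotating-wave problem. A mode $Y_\ell^{\pm k}$ rotating at rate $\omega$ solves the linearised problem exactly when $k\omega$ equals a Rayleigh--Lamb frequency $\Omega_\ell$, with
\[
\Omega_\ell^2=\frac{\sigma(\ell-1)\ell(\ell+1)(\ell+2)}{\ell\din+(\ell+1)\dout}.
\]
Hence for $\ell=2,k=1$ we choose $\omega$ near $\omega_*=\Omega_2$ (in suitable units). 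The kernel at $(\omega_*,\We=0)$ is then spanned by $\Re Y_2^1$ and $\Im Y_2^1$, a two-dimensional $\so$-orbit, provided we have non-resonance: $k\Omega_2\neq\Omega_\ell k'$-type coincidences $k'\omega_*=\Omega_\ell$ must be excluded for all other $(\ell,k')$. This is where the discrete exceptional set of density ratios appears. For those ratios, including $\din=0$, we expect to need the $\We$-perturbation of $L$ to split the extra kernel, or to restrict to a symmetry class (for example, invariance under $\pi$-rotation composed with reflection) that excludes the resonant modes.

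Third, we perform an $\so$-equivariant Lyapunov--Schmidt reduction. We write $h=h_0+\eps\Re(e^{i\varphi}Y_2^1)+w$ with $w$ in the complement of the kernel, and solve for $w$ by the implicit function theorem. By equivariance the phase $\varphi$ can be fixed, and the reduced equation becomes a single scalar equation $g(\eps,\omega,\We)=\eps\,\tilde g(\eps,\omega,\We)=0$. After dividing by $\eps$, we need $\partial_\omega\tilde g(0,\omega_*,0)\neq0$. This is transversality of the eigenvalue crossing in $\omega$, which follows from the explicit dependence of the symbol on $\omega$. The implicit function theorem then gives $\omega(\eps,\We)$, and hence local uniqueness modulo the symmetries. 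The mode $Y_2^1$, which is proportional to $xz$, shifts the centroid off-axis only at second order: its first moment vanishes, so the displacement comes through the coupling with the $O(\We)$ axial deformation and through $\eps^2$ terms. The fourth step is therefore to compute the centroid to order $\eps\We$ and show that its horizontal component is nonzero, so that the trajectory is a genuine helix with radius $\sim\eps\We$. We expect this computation, together with the handling of resonant density ratios, to be the main obstacle. The transversality condition is robust, but showing a nonvanishing off-axis centroid requires an explicit second-order expansion of $F$ near the sphere, including the impulse terms. Finally, smoothness of $\Gamma$ follows from elliptic regularity of the curvature equation.
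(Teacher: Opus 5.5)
Your treatment of the generic case follows the paper's route: axisymmetric translating branch, $\so$-equivariant Lyapunov--Schmidt reduction at the $(2,1)$ Rayleigh--Lamb frequency, transversality in $\omega$, and an elliptic bootstrap. However, the statement covers every $\din\ge0$, and for $\din=0$ and the five positive exceptional density ratios your plan does not close.

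The symmetry-class option cannot work. At fixed $(\omega,U)$ the problem is invariant only under rotations about $e_3$ and under $\pi$-rotations about horizontal axes, the latter combined with $\phi\mapsto-\phi$. The isotropy group of the tilt mode $XZ$ is generated by the $\pi$-rotation about the $y$-axis. That rotation also fixes a line in every weight-$m$ plane, for example $Z\,\Im((X+iY)^6)$ when $\din=0$. So the resonance survives in any symmetry class that contains the primary mode.

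The perturbative option is what the paper does, but its substance is missing. Tune $\omega=\omega_1(\We)$ so that $K_1$ stays in the kernel along the translating branch; the extra eigenvalue is then only $O(\We)$. You must (i) prove that its $\We$-coefficient is nonzero, and (ii) absorb the resulting $\We^{-1}$ loss when inverting that block. Step (i) is an explicit second-order computation. It includes the Schur-complement coupling to degrees $\ell\pm1$ through the part of the linearisation that is linear in $U$; these are the paper's $\gamma_6$ and $\Gamma_{\ell m}(\tau)$. The paper handles (ii) with an equivariant weight-selection estimate: the weight-one amplitude forces the weight-$m$ mode only at order $\abs{a}^m$. It combines this with an anisotropic blow-up, $a=UtXZ$, secondary amplitude of size $U^2$, and $\omega=\omega_1(U)+U^2\beta$, which also restricts the branch to $\abs{\eps}\lesssim\sqrt{\We}$. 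Without (i) and (ii), the reduction is not uniform as $\We\to0$.

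There are also problems in the part you treat as routine.

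First, $\so$-equivariance alone does not reduce the kernel equation to a single scalar equation. On the weight-one plane the reduced map has the form $A(\abs{a}^2)a+B(\abs{a}^2)Ja$, which gives two real equations for the single parameter $\omega$. The gradient structure you mention at the outset is what forces $B\equiv0$: the reduced functional is invariant, so its gradient is orthogonal to $Ja$.

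Second, your centroid step has both the order and the mechanism wrong. The problem depends on $U$, not only on $\We\propto U^2$. Off the axisymmetric set, the cross terms between translational and rotational data are odd in $U$. For the tilt $XZ$, the mixed $\eps U$ coefficient of the Bernoulli residual has a degree-one component proportional to $Y$, alongside a degree-three one. Horizontal translations are not symmetries when $\omega\neq0$: their eigenvalue is $-\omega_*^2(\tau+\frac12)\neq0$, with $\tau=\din/\dout$, in the paper's nondimensional units. So this component is solved in the range, and it shifts the centroid horizontally by order $\eps\sqrt{\We}$. It is not an order-$\eps\We$ effect coming from the axial deformation. In particular, only vertical translations may be factored out. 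If your phrase ``translations quotiented out'' includes horizontal ones, it removes exactly the modes that carry the helix.

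Finally, the Rayleigh--Lamb denominator is $(\ell+1)\din+\ell\dout$, not $\ell\din+(\ell+1)\dout$. With your formula, both $\omega_*$ and the resonant set come out wrong.
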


The displacement of the centroid is essential. An axisymmetric translating
solution may be described in a rotating frame with any angular velocity, but
this does not make its path helical: its centroid stays on the axis.
Our solution branch has a nonzero horizontal displacement, and hence describes a genuine helix.

The Rayleigh--Lamb frequency already occurs as the natural oscillation frequency in the study  of droplets and bubbles oscillating about a spherical form; see \cite[Chapter~IX, Art.~275, (10)]{MR1317348}.

We now introduce the equations and state the main results in full detail.

\subsection*{The two-phase Euler equations}
Let $\din\ge0$ and $\dout>0$ be the constant densities of the inner and
outer phases. At time $t$, the inner phase occupies a bounded domain
$\Omegain(t)\subset\R^3$, and the outer phase occupies
\[
	\Omegaout(t):=\R^3\setminus\overline{\Omegain(t)}.
\]
We assume that the interface has the topology of a sphere: $\Omegain(t)$ and
$\Omegaout(t)$ are simply connected, and
$\partial\Omegain(t)=\partial\Omegaout(t)$ is connected and smooth. Set
\[
	\Oinout:=\set{(t,x):x\in\Omegainout(t)},
	\qquad
	\cS:=\partial\Oin\cap\partial\Oout.
\]

The velocity and pressure are denoted by
\[
	u=\uin\mathbbm 1_{\Oin}+\uout\mathbbm 1_{\Oout},
	\qquad
	p=\pin\mathbbm 1_{\Oin}+\pout\mathbbm 1_{\Oout},
\]
and $\rho=\din\mathbbm 1_{\Oin}+\dout\mathbbm 1_{\Oout}$. On the
interface, $n$ is the unit normal pointing out of the inner phase,
$V_{\cS}$ is the normal velocity of the interface, and
$H=\divv_{\partial\Omegain(t)}n$ is its mean curvature. We use the jump
convention
\[
	\jump{f}:=f^{\rm in}-f^{\rm out}.
\]
Let $\sigma>0$ be the surface-tension coefficient. With these notations, the two-phase capillary Euler
equations are
\begin{equation}
	\begin{aligned}\label{eq:original_euler}
		\hspace{3em}\rho(\partial_t u+u\cdot\nabla u)+\nabla p & =0        &  & \text{in }\Oin\cup\Oout, \\
		\divv u                                                & =0        &  & \text{in }\Oin\cup\Oout, \\
		\uin\cdot n=\uout\cdot n                               & =V_{\cS}  &  & \text{on }\cS,           \\
		\jump p                                                & =\sigma H &  & \text{on }\cS.
	\end{aligned}
\end{equation}
The first two equations are the incompressible Euler equations in each phase.
The third says that the fluid transports the interface. The last is the
Young--Laplace law. Our sign convention gives $H=2/R$ on a sphere of radius
$R$.

We consider regular solutions of finite kinetic energy. Since the inner region
is bounded, this amounts to
\begin{equation}\label{eq:finite_energy_condition}
	\uout(t,\cdot)\in\L^2(\Omegaout(t);\R^3).
\end{equation}
When $\din=0$, the inner phase is interpreted as a massless gas: the inner
velocity field and its equations are omitted, and the kinematic condition in
\eqref{eq:original_euler} is read as $\uout\cdot n=V_{\cS}$, and the interior pressure $p^{\rm in}$ on the surface is constant.

The relative equilibria constructed here translate with speed $V$ along and rotate
with angular velocity $\alpha$ about the $x_3$-axis. Write
\begin{equation}\label{eq:rotationmatrix}
	\cR_\theta:=
	\begin{pmatrix}
		\cos\theta & -\sin\theta & 0 \\
		\sin\theta & \cos\theta  & 0 \\
		0          & 0           & 1
	\end{pmatrix},
	\qquad
	e_3:=
	\begin{pmatrix}0\\0\\1\end{pmatrix}.
\end{equation}
The domains, density and pressure are stationary in the moving coordinates,
while velocity transforms as a vector:
\begin{gather}\begin{gathered}\label{eq:rigid_motion_Omega_in}
		\Omegainout(t)=\cR_{\alpha t}\Omegainout(0)+tVe_3,\\
		\rho(t,\cR_{\alpha t}y+tVe_3)=\rho(0,y),\qquad
		p(t,\cR_{\alpha t}y+tVe_3)=p(0,y),\\
		u(t,\cR_{\alpha t}y+tVe_3)=\cR_{\alpha t}u(0,y).
	\end{gathered}\end{gather}
We also impose phasewise irrotationality,
\begin{equation}\label{eq:irrotational_condition}
	\curl u=0\qquad\text{in }\Oin\cup\Oout.
\end{equation}
Since both phases are simply connected, their velocities have single-valued
potentials, $\uinout=\nabla\phinout$. The velocity may jump tangentially
across the interface and form a vortex sheet. The vorticity of the full flow is concentrated there.

\subsection*{The overdetermined free boundary problem}

The following explanation is only formal; the complete derivation is
given in Appendix~\ref{sec:derivation}.

We next pass from the evolving Euler system to a stationary problem for the
shape. Irrotationality and incompressibility
give harmonic velocity potentials in the two phases. Once the moving frame is
prescribed, the kinematic condition fixes their normal derivatives on the
interface. Thus, for a chosen shape, two harmonic Neumann problems determine
the flow.

The shape is not arbitrary. Euler's equation in the moving frame gives a
Bernoulli law in each phase, while the Young--Laplace law forces the pressure
jump to be proportional to mean curvature. The resulting expression must be constant along the
same interface that was used to solve the Neumann problems. This second demand
is an additional scalar equation for the unknown shape and is what makes the
free boundary problem below overdetermined.

For $\alpha,V\in\R$, set
\begin{equation}\label{eq:def-WalphaV}
	W_{\alpha,V}(y):=\alpha e_3\times y+Ve_3.
\end{equation}
Let $\Omega\subset\R^3$ be the inner phase at time zero, let
$\Gamma=\partial\Omega$, and let $n$ be the outer unit normal of $\Omega$.
We assume that $\Omega$ is bounded, that $\Gamma$ is connected and smooth, and
that both $\Omega$ and $\R^3\setminus\overline\Omega$ are simply connected.
The volume is prescribed as
\[
	\abs{\Omega}=\frac{4\pi}{3}R^3;
\]
thus $R$ is the radius of the sphere having the same volume.

We seek $\Omega$, harmonic potentials
$\ein\colon\Omega\to\R$ and
$\eout\colon\R^3\setminus\overline\Omega\to\R$, and a Bernoulli constant
$c\in\R$. With $\phi=\ein$ and $\rho=\din$ in $\Omega$, and
$\phi=\eout$ and $\rho=\dout$ outside, the stationary problem is
\begin{align}\label{eq:main-problem}
	\Delta\ein                     & =0
	                               &                                                 & \text{in }\Omega,                             \nonumber \\
	\Delta\eout                    & =0
	                               &                                                 & \text{in }\R^3\setminus\overline\Omega,\nonumber        \\
	\partial_n\ein=\partial_n\eout & =W_{\alpha,V}\cdot n
	                               &                                                 & \text{on }\Gamma,\nonumber                              \\
	\jump{\frac{\rho}{2}\abs{\nabla\phi-Ve_3}^2
		+\rho\alpha\,y\cdot(e_3\times\nabla\phi)}
	+\sigma H                      & =c
	                               &                                                 & \text{on }\Gamma,                                       \\
	\eout(y)                       & \longrightarrow0
	                               &                                                 & \text{as }\abs y\to\infty,\nonumber                     \\
	\nabla\eout                    & \in\L^2(\R^3\setminus\overline\Omega),\nonumber                                                           \\
	\abs{\Omega}                   & =\frac{4\pi}{3}R^3.\nonumber
\end{align}
When $\din=0$, all interior-potential terms and the interior Neumann problem
are omitted.
Conversely, every sufficiently regular solution of \eqref{eq:main-problem}
gives a relative equilibrium of \eqref{eq:original_euler}. In the following, we will frequently refer to the jump equation as the \emph{Bernoulli equation}.

\subsection*{Parameters and main results}

Two dimensionless quantities organise the statement:
\begin{equation}
	\label{C1}
	\We:=\frac{\dout RV^2}{\sigma},
	\qquad
	\tau:=\frac{\din}{\dout}.
\end{equation}
Here, $\dout V^2$ is the inertial pressure scale and $\sigma/R$ is the
capillary pressure scale. The reference angular velocity is
\[
	\alpha_*:=
	\left(\frac{24\sigma}{R^3(3\din+2\dout)}\right)^{1/2}>0.
\]
The positive number $\alpha_*$ is the Rayleigh--Lamb frequency
for spherical harmonics of degree two and azimuthal number one
\cite[Chapter~IX, Art.~275, (10)]{MR1317348}.

The unit sphere will be written as
\[
	\Sph=\set{\omega=(X,Y,Z)\in\R^3:X^2+Y^2+Z^2=1}.
\]
We write $e_1,e_2,e_3$ for the standard basis of $\R^3$ and
$\perpProj$ for the orthogonal projection onto the horizontal plane
$\{e_3\}^{\perp}$.

Our investigation shows that at the sphere, the linearised shape equation is diagonal in spherical
harmonics. A small translation first excites the axisymmetric degree-two mode
$3Z^2-1$, making the sphere slightly oblate. The symmetry-breaking directions
used here are the degree-two azimuthal-number-one modes $XZ$ and $YZ$.
At $\alpha=\alpha_*$, these modes lie in the kernel of the linearised shape
operator and serve as bifurcation directions for a branch of relative
equilibria.

We give now a precise version of our main result, which we loosely announced in Theorem \ref{thm:main-informal}.

\begin{theorem}
	\label{thm:din-pos-dimensional}
	Assume that
	\[
		\din\ge 0,\qquad \dout>0,\qquad \sigma>0
	\]
	are given inner and outer mass densities and a surface tension coefficient, respectively.
	There are constants
	$\We_0>0$ and $\eta>0$, depending only on the density ratio $\tau$, with the following
	property: For any axial velocity $V \ge 0$ and any radius  $R>0$ satisfying
	\[
		0\le\We<\We_0
	\]
	and any $\abs{s}\le \eta\sqrt{\We}$, there exists a smooth solution ${\Omega=}\Omega(s,\We)$ of \eqref{eq:main-problem}, with angular velocity 	$\alpha=\alpha(s,\We)>0$, whose boundary is the radial graph
	\[
		\Gamma(s,\We)=
		\set{R(1+h(s,\We)(\omega))\omega:\omega\in\Sph}.
	\]
	The shape function $h$ is smooth for every fixed $s$ and $\We$. After a rotation about the $e_3$-axis, the parameter $s$ may be chosen so
	that the leading non-axisymmetric graph-height term is $sXZ$.
	The leading-order asymptotics of the   shape function are then given by
	\begin{align}
		\label{eq:h-expansion}
		h(s,\We)
		={} & -\frac{3}{16}\We\,\frac{3Z^2-1}{2}+sXZ \\
		    & -\sqrt{\We}\,sY\sqrt{6(3\tau+2)}
		\left[
			\frac{3}{40(2\tau+1)}
			+\frac{5}{22\tau+14}\left(Z^2-\frac15\right)
			\right]\nonumber \\
		    & +O_{\C^4(\Sph)}
		\left(s^2+ \We^2\right).\nonumber
	\end{align}
	The angular velocity satisfies
	\begin{equation}\label{eq:alpha-expansion}
		\frac{\alpha(s,\We)}{\alpha_*}
		=1+\frac{\gamma_{21}(\tau)}8\We
		+O\left(s^2+\We^{3/2}\right),
	\end{equation}
	where
	\[
		\gamma_{21}(\tau)
		:=-\frac{
			3(1188\tau^3+6605\tau^2+5601\tau+1156)}
		{280(2\tau+1)(3\tau+2)(11\tau+7)}.
	\]
	Moreover, there is the estimate
	\begin{align}\label{eq:centroid-expansion}
		\frac1R\perpProj\left(
		\frac1{\abs{\Omega(s,{\We})}}\int_{\Omega(s,{\We})}y\dd y
		\right)
		={} & -
		\frac{3\sqrt{6(3\tau+2)}}{40(2\tau+1)}
		s\sqrt{\We}\,e_2                               \\
		    & +O\left(s^2+\abs{s}\We\right). \nonumber
	\end{align}
	In particular, the horizontal component of the centroid is nonzero for $s \neq 0$.
	Consequently, in laboratory coordinates the centroid follows a genuine
	circular helix for $sV \neq 0$. For $s=0$, the branch is axisymmetric, its centroid
	remains on the rotation axis and follows a
	rectilinear trajectory.
	For each fixed $V$ and $\alpha$ near the $(2,1)$ Rayleigh--Lamb frequency $\alpha_*$, the non-axisymmetric branch is locally unique modulo
	rotations about the $e_3$-axis in the fixed-volume, vertically centred
	radial-graph class. The axisymmetric shape is locally unique, but $\alpha$
	remains free.
\end{theorem}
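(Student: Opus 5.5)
We plan to prove Theorem~\ref{thm:din-pos-dimensional} by an $\so$-equivariant Lyapunov--Schmidt reduction for the overdetermined problem \eqref{eq:main-problem}, viewed as a scalar equation for the radial graph height. After scaling $R=1$ and $\sigma=1$ and normalising densities by $\dout$, the parameters are $\tau$, $\We=V^2$ and $\alpha$. For a height $h\in\C^{k,\beta}(\Sph)$ with zero-volume correction and centred vertical centroid, the two Neumann problems with data $W_{\alpha,V}\cdot n$ are uniquely solvable. Their solutions $\ein,\eout$ depend analytically on $h$, by standard shape-derivative and Dirichlet-to-Neumann theory on perturbed spheres. Substituting them into the Bernoulli equation gives a map
\[
\cF(h,\alpha,V)=\jump{\tfrac{\rho}{2}\abs{\nabla\phi-Ve_3}^2+\rho\alpha\,y\cdot(e_3\times\nabla\phi)}+H-c,
\]
with $c$ eliminated by projecting off constants. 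Translation invariance in $e_3$ and the volume constraint remove the degree-zero mode and the $Z$-mode. The horizontal degree-one modes $X,Y$ are kept, because they carry the centroid displacement.

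The map $\cF$ is equivariant under rotations $\cR_\theta$ about $e_3$. It has a variational structure: it is the gradient of an energy-minus-momentum functional (surface area plus kinetic energy minus $V$ times linear momentum minus $\alpha$ times angular momentum). This structure makes the reduced equation a gradient. We first linearise at the sphere with $V=0$. There the linearisation is diagonal in spherical harmonics $Y_{\ell m}$, with symbol
\[
(\ell-1)(\ell+2)-\alpha^2\,\mu_{\ell m}(\tau),
\]
where $\mu_{\ell m}$ comes from the rotational terms; the $\ell=1$ modes are treated separately. The $(2,\pm1)$ entry vanishes exactly at $\alpha=\alpha_*$, which is Lamb's formula. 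We then verify that for all $\tau\ge0$ outside a discrete resonant set, no other $(\ell,m)$ vanishes at $\alpha_*$. At the resonant set, the extra kernel modes must be shown to decouple by symmetry, using the reflection $y\mapsto(x_1,-x_2,x_3)$ combined with $\alpha\mapsto-\alpha$. The axisymmetric branch $h_0(\We)$ comes from the implicit function theorem in the $m=0$ subspace for all $\alpha$. Its leading term $-\tfrac{3}{16}\We\,\tfrac{3Z^2-1}{2}$ is obtained by expanding the potential flow past a sphere to second order.

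Next we linearise at $(h_0(\We),\alpha,V)$ and split $h=h_0+s\,XZ+t\,YZ+w$, with $w$ in the complement of $\Span\{XZ,YZ\}$. The coupling with the degree-one horizontal modes appears at order $\sqrt{\We}$, through the cross term $V\alpha$. This term produces the $sY$ correction in \eqref{eq:h-expansion} and hence the centroid estimate \eqref{eq:centroid-expansion}: the $Y$-coefficient integrated against $y$ gives the $e_2$ displacement. We solve for $w$ by the implicit function theorem. Equivariance then reduces the bifurcation equation on $\CC\cong\Span\{XZ,YZ\}$ to the form $z\,g(\abs z^2,\alpha,\We)=0$, with $g$ real-valued by the gradient structure. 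We solve $g=0$ for $\alpha$ via $\partial_\alpha g\ne0$, which holds because $\partial_\alpha$ of the symbol at $\alpha_*$ is nonzero. This gives \eqref{eq:alpha-expansion}, with $\gamma_{21}$ computed from the second-order expansion in $\We$. The expansion requires $O(\We)$ projections of the $h_0$-perturbed operator and the $\sqrt{\We}$-order correction squared.

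We expect the main obstacle to be this second-order computation of $\gamma_{21}$ and the degree-one coupling. It requires shape derivatives of the Dirichlet-to-Neumann maps up to second order and Clebsch--Gordan products of harmonics. We would organise it via the explicit exterior and interior multipole solutions on the sphere. Local uniqueness follows from the implicit function theorem in the reduction. The smallness $\abs s\le\eta\sqrt{\We}$ ensures the $O(s^2)$ errors do not swamp the displayed centroid term.
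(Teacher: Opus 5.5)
Your nonresonant argument ($\tau>0$, $\tau\notin\cO$) follows the paper closely. The paper also uses an $\so$-equivariant gradient Lyapunov--Schmidt reduction onto $\Span\{XZ,YZ\}$ with a radial reduced equation, the implicit-function theorem in $\alpha$ via transversality of $\lambda_{21}$, the mixed $sV$ coefficient giving the $Y$ and $Y(Z^2-\frac15)$ corrections and hence the centroid, and a Schur-complement computation for $\gamma_{21}$.

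The gap is the resonant set, which the theorem covers. It consists of the five values in $\cO$ and also the vacuum endpoint $\tau=0$, where $Y_7^6$ is neutral at $\alpha_*$. Your plan to make the extra kernel modes ``decouple by symmetry'' cannot work, for two reasons.
\begin{itemize}
\item The reflection $(x_1,x_2,x_3)\mapsto(x_1,-x_2,x_3)$ sends solutions with angular velocity $\alpha$ to solutions with $-\alpha$. At fixed $\alpha$ near $\alpha_*>0$ it is therefore not a symmetry and gives no invariant subspace.
\item The genuine discrete symmetry at fixed $(\alpha,V)$ is the half-turn about the $e_1$-axis combined with $\phi\mapsto-\phi$. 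It acts as a reflection on every weight-$m$ plane and so fixes a line in it; at $\tau=0$ it fixes both $YZ$ and $Z\Im((X+iY)^6)$. More generally, no subgroup of the resulting $\mathrm O(2)$ fixes a weight-one vector without also fixing a nonzero vector in each weight-$m$ plane.
\end{itemize}
Hence every fixed-point subspace that keeps a weight-one direction still contains a neutral weight-$m$ direction at $(\alpha_*,V=0)$. The complement operator you invert to solve for $w$ is then not uniformly invertible as $\We\to0$.

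What is missing is a quantitative mechanism, which is what the paper supplies.
\begin{enumerate}
\item Base the reduction at the axisymmetric translating branch.
\item Compute the order-$V^2$ corrections of the two block eigenvalues $\mu_1,\mu_m$ of the reduced Hessian. These consist of a diagonal term from the $h_{02}$ background and the $V^2$ Bernoulli terms, plus the Schur complement through the neighbouring degrees $\ell\pm1$.
\item Fix $\alpha_1(V)$ by $\mu_1(\alpha_1(V),V)=0$.
\item Verify by explicit evaluation that $\mu_m(\alpha_1(V),V)=\Gamma_{\ell m}(\tau)V^2+O(V^3)$ with $\Gamma_{\ell m}(\tau)\neq0$.
\end{enumerate}
The inverse on the extra plane is then only $O(V^{-2})$, so two further ingredients are needed. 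One is the equivariant selection rule: the weight-one amplitude $a$ forces the weight-$m$ mode only at order $\abs a^m$. The other is the anisotropic rescaling $a=VtXZ$, $u=V^2U$, $\alpha=\alpha_1(V)+V^2\beta$, which makes the reduced system regular at $V=0$ with invertible linearisation in $(U,\beta)$. In these cases the restriction $\abs s\le\eta\sqrt{\We}$ is thus a condition for the branch to exist, not only for the centroid term to dominate the $O(s^2)$ error.
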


Our analysis actually reveals that apart from the vacuum case $\tau=0$ and for density ratios $\tau $ outside of a certain resonance set $\cO$ defined in \eqref{C2},  the bifurcation branch extends to a fixed small interval of
$s$ independent of the axial Weber number $\We$, i.e.,
\[
	|s|\le \eta;
\]
see Section \ref{C3} below for details. Moreover, the asymptotic formula in \eqref{eq:h-expansion} can be improved to any fixed regularity order at the expense of possibly decreasing the values of $\We_0$ and $\eta$.

The inner vacuum case $\tau=0$
was considered earlier in Benjamin's variational model \cite{MR905816} and in Meiron's numerical
study \cite{MR982188}. In Section~\ref{sec:endpoint-benjamin-comparison} we relate Benjamin's
eccentricity and inclination parameters to $(\We,s)$. The resulting helix
radius, pitch, and leading angular frequency agree with his formulae in their
common small-eccentricity, small-inclination regime. Thus, in the vacuum case $\tau=0$,
Theorem~\ref{thm:din-pos-dimensional} gives a rigorous near-spherical
realisation of the steady-spiralling scenario conjectured by Meiron and recovers
Benjamin's leading asymptotics which have been based on simplified shape approximations.

\begin{remark}\label{rem:really_a_helix}
	Let
	\[
		\bar y_{s,{\We}}:=
		\frac1{\abs{\Omega(s,{\We})}}\int_{\Omega(s,{\We})}y\dd y
	\]
	be the centroid in the moving frame. In laboratory coordinates it is
	\[
		\bar x_{s,{\We}}(t)
		=\cR_{\alpha(s,{\We})t}\bar y_{s,{\We}}+Vte_3.
	\]
	If the horizontal part of $\bar y_{s,{\We}}$ has length $r>0$ and polar angle
	$\theta_0$, then
	\[
		\bar x_{s,{\We}}(t)
		=r\bigl(\cos(\alpha(s,{\We}) t+\theta_0)e_1
		+\sin(\alpha(s,{\We}) t+\theta_0)e_2\bigr)
		+(\bar y_{s,{\We}}\cdot e_3+Vt)e_3.
	\]
	Thus the trajectory is a circular helix of
	radius $r$ and pitch $2\pi V/\alpha(s,{\We})$.
	For example, choosing $\abs{s}$ of order $\sqrt{\We}$ gives a radius
	of order $R\We$ and a
	pitch of order $R\sqrt{\We}$.
\end{remark}

\subsection*{Idea of the construction}

We first nondimensionalise the problem and write the interface as a radial
graph over the unit sphere. For a prescribed graph and fixed motion parameters,
the requirement that the fluid and the interface have the same normal velocity
determines Neumann problems for the harmonic velocity potentials. Solving these
problems and substituting the potentials into the Bernoulli condition reduces
the free-boundary problem to an equation for the graph, subject to the
fixed-volume and an additionally imposed vertical-centring constraint. At zero axial speed, the round
sphere solves this equation. For small axial speed, the sphere extends to an
axisymmetric translating solution. This branch was also constructed in
\cite{MR4972964} and provides the base branch for the subsequent bifurcation
argument.

We then linearise the Bernoulli equation at the sphere, with angular velocity
equal to the $(2,1)$ Rayleigh--Lamb frequency. The desired symmetry-breaking
kernel is the plane generated by $XZ$ and $YZ$. The reduced equation is a
gradient equation and is $\mathrm{SO}(2)$-equivariant under rotations about the
vertical axis. Whether the weight-one plane is the full kernel depends on the
density ratio. Besides the endpoint $\tau=0$, additional resonances occur
precisely at the positive density ratios
\begin{equation}
	\label{C2}
	\cO:=
	\set{
		\frac{2}{77},
		\frac{2}{7},
		\frac{23}{60},
		\frac{10}{9},
		\frac{46}{27}
	}.
\end{equation}
Accordingly, we distinguish three cases.

In the nonresonant case $\tau>0$ and $\tau\notin\cO$, the weight-one plane is
the full kernel. Using the gradient structure and the
$\mathrm{SO}(2)$-equivariance, a Lyapunov--Schmidt reduction followed by the
implicit-function theorem yields the non-axisymmetric branch and selects its
angular velocity. The branch is locally unique modulo rotations about the
vertical axis and exists for amplitudes in a fixed neighbourhood of zero.

At the endpoint $\tau=0$, the linearised operator at the sphere has an
additional resonant plane of weight six. Instead of bifurcating from
the sphere, where both planes belong to the kernel, we first move away from the
sphere along the axisymmetric translating branch. Along this branch, the
additional resonant directions are lifted: their eigenvalue becomes nonzero,
of quadratic order in the axial speed, while the angular
velocity is chosen so that the weight-one plane remains in the kernel. This is
the main mechanism by which the translation separates the desired
bifurcation directions from the additional resonance. The inverse on the
weight-six plane is consequently not uniform as the axial speed tends to zero.
However, the $\so$-equivariance compensates for this loss, since the
weight-one mode can force the weight-six mode only at sixth order. After
rescaling the bifurcation amplitude with the axial speed, the reduced system
extends regularly to zero speed and the implicit-function theorem in an equivariant flavour applies.

For a positive resonant density ratio $\tau\in\cO$, the construction follows as
in the endpoint case, with the weight-six plane replaced by the corresponding
resonant spherical-harmonic plane. Moving along the axisymmetric branch again
lifts the additional resonant directions, and the same rescaling and
weight-selection argument yields the non-axisymmetric branch.

A priori, the solutions obtained in this way are merely relative equilibria
under a screw motion. The construction alone does not show that they spiral,
since their centroids might still lie on the rotation axis. It therefore
remains to derive the asymptotics of the graph, the angular velocity and the
centroid. These asymptotics show that every non-axisymmetric solution on the
constructed branch has a nonzero horizontal centroid component. Finally, we return to dimensional
variables and deduce Theorem \ref{thm:din-pos-dimensional}.

\begin{remark}
	The degree-two, azimuthal-number-one branch constructed here is one
	instance of a more general bifurcation mechanism. For $\ell\ge2$, the
	Rayleigh--Lamb frequency of a degree-$\ell$ spherical harmonic is
	\[
		\omega_\ell^{\rm RL}
		=
		\left(
		\frac{\sigma\ell(\ell-1)(\ell+1)(\ell+2)}
		{R^3\bigl((\ell+1)\din+\ell\dout\bigr)}
		\right)^{1/2}.
	\]
	A mode with azimuthal number $1\le m\le\ell$ enters the kernel of the
	rotating-frame linearisation at the angular velocity
	\[
		\alpha_{\ell,m}
		=
		\frac{\omega_\ell^{\rm RL}}{m}.
	\]
	Away from additional resonances, the same equivariant bifurcation scheme
	can be directly adapted to construct analogous branches associated
	with other pairs $(\ell,m)$ than $(2,1)$. At the resonant density ratios a more detailed analysis will be needed. It would be interesting to determine the
	geometry of these higher-mode interfaces and the corresponding
	laboratory-frame trajectories.
\end{remark}

\subsection*{A complementary rigidity statement}

For comparison, we record what happens in the same class when surface tension
is removed. This result covers the full bubble regime, including the massless
endpoint and the equal-density case.

\begin{theorem}
	\label{thm:no-bubbles}
	Let
	\[
		0\le\din\le\dout,\qquad \dout>0,\qquad
		R>0,\qquad \alpha,V\in\R.
	\]
	Let $\Omega\subset\R^3$ be a bounded domain with connected boundary
	$\Gamma=\partial\Omega$ of class $\C^2$, and assume that both $\Omega$ and
	$\R^3\setminus\overline\Omega$ are simply connected.
	Suppose that $\eout$, and also $\ein$ when $\din>0$, have $\C^2$
	one-sided extensions up to $\Gamma$ and, together with a constant $c\in\R$,
	solve \eqref{eq:main-problem} classically with $\sigma=0$.

	Then
	\[
		V=0,\qquad
		\nabla\eout\equiv0.
	\]
	If $\din>0$, then also $\nabla\ein\equiv0$. If $\alpha\neq0$, then
	\[
		\cR_\theta\Omega=\Omega
		\qquad\text{for every }\theta\in\R.
	\]
\end{theorem}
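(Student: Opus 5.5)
The plan is to derive a Pohozaev-type integral identity from the Bernoulli condition, show that for $\sigma=0$ and $\din\le\dout$ it forces the exterior Dirichlet energy to vanish, and then read off the geometric conclusions from the kinematic condition.

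\emph{Reformulation in the moving frame.} For $W=W_{\alpha,V}$ as in \eqref{eq:def-WalphaV}, set $q:=\nabla\phi-W$ in each phase. By the Neumann condition in \eqref{eq:main-problem}, $q$ is tangent to $\Gamma$ on both sides. Using $(e_3\times y)\cdot a=-y\cdot(e_3\times a)$, a direct expansion gives
\[
\abs{\nabla\phi-W}^2=\abs{\nabla\phi-Ve_3}^2+2\alpha\,y\cdot(e_3\times\nabla\phi)+\alpha^2\abs{\perpProj y}^2 .
\]
Hence, with $\sigma=0$, the Bernoulli equation becomes
\[
\frac{\dout}{2}\abs{q^{\rm out}}^2-\frac{\din}{2}\abs{q^{\rm in}}^2=-c+\frac{\dout-\din}{2}\,\alpha^2\abs{\perpProj y}^2\qquad\text{on }\Gamma.
\]
The point is that the sign condition $\din\le\dout$ makes the centrifugal term on the right nonnegative.

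\emph{Rellich/Pohozaev identities.} For a harmonic $\phi$ and a smooth vector field $X$, the Rellich identity is
\[
\int_{\partial D}\bigl(\abs{\nabla\phi}^2X\cdot n-2\,\partial_n\phi\,X\cdot\nabla\phi\bigr)=\int_D\bigl(\Div X\,\abs{\nabla\phi}^2-2\nabla\phi\cdot DX\,\nabla\phi\bigr).
\]
I will apply it in $\Omega$ and in $\R^3\setminus\overline\Omega$. In the exterior domain the boundary term at infinity vanishes, since $\nabla\eout=O(\abs y^{-3})$ (the potential decays and has finite energy, so it is at least dipolar).
\begin{enumerate}[(i)]
\item With the Killing field $X=W$, the bulk term vanishes. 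Writing $\nabla\phi=q+W$ and $\partial_n\phi=W\cdot n$, the boundary integrand reduces to $(W\cdot n)(\abs q^2-\abs W^2)$. Together with $\int_\Gamma (W\cdot n)\abs W^2=0$, this gives $\int_\Gamma (W\cdot n)\abs{q}^2=0$ on each side.
\item With $X=e_3$ one obtains the force balance.
\item With the dilation field $X=y$, the bulk term equals $\pm\int\abs{\nabla\phi}^2$. This ties the Dirichlet energies $E^{\rm in/out}=\int\abs{\nabla\phi^{\rm in/out}}^2$ to $\int_\Gamma \abs{q}^2\,y\cdot n$ and to boundary terms involving $W$.
\end{enumerate}
Multiplying the rewritten Bernoulli equation by $y\cdot n$ and integrating over $\Gamma$ then yields a Pohozaev identity. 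The constant $c$ pairs with $\int_\Gamma y\cdot n=3\abs\Omega$. The centrifugal term becomes $\int_\Omega\Div(\abs{\perpProj y}^2y)\ge0$. The remaining $W$-terms can be rewritten through $\int_\Gamma (W\cdot n)\,y\cdot\nabla\phi$, and I expect these to combine into $\alpha$- and $V$-weighted energy and momentum quantities, namely $V\int\partial_3\phi$ and $\alpha$ times the angular momentum.

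\emph{Main obstacle.} The hard step is eliminating $c$ and these cross terms so that a signed inequality of the form $\dout E^{\rm out}+(\text{nonnegative terms})\le 0$ remains. My first attempt will be to eliminate $c$ using (i) together with the Bernoulli equation tested against $W\cdot n$. I will also use the Kelvin-type identities $\int_\Gamma\phi\,\partial_n\phi=\pm E$ with $\partial_n\phi=W\cdot n$, which express the cross terms as $\pm$ energies. The hypothesis $\din\le\dout$ should enter exactly once, through the centrifugal sign and through the comparison $\din E^{\rm in}$ versus $\dout E^{\rm out}$ (the latter via the minimality of harmonic extensions for fixed Neumann data). If the dilation multiplier alone does not close the argument, I will try the combination $X=y+\lambda W$ and optimise over $\lambda$.

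\emph{Conclusion.} Once $\nabla\eout\equiv0$, the Neumann condition gives $W\cdot n=0$ on $\Gamma$. Then the interior Neumann problem has zero data, so $\nabla\ein\equiv0$ when $\din>0$. Since $W$ is tangent to $\Gamma$, the compact surface $\Gamma$ is invariant under the screw flow $y\mapsto\cR_{\alpha t}y+tVe_3$. Boundedness forces $V=0$, because the $e_3$-component of the flow moves at constant speed $V$. If $\alpha\ne0$, it further forces $\cR_\theta\Omega=\Omega$ for all $\theta$.
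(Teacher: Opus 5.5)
Your outline stops where the proof has to happen. The signed inequality is never derived, and the devices you propose for it do not work. Normalise $\dout=1$ and put $\tau=\din/\dout$, $\delta:=\tau\nabla\ein-\nabla\eout$, $d:=\tau\ein-\eout$ (drop interior terms if $\tau=0$), and $E:=\tau\int_\Omega\abs{\nabla\ein}^2\dd y+\int_{\R^3\setminus\overline\Omega}\abs{\nabla\eout}^2\dd y$. The Bernoulli condition then reads $\frac12\bigl(\tau\abs{\nabla\ein}^2-\abs{\nabla\eout}^2\bigr)=C+\delta\cdot W$ on $\Gamma$ for a constant $C$. Your Kelvin identities give $E=\int_\Gamma d\,(W\cdot n)\dd S$. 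Insert this into the Rellich identity with $X=y$. The cross terms become $\int_\Gamma\delta\cdot\bigl(n\times(y\times W)\bigr)\dd S=-\int_\Gamma d\,\Div_\Gamma\bigl(n\times(y\times W)\bigr)\dd S$. Since $\curl(y\times W)=-3\alpha\,e_3\times y-2Ve_3$ is not a multiple of $W$, you end up with
\[
3E=-6C\abs{\Omega}-2\alpha\int_\Gamma d\,(e_3\times y)\cdot n\dd S,
\]
and the angular-momentum term has no sign. Adding $\lambda W$ to the multiplier changes nothing. In the $\tau$-weighted two-phase combination, the Rellich identity with a Killing multiplier ($W$ or $e_3$) collapses to $0=0$ once Bernoulli is inserted, because $W\times W=0$ and $\curl(e_3\times W)=0$. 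Likewise, testing Bernoulli against $W\cdot n$ and using (i) is vacuous, since $\int_\Gamma W\cdot n\dd S=\int_\Gamma\abs{\perpProj y}^2\,W\cdot n\dd S=0$. So $c$ cannot be eliminated this way.

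Two ideas are missing, and the paper uses both. First, the right multiplier is the horizontal dilation $X=(y_1,y_2,0)$. With $T(\nabla u)=\nabla u\otimes\nabla u-\frac12\abs{\nabla u}^2I$ one has $\Div(T(\nabla u)X)=-(\partial_3u)^2\ge-\abs{\nabla u}^2$ for harmonic $u$. Moreover $\curl(X\times W)=-2W$ exactly, so $\Div_\Gamma\bigl(n\times(X\times W)\bigr)=2W\cdot n$ and the cross terms equal exactly $-2E$. With $\int_\Gamma X\cdot n\dd S=2\abs{\Omega}$ this gives $-E\le-2E-2C\abs{\Omega}$, i.e.\ $0\le E\le-2C\abs{\Omega}$. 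Second, the constant is not eliminated but signed. The function $d$ has a critical point on the compact surface $\Gamma$. There $\nabla_\Gamma\eout=\tau\nabla_\Gamma\ein$, hence $\nabla\eout=(W\cdot n)n+\tau\nabla_\Gamma\ein$. Evaluating Bernoulli at that point gives $C=\frac{1-\tau}{2}\bigl((W\cdot n)^2+\tau\abs{\nabla_\Gamma\ein}^2\bigr)\ge0$, or $C=\frac12(W\cdot n)^2$ if $\tau=0$. This pointwise evaluation is where $\din\le\dout$ enters, not the centrifugal sign or an energy comparison, and it forces $E=0$. Your final step from $E=0$ to $V=0$ and rotational invariance via the screw flow is fine. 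A minor point: the dipolar decay of $\eout$ comes from the zero flux $\int_\Gamma W\cdot n\dd S=0$, not from finite energy. The weaker bound $\nabla\eout=O(\abs{y}^{-2})$ would in any case suffice for all boundary terms at infinity.
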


Thus there is no nontrivial translating relative equilibrium, and no relative
equilibrium with a helical centroid trajectory, in the full irrotational,
sphere-topology bubble regime without surface tension. The theorem does not
exclude static interfaces with trivial fluid velocity: when $V=\alpha=0$ and all potentials present in the
problem are constant, every admissible shape solves the zero-surface-tension
problem.

The equal-density case gives the following consequence for regular stationary
vortex sheets.

\begin{corollary}
	\label{cor:sheets}
	Let $\Gamma=\partial\Omega\subset\R^3$ be a closed embedded $\C^2$ surface
	diffeomorphic to $\Sph$, with $\Omega$ the bounded component of its
	complement. Let $(u,p)$ be a stationary distributional solution of the
	homogeneous Euler equations which is piecewise $\C^1$ up to $\Gamma$.
	Write $\uinout$ and $\pinout$ for its restrictions to the two phases.
	Assume
	\[
		u\in\L^2(\R^3;\R^3),\qquad
		\curl\uin=\curl\uout=0,
	\]
	and that $\Gamma$ is a stationary
	vortex sheet with
	\[
		\uin\cdot n=\uout\cdot n=0
		\qquad\text{on }\Gamma.
	\]
	Then $u\equiv0$.
\end{corollary}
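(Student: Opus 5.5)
The plan is to reduce Corollary~\ref{cor:sheets} to Theorem~\ref{thm:no-bubbles} in the equal-density case $\din=\dout=:\rho_0>0$, with $\alpha=V=0$ and $\sigma=0$.

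\emph{Step 1: potentials and their boundary data.} The surface $\Gamma$ is a $\C^2$ embedded sphere, so by the Jordan--Brouwer separation and Alexander's theorem in the smooth category, both $\Omega$ and $\R^3\setminus\overline\Omega$ are simply connected. Since $\curl\uinout=0$ on each side, there are potentials with $\uinout=\nabla\phinout$, each $\C^2$ up to $\Gamma$. Incompressibility, which is part of the distributional Euler equations, gives $\Delta\phinout=0$. The standing hypothesis $\uin\cdot n=\uout\cdot n=0$ is exactly the Neumann condition $\partial_n\phinout=W_{0,0}\cdot n$. Finally, $u\in\L^2$ means $\nabla\eout\in\L^2(\R^3\setminus\overline\Omega)$. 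A harmonic function in an exterior domain with $\L^2$ gradient tends to a constant at infinity (by Kelvin transform or the multipole expansion), so after subtracting that constant we get $\eout\to0$.

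\emph{Step 2: the Bernoulli jump condition.} In each phase the steady Euler equation with irrotational flow reads
\[
\nabla\Bigl(\tfrac{\rho_0}{2}\abs{\nabla\phi}^2+p\Bigr)=0.
\]
Hence $p^{\rm in/out}=c^{\rm in/out}-\tfrac{\rho_0}{2}\abs{\nabla\phinout}^2$, using that each phase is connected. Next I test the distributional momentum equation $\divv(u\otimes u)+\nabla p=0$ against test functions supported near $\Gamma$. Integrating by parts on each side leaves the interface term
\[
\jump{\rho_0 u\,(u\cdot n)+pn}=0.
\]
Because $u\cdot n=0$ on both sides, this reduces to $\jump p=0$ on $\Gamma$. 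Inserting the Bernoulli expressions then gives
\[
\jump{\tfrac{\rho_0}{2}\abs{\nabla\phi}^2}=c^{\rm in}-c^{\rm out}=:c,
\]
which is precisely the Bernoulli equation in \eqref{eq:main-problem} with $\sigma=0$ and $\alpha=V=0$.

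\emph{Step 3: conclusion.} The volume constraint only fixes $R$, and we take $R$ to be the equivalent radius of $\Omega$. Theorem~\ref{thm:no-bubbles} then applies with $\din=\dout$, which is allowed since $\din\le\dout$. It yields $\nabla\eout\equiv0$ and $\nabla\ein\equiv0$, hence $u\equiv0$.

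\emph{Main obstacle.} The delicate point is Step 2: deriving the pressure continuity $\jump p=0$ rigorously from the distributional formulation under only piecewise $\C^1$ regularity. This requires a careful divergence-theorem computation on each side, justified by the $\C^1$ regularity up to $\Gamma$ of $u$ and $p$. One must also note that the pressure is only determined up to a constant in each phase, and that this freedom is absorbed into $c$. The topological facts in Step 1, simple connectivity and the decay of the potential at infinity, are standard by comparison.
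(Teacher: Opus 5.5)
Your proposal is correct and follows the paper's proof essentially step for step. Both arguments use simple connectivity of both phases, harmonic potentials with homogeneous Neumann data normalised so that $\eout\to0$, pressure continuity $\jump p=0$ from the distributional interface computation combined with the phasewise Bernoulli law, and then Theorem~\ref{thm:no-bubbles} with $\din=\dout$ and $\alpha=V=\sigma=0$. The only difference is how simple connectivity is justified: the paper uses local flatness of the $\C^2$ sphere and the generalised Schoenflies theorem, which covers the $\C^2$ case directly without a smooth-category version of Alexander's theorem.
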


\subsection*{Mathematical context}

The initial-value problem for free-boundary Euler flows with surface tension,
including vortex sheets with surface tension, has a substantial well-posedness
theory \cite{MR2001473,MR2334849,MR2291920,MR2456184,MR2763036}. Almost global
well-posedness near a circle is known in two dimensions \cite{MR5064622}.
The construction of coherent capillary solutions includes two-dimensional
hollow vortices and capillary vortex sheets
\cite{crowdy_circulation_1999,MR1794849,MR5037134,
	murgante2024steadyvortexsheetspresence}, and three-dimensional rectilinear
bubbles and drops, rotating drops, and ring-shaped vortex sheets
\cite{MR4972964,MR5001433,baldi2025bifurcationmultipleeigenvaluesrotating,
	meyer2025steadyringshapedvortexsheets}. Recent rigidity and nonexistence
results for two-dimensional bubbles, thick bubble rings and liquid drops appear in
\cite{niebel2026globalrigiditytwodimensionalbubbles,
	han2026nonexistencethickbubblerings,baldi2026rigiditycapillaryliquiddrops,baldi2026rigidityresult3dcapillary};
see also \cite{niebel2026sphericalrigidityexterioroverdetermined}.

The combination of symmetry with a variational or gradient structure is a
standard theme in equivariant bifurcation theory
\cite{MR950168,MR2004250}. In capillary free-boundary problems it has also
proved effective in the construction of special solutions
\cite{MR1781220,MR5001433,
	baldi2025bifurcationmultipleeigenvaluesrotating}.

There is also a large literature on coherent Euler structures without surface
tension. It includes steady vortex rings
\cite{MR422916,MR628444,MR1011553}, helical filaments, patches, and vortex
sheets
\cite{Caflisch01081992,MR4417384,MR4834945,MR5018456,
	guerra2025nearlyparallelhelicalvortex,MR5036920,cluster_helices}. The word
\emph{helical} has a different meaning in much of that literature: it describes
the spatial structure or symmetry of an unbounded vortex. In the present work the interface
is compact and need not have a helical shape. It undergoes a screw motion, and
it is the trajectory of its centroid that is a helix.

\subsection*{Acknowledgements}
The authors are funded by the Deutsche Forschungsgemeinschaft (DFG, German Research Foundation) under Germany's Excellence Strategy EXC 2044/2 -- 390685587, Mathematics Münster: Dynamics--Geometry--Structure and Excellence and grant no.~531098047.
Bj\"orn Gebhard acknowledges support by the Generalitat Valenciana through project CIGE/2024/115.
Lukas Niebel is also funded by the SNSF Starting Grant TMSGI2\textunderscore226018.

\subsection*{Declaration of AI Use}
OpenAI's GPT-5.5 and GPT-5.6 Sol were used for proof-checking.

\section{Functional-analytic and geometric setup}

\subsection{Nondimensionalisation}
\label{sec:nondimensionalisation}

First, we record the nondimensional form of our free-boundary problem \eqref{eq:main-problem}. For a near-spherical bubble, the natural
length scale is the prescribed equivalent radius
\[
	R = \left(\frac{3}{4\pi}|\Omega|\right)^{1/3}.
\]
Measuring length in units of this radius, we consider
\[
	y = R\widehat y.
\]

In particular, if the dimensionless and dimensional interfaces are written as
radial graphs
\[
	\widehat\Gamma
	=
	\set{(1+\widehat h(\omega))\omega:\omega\in\Sph},
	\qquad
	\Gamma
	=
	\set{R(1+h(\omega))\omega:\omega\in\Sph},
\]
then $\Gamma=R\widehat\Gamma$ is equivalent to
\begin{equation}
	\label{eq:radial-height-rescaling}
	h=\widehat h.
\end{equation}

The capillary velocity $\sqrt{\sigma/({\dout R})}$ balances the dynamic pressure with
the capillary pressure. We rescale all velocity variables in our model by this characteristic velocity
\[
	V=\left(\frac{\sigma}{\dout R}\right)^{1/2}\widehat V,
	\qquad W_{\alpha, V}(y) = \left(\frac{\sigma}{\dout R}\right)^{1/2} W_{\widehat \alpha, \widehat V}(\widehat y).
\]
Here, $\widehat \alpha$ is the nondimensionalised angular velocity
\[
	\alpha=\left(\frac{\sigma}{\dout R^3}\right)^{1/2}\widehat\alpha.
\]
The rescaled velocity potentials are given by
\[
	\ein(y)=\left(\frac{\sigma R}{\dout}\right)^{1/2}\widehat\phi^{\rm in}(\widehat y),
	\qquad
	\eout(y)=\left(\frac{\sigma R}{\dout}\right)^{1/2}\widehat\phi^{\rm out}(\widehat y).
\]

After dropping hats, the nondimensional problem is the following. Find a
bounded smooth simply connected domain $\Omega\subset\R^3$ with connected
boundary $\Gamma=\partial\Omega$, whose exterior
$\R^3\setminus\overline\Omega$ is also simply connected, potentials
\[
	\ein\colon\Omega\to\R,
	\qquad
	\eout\colon\R^3\setminus\overline\Omega\to\R,
\]
and a constant $c\in\R$ such that
\begin{equation}\label{eq:main-problem-nondimensional}
	\begin{aligned}
		\Delta \ein  & =0
		             &      & \text{in }\Omega,                       \\
		\Delta \eout & =0
		             &      & \text{in }\R^3\setminus\overline\Omega, \\
		\partial_n\ein=\partial_n\eout
		             & =
		W_{\alpha,V}\cdot n
		             &      & \text{on }\Gamma,                       \\
		\jump{
			\frac{\rho}{2}\abs{\nabla\phi-Ve_3}^2
			+\rho\alpha\, y\cdot(e_3\times\nabla\phi)
		}
		+H
		             & =c
		             &      & \text{on }\Gamma,                       \\
		\eout(y)     & \to0
		             &      & \text{as }\abs{y}\to\infty,             \\
		\nabla\eout  & \in
		\L^2(\R^3\setminus\overline\Omega),                           \\
		\abs{\Omega} & =
		\frac{4\pi}{3}.
	\end{aligned}
\end{equation}
Here, $\rho$ is the non-dimensional density functional
\[
	\rho
	=
	\tau\,\mathbbm{1}_{\Omega}
	+
	\mathbbm{1}_{\R^3\setminus\overline\Omega},
	\qquad
	\tau=\frac{\din}{\dout}.
\]
The original dimensional variables are recovered by undoing the above
scaling.

\subsection{Functional analysis}
We consider functions on the two-dimensional sphere $\Sph\subset\R^3$. We
write $\omega=(\omega_1,\omega_2,\omega_3)=(X,Y,Z)\in\Sph$. Throughout,
$\N=\{1,2,\ldots\}$ and	$\N_0=\{0,1,2,\ldots\}$.

All Sobolev spaces below are real Sobolev spaces. For $j\in\N$ we use
$\H^j(\Sph)$ with the standard Sobolev norm.
Since $\dim\Sph=2$, the spaces $\H^j(\Sph)$ are Banach algebras and
embed continuously into $\C^0(\Sph)$ for $j>1$. Consequently, the smooth
pointwise expressions used below, including rational expressions with
denominators bounded away from zero, define smooth maps on the relevant open
subsets; see
\cite[Lemma~6.2.8 and Remark~6.2.9(i)]{PruessSimonett2016} for the underlying
multiplication estimates.

For consistency with the later analysis, we take $j=k+2$, where $k\ge4$ is
an integer. In particular, functions in $\H^{k+2}(\Sph)$ are at least
$\C^4$. For $\delta>0$, we introduce the open ball
\[
	U^{k+2}_\delta
	:=
	\set{
		h\in \H^{k+2}(\Sph):
		\norm{h}_{\H^{k+2}}<\delta
	}.
\]
By Sobolev embedding, after choosing $\delta$ sufficiently small, every
$h\in U^{k+2}_{\delta}$ satisfies $1+h>\frac12$. For any such $h$ we write
\begin{equation}\label{eq:radial-graph-chart}
	\Gamma_h=\{(1+h(\omega))\omega:\omega\in\Sph\},
	\qquad
	\Psi_h(\omega)=(1+h(\omega))\omega ,
\end{equation}
and we denote by $\Omega_h$ the bounded component enclosed by $\Gamma_h$.
After decreasing $\delta$ if necessary, $\Gamma_h$ is a regular embedded
sphere, $\Omega_h$ is diffeomorphic to $B_1$, and
$\R^3\setminus\overline{\Omega_h}$ is diffeomorphic to
$\R^3\setminus\overline{B_1}$. In particular, both phases are simply
connected for every $h\in U^{k+2}_\delta$.

The outer unit normal and the surface Jacobian are
\begin{equation}\label{eq:radial-normal-jacobian}
	n_h\circ \Psi_h
	=
	\frac{(1+h)\omega-\nablaS h}
	{\sqrt{(1+h)^2+\abs{\nablaS h}^2}},
	\qquad
	J_h
	=
	(1+h)\sqrt{(1+h)^2+\abs{\nablaS h}^2}.
\end{equation}
These are the radial-graph specialisations of the normal-graph formulas in
\cite[pp.~55--57, equations~(2.42) and~(2.44)--(2.45)]{PruessSimonett2016}.
These depend smoothly on $h$ as maps
\[
	U^{k+2}_\delta\to \H^{k+1}(\Sph;\R^3),
	\qquad
	U^{k+2}_\delta\to \H^{k+1}(\Sph).
\]

We introduce two constraints: the first fixes the
volume of the enclosed domain to be that of the unit ball,
\[
	\mathcal{V}(h) = 0,\quad\mbox{where }\mathcal{V}(h) = |\Omega_h|-|B_1|,
\]
while the second fixes the vertical component of
the centroid at the origin,
\[
	\mathcal{C}_3(h) = 0,\quad\mbox{where }\mathcal{C}_3(h) = \int_{\Omega_h} y_3\dd y.
\]
Because in the radial graph
parametrisation $y=\Psi_h(\omega)=(1+h(\omega))\omega$ one has
\[
	\bigl((y\cdot n_h)\circ\Psi_h\bigr)J_h=(1+h)^3,
	\qquad
	y_3\circ\Psi_h=(1+h)Z,
\]
we find via the divergence theorem that
\begin{equation}\label{eq:vertical-centring_V}
	\cV(h)
	=
	\frac13\int_{\Gamma_h} y\cdot n_h\dd S
	-\frac{4\pi}{3}
	=
	\frac13\int_{\Sph}(1+h)^3\dd S-\frac{4\pi}{3},
\end{equation}
and
\begin{equation} \label{eq:vertical-centring_C}
	\cC_3(h)
	=
	\frac14\int_{\Gamma_h} y_3\, y\cdot n_h\dd S
	=
	\frac14\int_{\Sph}(1+h)^4 Z\dd S .
\end{equation}

The maps $\cV,\cC_3\colon U^{k+2}_\delta\to\R$ are smooth and
\[
	D\cV(0)\eta=\int_{\Sph}\eta\dd S,
	\qquad
	D\cC_3(0)\eta=\int_{\Sph}\eta Z\dd S .
\]
The map
\[
	h\mapsto \left(\cV(h),\cC_3(h)\right)
\]
has surjective differential at $h=0$, because its restriction to
$\Span\{1,Z\}$ is represented by the diagonal matrix
\[
	\begin{pmatrix}
		4\pi & 0              \\
		0    & \frac{4\pi}{3}
	\end{pmatrix}.
\]
Hence, after decreasing $\delta$ if necessary, the set
\[
	\cM^{k+2}
	=
	\{h\in U^{k+2}_\delta:\cV(h)=0,\ \cC_3(h)=0\}
\]
is a smooth codimension-two submanifold of $\H^{k+2}(\Sph)$.
Its tangent space at the round sphere is
\[
	T_0\cM^{k+2}
	=
	\set{\eta\in \H^{k+2}(\Sph):
		\int_{\Sph}\eta\dd S=0,\quad
		\int_{\Sph}\eta Z\dd S=0
	}.
\]

We shall use the following linear slice for the fixed-volume, vertically centred
constraint. For $j\in \N$ set
\begin{equation}\label{eq:linear-slice}
	\H^j_{\rm lin}
	:=
	\set{
		\xi\in \H^j(\Sph):
		\int_{\Sph}\xi\dd S=0,\quad
		\int_{\Sph}\xi Z\dd S=0
	}.
\end{equation}
For $j=k+2$, this space is $T_0\cM^{k+2}$.
We denote by
$\proj_{\rm lin}$ the $\L^2(\Sph)$-orthogonal projection of $\H^k$ onto
$\H^k_{\rm lin}$.

The manifold $\cM^{k+2}$ can be locally parametrised over its tangent space at
the origin. Indeed, for $\xi\in\H^{k+2}_{\rm lin}$ small, we solve
\begin{equation}\label{eq:constraint-correction-equations}
	\cV(\xi+c+dZ)=0,
	\qquad
	\cC_3(\xi+c+dZ)=0
\end{equation}
for $c,d\in\R$. Since
\[
	D_{(c,d)}
	\begin{pmatrix}
		\cV(\xi+c+dZ) \\
		\cC_3(\xi+c+dZ)
	\end{pmatrix}_{(0,0,0)}
	=
	\begin{pmatrix}
		4\pi & 0              \\
		0    & \frac{4\pi}{3}
	\end{pmatrix},
\]
the implicit-function theorem gives unique smooth functions
$c=c(\xi)$ and $d=d(\xi)$ with
\[
	c(0)=d(0)=0,
	\qquad
	Dc(0)=Dd(0)=0,
\]
and these functions satisfy \eqref{eq:constraint-correction-equations}.
Thus
\begin{equation}\label{eq:Theta-chart}
	\Theta(\xi):=\xi+c(\xi)+d(\xi)Z
\end{equation}
is a smooth local chart from $\H^{k+2}_{\rm lin}$ to $\cM^{k+2}$ near $0$.
Moreover,
\begin{equation}\label{eq:Theta-expansion}
	\Theta(0)=0,\qquad
	D\Theta(0)=\id_{\H^{k+2}_{\rm lin}},
	\qquad
	\Theta(\xi)-\xi=O\left(\norm{\xi}_{\H^{k+2}}^2\right).
\end{equation}

For later use we also record the tangent space at a general point of
$\cM^{k+2}$. Since
\begin{equation}\label{eq:radial-volume-weight}
	D\cV(h)[\eta]
	=
	\int_{\Sph}M_h\eta\dd S,
	\qquad
	M_h:=\bigl(\omega\cdot(n_h\circ\Psi_h)\bigr)J_h=(1+h)^2 ,
\end{equation}
and
\begin{equation}\label{eq:radial-C3-variation}
	D\cC_3(h)[\eta]
	=
	\int_{\Sph}(1+h)^3Z\eta\dd S
	=
	\int_{\Sph}(y_3\circ\Psi_h)M_h\eta\dd S ,
\end{equation}
we have
\begin{equation}\label{eq:radial-tangent-space}
	T_h\cM^{k+2}
	=
	\set{
		\eta\in \H^{k+2}(\Sph):
		\int_{\Sph}M_h\eta\dd S=0,\quad
		\int_{\Sph}(y_3\circ\Psi_h)M_h\eta\dd S=0
	}.
\end{equation}

Let us now turn to the differentiability of the elliptic problems with respect to $h$.

\begin{proposition}
	\label{prop:neumann-sobolev}
	After possibly decreasing $\delta>0$, the following holds.
	For every $h\in U^{k+2}_\delta$ and every $\alpha,V\in\R$, there are
	unique functions
	\[
		\ein_h=\ein(h,\alpha,V),
		\qquad
		\eout_h=\eout(h,\alpha,V),
	\]
	solving
	\[
		\Delta \ein_h=0
		\quad\text{in }\Omega_h,
		\qquad
		\partial_{n_h}\ein_h
		=
		W_{\alpha,V}\cdot n_h
		\quad\text{on }\Gamma_h,
	\]
	with the normalisation
	\[
		\int_{\Omega_h}\ein_h\dd y=0,
	\]
	and
	\[
		\Delta \eout_h=0
		\quad\text{in }\R^3\setminus\overline{\Omega_h},
		\qquad
		\partial_{n_h}\eout_h
		=
		W_{\alpha,V}\cdot n_h
		\quad\text{on }\Gamma_h,
	\]
	with
	\[
		\nabla\eout_h\in \L^2(\R^3\setminus\overline{\Omega_h}),
		\qquad
		\eout_h(y)\to0
		\quad\text{as }\abs y\to\infty .
	\]
	In fact,
	\[
		\eout_h(y)=O(\abs y^{-2}),
		\qquad
		\nabla\eout_h(y)=O(\abs y^{-3})
		\quad\text{as }\abs y\to\infty .
	\]
	Moreover, the pulled-back boundary gradients
	\[
		\Qin(h,\alpha,V)
		:=
		(\nabla\ein_h)\circ\Psi_h,
		\qquad
		\Qout(h,\alpha,V)
		:=
		(\nabla\eout_h)\circ\Psi_h
	\]
	define smooth maps
	\[
		\Qin,\Qout
		\colon
		U^{k+2}_\delta\times\R^2
		\to
		\H^{k+1}(\Sph;\R^3).
	\]
\end{proposition}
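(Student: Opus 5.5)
Since $W_{\alpha,V}$ is linear in $(\alpha,V)$, both Neumann problems are linear in the data. It therefore suffices to treat the two data $e_3\times y$ and $e_3$, with $\alpha,V$ entering linearly and so smoothly; we write $\ein_h=\alpha\psi^{\rm in}_{h,1}+V\psi^{\rm in}_{h,2}$, and similarly outside.

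\emph{Existence and uniqueness.} For the interior problem the compatibility condition holds because $\divv W_{\alpha,V}=0$, so $\int_{\Gamma_h}W_{\alpha,V}\cdot n_h\dd S=0$. Standard Neumann theory then gives a unique solution up to constants, fixed by the mean-zero normalisation. For the exterior problem we work in the homogeneous space of functions with $\nabla\eout\in\L^2$ that decay at infinity. Lax--Milgram on $\Hdot^1$ gives existence and uniqueness, since the flux functional $v\mapsto\int_{\Gamma_h}(W\cdot n_h)v\dd S$ is bounded on $\Hdot^1$ via the trace on a bounded neighbourhood together with Hardy's inequality. Elliptic regularity up to the $\H^{k+2}$ boundary (boundary regularity $\C^{k+1,\cdot}$ by Sobolev embedding, or the Sobolev version in \cite{PruessSimonett2016}) yields $\nabla\ein_h,\nabla\eout_h\in\H^{k+3/2}$ near $\Gamma_h$ with traces in $\H^{k+1}$.

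\emph{Decay.} Expand $\eout_h$ in spherical harmonics outside a large ball. The monopole coefficient equals $-\frac1{4\pi}$ times the total flux through a large sphere, which equals the flux through $\Gamma_h$ and hence vanishes by the divergence-free computation above. Consequently $\eout_h=O(\abs y^{-2})$ and $\nabla\eout_h=O(\abs y^{-3})$.

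\emph{Smooth dependence on $h$.} This is the main step. We pull back to fixed reference domains: $B_1$ and $\R^3\setminus\overline{B_1}$ via the diffeomorphism $\Phi_h(r\omega)=(1+\chi(r)h(\omega))r\omega$, where $\chi$ is a cutoff equal to $1$ near $r=1$ and compactly supported in $(1/2,2)$. We extend $h$ off the sphere by a harmonic (Poisson) extension so that $\Phi_h$ gains the half derivative and is $\H^{k+5/2}$ in the interior. The transformed operator $\divv(A(h)\nabla\cdot)$, with $A(h)=(\det D\Phi_h)\,D\Phi_h^{-1}D\Phi_h^{-T}$, and the conormal boundary operator then depend analytically on $h$, as polynomial and rational expressions in $D\Phi_h$ valued in Banach algebras. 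The pulled-back data $(W\circ\Psi_h)\cdot n_hJ_h$ are smooth in $h$ with values in $\H^{k+1}(\Sph)$. At $h=0$ the reference problem is an isomorphism: on $B_1$ from $\{\int u=0\}\cap\H^{k+5/2}$ onto mean-zero $\H^{k+1}$ Neumann data, and outside in a weighted space, for instance $u\in\H^{k+5/2}$ near the sphere and harmonic outside $B_2$ with decay. This is verified explicitly by spherical harmonics: the exterior Neumann problem $\partial_r(r^{-\ell-1}Y_\ell)=-(\ell+1)Y_\ell$ is invertible for all $\ell\ge0$. Because $\Phi_h=\id$ outside $B_2$, the exterior variable-coefficient problem can be reduced to a bounded annulus coupled with the Dirichlet-to-Neumann map on $\partial B_2$, avoiding weighted spaces at infinity. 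The implicit function theorem, or simply a Neumann series with analytic dependence of the inverse, then gives smoothness of $h\mapsto u_h$. The compatibility constraint is preserved identically in $h$, so the interior problem should be posed on the codomain of mean-zero data. Finally $\Qinout=(D\Phi_h^{-T}\nabla u_h)|_{\Sph}$ is smooth into $\H^{k+1}$ by the trace theorem and the product estimates. The delicate points are arranging this exterior functional setting and accounting for the half-derivative loss in the extension, which the harmonic extension of $h$ handles.
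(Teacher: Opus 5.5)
Your proposal is correct and follows essentially the same route as the paper. You pull back to $B_1$ and its exterior by a diffeomorphism built from a half-derivative-gaining extension, and perturb the $h=0$ isomorphism, with the exterior problem truncated to an annulus via the exterior Dirichlet-to-Neumann map on a large sphere. You then read off the decay from the vanishing monopole coefficient forced by zero flux, and obtain $\Qin,\Qout$ as traces of $D\Phi_h^{-T}\nabla u_h$. The remaining differences are cosmetic: a ray-preserving $\Phi_h$ from a Poisson-extended height instead of a general extension of $h\omega$, a linear splitting in $(\alpha,V)$, and fixing the interior additive constant a posteriori rather than building $\int_{B_1}\mathfrak j_h u\dd x$ into the operator. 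One small correction: the fixed, $h$-independent interior codomain must include the bulk equation, i.e.\ be $\{(f,g):\int f+\int g=0\}$, not just mean-zero Neumann data.
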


We set
\[
	\Hdot^1(D)
	:=
	\left\{
	u\in \H^1_{\mathrm{loc}}(D)\cap \L^6(D)
	:\nabla u\in \L^2(D;\mathbb R^3)
	\right\},
	\qquad
	\|u\|_{\Hdot^1(D)}:=\|\nabla u\|_{L^2(D)}.
\]
The \(\L^6\)-condition is the natural Sobolev normalisation: on an
exterior Lipschitz domain in \(\mathbb R^3\), every finite-energy class
modulo constants has a unique \(\L^6\)-representative.

\begin{proof}
	Set
	\[
		D_{\rm in}:=B_1,
		\qquad
		D_{\rm out}:=\R^3\setminus\overline{B_1}.
	\]
	Fix a linear extension operator $\mathsf{E}$ which, for every $r\ge k+2$,
	is bounded as a map
	\[
		\mathsf{E}:
		\H^r(\Sph;\R^3)\longrightarrow
		\H^{r+1/2}(\R^3;\R^3),
		\qquad
		(\mathsf{E}q)\big|_{\Sph}=q,
	\]
	and has support in a fixed annulus around $\Sph$. Such a single extension
	operator, bounded throughout this Sobolev scale, is obtained by localising the
	half-space Poisson extension in a finite smooth atlas and multiplying by a
	fixed cut-off which equals one near $\Sph$. We set
	\[
		\Phi_h(x):=x+\mathsf{E}(h\omega)(x).
	\]
	Then
	\[
		\Phi_h(\omega)=(1+h(\omega))\omega=\Psi_h(\omega)
		, \qquad \omega \in \Sph.
	\]
	Since $k+\frac52>\frac32+1$, Sobolev embedding and boundedness of $\mathsf{E}$
	show that, after decreasing $\delta$, one has
	\[
		\sup_{h\in U^{k+2}_\delta}
		\norm{D\Phi_h-I}_{\L^\infty}<1.
	\]
	The resulting lower Lipschitz bound makes $\Phi_h$ injective. Moreover,
	$\Phi_h$ equals the identity outside the fixed annulus, and hence
	$\abs{\Phi_h(x)}\to\infty$ as $\abs{x}\to\infty$. Thus inverse images of
	compact sets are compact. The global inverse theorem now shows that $\Phi_h$
	is an orientation-preserving global diffeomorphism mapping $B_1$ onto
	$\Omega_h$. Moreover,
	\[
		h\longmapsto\Phi_h-\id
	\]
	is smooth from $U^{k+2}_\delta$ to $\H^{k+\frac52}(\R^3;\R^3)$.

	Write
	\[
		\mathfrak j_h:=\det D\Phi_h,
		\qquad
		A_h:=
		\mathfrak j_hD\Phi_h^{-1}D\Phi_h^{-T}.
	\]
	The preceding choice of $\delta$ ensures $\mathfrak j_h>0$. The Sobolev
	multiplication theorem in dimension three gives smooth maps
	\[
		h\longmapsto \mathfrak j_h-1,
		\quad
		h\longmapsto A_h-I
		\quad\text{from}\quad
		U^{k+2}_\delta
		\quad\text{to}\quad
		\H^{k+\frac32}(\R^3),
	\]
	with the matrix-valued space for $A_h-I$. Both differences have
	support in one fixed annulus. In particular, $A_h$ is uniformly elliptic
	when $\delta$ is sufficiently small.

	Set
	\[
		G(h,\alpha,V)
		:=
		\left(
		W_{\alpha,V}(\Psi_h)\cdot(n_h\circ\Psi_h)
		\right)J_h.
	\]
	The radial-graph formulas and the Sobolev multiplication theorem on $\Sph$
	imply that
	\[
		G:
		U^{k+2}_\delta\times\R^2
		\longrightarrow
		\H^{k+1}(\Sph)
	\]
	is smooth. The divergence theorem gives
	\begin{equation}
		\label{eq:compatibility_condition_neumann}
		\int_{\Sph}G(h,\alpha,V)\dd S
		=
		\int_{\Gamma_h}W_{\alpha,V}\cdot n_h\dd S
		=
		\int_{\Omega_h}\Div W_{\alpha,V}\dd y
		=0.
	\end{equation}
	The change of variables	$u=\phi\circ\Phi_h$ shows
	\begin{equation} \label{eq:smoothness-change-of-variables}
		\int_{\Phi_h(D)}\nabla\phi\cdot\nabla(\zeta\circ\Phi_h^{-1})\dd y
		=
		\int_D (A_h\nabla u)\cdot\nabla\zeta\dd x
	\end{equation}
	for all test functions $\zeta\in \C^\infty_c(\mathbb R^3)$.
	Together with the surface change of variables defining $G$, this identifies
	the fixed equations below exactly with the physical Neumann problems. In the
	exterior problem the transformed normal has the opposite sign on the inner boundary.

	We first treat the interior problem. Introduce the fixed Banach space
	\[
		\mathscr Y_{\rm in}
		:=
		\left\{
		(f,g,c)\in
		\H^{k+\frac12}(B_1)
		\times\H^{k+1}(\Sph)
		\times\R:
		\int_{B_1}f\dd x+
		\int_{\Sph}g\dd S=0
		\right\}
	\]
	and define
	\[
		\mathscr L_h^{\rm in}:
		\H^{k+\frac52}(B_1)\longrightarrow\mathscr Y_{\rm in}
	\]
	by
	\[
		\mathscr L_h^{\rm in}u
		:=
		\left(
		-\Div(A_h\nabla u),
		(A_h\nabla u)\cdot\omega\big|_{\Sph},
		\int_{B_1}\mathfrak j_hu\dd x
		\right).
	\]
	The divergence theorem shows that the first two components satisfy the
	compatibility condition defining $\mathscr Y_{\rm in}$.

	At $h=0$, the operator $\mathscr L_0^{\rm in}$ is the usual Neumann
	operator on the ball, supplemented by the mean-value functional that fixes the additive constant.
	Lax--Milgram on $\H^1(B_1)/\R$, followed by the standard Neumann regularity estimate on the
	smooth ball, gives the isomorphism
	\[
		\mathscr L_0^{\rm in}:
		\H^{k+\frac52}(B_1)\longrightarrow \mathscr Y_{\rm in}.
	\]
	The multiplication and trace estimates give
	\[
		\norm{\mathscr L_h^{\rm in}-\mathscr L_0^{\rm in}}_
		{\mathcal L(\H^{k+\frac52}(B_1),\mathscr Y_{\rm in})}
		\le
		C\left(
		\norm{A_h-I}_{\H^{k+\frac32}}
		+
		\norm{\mathfrak j_h-1}_{\H^{k+\frac32}}
		\right).
	\]
	Thus, after decreasing $\delta$, $\mathscr L_h^{\rm in}$ is an
	isomorphism for every $h\in U^{k+2}_\delta$. Since
	$h\mapsto\mathscr L_h^{\rm in}$ is smooth in bounded-operator norm and
	inversion is smooth on the open set of Banach-space isomorphisms,
	\[
		h\longmapsto(\mathscr L_h^{\rm in})^{-1}
	\]
	is smooth. We may therefore define
	\[
		U_h^{\rm in}
		:=
			(\mathscr L_h^{\rm in})^{-1}
		\bigl(0,G(h,\alpha,V),0\bigr).
	\]
	Then $U_h^{\rm in}\in\H^{k+\frac52}(B_1)$ depends smoothly on
	$(h,\alpha,V)$ and satisfies
	\[
		\int_{B_1}\mathfrak j_hU_h^{\rm in}\dd x=0.
	\]
	After changing variables, this is $\int_{\Omega_h}\ein_h\dd y=0$ for $\ein_h=U_h^{\rm in}\circ \Phi_h^{-1}$.

	For the exterior problem choose $R>1$ so large that $\Phi_h=\id$ and
	$A_h=I$ in a neighbourhood of $\partial B_R$, for every
	$h\in U^{k+2}_\delta$, and put
	\[
		D_R:=B_R\setminus\overline{B_1}.
	\]
	Let $\Lambda_R^{\rm ext}$ be the exterior Dirichlet-to-Neumann operator on
	$\partial B_R$: for a spherical harmonics expansion, if
	\[
		v(R\omega)=\sum_{\ell,m}v_{\ell m}Y_\ell^m(\omega),
	\]
	then
	\[
		\Lambda_R^{\rm ext}v
		=
		-\sum_{\ell,m}\frac{\ell+1}{R}
		v_{\ell m}Y_\ell^m.
	\]
	It is bounded as a map
	\[
		\Lambda_R^{\rm ext}:
		\H^{k+2}(\partial B_R)
		\longrightarrow
		\H^{k+1}(\partial B_R).
	\]
	Define the space
	\[
		\mathscr X_{\rm out}
		:=
		\left\{
		u\in\H^{k+\frac52}(D_R):
		\partial_ru
		=
		\Lambda_R^{\rm ext}(u\big|_{\partial B_R})
		\text{ on }\partial B_R
		\right\},
	\]
	equipped with the $\H^{k+\frac52}(D_R)$ norm, and define
	\[
		\mathscr L_h^{\rm out}:
		\mathscr X_{\rm out}
		\longrightarrow
		\H^{k+\frac12}(D_R)\times\H^{k+1}(\Sph)
	\]
	by
	\[
		\mathscr L_h^{\rm out}u
		:=
		\left(
		-\Div(A_h\nabla u),
		(A_h\nabla u)\cdot(-\omega)\big|_{\Sph}
		\right).
	\]
	Here $-\omega$ is the outer normal of the annulus on its inner boundary.

	The operator $\mathscr L_0^{\rm out}$ is an isomorphism. Indeed, the
	condition at $\partial B_R$ says precisely that the radial derivative of
	$u$ agrees with that of the unique harmonic extension of its trace to
	$\R^3\setminus\overline{B_R}$ which decays at infinity. At the energy level,
	existence follows from Lax--Milgram applied to
	\[
		\int_{D_R}\nabla u\cdot\nabla v\dd x
		-
		\left\langle
		\Lambda_R^{\rm ext}(u\big|_{\partial B_R}),
		v\big|_{\partial B_R}
		\right\rangle.
	\]
	Since $-\Lambda_R^{\rm ext}$ has the strictly positive eigenvalues
	$(\ell+1)/R$, this form is coercive. Uniqueness follows from the same
	identity. Elliptic regularity for the annular problem with this exterior
	Dirichlet-to-Neumann condition, gives
	\[
		\norm{u}_{\H^{k+\frac52}(D_R)}
		\le
		C\left(
		\norm{-\Delta u}_{\H^{k+\frac12}(D_R)}
		+
		\norm{-\partial_ru\big|_{\Sph}}_{\H^{k+1}(\Sph)}
		\right),
	\]
	and hence
	\[
		\mathscr L_0^{\rm out}:
		\mathscr X_{\rm out}
		\longrightarrow
		\H^{k+\frac12}(D_R)\times\H^{k+1}(\Sph).
	\]

	Because $A_h=I$ near $\partial B_R$, the domain
	$\mathscr X_{\rm out}$ is independent of $h$. Moreover,
	\[
		\norm{\mathscr L_h^{\rm out}-\mathscr L_0^{\rm out}}_
		{\mathcal L(
			\mathscr X_{\rm out},
			\H^{k+\frac12}(D_R)\times\H^{k+1}(\Sph))}
		\le C\norm{A_h-I}_{\H^{k+\frac32}}.
	\]
	Thus $\mathscr L_h^{\rm out}$ is an isomorphism for all sufficiently
	small $h$, and its inverse depends smoothly on $h$. Set
	\[
		U_{h,R}^{\rm out}
		:=
			(\mathscr L_h^{\rm out})^{-1}
		\bigl(0,-G(h,\alpha,V)\bigr).
	\]
	The minus sign is due to the inner-boundary normal $-\omega$ of $D_R$.
	Extend $U_{h,R}^{\rm out}$ to $\R^3\setminus\overline{B_R}$ by the
	decaying exterior harmonic extension of its trace on $\partial B_R$.
	The annular solution and its exterior harmonic extension have the same
	trace and, by the boundary condition, the same radial derivative on
	$\partial B_R$; hence they define a global weak solution $U_h^{\rm out}$.
	The exterior Poisson operator is
	bounded from $\H^{k+2}(\partial B_R)$ into $\Hdot^1$ and into
	$\H^{k+\frac52}$
	on every bounded exterior annulus. Consequently,
	\[
		(h,\alpha,V)\longmapsto U_h^{\rm out}
	\]
	is smooth into
	\[
		\Hdot^1(D_{\rm out})
		\cap
		\H^{k+\frac52}(D_{\rm out}\cap B_{R_1})
	\]
	for every fixed $R_1>1$.

	The flux through $\partial B_R$ is
	\[
		\int_{\partial B_R}\partial_rU_h^{\rm out}\dd S
		=
		\int_{\Sph}G(h,\alpha,V)\dd S
		=0.
	\]
	In the exterior spherical-harmonic expansion
	\[
		U_h^{\rm out}(r,\omega)
		=
		\sum_{\ell,m}
		a_{\ell m}
		\left(\frac Rr\right)^{\ell+1}
		Y_\ell^m(\omega),
		\qquad r\ge R,
	\]
	the preceding identity is exactly the vanishing of the $\ell=0$
	coefficient. Hence the expansion starts at $\ell=1$, and therefore
	\[
		U_h^{\rm out}(x)=O(\abs{x}^{-2}),
		\qquad
		\nabla U_h^{\rm out}(x)=O(\abs{x}^{-3}).
	\]
	The energy identity proves uniqueness in $\Hdot^1(D_{\rm out})$.

	Finally, define
	\[
		\ein_h:=U_h^{\rm in}\circ\Phi_h^{-1},
		\qquad
		\eout_h:=U_h^{\rm out}\circ\Phi_h^{-1}.
	\]
	The identity
	\eqref{eq:smoothness-change-of-variables}, together with the definition of $G$,
	shows that $\ein_h$ and $\eout_h$ satisfy the harmonic equations and their
	Neumann boundary conditions. Since $\Phi_h$ is bi-Lipschitz and is the identity outside the
	fixed annulus, the exterior finite-energy condition and the displayed decay
	pass unchanged to $\eout_h$.
	Denoting the boundary trace on either side of $\Sph$ by $\gamma$, the chain rule
	on $\Sph$ gives
	\[
		(\nabla\ein_h)\circ\Psi_h
			=
			(D\Phi_h\big|_{\Sph})^{-T}
		\,\gamma(\nabla U_h^{\rm in}),
	\]
	and the identical formula holds for the exterior solution. Since
	\[
		\gamma(\nabla U_h^{\rm in}),
		\ \gamma(\nabla U_h^{\rm out})
		\in
		\H^{k+1}(\Sph;\R^3),
	\]
	and $\H^{k+1}(\Sph)$ is a Banach algebra, multiplication by
	$(D\Phi_h\big|_{\Sph})^{-T}$ preserves this space and depends smoothly on
	$h$. This proves the claimed smoothness of $\Qin$ and $\Qout$.
\end{proof}

The following lemma will be used in the regularity bootstrap.

\begin{lemma}
	\label{lem:neumann-sobolev-scale}
	Let $\delta>0$ be the fixed $\H^{k+2}$-radius in
	Proposition~\ref{prop:neumann-sobolev}. For every integer $q\ge k$, if
	\[
		h\in U_\delta^{k+2}\cap\H^{q+2}(\Sph),
	\]
	then the same energy solutions constructed in that proposition satisfy
	\[
		U_h^{\rm in}\in\H^{q+5/2}(B_1),
		\qquad
		U_h^{\rm out}\in\Hdot^1(\R^3\setminus\overline{B_1})
		\cap
		\H^{q+5/2}
		\bigl((\R^3\setminus\overline{B_1})\cap B_R\bigr)
	\]
	for every fixed $R>1$, and
	\[
		\Qin(h,\alpha,V),\ \Qout(h,\alpha,V)
		\in\H^{q+1}(\Sph;\R^3).
	\]
	Moreover,
	\[
		\Qin,\Qout:
		\bigl(U_\delta^{k+2}\cap\H^{q+2}(\Sph)\bigr)
		\times\R^2
		\longrightarrow
		\H^{q+1}(\Sph;\R^3)
	\]
	are smooth when the intersection is given its $\H^{q+2}$ topology. No
	smallness of $\norm h_{\H^{q+2}}$ is required.
\end{lemma}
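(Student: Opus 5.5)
The plan is to reuse the transformed problems of Proposition~\ref{prop:neumann-sobolev} unchanged and to upgrade only their regularity. The geometric data are kept at the fixed $\H^{k+2}$ level, where the operators are already known to be isomorphisms, and higher regularity is then obtained from elliptic regularity. The central observation is that the diffeomorphism $\Phi_h=\id+\mathsf E(h\omega)$ is built from a single extension operator that is bounded on the whole scale. Hence $h\in\H^{q+2}(\Sph)$ gives $\Phi_h-\id\in\H^{q+5/2}(\R^3)$, and $h\mapsto\Phi_h-\id$ is linear and continuous, hence smooth, into this space. By Sobolev multiplication in dimension three, the coefficients satisfy $A_h-I,\ \mathfrak j_h-1\in\H^{q+3/2}$. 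The boundary datum $G(h,\alpha,V)$ lies in $\H^{q+1}(\Sph)$ by multiplication on $\Sph$. All of these depend smoothly on $h$ in the $\H^{q+2}$ topology.

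No smallness of $h$ in $\H^{q+2}$ is needed for the following reason. Uniform ellipticity and invertibility come from $h\in U^{k+2}_\delta$, so the energy solutions $U_h^{\rm in}, U_h^{\rm out}$ already exist, are unique, and lie in $\H^{k+5/2}$. I would then run a finite-step bootstrap. Suppose $U\in\H^{r+5/2}$ for some $k\le r<q$. The equation $\Div(A_h\nabla U)=0$ with conormal datum $(A_h\nabla U)\cdot\omega=\pm G$ on $\Sph$ has coefficients in $\H^{q+3/2}\subset\H^{r+5/2}$ and datum in $\H^{r+2}$. Standard regularity for divergence-form Neumann problems with Sobolev coefficients, via localisation, flattening, difference quotients, and product estimates in which the top-order coefficient derivatives are controlled because $U\in\H^{r+5/2}$ with $r+5/2>5/2$, then gives $U\in\H^{r+7/2}$. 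Half-integer and fractional steps are handled by interpolation, or simply by stepping by $1$ until $q$ is reached. In the exterior case the same argument works in the annulus $D_R$. Near $\partial B_R$ we have $A_h=I$, and the solution is the explicit decaying harmonic extension, which is smooth there, so the exterior statement on every bounded annulus follows, while the $\Hdot^1$ property is inherited.

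For smoothness of the solution maps, I would avoid differentiating the bootstrap estimates directly. Instead I would show that $\mathscr L_h^{\rm in}$ and $\mathscr L_h^{\rm out}$, regarded as maps $\H^{q+5/2}\to\mathscr Y^{q}$ (the analogous spaces with $k$ replaced by $q$), are bounded, depend smoothly on $h\in\H^{q+2}$, and are isomorphisms for every such $h$ in $U^{k+2}_\delta$. Injectivity is inherited from the $\H^{k+5/2}$ level. Surjectivity follows because, for data in $\mathscr Y^q$, the $\H^{k+5/2}$ solution exists and the bootstrap upgrades it. Boundedness of the inverse then follows from the open mapping theorem. Since inversion is smooth on isomorphisms, $U_h$ depends smoothly on $(h,\alpha,V)$ in $\H^{q+5/2}$. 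The traces $\gamma(\nabla U_h)$ then lie in $\H^{q+1}(\Sph)$, and multiplication by $(D\Phi_h|_{\Sph})^{-T}\in\H^{q+1}$, which is a Banach algebra, gives the smoothness of $\Qin$ and $\Qout$.

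I expect the main obstacle to be the coefficient-regularity bookkeeping in the bootstrap. The coefficients $A_h$ lie exactly in $\H^{q+3/2}$, which is only one derivative above what the solution needs. I would verify carefully that $\H^{q+3/2}\cdot\H^{q+3/2}\subset\H^{q+3/2}$ (true since $q+3/2>3/2$), and that the commutator terms in the difference-quotient argument close at the last step $r=q$.
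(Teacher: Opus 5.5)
Your proposal is correct and follows the paper's proof essentially step by step. Both arguments obtain coefficient and datum regularity from the single scale-bounded extension operator $\mathsf E$, and both use a variable-coefficient Neumann bootstrap whose constants may depend on $\norm h_{\H^{q+2}}$ but require no smallness. Both then upgrade $\mathscr L_h^{\rm in}$ and $\mathscr L_h^{\rm out}$ to isomorphisms at the $\H^{q+5/2}$ level and finish with smoothness of inversion, followed by trace and multiplication estimates.

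One point needs tightening: surjectivity of the exterior operator for general data $f$ requires regularity up to $\partial B_R$ under the Dirichlet-to-Neumann condition. The paper invokes this directly; it is available from the $h=0$ case since $A_h=I$ near $\partial B_R$. Your harmonic-continuation remark only covers data vanishing near $\partial B_R$.
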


\begin{proof}
	We use the extension operator $\mathsf{E}$ and the
	fixed-domain formulation from the proof of
	Proposition~\ref{prop:neumann-sobolev}. The condition
	$h\in U_\delta^{k+2}$ ensures that $\Phi_h$ is a diffeomorphism and that
	$A_h$ is uniformly elliptic. If, in addition,
	$h\in\H^{q+2}(\Sph)$, then
	\[
		A_h-I,\ \mathfrak j_h-1\in\H^{q+3/2}(\R^3),
		\qquad
		G(h,\alpha,V)\in\H^{q+1}(\Sph),
	\]
	with smooth dependence in the $\H^{q+2}$ topology.

	Choose a truncation radius $R_0>1$ as in the preceding proof. For each
	fixed such $h$, the same Lax--Milgram arguments give
	existence and uniqueness for the interior and exterior problems with
	general data, subject to the interior compatibility condition. Standard
	variable-coefficient Neumann regularity on $B_1$ and elliptic regularity
	on $D_{R_0}$ with the fixed exterior Dirichlet-to-Neumann condition then
	promote these solutions to $\H^{q+5/2}$. These estimates follow by boundary flattening and tangential difference
	quotients, using the equation and the Neumann boundary condition to control
	the remaining normal derivatives. Their constants may depend
	locally on $\norm h_{\H^{q+2}}$, but this norm need not be small.
	Consequently, the corresponding operators
	$\mathscr L_h^{\rm in}$ and $\mathscr L_h^{\rm out}$ between the
	Sobolev spaces with solution regularity $\H^{q+5/2}$ are isomorphisms.
	The domain of the exterior operator remains fixed because $A_h=I$ near
	$\partial B_{R_0}$.

	The coefficient maps and the datum $G$ depend smoothly on
	$(h,\alpha,V)$ in the indicated Sobolev spaces. Hence the two operators
	depend smoothly on $h$ in bounded-operator norm, and smoothness of
	inversion gives the asserted smooth dependence of the fixed-domain
	solutions. The fixed exterior harmonic continuation gives the required
	regularity on every bounded exterior annulus. Finally,
	\[
		(\nabla\ein_h)\circ\Psi_h
		=
		\left.D\Phi_h^{-T}\nabla U_h^{\rm in}\right|_{\Sph},
		\qquad
		(\nabla\eout_h)\circ\Psi_h
		=
		\left.D\Phi_h^{-T}\nabla U_h^{\rm out}\right|_{\Sph}.
	\]
	The trace theorem and Sobolev multiplication yield the claimed
	$\H^{q+1}$ regularity and smooth dependence of $\Qin$ and $\Qout$.
\end{proof}

\subsection{Variational structure of the Bernoulli equation}

In this section we set up the Bernoulli equation, i.e.\ the fourth equation of \eqref{eq:main-problem-nondimensional}, as a function of the radial graph $h$, and record its underlying variational structure.
The shape-derivative identities are first derived for
$\C^\infty$ radial graphs and $\C^\infty$ variations. By
Proposition~\ref{prop:neumann-sobolev} and the smoothness of the maps
constructed below, both sides of these identities extend continuously to
$h\in\cM^{k+2}$ and $\eta\in T_h\cM^{k+2}$. Thus the formulas
obtained by the smooth calculation are valid in the Sobolev scale.

For a velocity $q$ define
\begin{equation*}
	F_{\alpha,V}(y,q)
	=\frac12\abs{q-Ve_3}^2+\alpha y\cdot(e_3\times q).
\end{equation*}
We call the first term the kinetic part and the second term will be referred to as the rotational part.
On the boundary, the normal component of $q=\nabla\phi$ is not an independent
unknown: by the Neumann condition, $q\cdot n=W_{\alpha,V}\cdot n$. Thus, when
one decomposes $q=q_t+(q\cdot n)n$, the only freely determined part of the
boundary gradient is its tangential component $q_t$.

Using the identity
$y\cdot(e_3\times q)=-(e_3\times y)\cdot q$, this may also be written as
\begin{equation*}
	F_{\alpha,V}(y,q)
	=\frac12\abs{q}^2-W_{\alpha,V}(y)\cdot q+\frac{V^2}{2}.
\end{equation*}
For each radial graph $\Gamma_h$, we define the Bernoulli density
\begin{equation*}
	B_h(y)
	:=
	\tau F_{\alpha,V}(y,\nabla\ein_h(y))
	-F_{\alpha,V}(y,\nabla\eout_h(y))
	+H_h(y),
	\qquad y\in\Gamma_h,
\end{equation*}
Here $H_h$ is the mean curvature on $\Gamma_h$; see
\eqref{eq:curvature-radial-graph}.
The constant $c$ in the Bernoulli condition removes the average of this
quantity. Recall
$M_h(\omega)=\bigl(\omega\cdot(n_h\circ\Psi_h)(\omega)\bigr)J_h(\omega)$,
where $J_h\dd S$ is the surface measure on $\Gamma_h$ and $n_h$ is the
outer unit normal of $\Omega_h$. Then
$D\cV(h)[\eta]=\int_{\Sph}M_h\eta\dd S$ by
\eqref{eq:radial-volume-weight}.
Define
\begin{equation} \label{eq:def-Pi-h}
	\avg{f}_{M_h}
	=\frac{\int_{\Sph}fM_h\dd S}{\int_{\Sph}M_h\dd S},
	\qquad
	\Pi_h f=f-\avg{f}_{M_h}.
\end{equation}
The Bernoulli system \eqref{eq:main-problem-nondimensional}, including the
volume and vertical-centring normalisations, is equivalent to the existence
of $h\in\cM^{k+2}$ and $\alpha,V\in\R$ satisfying
\begin{equation}\label{eq:Bernoulli-map}
	\cB(h,\alpha,V):=\Pi_h\left(B_h\circ\Psi_h\right)=0,
	\qquad \Psi_h(\omega)=(1+h(\omega))\omega,\quad \omega\in\Sph.
\end{equation}
If \eqref{eq:Bernoulli-map} holds, the constant $c$ is the corresponding
omitted weighted average.

\begin{proposition}
	\label{prop:smooth-bernoulli-map}
	For $k\ge4$ and $\delta>0$ sufficiently small, the map
	\[
		(h,\alpha,V)
		\mapsto
		B_h\circ\Psi_h
	\]
	is smooth from
	\[
		U^{k+2}_\delta\times\R^2
		\quad\text{to}\quad
		\H^k(\Sph).
	\]
	Consequently
	\[
		\cB
		\colon
		U^{k+2}_\delta\times\R^2
		\to
		\H^k(\Sph),
		\qquad
		\cB(h,\alpha,V)
		=
		\Pi_h(B_h\circ\Psi_h),
	\]
	is smooth.
\end{proposition}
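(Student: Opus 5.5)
The plan is to decompose $B_h\circ\Psi_h$ into its kinetic/rotational parts and its curvature part, and to show that each one depends smoothly on $(h,\alpha,V)$ as a map into $\H^k(\Sph)$. Writing $y=\Psi_h(\omega)=(1+h)\omega$, we have
\[
	B_h\circ\Psi_h
	=\tau F_{\alpha,V}\bigl(\Psi_h,\Qin(h,\alpha,V)\bigr)
	-F_{\alpha,V}\bigl(\Psi_h,\Qout(h,\alpha,V)\bigr)
	+H_h\circ\Psi_h .
\]
When $\tau=0$ the interior term is simply absent.

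For the flow terms I will use Proposition~\ref{prop:neumann-sobolev}. It gives that $\Qin,\Qout\colon U^{k+2}_\delta\times\R^2\to\H^{k+1}(\Sph;\R^3)$ are smooth. The map $h\mapsto\Psi_h=(1+h)\omega$ is affine and continuous into $\H^{k+2}(\Sph;\R^3)$, so it is smooth. The function $F_{\alpha,V}(y,q)=\frac12\abs q^2-W_{\alpha,V}(y)\cdot q+\frac{V^2}2$ is a polynomial in $(y,q,\alpha,V)$, and $\H^{k+1}(\Sph)$ is a Banach algebra. Hence the composition is a polynomial expression in smooth maps with values in a Banach algebra, and it is smooth into $\H^{k+1}(\Sph)\hookrightarrow\H^k(\Sph)$.

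The curvature term is where the loss of one derivative occurs, and it fixes the target space $\H^k$. I will use the explicit radial-graph formula for the mean curvature, \eqref{eq:curvature-radial-graph}, i.e.\ $H_h=\divv$ of the normal. In that formula, $H_h\circ\Psi_h$ is a rational expression in $h$, $\nablaS h$ and $\nablaS^2 h$. Its coefficients are smooth functions on $\Sph$, and its denominators are powers of $(1+h)$ and of $\sqrt{(1+h)^2+\abs{\nablaS h}^2}$. Both are bounded below by a positive constant on $U^{k+2}_\delta$ because $1+h>\frac12$. The maps $h\mapsto h,\nablaS h,\nablaS^2h$ are bounded linear into $\H^{k}$, and the rational functions are smooth on the open set where the denominators stay away from zero. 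By the multiplication and composition results cited from \cite{PruessSimonett2016} (with $k\ge 4>1$), it follows that $h\mapsto H_h\circ\Psi_h$ is smooth $U^{k+2}_\delta\to\H^k(\Sph)$. The main obstacle is to verify cleanly that the curvature formula has exactly this structure, namely that it is linear in second derivatives with coefficients smooth in $(h,\nablaS h)$, so that the composition argument applies. I will handle the square root by noting that $t\mapsto\sqrt t$ is smooth on $(\frac14,\infty)$, and that superposition with smooth functions preserves $\H^k$ smoothly.

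For the projected map $\cB$, I note that $M_h=(1+h)^2$ is smooth into $\H^{k+2}$, and $\int_{\Sph}M_h\dd S$ stays near $4\pi$ and hence away from zero. Then $f\mapsto\avg f_{M_h}$ is a smooth quotient of continuous bilinear and linear integrals. It follows that $(h,f)\mapsto\Pi_hf$ is smooth from $U^{k+2}_\delta\times\H^k(\Sph)$ to $\H^k(\Sph)$, and composing it with the smooth map above gives the smoothness of $\cB$.
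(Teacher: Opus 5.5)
Your proposal is correct and follows essentially the same route as the paper. Like the paper, you get smoothness of $\Qin,\Qout$ into $\H^{k+1}$ from Proposition~\ref{prop:neumann-sobolev} and use the Banach-algebra property for the $F_{\alpha,V}$ terms; you treat the curvature via the rational radial-graph formula with denominators bounded away from zero; and you get smoothness of $(h,f)\mapsto\Pi_h f$ from $M_h=(1+h)^2$ and $\int_{\Sph}M_h\dd S$ staying away from zero.

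The point you flag as the main obstacle is not actually needed. Because $\H^k(\Sph)$ is an algebra embedded in $\C^0$, any smooth rational expression in $(h,\nablaS h,\nablaS^2 h)$ with nonvanishing denominator is smooth into $\H^k$. Linearity in $\nablaS^2 h$ matters only for the ellipticity used later in the regularity bootstrap.
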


\begin{proof}
	By Proposition~\ref{prop:neumann-sobolev},
	\[
		\Qin(h,\alpha,V)
		=
		(\nabla\ein_h)\circ\Psi_h,
		\qquad
		\Qout(h,\alpha,V)
		=
		(\nabla\eout_h)\circ\Psi_h
	\]
	depend smoothly on $(h,\alpha,V)$ as $\H^{k+1}(\Sph;\R^3)$-valued maps.
	Also
	\[
		h\mapsto \Psi_h,\qquad
		h\mapsto n_h\circ\Psi_h,\qquad
		h\mapsto J_h
	\]
	are smooth in the Sobolev spaces stated above.

	The mean curvature of the radial graph can be written locally as a smooth
	rational expression in
	\[
		1+h,\qquad \nablaS h,\qquad \nablaS^2 h,
	\]
	with denominator bounded away from zero for $h\in U^{k+2}_\delta$; see
	\eqref{eq:curvature-radial-graph}.
	Up to the opposite sign convention for mean curvature, the corresponding
	general normal-graph formula and ellipticity statement are given in
	\cite[pp.~60--61, equations~(2.48)--(2.49)]{PruessSimonett2016}.
	Therefore
	\[
		h\mapsto H_h\circ\Psi_h
	\]
	is smooth from $U^{k+2}_\delta$ to $\H^k(\Sph)$.

	Since $\H^k(\Sph)$ is a Banach algebra and
	$\Qin,\Qout\in\H^{k+1}$, the functions
	\[
		F_{\alpha,V}(\Psi_h,\Qin),
		\qquad
		F_{\alpha,V}(\Psi_h,\Qout)
	\]
	depend smoothly on $(h,\alpha,V)$ as $\H^{k+1}$-valued, hence also as
	$\H^k$-valued, functions. 	Thus
	\[
		B_h\circ\Psi_h
		=
		\tau F_{\alpha,V}(\Psi_h,\Qin)
		-
		F_{\alpha,V}(\Psi_h,\Qout)
		+
		H_h\circ\Psi_h
	\]
	is smooth into $\H^k(\Sph)$.

	It remains only to check the projection. Since $M_h=(1+h)^2$,
	the map $h\mapsto M_h$ is smooth into $\H^{k+2}(\Sph)$, and
	$\int_{\Sph}M_h\dd S$ stays bounded away from zero. Hence
	\[
		\Pi_h f
		=
		f-
		\frac{\int_{\Sph}fM_h\dd S}{\int_{\Sph}M_h\dd S}
	\]
	depends smoothly on $(h,f)\in U^{k+2}_\delta\times\H^k(\Sph)$.
	Thus $\cB$ is smooth.
\end{proof}

Let us now turn to the variational structure. We define
\begin{equation*}
	\scrL_{\alpha,V}(y,p)
	=\frac12 \abs{p}^2-W_{\alpha,V}(y)\cdot p,
\end{equation*}
so that
\begin{equation}\label{eq:F-L-relation}
	F_{\alpha,V}(y,p)=\scrL_{\alpha,V}(y,p)+\frac{V^2}{2}.
\end{equation}

Let $D\in\set{\Omega,\R^3\setminus\overline{\Omega}}$. We denote
by $\nu_D$ the unit normal pointing out of $D$. In the exterior case,
$\partial D=\Gamma$ denotes the compact inner boundary, oriented by
$\nu_D=-n$. No boundary integral at infinity is included. For fixed $(\alpha,V)$ and a suitably smooth function $u$, define the one-phase action
\begin{equation*}
	\scrA_D(u,\alpha,V)
	:=\frac12\int_D \abs{\nabla u}^2\dd y
	-\int_{\partial D}u\,W_{\alpha,V}\cdot\nu_D\dd S .
\end{equation*}
The Neumann compatibility identity
\[
	\int_{\partial D}W_{\alpha,V}\cdot\nu_D\dd S=0
\]
holds for both phases by \eqref{eq:compatibility_condition_neumann}. Hence adding a constant to $u$
does not change the value of $\scrA_D(\cdot,\alpha,V)$.

The first variation with respect to $u$ is
\[
	D_u\scrA_D(u,\alpha,V)[v]
	=\int_D \nabla u\cdot\nabla v\dd y
	-\int_{\partial D}v\,W_{\alpha,V}\cdot\nu_D\dd S .
\]
After integration by parts, critical points satisfy
\begin{equation*}
	\Delta u=0\quad\text{in }D,
	\qquad
	\partial_{\nu_D}u=W_{\alpha,V}\cdot\nu_D
	\quad\text{on }\partial D .
\end{equation*}
For both phases the latter is
\[
	\partial_n u=W_{\alpha,V}\cdot n
	\quad\text{on }\Gamma,
\]
because $\nu_D=n$ or $\nu_D=-n$.

Let $u_D^{\alpha,V}$ denote the corresponding solution, with decay at infinity
in the exterior case, and define the reduced one-phase value
\[
	\scrN_D(\alpha,V)
	:=
	\scrA_D(u_D^{\alpha,V},\alpha,V).
\]
When $(\alpha,V)$ is fixed, we write simply $u_D$ for
$u_D^{\alpha,V}$.
The two-phase reduced action is
\begin{equation}\label{eq:two_phase_reduced_action_definition}
	\scrE(\Omega,\alpha,V)
	:=
	\abs{\partial\Omega}
	+\tau\scrN_{\Omega}(\alpha,V)
	+\scrN_{\R^3\setminus\overline\Omega}(\alpha,V).
\end{equation}
\begin{proposition}
	\label{prop:shape-derivative}
	Fix $(\alpha,V)$. Let $(\Omega_t)_t$ be a family of smooth deformations of
	$\Omega=\Omega_0$ with boundaries $(\Gamma_t)_t$ moving at $t=0$ with
	normal velocity $\zeta n$, where $n$ is the outer normal of $\Omega$ at
	$\Gamma=\Gamma_0$. Then
	\begin{equation}\label{eq:shape-derivative-full}
		\frac{\ddd}{\ddd t}\scrE(\Omega_t,\alpha,V)\Big|_{t=0}
		=
		\int_\Gamma
		\left[
			\tau \scrL_{\alpha,V}(y,\nabla\ein)
			-\scrL_{\alpha,V}(y,\nabla\eout)
			+H
			\right]\zeta\dd S .
	\end{equation}
\end{proposition}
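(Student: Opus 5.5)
The derivative splits into three contributions: the area term, which gives $\int_\Gamma H\zeta\dd S$ by the first variation of area with the convention $H=\divv_\Gamma n$, plus the two reduced one-phase values. For the latter I would use the minimisation (critical point) structure of $\scrA_D$, a Hadamard-type argument. Since $u_D$ is a critical point of $\scrA_D(\cdot,\alpha,V)$ on the domain $D$, the implicit dependence of the reduced value $\scrN_D$ on the domain through $u_D$ drops out. Only the explicit domain dependence of the integrals remains. Concretely, I would extend $u_D$ harmonically (or just smoothly) slightly across $\Gamma$, and write $\scrN_{D_t}=\scrA_{D_t}(u_{D_t})$. By the chain rule this gives $\frac{\ddd}{\ddd t}\scrN_{D_t}|_{t=0}=\partial_t\scrA_{D_t}(u_D)|_{t=0}+D_u\scrA_D(u_D)[\dot u]$, and the second term vanishes by criticality. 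Justifying this requires smooth dependence of $u_{D_t}$ on $t$, which follows from Proposition~\ref{prop:neumann-sobolev} via the fixed-domain pull-back.

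For the explicit variation, I would first rewrite the boundary term as a volume integral. By the divergence theorem, $\int_{\partial D}u\,W\cdot\nu_D\dd S=\int_D\divv(uW)\dd y=\int_D\nabla u\cdot W\dd y$, since $\divv W_{\alpha,V}=0$. In the exterior case this needs the decay $u=O(|y|^{-2})$, $W=O(|y|)$ so that the sphere at infinity contributes nothing: $|u||W|r^2=O(r)$ is not immediately small, so I would instead use $\nabla u=O(r^{-3})$ together with the observation that $W\cdot\nu$ on large spheres has a $Ve_3\cdot\nu$ part. This pairs against the dipole term of $u$ at order $r^{-2}\cdot r^2$, which is finite but nonzero. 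I expect this to be the main delicate point. Because the action is defined with only the compact boundary, I would avoid the volume rewriting in the exterior case and instead differentiate the boundary integral directly using transport formulas.

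The cleaner route treats both phases uniformly with Reynolds transport. Write $\scrA_D(u)=\int_D\scrL_{\alpha,V}(y,\nabla u)\dd y+\int_D\nabla u\cdot W\dd y-\int_{\partial D}uW\cdot\nu_D\dd S$, where the last two terms cancel on bounded $D$. For fixed $u$ extended across $\Gamma$, the moving-domain derivative of $\int_D f\dd y$ is $\pm\int_\Gamma f\zeta\dd S$. The derivative of the surface flux $\int_{\partial D_t}uW\cdot\nu\dd S$ equals $\pm\int_\Gamma\divv(uW)\zeta\dd S$, since its $t$-derivative is the derivative of a flux of a fixed field and therefore equals the integral of the divergence over the swept region. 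This holds in the exterior case too, because only the compact boundary moves. Combining, $\frac{\ddd}{\ddd t}\scrN_{D_t}=\pm\int_\Gamma\bigl(\tfrac12|\nabla u|^2-\nabla u\cdot W\bigr)\zeta\dd S=\pm\int_\Gamma\scrL_{\alpha,V}(y,\nabla u)\zeta\dd S$, with sign $+$ for $D=\Omega$ and $-$ for the exterior. Weighting by $\tau$ and $1$ and adding the area term gives \eqref{eq:shape-derivative-full}.

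The main obstacle I anticipate is rigorously justifying the criticality step in the exterior domain. There the test variation $\dot u$ has only $\Hdot^1$ decay, and the pairing $\int\nabla u\cdot\nabla\dot u$ must be shown to be finite and to integrate by parts without a contribution from infinity. I would handle this with the decay $\nabla u,\nabla\dot u=O(|y|^{-3})$, which follows from the expansion in Proposition~\ref{prop:neumann-sobolev} starting at $\ell=1$.
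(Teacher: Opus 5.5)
Your proposal is correct and follows the paper's proof. You freeze $u_D$ by criticality, apply Reynolds transport to the Dirichlet term, and apply the flux-transport identity with $\divv W_{\alpha,V}=0$ to the boundary term, using a fixed large truncating sphere in the exterior case; you then combine the phases via $\zeta_D=\pm\zeta$ and add the first variation of area. Two remarks: the volume-rewriting detour is unnecessary, and the exterior criticality step needs no extra decay. Indeed, $u_{D_t}$ is the $\Hdot^1$ weak solution, i.e.\ the minimiser of the convex quadratic $\scrA_{D_t}$, so freezing $u_D$ changes the value by exactly $\frac12\|\nabla(u_D-u_{D_t})\|_{\L^2(D_t)}^2=O(t^2)$.
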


\begin{proof}
	We first compute the shape derivative of one phase. Let $D_t$ be a smooth
	deformation of $D$ with normal velocity $\zeta_D\nu_D$. Since $u_D$
	is a critical point of $\scrA_D(\cdot,\alpha,V)$, differentiating the
	reduced value $\scrN_{D_t}(\alpha,V)$ is the same as differentiating
	$\scrA_{D_t}(u_D,\alpha,V)$ with $u_D$ frozen and suitably extended to a
	neighbourhood of $\overline D$.

	By the Reynolds transport theorem,
	\[
		\frac{\ddd}{\ddd t}
		\frac12\int_{D_t}\abs{\nabla u_D}^2\dd y\Big|_{t=0}
		=
		\int_{\partial D}
		\frac12\abs{\nabla u_D}^2\zeta_D\dd S .
	\]
	For the boundary term, the flux form of the Reynolds transport theorem also implies
	\begin{equation}\label{eq:flux-transport}
		\left.\frac{\ddd}{\ddd t}\right|_{t=0}
		\int_{\partial D_t}A\cdot\nu_{D_t}\dd S
		=
		\int_{\partial D}\Div A\,\zeta_D\dd S
	\end{equation}
	for every sufficiently regular vector field $A$. In order to see this in the exterior case,
	choose, without changing the prescribed normal velocity
	on the compact interface, a deformation field supported in a fixed collar of
	$\Gamma$, and fix $\scrR$ so large that this collar lies in $B_{\scrR}$.
	Apply the flux transport identity \eqref{eq:flux-transport} to
	$D_t\cap B_{\scrR}$. Its boundary is the
	moving inner boundary together with the fixed sphere $S_{\scrR}$. Since $u_D$
	is frozen and neither $S_{\scrR}$ nor its normal moves,
	\[
		\left.\frac{\ddd}{\ddd t}\right|_{t=0}
		\int_{S_{\scrR}}u_DW_{\alpha,V}\cdot\nu\dd S=0.
	\]
	Subtracting this zero derivative gives the flux identity on the
	moving compact boundary and hence \eqref{eq:one-phase-shape} for the exterior
	phase.

	In particular, with
	$A=u_DW_{\alpha,V}$ and $\Div W_{\alpha,V}=0$, this becomes
	\[
		\frac{\ddd}{\ddd t}
		\int_{\partial D_t}u_DW_{\alpha,V}\cdot\nu_{D_t}\dd S\Big|_{t=0}
		=
		\int_{\partial D}
		W_{\alpha,V}\cdot\nabla u_D\,\zeta_D\dd S .
	\]
	Therefore
	\begin{align}\label{eq:one-phase-shape}
		\frac{\ddd}{\ddd t}\scrN_{D_t}(\alpha,V)\Big|_{t=0}
		 & =
		\int_{\partial D}
		\left(
		\frac12\abs{\nabla u_D}^2
		-W_{\alpha,V}\cdot\nabla u_D
		\right)\zeta_D\dd S \\
		 & =
		\int_{\partial D}
		\scrL_{\alpha,V}(y,\nabla u_D)\zeta_D\dd S . \nonumber
	\end{align}

	We now apply \eqref{eq:one-phase-shape} to the two phases. For the interior
	phase, $\zeta_D=\zeta$. For the exterior phase, the outer normal of the
	exterior domain is $\nu_D=-n$, hence the same motion of $\Gamma$ has
	normal velocity $\zeta_D=-\zeta$ relative to the exterior domain. Thus
	the two one-phase contributions combine as
	\[
		\int_\Gamma \tau \scrL_{\alpha,V}(y,\nabla\ein)\zeta\dd S
		-
		\int_\Gamma \scrL_{\alpha,V}(y,\nabla\eout)\zeta\dd S .
	\]

	Finally, with our convention $H=\Div_\Gamma n$, the standard first-variation
	formula for the perimeter is
	\[
		\frac{\ddd}{\ddd t}\abs{\Gamma_t}\Big|_{t=0}
		=
		\int_\Gamma H\zeta\dd S .
	\]

	This proves \eqref{eq:shape-derivative-full}.
\end{proof}

We now rewrite \eqref{eq:shape-derivative-full} in terms of the radial graphs
\eqref{eq:radial-graph-chart}. In a slight abuse of notation, define
$\scrE:U^{k+2}_\delta\times\R^2\to\R$ by
\begin{align}\label{eq:graph_parametrized_two_phase_action}
	\scrE(h,\alpha,V):=\scrE(\Omega_h,\alpha,V)=\abs{\partial\Omega_h}
	+\tau\scrN_{\Omega_h}(\alpha,V)
	+\scrN_{\R^3\setminus\overline\Omega_h}(\alpha,V).
\end{align}
Recalling the Bernoulli map \eqref{eq:Bernoulli-map}, the following lemma
characterises solutions of \eqref{eq:main-problem-nondimensional} as critical
points.
\begin{lemma}\label{lem:variational_structure}
	The functional $\scrE$ is smooth with derivative given by
	\begin{align*}
		D_h\scrE(h,\alpha,V)[\eta]=\int_{\Sph} \cB(h,\alpha,V)M_h \eta\dd S
	\end{align*}
	for all $h\in U_\delta^{k+2}$, $\alpha,V\in\R$, and
	$\eta\in \H^{k+2}(\Sph)$ satisfying
	$\int_{\Sph} M_h\eta\dd S=0$. Moreover, a function
	$h\in\cM^{k+2}$ is a critical point of
	$\scrE(\cdot,\alpha,V)\big|_{\cM^{k+2}}$ if and only if
	$\cB(h,\alpha,V)=0$.
\end{lemma}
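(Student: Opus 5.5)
The plan has three parts: establish smoothness of $\scrE$, derive the derivative formula by pulling the shape derivative back to the sphere, and then characterise the critical points with a Lagrange multiplier argument.

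\emph{Smoothness.} Each term of $\scrE$ is handled on the fixed reference domains.

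The area is $\abs{\partial\Omega_h}=\int_{\Sph}J_h\dd S$. By \eqref{eq:radial-normal-jacobian} this is a smooth function of $h\in U^{k+2}_\delta$.

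For the reduced actions, we use the fact that the boundary term in $\scrA_D$ equals the bulk term at the critical point. Testing the Neumann problem with $u_D$ itself gives
\[
\int_D\abs{\nabla u_D}^2\dd y=\int_{\partial D}u_D W_{\alpha,V}\cdot\nu_D\dd S,
\]
so $\scrN_D=-\tfrac12\int_{\partial D}u_D\,W_{\alpha,V}\cdot\nu_D\dd S$. For $D=\Omega_h$ this becomes
\[
\scrN_{\Omega_h}=-\tfrac12\int_{\Sph}\bigl(U^{\rm in}_h|_{\Sph}\bigr)\,G(h,\alpha,V)\dd S .
\]
For the exterior phase the same expression holds with $U^{\rm out}_h$ and the opposite sign. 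The exterior identity is justified by the decay $\eout_h=O(\abs y^{-2})$ and $\nabla\eout_h=O(\abs y^{-3})$, which kills the boundary term at infinity. The proof of Proposition~\ref{prop:neumann-sobolev} shows that $U^{\rm in/out}_h$ and $G$ depend smoothly on $(h,\alpha,V)$ in the relevant Sobolev spaces, and the trace map is continuous. Hence $\scrE$ is smooth.

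\emph{Derivative formula.} First take $h$ and $\eta$ in $\C^\infty$, and consider the family $h+t\eta$. In the radial parametrisation, the boundary point $\Psi_{h+t\eta}(\omega)$ moves with velocity $\eta\omega$. Its normal velocity at $\Psi_h(\omega)$ is therefore $\zeta\circ\Psi_h=\eta\,\omega\cdot(n_h\circ\Psi_h)$.

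Proposition~\ref{prop:shape-derivative} expresses the derivative as an integral over $\Gamma_h$. Pulling it back with $\dd S_{\Gamma_h}=J_h\dd S$ gives
\[
D_h\scrE[\eta]=\int_{\Sph}\Bigl(\tau\scrL(\cdot,\Qin)-\scrL(\cdot,\Qout)+H_h\circ\Psi_h\Bigr)M_h\eta\dd S .
\]

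We then pass from $\scrL$ to $F$. By \eqref{eq:F-L-relation} the integrand differs from $B_h\circ\Psi_h$ only by the constant $(1-\tau)V^2/2$. Next we use the condition $\int M_h\eta\dd S=0$. It makes every constant in the integrand invisible, so both the constant shift $(1-\tau)V^2/2$ and the weighted average $\avg{B_h\circ\Psi_h}_{M_h}$ drop out. Together these steps show that the integrand may be replaced by $\cB=\Pi_h(B_h\circ\Psi_h)$.

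Finally, we remove the smoothness assumption. Both sides are continuous in $(h,\eta)$: the left side by the smoothness of $\scrE$, the right side by Proposition~\ref{prop:smooth-bernoulli-map}. Smooth pairs are dense in the constraint set, which is a closed hyperplane that varies continuously with $h$. So the formula extends to all $h\in U^{k+2}_\delta$ and admissible $\eta\in\H^{k+2}$.

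\emph{Critical points.} By \eqref{eq:radial-tangent-space}, $h\in\cM^{k+2}$ is critical exactly when
\[
\int_{\Sph}\cB\,M_h\eta\dd S=0\quad\text{for all }\eta\text{ with }\int M_h\eta\dd S=0\text{ and }\int (1+h)Z\,M_h\eta\dd S=0 .
\]

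If $\cB=0$, this holds trivially.

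For the converse, we argue by Lagrange multipliers and density. Since $\cB M_h\in\H^k$, the orthogonality conditions together with the density of $\H^{k+2}$ in $\L^2$ force $\cB M_h\in\Span\{M_h,(1+h)ZM_h\}$. Dividing by $M_h>0$ gives $\cB=a+b(1+h)Z$ for some constants $a,b$.

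The main obstacle is showing that $b=0$, i.e.\ the Lagrange multiplier for the centring constraint vanishes. The natural route is translation invariance. The vertical translation $y\mapsto y+te_3$ commutes with $W_{\alpha,V}$ up to the constant $Ve_3$. That constant does not affect the potentials on compactly supported deformations, and the boundary term in $\scrA_D$ is unchanged by the compatibility identity. Hence $\scrE$ is invariant under vertical translations.

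Differentiating along this translation, the normal velocity is $e_3\cdot n$, which in the chart is $\eta=(1+h)Z-\ldots$. Because the centroid constraint itself is not invariant, the variation is taken before projecting onto it: apply the full derivative formula from Proposition~\ref{prop:shape-derivative} with $\zeta=e_3\cdot n$, noting that $\int_\Gamma e_3\cdot n\dd S=0$ by the divergence theorem. This yields
\[
0=\int_{\Gamma}\bigl(a+b\,y_3\bigr)\,e_3\cdot n\dd S=b\abs{\Omega_h}.
\]
Therefore $b=0$, then $\cB=a$, and $\Pi_h\cB=\cB$ forces $a=0$.
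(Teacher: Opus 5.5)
Your proposal is correct and is essentially the paper's proof. You pull back Proposition~\ref{prop:shape-derivative} with normal velocity $\eta\,\omega\cdot(n_h\circ\Psi_h)$ to obtain the weight $M_h$, and you discard constants using $\int_{\Sph}M_h\eta\dd S=0$. You then remove the centring multiplier through $0=\int_{\Gamma_h}E_h\,e_3\cdot n_h\dd S=\mu\abs{\Omega_h}$, which comes from vertical-translation invariance.

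The only variation is in the smoothness step. You use the boundary pairing $\scrN_D=-\frac12\int_{\partial D}u_D\,W_{\alpha,V}\cdot\nu_D\dd S$, pulled back to $\mp\frac12\int_{\Sph}U_h|_{\Sph}\,G\dd S$. The paper instead uses the fixed-domain Dirichlet energy $-\frac12\int A_h\nabla U_h\cdot\nabla U_h\dd x$. Your version is equally valid and only needs traces of $U_h^{\rm in/out}$ near $\Sph$, not the $\Hdot^1$-dependence of the exterior solution.

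One justification should be repaired. The identity $W_{\alpha,V}(y+te_3)=W_{\alpha,V}(y)$ holds exactly, since $e_3\times e_3=0$. Hence the translated potentials are exactly those of the translated domain, and there is no ``constant $Ve_3$'' discrepancy to explain away. Your stated reason, that such a constant ``does not affect the potentials'', is also false: adding a constant vector $c$ to $W_{\alpha,V}$ changes the Neumann datum by $c\cdot n\neq0$.
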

\begin{proof}
	Define the reduced Bernoulli density associated with the action by
	\begin{equation}
		E_h
		:=
		\tau \scrL_{\alpha,V}(y,\nabla\ein_{h})
		-\scrL_{\alpha,V}(y,\nabla\eout_{h})
		+H_h ,
	\end{equation}
	We first establish smoothness of the reduced action. For either
	phase $D$, testing the weak Neumann equation with its solution $u_D$ gives
	\[
		\int_D\abs{\nabla u_D}^2\dd y
		=
		\int_{\partial D}u_DW_{\alpha,V}\cdot\nu_D\dd S .
	\]
	Here the exterior orientation $\nu_D=-n$ is already included, and
	$u_D\in\Hdot^1(D)$ is an admissible test function in the exterior problem.
	It follows from the definition of the one-phase action that
	\begin{equation}\label{eq:reduced-one-phase-dirichlet-energy}
		\scrN_D(\alpha,V)
		=
		-\frac12\int_D\abs{\nabla u_D}^2\dd y .
	\end{equation}

	Let $\Phi_h$, $A_h$, $U_h^{\rm in}=\ein_h\circ\Phi_h$, and
	$U_h^{\rm out}=\eout_h\circ\Phi_h$ be the fixed-domain objects constructed
	in the proof of Proposition~\ref{prop:neumann-sobolev}. By
	\eqref{eq:smoothness-change-of-variables} and
	\eqref{eq:reduced-one-phase-dirichlet-energy},
	\begin{align}\label{eq:fixed-domain-reduced-action}
		\scrE(h,\alpha,V)
		={} & \int_{\Sph}J_h\dd S
		-\frac{\tau}{2}\int_{B_1}
		A_h\nabla U_h^{\rm in}\cdot\nabla U_h^{\rm in}\dd x \nonumber \\
		    & -\frac12\int_{\R^3\setminus\overline{B_1}}
		A_h\nabla U_h^{\rm out}\cdot\nabla U_h^{\rm out}\dd x .
	\end{align}
	The fixed-domain construction in the proof of that proposition shows that
	$U_h^{\rm in}$ depends smoothly on $(h,\alpha,V)$ in
	$\H^{k+5/2}(B_1)$ and that $U_h^{\rm out}$ depends smoothly in
	$\Hdot^1(\R^3\setminus\overline{B_1})$. Moreover,
	$h\mapsto A_h-I$ is smooth into
	$\H^{k+3/2}(\R^3)\hookrightarrow\L^\infty(\R^3)$ and has support in a
	fixed annulus. Writing $A_h=I+(A_h-I)$, smoothness of the two energy terms
	follows from the continuous bilinear form
	\[
		(u,v)\longmapsto\int\nabla u\cdot\nabla v\dd x
	\]
	and the continuous trilinear form
	\[
		(K,u,v)\longmapsto\int K\nabla u\cdot\nabla v\dd x
	\]
	on $\L^\infty\times\Hdot^1\times\Hdot^1$ (and on the corresponding
	bounded-domain spaces). Since $h\mapsto J_h$ is smooth as well,
	\eqref{eq:fixed-domain-reduced-action} proves that
	\[
		\scrE:U_\delta^{k+2}\times\R^2\longrightarrow\R
	\]
	is smooth.

	We next identify its derivative. First suppose that $h$ and the
	radial variation are smooth. Let
	$t\mapsto h+\eta_t\in U_\delta^{k+2}$ be a smooth path with
	$\eta_0=0$ and tangent vector
	\[
		\eta=\left.\frac{\ddd}{\ddd t}\right|_{t=0}\eta_t.
	\]
	Viewed as a deformation of $\Gamma_h$, this path generates the normal
	velocity at $t=0$
	\[
		\zeta=\eta\bigl(\omega\cdot(n_h\circ\Psi_h)\bigr).
	\]
	Recalling
	$M_h=\bigl(\omega\cdot(n_h\circ\Psi_h)\bigr)J_h=(1+h)^2$ from
	\eqref{eq:radial-volume-weight}, Proposition~\ref{prop:shape-derivative}
	therefore gives
	\begin{equation}\label{eq:reduced-action-first-variation}
		D_h\scrE(h,\alpha,V)[\eta]
		=
		\int_{\Sph}
		(E_h\circ\Psi_h)M_h\eta\dd S .
	\end{equation}
	The density $B_h$ and $E_h$ differ only by the constant
	$(1-\tau)V^2/2$; see \eqref{eq:F-L-relation}. Proposition~\ref{prop:smooth-bernoulli-map}
	therefore shows that
	$(h,\alpha,V)\mapsto M_h(E_h\circ\Psi_h)$ is smooth into $\H^k(\Sph)$.
	Consequently, both sides of \eqref{eq:reduced-action-first-variation} depend
	continuously on
	\[
		(h,\alpha,V,\eta)
		\in U_\delta^{k+2}\times\R^2\times\H^{k+2}(\Sph).
	\]
	Density of $\C^\infty(\Sph)$ in $\H^{k+2}(\Sph)$ extends the identity from
	smooth graphs and variations to the stated Sobolev class. Thus
	$M_h(E_h\circ\Psi_h)$ is the $\L^2(\Sph)$-representer of the unconstrained
	differential of $\scrE(\cdot,\alpha,V)$.

	Let us now consider variations with $\int_{\Sph}M_h\eta\dd S=0$.
	With the projection $\Pi_h$ defined in \eqref{eq:def-Pi-h}, this means
	$\Pi_h\eta=\eta$. Since $B_h$ and $E_h$ differ only by a constant, it follows
	that
	\begin{align*}
		D_h\scrE(h,\alpha,V)[\eta]
		 & =
		\int_{\Sph}
		(B_h\circ\Psi_h)M_h\eta\dd S=\int_{\Sph}
		(B_h\circ\Psi_h)M_h\Pi_h\eta\dd S \\
		 & =\int_{\Sph}
		\Pi_h[B_h\circ\Psi_h]M_h\eta\dd S=\int_{\Sph}
		\cB(h,\alpha,V)M_h\eta\dd S
	\end{align*}
	for such $\eta$. Equivalently, there exists $\lambda_h\in\R$ such that the
	$\L^2(\Sph)$-representer of the differential of
	$\scrE(\cdot,\alpha,V)$ can be written as
	\begin{align}\label{eq:L2_gradient_scrE}
		\nabla^{\L^2}_h\scrE(h,\alpha,V)=(E_h\circ\Psi_h)M_h=\cB(h,\alpha,V)M_h+\lambda_h M_h.
	\end{align}

	Now let $h\in\cM^{k+2}$ be a critical point of
	$\scrE(\cdot,\alpha,V)\big|_{\cM^{k+2}}$. By
	\eqref{eq:radial-tangent-space} and the preceding expression, this is
	equivalent to the existence of Lagrange multipliers $\lambda,\mu\in\R$ such
	that
	\begin{equation}\label{eq:lagrange_multipliers_E}
		E_h=\lambda+\mu y_3
		\qquad
		\text{on }\Gamma_h .
	\end{equation}

	Substitution of \eqref{eq:lagrange_multipliers_E} into
	\eqref{eq:L2_gradient_scrE}, followed by integration over $\Sph$, shows that
	the equation
	$\cB(h,\alpha,V)=\Pi_h(B_h\circ\Psi_h)=0$ is equivalent to $\mu=0$. We now
	show that the multiplier $\mu$ associated with the vertical-centring
	constraint vanishes, using invariance under vertical translations. Consider
	$T_t(y)=y+te_3$ in Proposition~\ref{prop:shape-derivative}, that is,
	$\Omega_t=\Omega_h+te_3$ in \eqref{eq:shape-derivative-full}.
	Since
	\[
		W_{\alpha,V}(y+te_3)=W_{\alpha,V}(y),
	\]
	vertical translation leaves the reduced action invariant: the volume and area
	are unchanged, and the translated Neumann potentials are the potentials of the
	translated domain. Thus the derivative of the reduced action in this direction
	vanishes, and \eqref{eq:shape-derivative-full} gives
	\begin{equation}\label{eq:tranlational_EEE}
		0=\int_{\Gamma_h}E_h e_3\cdot n_h\dd S.
	\end{equation}
	Substituting $E_h=\lambda+\mu y_3$, we obtain
	\[
		0
		=
		\lambda\int_{\Gamma_h}e_3\cdot n_h\dd S
		+
		\mu\int_{\Gamma_h}y_3\,e_3\cdot n_h\dd S .
	\]
	The first integral vanishes by the divergence theorem:
	\[
		\int_{\Gamma_h}e_3\cdot n_h\dd S
		=
		\int_{\Omega_h}\Div e_3\dd y
		=0 .
	\]
	The second integral equals the enclosed volume:
	\[
		\int_{\Gamma_h}y_3\,e_3\cdot n_h\dd S
		=
		\int_{\Omega_h}\partial_3 y_3\dd y
		=
		\abs{\Omega_h}.
	\]
	Thus $0=\mu \abs{\Omega_h}$. Since $\abs{\Omega_h}>0$, we have $\mu=0$,
	as desired.
\end{proof}

\section{Shape calculus}
\label{sec:shape-calculus}

In this section we collect the shape-calculus expansions at the round
sphere that are used repeatedly in the rest of the manuscript. Throughout, the reference sphere has
unit radius, points on $\Sph$ are denoted by
$\omega=(X,Y,Z)$, and
\[
	\Psi_h(\omega)=(1+h(\omega))\omega .
\]
For a one-parameter deformation we write $h=t\eta$. All remainders are meant in
the Sobolev spaces dictated by the expression under consideration, locally
uniformly for $\eta$ in bounded subsets of $\H^{k+2}(\Sph)$ and for bounded
$\alpha,V$.
For general background on normal-graph geometry, normal variations,
and transport on moving hypersurfaces, see
\cite[pp.~55--85]{PruessSimonett2016}. The book uses the opposite sign
convention for mean curvature.

We use the following notation to extract a coefficient in a Taylor expansion. Let
$\eps_1,\ldots,\eps_m$ denote the small parameters which are
being expanded at the given point of the argument. If
\[
	F(\eps_1,\ldots,\eps_m)
	=
	\sum_{\beta\in\N_0^m}
	F_\beta\,
	\eps_1^{\beta_1}\cdots \eps_m^{\beta_m},
\]
then, for the monomial
$\frB=\eps_1^{\beta_1}\cdots \eps_m^{\beta_m}$, we write
\[
	[F]_{\frB}:=F_\beta .
\]
For example, if
\[
	F=F_0+tF_t+sF_s+t s F_{ts}+t^2F_{t^2}+\cdots ,
\]
then
\[
	[F]_t=F_t,
	\qquad
	[F]_{ts}=F_{ts}
	=
	\partial_t\partial_sF\big|_{t=s=0},
	\qquad
	[F]_{t^2}=F_{t^2}
	=
	\frac12\partial_t^2F\big|_{t=0}.
\]
All other small parameters are set equal to zero unless they are explicitly
displayed in the subscript.

\subsection{Some identities}
We first record the spherical calculus identities used below. The angular
vector field around the $e_3$-axis is
\begin{equation}
	\label{eq:dphi-def}
	\partial_\varphi=(e_3\times\omega)\cdot\nablaS,
	\qquad
	\partial_\varphi X=-Y,
	\qquad
	\partial_\varphi Y=X,
	\qquad
	\partial_\varphi Z=0 .
\end{equation}
Moreover,
\begin{equation}
	\nablaS \omega_i=e_i-\omega_i\omega,
	\qquad
	e_i=\omega_i\omega+\nablaS\omega_i,
	\qquad
	\nablaS\omega_i\cdot\nablaS\omega_j=\delta_{ij}-\omega_i\omega_j .
\end{equation}
In particular,
\begin{equation}
	\label{eq:low-mode-products}
	e_3=Z\omega+\nablaS Z,
	\qquad
	\abs{\nablaS Z}^2=1-Z^2,
	\qquad
	\nablaS X\cdot\nablaS Z=-XZ,
\end{equation}
\begin{equation}
	\label{eq:XZ-YZ-products}
	\nablaS(XZ)\cdot\nablaS Z=X(1-2Z^2),
	\qquad
	\nablaS(YZ)\cdot\nablaS Z=Y(1-2Z^2).
\end{equation}
We use the second Legendre polynomial
\begin{equation}
	P_2(Z)=\frac{3Z^2-1}{2}.
\end{equation}
The functions $X$, $Y$ are degree-one spherical harmonics, while
\begin{equation}
	\label{eq:degree-three-real}
	X\left(Z^2-\frac15\right),
	\qquad
	Y\left(Z^2-\frac15\right)
\end{equation}
are degree-three spherical harmonics with azimuthal number one.
For a real or complex spherical harmonic $Y_\ell^m$ of azimuthal
number $m$,
\begin{equation}
	\label{eq:spherical-harmonic-eigen}
	\Delta_{\Sph}Y_\ell^m=-\ell(\ell+1)Y_\ell^m,
	\qquad
	\partial_\varphi^2Y_\ell^m=-m^2Y_\ell^m.
\end{equation}
Moreover,
\begin{equation}
	\label{eq:centroid-integrals}
	\int_{\Sph}X\omega\dd S=\frac{4\pi}{3}e_1,
	\qquad
	\int_{\Sph}Y\omega\dd S=\frac{4\pi}{3}e_2,
	\qquad
	\int_{\Sph}Z\omega\dd S=\frac{4\pi}{3}e_3 .
\end{equation}

Finally, for integers $\ell\ge0$ and $\abs{m}\le\ell$, we set
\begin{equation}
	\label{eq:a-coefficients}
	a_{\ell,+}^m=
	\left(\frac{(\ell+1)^2-m^2}{(2\ell+1)(2\ell+3)}\right)^{1/2},
	\qquad
	a_{\ell,-}^m=
	\left(\frac{\ell^2-m^2}{(2\ell-1)(2\ell+1)}\right)^{1/2}.
\end{equation}
For these indices, let $Y_\ell^m$
denote the complex $\L^2(\Sph)$-normalised spherical harmonics, with
phases chosen according to the standard
Condon--Shortley convention, so that
$\partial_\varphi Y_\ell^m=imY_\ell^m$.
In \eqref{eq:Z-recursion}--\eqref{eq:gradZ-recursion} and in every subsequent
formula obtained from them, any term involving $Y_{\ell-1}^m$ or one of its
derivatives is omitted whenever $\ell-1<\abs{m}$. This
includes every mode with $\abs{m}=\ell$ and the constant mode
$(\ell,m)=(0,0)$. With this convention,
\begin{equation}
	\label{eq:Z-recursion}
	ZY_\ell^m=a_{\ell,+}^mY_{\ell+1}^m+a_{\ell,-}^mY_{\ell-1}^m,
\end{equation}
and
\begin{equation}
	\label{eq:gradZ-recursion}
	\nablaS Z\cdot\nablaS Y_\ell^m
	=
	-\ell a_{\ell,+}^mY_{\ell+1}^m
	+(\ell+1)a_{\ell,-}^mY_{\ell-1}^m .
\end{equation}
Relation \eqref{eq:Z-recursion} can, for example, be found in
\cite[Sec.~5.7, Eq.~(2)]{MR1022665}. Relation
\eqref{eq:gradZ-recursion} follows by applying $\Delta_{\Sph}$ to
\eqref{eq:Z-recursion} and using \eqref{eq:spherical-harmonic-eigen}.

\subsection{The geometric quantities}
We first recall from \eqref{eq:radial-normal-jacobian} that, for
$h\in U^{k+2}_\delta$, the outer unit normal, the surface Jacobian, and
the radial volume weight are given by
\begin{equation}
	\label{eq:normal-jacobian}
	n_h\circ\Psi_h
	=
	\frac{(1+h)\omega-\nablaS h}
	{\sqrt{(1+h)^2+\abs{\nablaS h}^2}},
	\qquad
	J_h=(1+h)\sqrt{(1+h)^2+\abs{\nablaS h}^2},
\end{equation}
and
\begin{equation}
	\label{eq:Mh}
	M_h=\bigl(\omega\cdot(n_h\circ\Psi_h)\bigr)J_h=(1+h)^2.
\end{equation}
We collect some further identities in the following lemma.
\begin{lemma}
	\label{lem:radial-geometry}
	Let $h\in U^{k+2}_\delta$.
	The normal component of the rotational-translational field $W_{\alpha,V}$ is
	\begin{equation}
		\label{eq:W-normal-exact}
		\bigl(W_{\alpha,V}\cdot n_h\bigr)\circ\Psi_h
		=
		\frac{-\alpha(1+h)\partial_\varphi h
			+V\left((1+h)Z-\nablaS h\cdot\nablaS Z\right)}
		{\sqrt{(1+h)^2+\abs{\nablaS h}^2}},
	\end{equation}
	and,
	\begin{equation}
		\label{eq:translation-normal-exact}
		\bigl(Ve_3\cdot n_h\bigr)\circ\Psi_h
		=
		V\frac{(1+h)Z-\nablaS h\cdot\nablaS Z}
		{\sqrt{(1+h)^2+\abs{\nablaS h}^2}} .
	\end{equation}
	For $h=t\eta$, as $t\to0$,
	\begin{equation}
		\label{eq:normal-first-expansion}
		n_{t\eta}\circ\Psi_{t\eta}
		=
		\omega-t\nablaS\eta+O(t^2),
	\end{equation}
	\begin{equation}
		\label{eq:W-normal-first-expansion}
		\bigl(W_{\alpha,V}\cdot n_{t\eta}\bigr)\circ\Psi_{t\eta}
		=
		VZ-t\alpha\partial_\varphi\eta
		-tV\nablaS\eta\cdot\nablaS Z+O(t^2),
	\end{equation}
	and the purely translational normal velocity has the second-order expansion
	\begin{align}
		\label{eq:translation-normal-second-expansion}
		\bigl(Ve_3\cdot n_{t\eta}\bigr)\circ\Psi_{t\eta}
		 & =
		VZ-tV\nablaS\eta\cdot\nablaS Z
		+t^2V
		\left(
		\eta\,\nablaS\eta\cdot\nablaS Z
		-\frac Z2\abs{\nablaS\eta}^2
		\right)                          \\
		 & \phantom{=}+O(t^3). \nonumber
	\end{align}
	The mean curvature, with the convention $H=2$ on the round sphere, satisfies
	\begin{equation}
		\label{eq:curvature-expansion-one-parameter}
		H_{t\eta}\circ\Psi_{t\eta}
		=
		2
		-t(\Delta_{\Sph}+2)\eta
		+O(t^2).
	\end{equation}
\end{lemma}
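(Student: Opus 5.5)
The plan is direct computation from the exact radial-graph formulas \eqref{eq:normal-jacobian}, followed by Taylor expansion in $t$. For \eqref{eq:W-normal-exact}, we evaluate $W_{\alpha,V}(\Psi_h)=\alpha(1+h)e_3\times\omega+Ve_3$ and dot it with the unnormalised normal $N_h:=(1+h)\omega-\nablaS h$. The rotational part gives $\alpha(1+h)\bigl[(e_3\times\omega)\cdot\omega(1+h)-(e_3\times\omega)\cdot\nablaS h\bigr]$. Here the first term vanishes, and the second equals $-\alpha(1+h)\partial_\varphi h$ by \eqref{eq:dphi-def}. For the translational part we use $e_3=Z\omega+\nablaS Z$ from \eqref{eq:low-mode-products}, together with $\omega\cdot\nablaS h=0$. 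This gives $e_3\cdot N_h=(1+h)Z-\nablaS h\cdot\nablaS Z$. Dividing by $\sqrt{(1+h)^2+\abs{\nablaS h}^2}$ yields \eqref{eq:W-normal-exact}. Setting $\alpha=0$ gives \eqref{eq:translation-normal-exact}.

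Next, we substitute $h=t\eta$ and expand. The denominator satisfies
\[
	\bigl((1+t\eta)^2+t^2\abs{\nablaS\eta}^2\bigr)^{-1/2}
	=1-t\eta+t^2\Bigl(\eta^2-\tfrac12\abs{\nablaS\eta}^2\Bigr)+O(t^3).
\]
Multiplying by $(1+t\eta)\omega-t\nablaS\eta$ gives \eqref{eq:normal-first-expansion}, since the $t\eta\omega$ terms cancel. For \eqref{eq:W-normal-first-expansion}, the numerator is $VZ+t\bigl(V\eta Z-\alpha\partial_\varphi\eta-V\nablaS\eta\cdot\nablaS Z\bigr)+O(t^2)$. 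The factor $1-t\eta$ then cancels $tV\eta Z$.

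For \eqref{eq:translation-normal-second-expansion}, we multiply
\[
	V\bigl(Z+t(\eta Z-\nablaS\eta\cdot\nablaS Z)\bigr)
	\quad\text{by}\quad
	1-t\eta+t^2\Bigl(\eta^2-\tfrac12\abs{\nablaS\eta}^2\Bigr)
\]
and collect the $t^2$ coefficient. It is
\[
	V\Bigl[Z\eta^2-\tfrac Z2\abs{\nablaS\eta}^2-\eta^2Z+\eta\,\nablaS\eta\cdot\nablaS Z\Bigr],
\]
which is the claimed expression. The remainders are controlled in $\H^{k+1}$ uniformly for bounded $\eta$ by the Banach-algebra property and the smooth dependence recorded after \eqref{eq:radial-normal-jacobian}.

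The curvature expansion is the one step requiring some care, since we have not written \eqref{eq:curvature-radial-graph} explicitly. I would avoid the full formula and use the first variation instead. We know $H_0=2$. By smoothness of $h\mapsto H_h\circ\Psi_h$ (Proposition~\ref{prop:smooth-bernoulli-map}), it suffices to compute the derivative at $t=0$. The deformation $\Psi_{t\eta}$ of the unit sphere has normal speed $\eta$. The classical formula for the linearised mean curvature of a normal graph $\omega+t\eta\omega$ over a surface with second fundamental form $A$ is $\partial_tH=-\Delta_\Gamma\eta-\abs{A}^2\eta$, in the convention $H=\Div n$ (this is the sign opposite to \cite{PruessSimonett2016}). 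On the unit sphere, $\abs{A}^2=2$. Moreover, the tangential part of the motion is zero at $t=0$ for the radial graph over the sphere, so no transport term arises after pulling back by $\Psi_{t\eta}$.

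As a consistency check on signs, take $\eta=1$. This gives the sphere of radius $1+t$, with $H=2/(1+t)=2-2t+O(t^2)$, matching $-(\Delta_{\Sph}+2)1=-2$. This yields \eqref{eq:curvature-expansion-one-parameter}, with the $O(t^2)$ remainder in $\H^k$ again following from smoothness of the curvature map.
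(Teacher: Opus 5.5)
Your computation of the normal components and their expansions is the same as the paper's. You dot $W_{\alpha,V}(\Psi_h)=\alpha(1+h)e_3\times\omega+Ve_3$ with \eqref{eq:normal-jacobian}, use $e_3=Z\omega+\nablaS Z$ and $(e_3\times\omega)\cdot\nablaS h=\partial_\varphi h$, and Taylor expand. Your coefficients, including the $t^2$ term of the inverse square root, are correct.

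For the curvature you take a different route. The paper recalls the explicit radial-graph formula \eqref{eq:curvature-radial-graph} and Taylor expands it. You instead observe that $\Psi_{t\eta}(\omega)=\omega+t\eta(\omega)\omega$ is a normal graph over the unit sphere, and invoke the classical variation $\partial_tH=-\Delta_\Gamma\eta-\abs{A}^2\eta$ with $\abs{A}^2=2$. This is correct. Since $H_0\equiv2$ is constant, a tangential component of the variation would contribute nothing anyway.

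Your route is shorter and coordinate-free, but it yields only the first-order term. It also still relies on the smoothness of $h\mapsto H_h\circ\Psi_h$, which the paper derives from the rational structure of \eqref{eq:curvature-radial-graph}. The paper's route costs one explicit expansion, but the same formula is reused later. Its quadratic part $2(h^2+h\Delta_{\Sph}h)$ is what drives Lemma~\ref{lem:curvature-cross-h02}, and its principal part gives the ellipticity used in Lemma~\ref{lem:small-sobolev-solution-spatial-regularity}.
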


\begin{proof}
	Since
	$e_3=Z\omega+\nablaS Z$ and
	$(e_3\times\omega)\cdot\nablaS h=\partial_\varphi h$, the normal
	components \eqref{eq:W-normal-exact} and
	\eqref{eq:translation-normal-exact} follow by taking the scalar product of
	$W_{\alpha,V}(\Psi_h)=\alpha(1+h)e_3\times\omega+Ve_3$ with
	\eqref{eq:normal-jacobian}. Expanding these identities at
	$h=t\eta$ gives \eqref{eq:normal-first-expansion},
	\eqref{eq:W-normal-first-expansion}, and
	\eqref{eq:translation-normal-second-expansion}.

	For the curvature, we recall the radial-graph formula
	\begin{align}
		 & H_h\circ\Psi_h \label{eq:curvature-radial-graph} \\
		 & =
		\frac{
			2(1+h)^3
			+
			3(1+h)\abs{\nablaS h}^2
			-
			\big((1+h)^2+\abs{\nablaS h}^2\big)\Delta_{\Sph} h
			+
			\nablaS^2 h(\nablaS h,\nablaS h)
		}{
			(1+h)\big((1+h)^2+\abs{\nablaS h}^2\big)^{3/2}
		} \nonumber
	\end{align}
	found, with the opposite
	curvature sign convention, in
	\cite[pp.~60--61, equation~(2.49)]{PruessSimonett2016}.
	Taylor expansion gives \eqref{eq:curvature-expansion-one-parameter}.
\end{proof}

The exact curvature formula above, together with the Sobolev-scale Neumann
regularity of Lemma~\ref{lem:neumann-sobolev-scale}, yields the following
regularity bootstrap.
\begin{lemma}
	\label{lem:small-sobolev-solution-spatial-regularity}
	Let $\delta$ be as in Lemma~\ref{lem:neumann-sobolev-scale}. Suppose that
	$h\in U_\delta^{k+2}$ satisfies
	\[
		\tau F_{\alpha,V}(\Psi_h,\Qin)
		-
		F_{\alpha,V}(\Psi_h,\Qout)
		+
		H_h\circ\Psi_h
		=c
		\quad\text{on }\Sph.
	\]
	Then $h\in\C^\infty(\Sph)$. The associated interior and exterior
	potentials are smooth up to the free boundary, and the exterior potential
	has the decay stated in Proposition~\ref{prop:neumann-sobolev}.
\end{lemma}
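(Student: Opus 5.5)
The plan is a bootstrap in the Sobolev scale. The Bernoulli equation is a quasilinear elliptic equation for $h$: its principal part is the mean curvature. The potential terms cost only one derivative less than $h$ itself, by Lemma~\ref{lem:neumann-sobolev-scale}. We show by induction on $q\ge k$ that $h\in\H^{q+2}(\Sph)$ implies $h\in\H^{q+3}(\Sph)$. The base case $q=k$ holds by assumption. Smoothness of $h$ then follows from Sobolev embedding, and smoothness of the potentials up to the boundary from Lemma~\ref{lem:neumann-sobolev-scale} applied for every $q$. The decay of the exterior potential was already established in Proposition~\ref{prop:neumann-sobolev}, since we are working with the same energy solutions.

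For the inductive step, suppose $h\in U^{k+2}_\delta\cap\H^{q+2}(\Sph)$. By Lemma~\ref{lem:neumann-sobolev-scale}, $\Qin,\Qout\in\H^{q+1}(\Sph;\R^3)$, and $\Psi_h\in\H^{q+2}$. Since $\H^{q+1}(\Sph)$ is an algebra, both $F_{\alpha,V}(\Psi_h,\Qin)$ and $F_{\alpha,V}(\Psi_h,\Qout)$ lie in $\H^{q+1}$. The equation therefore gives
\[
	H_h\circ\Psi_h=c-\tau F_{\alpha,V}(\Psi_h,\Qin)+F_{\alpha,V}(\Psi_h,\Qout)\in\H^{q+1}(\Sph).
\]
Next we rewrite the curvature formula \eqref{eq:curvature-radial-graph} as
\[
	a(h,\nablaS h)\,\Delta_{\Sph}h-b(h,\nablaS h)\bigl(\nablaS^2h(\nablaS h,\nablaS h)\bigr)=g,
\]
where $a=\bigl((1+h)^2+\abs{\nablaS h}^2\bigr)^{-1/2}(1+h)^{-1}$ and $b$ is similarly explicit. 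The right-hand side $g$ collects $H_h\circ\Psi_h$ and the lower-order terms, all in $\H^{q+1}$. Combining the two second-order terms, the principal operator is
\[
	\mathcal A_h w=\frac{1}{(1+h)\sqrt{(1+h)^2+\abs{\nablaS h}^2}}\Bigl(\Delta_{\Sph}w-\frac{\nablaS^2w(\nablaS h,\nablaS h)}{(1+h)^2+\abs{\nablaS h}^2}\Bigr).
\]
Its symbol is $\abs\xi^2-(\xi\cdot\nablaS h)^2/((1+h)^2+\abs{\nablaS h}^2)\ge \abs\xi^2(1+h)^2/((1+h)^2+\abs{\nablaS h}^2)$, so it is uniformly elliptic. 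This is the ellipticity statement cited from \cite{PruessSimonett2016}. Its coefficients lie in $\H^{q+1}\subset\C^{q-1}$.

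The main step is to gain one derivative from $\mathcal A_h h=g\in\H^{q+1}$ when the coefficients are only in $\H^{q+1}$. We do this by differentiating the equation tangentially. Take a smooth vector field $T$ on $\Sph$, for instance the rotation generators $(e_i\times\omega)\cdot\nablaS$, which commute with $\Delta_{\Sph}$. Apply $T^{q}$, understood in the difference-quotient sense to avoid assuming regularity we do not yet have. The highest-order terms then give
\[
	\mathcal A_h(T^qh)=T^qg+[\text{commutators}].
\]
The commutators involve at most $q+2$ derivatives of $h$ multiplied by coefficients. Since $q\ge k\ge4$, standard Moser and Kato--Ponce product estimates on the algebra $\H^{s}$, $s>1$, place them in $\H^{1}$. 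The right-hand side is therefore in $\H^1$. Second-order elliptic regularity for $\mathcal A_h$ with $\C^{1}$ coefficients on the closed manifold $\Sph$ then gives $T^qh\in\H^3$. This follows from the standard $\H^{s}\to\H^{s+2}$ estimate via localisation and freezing of coefficients, since no boundary is present. Doing this for all rotation generators, which span the tangent space at every point, yields $h\in\H^{q+3}$ and closes the induction.

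The commutator bookkeeping, namely checking that no term needs $q+3$ derivatives of $h$, is where care is needed. This is the step I expect to be the main obstacle. An alternative I would try first is to linearise: treat $\mathcal A_h$ as a linear elliptic operator with frozen coefficients in $\H^{q+1}$ and invoke the known estimate $\norm{w}_{\H^{s+2}}\lesssim\norm{\mathcal A w}_{\H^s}+\norm{w}_{\L^2}$. This estimate holds for coefficients in $\H^{r}$ with $r>1$ and $s\le r$, which covers $s=q+1=r$.
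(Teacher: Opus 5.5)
Your proposal is correct and follows essentially the paper's argument: Lemma~\ref{lem:neumann-sobolev-scale} puts $H_h\circ\Psi_h$ in $\H^{q+1}$, and elliptic regularity for the quasilinear curvature operator then gains one derivative. The paper differentiates only once, by difference quotients in an atlas with commutator $(\nabla a_h)\nabla^2 h\in\H^r$, and then applies the linear $\H^{r+2}$ estimate with $\H^{r+1}$ coefficients, which is your fallback. One correction to your $T^q$ variant: each commutator term carries at most $q+1$ derivatives of $h$ on any single factor, not $q+2$, because at least one of the $q$ tangential derivatives falls on a coefficient or on a factor $\nablaS h$. That corrected count is what places the commutators in $\H^1$; a factor carrying $q+2$ derivatives would lie only in $\L^2$.
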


\begin{proof}
	We prove, for every integer $r\ge k$, that
	\[
		h\in\H^{r+2}(\Sph)
		\quad\Longrightarrow\quad
		h\in\H^{r+3}(\Sph).
	\]
	Lemma~\ref{lem:neumann-sobolev-scale} gives
	$\Qin,\Qout\in\H^{r+1}(\Sph;\R^3)$. Since
	$\H^{r+1}(\Sph)$ is a Banach algebra, the non-curvature part of the
	Bernoulli equation belongs to $\H^{r+1}(\Sph)$, and hence
	\[
		H_h\circ\Psi_h\in\H^{r+1}(\Sph).
	\]

	To see the ellipticity, put $\rho:=1+h$, $p:=\nablaS h$, and let
	$g^{ij}$ be the inverse round metric. Formula
	\eqref{eq:curvature-radial-graph} has the form
	\[
		H_h\circ\Psi_h
		=
		b(h,p)-a_h^{ij}\nabla_i\nabla_jh,
	\]
	where $b(h,p)$ contains no second derivative of $h$ and
	\[
		a_h^{ij}
		=
		\frac{
			(\rho^2+\abs{p}^2)g^{ij}-p^ip^j
		}{
			\rho(\rho^2+\abs{p}^2)^{3/2}
		}.
	\]
	For every $\omega\in\Sph$ and every cotangent vector $\zeta\in T_\omega^*\Sph$,
	\[
		a_h^{ij}\zeta_i\zeta_j
		\ge
		\frac{\rho}{(\rho^2+\abs{p}^2)^{3/2}}
		\abs{\zeta}^2.
	\]
	The fixed low-norm bound makes $\rho$ positive and bounds $p$ in
	$\C^0$, so the operator is uniformly elliptic. This uses no smallness of
	$\norm h_{\H^{r+2}}$.

	Under the induction hypothesis,
	$a_h^{ij},b(h,p)\in\H^{r+1}(\Sph)$. In a finite atlas, differentiate the
	last equation by difference quotients. Each first derivative of $h$ then
	satisfies a uniformly elliptic linear equation whose right-hand side is in
	$\H^r$: the commutator terms have the form
	$(\nabla a_h)\nabla^2h$ and belong to $\H^r$ by Sobolev multiplication on
	the two-dimensional sphere. The linear elliptic $\H^{r+2}$ estimate gives
	$\nabla h\in\H^{r+2}$ and therefore $h\in\H^{r+3}$.
	Starting at $r=k$ and iterating proves that $h$ belongs to every Sobolev
	space. Lemma~\ref{lem:neumann-sobolev-scale}, applied at every level,
	then gives smoothness of the potentials up to the boundary.
\end{proof}

For the moving-boundary calculations below, we use a compactly supported fixed-domain identification of the same type as that used in the proof of Proposition~\ref{prop:neumann-sobolev}.
The maps $T_t^\eta$ identify the moving and fixed domains. Choose
$\vartheta\in \C_c^\infty((0,\infty))$ equal to one near $r=1$ and, for small
$t$, write $\Gamma_t:=\Psi_{t\eta}(\Sph)$ with $\Psi$ as in \eqref{eq:radial-graph-chart}, and set
\[
	T_t^\eta(r\omega)
	=
	\bigl(r+t\vartheta(r)\eta(\omega)\bigr)\omega,
	\qquad
	\mathscr X_\eta
	:=
	\left.\partial_tT_t^\eta\right|_{t=0}.
\]
Thus $T_t^\eta$ maps $B_1$ and its exterior onto the corresponding domains
bounded by $\Gamma_t$, is the identity away from a fixed annulus, and
$\mathscr X_\eta(\omega)=\eta(\omega)\omega$ on $\Sph$. Let
$n_t:=n_{t\eta}$ be the bounded-phase outer normal from
\eqref{eq:normal-jacobian}.

\begin{lemma}
	\label{lem:eulerian-boundary-differentiation}
	Let $\eta\in\C^\infty(\Sph)$, and let $\phi_t$
	be defined on $T_t^\eta(D_0)$, where $D_0$ is either $B_1$ or
	$\R^3\setminus\overline{B_1}$. Assume that, for every fixed $R>1$,
	\[
		t\longmapsto\widehat\phi_t:=\phi_t\circ T_t^\eta
	\]
	is twice continuously differentiable with values in
	$\H^{k+5/2}(D_0\cap B_R)$. In the exterior case assume in addition that this
	map is continuously differentiable with values in $\Hdot^1(D_0)$. Define
	\[
		\dot\phi
		:=
		\left.\partial_t\right|_{t=0}\widehat\phi_t,
		\qquad
		\phi'
		:=
		\dot\phi-\mathscr X_\eta\cdot\nabla\phi_0.
	\]
	Then, on $\Sph$,
	\begin{align}
		\phi_t\circ\Psi_{t\eta}
		 & =
		\phi_0
		+t\bigl(\phi'+\eta\partial_r\phi_0\bigr)
		+O(t^2),
		\label{eq:moving-trace-eulerian}    \\
		(\nabla\phi_t)\circ\Psi_{t\eta}
		 & =
		\nabla\phi_0
		+t\bigl(\nabla\phi'+\eta\partial_r\nabla\phi_0\bigr)
		+O(t^2),
		\label{eq:moving-gradient-eulerian} \\
		(\nabla\phi_t\cdot n_t)\circ\Psi_{t\eta}
		 & =
		\partial_r\phi_0
		+t\left(
		  \partial_r\phi'
		  +\eta\partial_{rr}\phi_0
		  -\nablaS\eta\cdot(\nabla\phi_0)_{\tan}
		\right)
		+O(t^2).
		\label{eq:moving-normal-eulerian}
	\end{align}
	If every $\phi_t$ is harmonic, then $\phi'$ is harmonic on $D_0$.
	The material derivative $\dot\phi$ is not harmonic in general.
	Moreover, if $\phi_t$ is given by an exterior Neumann family $\phi^{\rm out}_{t\eta}$ of Proposition~\ref{prop:neumann-sobolev}, then $\phi'$ has finite Dirichlet energy and satisfies
	\[
		\phi'(x)=O(\abs{x}^{-2}),
		\qquad
		\nabla\phi'(x)=O(\abs{x}^{-3}).
	\]
\end{lemma}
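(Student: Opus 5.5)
The plan is to work entirely on the fixed domain $D_0$ and transfer everything back through $T_t^\eta$. Near $\Sph$ we have $\vartheta\equiv1$, so $T_t^\eta(r\omega)=(r+t\eta(\omega))\omega$, and in particular $\Psi_{t\eta}=T_t^\eta|_{\Sph}$. The definition $\widehat\phi_t=\phi_t\circ T_t^\eta$ then gives directly
\[
\phi_t\circ\Psi_{t\eta}=\widehat\phi_t\big|_{\Sph}=\phi_0+t\dot\phi+O(t^2),
\]
using the assumed $\C^2$ dependence in $\H^{k+5/2}(D_0\cap B_R)$ together with the trace theorem. On $\Sph$ we have $\mathscr X_\eta=\eta\omega$, so $\dot\phi=\phi'+\eta\partial_r\phi_0$, and this is \eqref{eq:moving-trace-eulerian}.

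For the gradient, I will use the chain rule
\[
(\nabla\phi_t)\circ T_t^\eta=(DT_t^\eta)^{-T}\nabla\widehat\phi_t,
\qquad
(DT_t^\eta)^{-T}=I-t\,D\mathscr X_\eta^{T}+O(t^2).
\]
Differentiating at $t=0$ gives $\nabla\dot\phi-D\mathscr X_\eta^T\nabla\phi_0$. This equals $\nabla(\mathscr X_\eta\cdot\nabla\phi_0)-D\mathscr X_\eta^T\nabla\phi_0+\nabla\phi'$, and the first two terms combine to $D^2\phi_0\,\mathscr X_\eta$. Evaluated on $\Sph$ this is $\eta\partial_r\nabla\phi_0$, which gives \eqref{eq:moving-gradient-eulerian}. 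The remainder is $O(t^2)$ in $\H^{k+1}(\Sph)$ by trace estimates and the Banach algebra property.

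For \eqref{eq:moving-normal-eulerian}, I dot \eqref{eq:moving-gradient-eulerian} with the normal expansion \eqref{eq:normal-first-expansion}, $n_t\circ\Psi_{t\eta}=\omega-t\nablaS\eta+O(t^2)$. Since $\nablaS\eta$ is tangential, $\nablaS\eta\cdot\nabla\phi_0=\nablaS\eta\cdot(\nabla\phi_0)_{\tan}$. Moreover $\omega\cdot\partial_r\nabla\phi_0=\partial_{rr}\phi_0$, because $\omega$ is constant along radial lines.

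Harmonicity of $\phi'$ is most easily seen in Eulerian form. On any compact $K\Subset D_0$ away from the annulus, or in general for $x\in D_0$ fixed, the point $x$ lies in $T_t^\eta(D_0)$ for small $t$. There we can write $\phi'(x)=\partial_t\phi_t(x)|_{t=0}$, which follows by differentiating $\phi_t(x)=\widehat\phi_t((T_t^\eta)^{-1}x)$ and using $\partial_t(T_t^\eta)^{-1}x|_{t=0}=-\mathscr X_\eta(x)$. A $t$-derivative of harmonic functions is harmonic in the distributional sense, since $\Delta$ commutes with $\partial_t$ under locally uniform $\H^{k+5/2}$ convergence of the difference quotients. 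For the material derivative the corresponding statement fails, because $\Delta\dot\phi=\Delta(\mathscr X_\eta\cdot\nabla\phi_0)$, which is generally nonzero.

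For the exterior decay, outside the fixed annulus $T_t^\eta=\id$, so there $\phi'=\dot\phi$. By the $\C^1$ assumption into $\Hdot^1(D_0)$, $\dot\phi$ is the $\Hdot^1$-limit of difference quotients of $\widehat\phi_t$. Hence $\phi'$ has finite energy: on the annulus it is $\H^{k+5/2}$, and $\mathscr X_\eta\cdot\nabla\phi_0$ is compactly supported. For $|x|>R$, each $\phi_t=\phi^{\rm out}_{t\eta}$ has an expansion in exterior harmonics starting at $\ell=1$, because the flux vanishes, exactly as in the proof of Proposition~\ref{prop:neumann-sobolev}. The coefficients are $\C^1$ in $t$, being linear functionals of the trace on $\partial B_R$. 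Hence $\phi'$ has the same kind of expansion with no $\ell=0$ term, and this yields the $O(|x|^{-2})$ and $O(|x|^{-3})$ bounds.

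The main obstacle is bookkeeping of regularity in the remainders. One has to check that $O(t^2)$ holds in the stated Sobolev norms, using Taylor's theorem with the $\C^2$ hypothesis. One also has to justify interchanging $\partial_t$ with traces and derivatives, which requires losing one derivative and working in $\H^{k+3/2}$ traces.
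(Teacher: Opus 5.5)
Your proposal is correct and follows the same route as the paper. You restrict the material expansion to $\Sph$, combine the chain rule $(\nabla\phi_t)\circ T_t^\eta=(DT_t^\eta)^{-T}\nabla\widehat\phi_t$ with the normal expansion, differentiate at fixed spatial points to get harmonicity of $\phi'$, and differentiate the exterior Poisson series (whose $\ell=0$ coefficient vanishes) termwise beyond the collar to get finite energy and decay. Your identity $\nabla(\mathscr X_\eta\cdot\nabla\phi_0)-D\mathscr X_\eta^{T}\nabla\phi_0=D^2\phi_0\,\mathscr X_\eta$ just makes explicit the step the paper compresses into ``differentiating this identity.''
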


\begin{proof}
	Write $\widehat\phi_t:=\phi_t\circ T_t^\eta$. Smooth material dependence
	gives $\widehat\phi_t=\phi_0+t\dot\phi+O(t^2)$. Restriction to $\Sph$ and
	the identity $\dot\phi=\phi'+\mathscr X_\eta\cdot\nabla\phi_0$, as well as $\mathscr X_\eta=\eta\omega$ on $\Sph$ give
	\eqref{eq:moving-trace-eulerian}. For the gradient, the chain rule yields
	\[
		(\nabla\phi_t)\circ T_t^\eta
			=
			(DT_t^\eta)^{-T}\nabla\widehat\phi_t.
	\]
	Differentiating this identity, and using
	\[
		DT_t^\eta=\id+tD\mathscr{X}_\eta=\id+t\left(\omega\otimes\nabla_{\Sph}\eta+\eta\left(\id-\omega\otimes\omega\right)\right)\quad \text{on } \Sph
	\]
	gives
	\[
		\left.\partial_t\right|_{t=0}
		\bigl((\nabla\phi_t)\circ\Psi_{t\eta}\bigr)
		=
		\nabla\phi'+\eta D^2\phi_0\,\omega
		=
		\nabla\phi'+\eta\partial_r\nabla\phi_0,
	\]
	which is \eqref{eq:moving-gradient-eulerian}. Taking its scalar product with
	$n_t\circ\Psi_{t\eta}=\omega-t\nablaS\eta+O(t^2)$ proves
	\eqref{eq:moving-normal-eulerian}.

	If $\Delta\phi_t=0$, differentiation at fixed spatial points on
	each compact subset of $D_0$ gives $\Delta\phi'=0$. In the exterior
	Neumann case, $T_t^\eta$ is the identity beyond the fixed collar, while
	the proof of Proposition~\ref{prop:neumann-sobolev} represents $\phi_t$
	there by a smoothly parameter-dependent exterior Poisson series whose
	$\ell=0$ coefficient vanishes identically. Differentiating that series
	therefore preserves both finite energy and the stated dipole decay.
\end{proof}
For a Neumann solution family from
Proposition~\ref{prop:neumann-sobolev}, the first variations in the three
boundary   identities \eqref{eq:moving-trace-eulerian}--\eqref{eq:moving-normal-eulerian} depend continuously on the direction
$\eta\in\H^{k+2}(\Sph)$, with values in $\H^k(\Sph)$. This follows from
the smooth fixed-domain solution maps proved there, the trace theorem, and
the multiplication estimates on $\Sph$. Since $\C^\infty(\Sph)$ is dense
in $\H^{k+2}(\Sph)$, identities
\eqref{eq:moving-trace-eulerian}--\eqref{eq:moving-normal-eulerian}
therefore extend to every $\eta\in\H^{k+2}(\Sph)$ in the sense of first
variations.

\subsection{The elliptic Neumann problems}

The formulas in Lemma~\ref{lem:neumann-to-dirichlet} define the
\emph{round-sphere Neumann-to-Dirichlet maps} $g\mapsto\ein_g\big|_{r=1}$ and
$g\mapsto\eout_g\big|_{r=1}$
and the \emph{round-sphere Neumann-to-gradient maps}
$g\mapsto\nabla\ein_g\big|_{r=1}$ and $g\mapsto\nabla\eout_g\big|_{r=1}$.
We denote by $\partial_r=\omega\cdot\nabla$ differentiation in the normal
direction of the bounded phase.

\begin{lemma}
	\label{lem:neumann-to-dirichlet}
	Let $g\in\H^j(\Sph)$ have zero average and write its spherical-harmonic
	decomposition as
	\[
		g=\sum_{\ell\ge1}g_\ell,
	\]
	where $g_\ell$ is the degree-$\ell$ part.
	Let $\ein_g$ denote the unique solution of the interior Neumann problem
	\begin{equation}
		\label{eq:NtD-interior-problem}
		\begin{cases}
			\Delta \ein_g=0    & \text{in } B_1, \\
			\partial_r\ein_g=g & \text{on } r=1, \\
			\displaystyle\int_{\Sph}\ein_g(1,\omega)\dd S=0.
		\end{cases}
	\end{equation}
	and $\eout_g$ the unique decaying finite-energy solution of the exterior
	Neumann problem
	\begin{equation}
		\label{eq:NtD-exterior-problem}
		\begin{cases}
			\Delta \eout_g=0     &
			\text{in } \R^3\setminus\overline{B_1},            \\
			\partial_r\eout_g=g  & \text{on } r=1,             \\
			\nabla\eout_g\in\L^2(\R^3\setminus\overline{B_1}),
			\quad \eout_g(y)\to0 & \text{as } \abs y\to\infty.
		\end{cases}
	\end{equation}
	The two solutions have the decompositions
	\begin{equation}
		\label{eq:NtD-potentials}
		\ein_g(r,\omega)
		=
		\sum_{\ell\ge1}\frac{1}{\ell}r^\ell g_\ell(\omega),
		\qquad
		\eout_g(r,\omega)
		=
		-\sum_{\ell\ge1}\frac{1}{\ell+1}r^{-\ell-1}g_\ell(\omega).
	\end{equation}
	Thus
	\begin{equation}
		\label{eq:NtD-traces}
		\ein_g\big|_{r=1}
		=
		\sum_{\ell\ge1}\frac1\ell g_\ell,
		\qquad
		\eout_g\big|_{r=1}
		=
		-\sum_{\ell\ge1}\frac1{\ell+1}g_\ell.
	\end{equation}
	The corresponding boundary gradients are
	\begin{align}
		\qin_g
		 & :=
		\nabla\ein_g\big|_{r=1}
		=
		\sum_{\ell\ge1}\left(g_\ell\omega+\frac1\ell\nablaS g_\ell\right), \label{eq:NtG-gradient-in} \\
		\qout_g
		 & :=
		\nabla\eout_g\big|_{r=1}
		=
		\sum_{\ell\ge1}\left(g_\ell\omega-\frac1{\ell+1}\nablaS g_\ell\right). \label{eq:NtG-gradient-out}
	\end{align}
	In particular, the tangential parts are
	\begin{equation}
		\label{eq:NtG-tangential}
		(\qin_g)_{\tan}
		=
		\sum_{\ell\ge1}\frac1\ell\nablaS g_\ell,
		\qquad
		(\qout_g)_{\tan}
		=
		-\sum_{\ell\ge1}\frac1{\ell+1}\nablaS g_\ell .
	\end{equation}
	For a pure degree-$\ell$ datum $g_\ell$, these formulas reduce to
	\begin{equation}
		\label{eq:NtG-pure-mode}
		\qin_{g_\ell}=g_\ell\omega+\frac1\ell\nablaS g_\ell,
		\qquad
		\qout_{g_\ell}=g_\ell\omega-\frac1{\ell+1}\nablaS g_\ell .
	\end{equation}
\end{lemma}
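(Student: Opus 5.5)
The plan is to use separation of variables in spherical coordinates together with uniqueness for the two Neumann problems. First, note that since $g$ has zero average, the degree-zero part is absent. The compatibility condition for the interior problem \eqref{eq:NtD-interior-problem} is therefore satisfied. In the exterior problem \eqref{eq:NtD-exterior-problem} it rules out a monopole term $r^{-1}$.

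For the interior, for each degree-$\ell$ component consider $r^\ell g_\ell(\omega)$. It is a homogeneous harmonic polynomial, hence harmonic in $B_1$, and its radial derivative at $r=1$ is $\ell g_\ell$. So $\frac1\ell r^\ell g_\ell$ has Neumann datum $g_\ell$. Its trace $\frac1\ell g_\ell$ has zero mean for $\ell\ge1$, which fixes the additive constant as required.

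For the exterior, the Kelvin-type function $r^{-\ell-1}g_\ell$ is harmonic in $\R^3\setminus\overline{B_1}$, decays, and has finite Dirichlet energy. Its radial derivative at $r=1$ is $-(\ell+1)g_\ell$. This gives the coefficient $-\frac1{\ell+1}$.

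Convergence of both series in $\H^{j+1/2}$ near the sphere, and in $\Hdot^1$ outside, follows from the $\ell^2$-weighted bounds $\sum \ell^{2j}\norm{g_\ell}_{\L^2}^2<\infty$. Multiplying the coefficients by $1/\ell$ or $1/(\ell+1)$ only improves these bounds. Uniqueness comes from the energy identity. For the interior, a harmonic function with zero Neumann data is constant, and the mean condition makes it zero. For the exterior, zero Neumann data together with finite energy forces $\nabla\phi=0$, and decay then gives $\phi=0$. This establishes \eqref{eq:NtD-potentials}, and setting $r=1$ gives \eqref{eq:NtD-traces}.

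For the gradients, write $\nabla=\omega\partial_r+\frac1r\nablaS$ in spherical coordinates and evaluate at $r=1$. The radial part of each term is the Neumann datum $g_\ell$. The tangential part is $\nablaS$ applied to the trace, namely $\frac1\ell\nablaS g_\ell$ inside and $-\frac1{\ell+1}\nablaS g_\ell$ outside. This gives \eqref{eq:NtG-gradient-in}, \eqref{eq:NtG-gradient-out} and \eqref{eq:NtG-tangential}, and \eqref{eq:NtG-pure-mode} is the case where only one $g_\ell$ is present.

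The only point needing some care is justifying termwise differentiation of the series up to the boundary. I would handle it through the same weighted-$\ell^2$ estimate, noting that the traces of the series converge in $\H^{j}$ and the tangential gradients in $\H^{j-1}$. Apart from that, the argument is routine.
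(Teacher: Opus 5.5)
Your proposal is correct and follows the paper's own argument. You treat each degree-$\ell$ mode with the explicit harmonics $r^\ell g_\ell$ and $r^{-\ell-1}g_\ell$, fix the constants $1/\ell$ and $-1/(\ell+1)$ by the Neumann condition (zero average giving compatibility and the interior trace normalisation), sum over $\ell$, and read off the boundary gradients from $\nabla\phi=\partial_r\phi\,\omega+\nablaS(\phi|_{r=1})$ at $r=1$.

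Your extra remarks on convergence and uniqueness are fine. The series actually converge with one more derivative than you state; for instance, the traces lie in $\H^{j+1}$ and their tangential gradients in $\H^{j}$. The weaker statements you give still suffice.
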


\begin{proof}
	For a pure degree-$\ell$ datum, the regular interior harmonic and the decaying
	exterior harmonic are multiples of $r^\ell g_\ell(\omega)$ and
	$r^{-\ell-1}g_\ell(\omega)$, respectively. The constants in
	\eqref{eq:NtD-potentials} are chosen so that the radial derivative at
	$r=1$ is $g_\ell$ in the sense of
	\eqref{eq:NtD-interior-problem}--\eqref{eq:NtD-exterior-problem}. The
	zero-average assumption implies the Neumann compatibility condition, and the trace
	normalisation fixes the interior additive constant.
	Summing over $\ell$ gives the general case. The gradient decomposition on
	$r=1$,
	\[
		\nabla\phi=\partial_r\phi\,\omega+\nablaS(\phi\big|_{r=1}),
	\]
	then gives \eqref{eq:NtG-gradient-in}, \eqref{eq:NtG-gradient-out} and
	\eqref{eq:NtG-tangential}.
\end{proof}

Our goal is to obtain expansions of the inner and outer potentials under shape
deformations of the round sphere. In view of the expansion of the Neumann datum
in \eqref{eq:W-normal-first-expansion}, we begin with the zeroth-order datum
$g=VZ$ on the round sphere. The resulting solutions are called the
\emph{round-sphere translational solutions}, their
potentials are denoted by $\ein_V$ and $\eout_V$, and the boundary gradients by
$\qin_V$ and $\qout_V$.

\begin{lemma}
	\label{lem:round-translational-potentials}
	On the round sphere we consider the translational datum $VZ$. The associated
	potentials, obtained from \eqref{eq:NtD-interior-problem} and
	\eqref{eq:NtD-exterior-problem}, are
	\begin{equation}
		\label{eq:round-translational-potentials}
		\ein_V(y)=Vy_3=VrZ,
		\qquad
		\eout_V(r,\omega) = -\frac{V}2\frac{y_3}{|y|^3}
		=-\frac{V}{2}r^{-2}Z .
	\end{equation}
	Their boundary gradients on $r=1$ are
	\begin{equation}
		\label{eq:round-translational-gradients}
		\qin_V=Ve_3,
		\qquad
		\qout_V
		=V\left(Z\omega-\frac12\nablaS Z\right),
		\qquad
		\qout_V-Ve_3=-\frac{3V}{2}\nablaS Z .
	\end{equation}
	Moreover,
	\begin{equation}
		\label{eq:round-translational-F}
		F_{\alpha,V}(\omega,\qin_V)=0,
		\qquad
		F_{\alpha,V}(\omega,\qout_V)
		=\frac{9V^2}{8}(1-Z^2).
	\end{equation}
\end{lemma}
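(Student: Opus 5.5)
The lemma is a direct application of Lemma~\ref{lem:neumann-to-dirichlet} to the pure degree-one datum $g=VZ$. Then we evaluate $F_{\alpha,V}$ on the resulting gradients.

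First, $Z$ is a degree-one spherical harmonic with zero average, so $g=g_1=VZ$. Formula \eqref{eq:NtD-potentials} with $\ell=1$ gives
\[
\ein_V=VrZ=Vy_3,\qquad \eout_V=-\tfrac{V}{2}r^{-2}Z=-\tfrac V2\,y_3/\abs y^3 .
\]
The interior trace normalisation holds because $\int_{\Sph}Z\dd S=0$. Next, \eqref{eq:NtG-pure-mode} with $\ell=1$ gives
\[
\qin_V=V(Z\omega+\nablaS Z),\qquad \qout_V=V\bigl(Z\omega-\tfrac12\nablaS Z\bigr).
\]
Using $e_3=Z\omega+\nablaS Z$ from \eqref{eq:low-mode-products}, the first becomes $Ve_3$. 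Subtracting $Ve_3$ from the second gives $\qout_V-Ve_3=-\tfrac{3V}{2}\nablaS Z$.

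For \eqref{eq:round-translational-F} we use the definition $F_{\alpha,V}(y,q)=\tfrac12\abs{q-Ve_3}^2+\alpha\,y\cdot(e_3\times q)$.

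\emph{Interior.} Here $q=Ve_3$, so the kinetic part vanishes. The rotational part is $\alpha\,\omega\cdot(e_3\times Ve_3)=0$.

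\emph{Exterior.} The kinetic part is $\tfrac12\cdot\tfrac94V^2\abs{\nablaS Z}^2=\tfrac98V^2(1-Z^2)$, again by \eqref{eq:low-mode-products}. For the rotational part, write $\omega\cdot(e_3\times q)=-(e_3\times\omega)\cdot q$. Note that $e_3\times\omega$ is tangent to the sphere and orthogonal to $\omega$. Moreover, $(e_3\times\omega)\cdot\nablaS Z=\partial_\varphi Z=0$ by \eqref{eq:dphi-def}. Hence $(e_3\times\omega)\cdot\qout_V=0$, and the rotational part vanishes.

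No step is a genuine obstacle. The only point requiring care is the rotational term: it vanishes because the translational gradients have no azimuthal component. This is why $F$ is independent of $\alpha$ here.
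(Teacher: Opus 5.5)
Your proposal is correct and follows the paper's proof essentially verbatim. As in the paper, you take the degree-one case of Lemma~\ref{lem:neumann-to-dirichlet} for the potentials and gradients, use $e_3=Z\omega+\nablaS Z$ to obtain $\qin_V=Ve_3$, compute the exterior kinetic term via $\abs{\nablaS Z}^2=1-Z^2$, and note that the rotational part vanishes because $(e_3\times\omega)\cdot\nablaS Z=\partial_\varphi Z=0$.
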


\begin{proof}
	The formulas for $\ein_V$ and $\eout_V$ are the
	degree-one case of Lemma~\ref{lem:neumann-to-dirichlet}. Their gradients
	give \eqref{eq:round-translational-gradients}. Since
	$\qin_V=Ve_3$, the interior value of $F_{\alpha,V}$ is zero. For the
	exterior field, the rotational contribution vanishes because
	$\qout_V$ is axisymmetric:
	\[
		(e_3\times\omega)\cdot \qout_V
		=-\frac V2\partial_\varphi Z=0.
	\]
	Thus
	\[
		F_{\alpha,V}(\omega,\qout_V)
		=\frac12\abs{\qout_V-Ve_3}^2
		=\frac{9V^2}{8}\abs{\nablaS Z}^2
		=\frac{9V^2}{8}(1-Z^2).
	\]
\end{proof}

Next, we turn to the first-order terms associated with the radial perturbation
\[
	\Gamma_t=\bigl\{(1+t\eta(\omega))\omega:\omega\in\Sph\bigr\}.
\]
We split the first-order correction of the potentials into two pieces. The
\emph{rotational correction} is caused by the fact that the rotational vector field $\alpha e_3\times y$, although tangent to the round sphere, acquires a normal component when the
normal vector is varied. Its first-order Neumann datum is
$-\alpha\,\partial_\varphi \eta$; see \eqref{eq:W-normal-first-expansion}.
The \emph{translational shape correction} is the derivative at $t=0$ of the
translational Neumann solution corresponding to the field $V e_3$ on the
deformed domain. The boundary-gradient corrections are obtained by taking the
gradients of these first-order potentials at $r=1$.

\begin{lemma}
	\label{lem:first-order-neumann-expansion}
	Fix $\alpha,V\in\R$ and let $\eta\in\H^{k+2}(\Sph)$. Set
	\[
		\Gamma_t=\Psi_{t\eta}(\Sph),\qquad
		\Omegain_t=\set{r<1+t\eta(\omega)},\qquad
		\Omegaout_t=\R^3\setminus\overline{\Omegain_t}.
	\]
	Let $n_t$ be the normal in \eqref{eq:normal-jacobian} with $h=t\eta$. The
	moving-boundary Neumann problems are
	\begin{equation}
		\begin{cases}
			\Delta\ein_{t,V}=0 & \text{in } \Omegain_t, \\
			\nabla\ein_{t,V}\cdot n_t=W_{\alpha,V}\cdot n_t
			                   & \text{on } \Gamma_t,   \\
			\displaystyle \int_{\Sph}\ein_{t,V}((1+t\eta)\omega)\dd S=0,
		\end{cases}
	\end{equation}
	and
	\begin{equation}
		\begin{cases}
			\Delta\eout_{t,V}=0 & \text{in } \Omegaout_t,      \\
			\nabla\eout_{t,V}\cdot n_t=W_{\alpha,V}\cdot n_t
			                    & \text{on } \Gamma_t,         \\
			\nabla\eout_{t,V}\in\L^2(\Omegaout_t),
			\quad
			\eout_{t,V}(y)\to0  & \text{as } \abs y\to\infty .
		\end{cases}
	\end{equation}
	Let $\phinoutdot$ and $(\phinout)'$ denote the material and
	Eulerian derivatives associated with $T_t^\eta$ as in
	Lemma~\ref{lem:eulerian-boundary-differentiation}. The Eulerian derivatives
	are harmonic and decompose as
	$(\phinout)'=\phinout_{{\rm rot},\eta}+\uinout_{V,\eta}$. Consequently, up
	to the irrelevant interior additive constant, the fixed-domain pullback
	expansions are
	\begin{equation}
		\label{eq:potential-expansion-in-out}
		\begin{aligned}
			\ein_{t,V}\circ T_t^\eta
			 & =
			\ein_V+t\left(
			\mathscr X_\eta\cdot\nabla\ein_V
			+\ein_{{\rm rot},\eta}+\uin_{V,\eta}
			\right)+O(t^2), \\
			\eout_{t,V}\circ T_t^\eta
			 & =
			\eout_V+t\left(
			\mathscr X_\eta\cdot\nabla\eout_V
			+\eout_{{\rm rot},\eta}+\uout_{V,\eta}
			\right)+O(t^2).
		\end{aligned}
	\end{equation}
	The round-sphere potentials $\phinout_V$ are given in
	Lemma~\ref{lem:round-translational-potentials}. The Eulerian rotational
	potentials solve
	\begin{equation}
		\label{eq:rotational-potentials-def}
		\begin{cases}
			\Delta\phinout_{{\rm rot},\eta}
			 & = 0 \\[0.2em]
			\partial_r\phinout_{{\rm rot},\eta}\big|_{r=1}
			 & =
			g_{{\rm rot},\eta}:=-\alpha\partial_\varphi\eta,
		\end{cases}
	\end{equation}
	with exterior decay for the outer problem and the same interior normalisation
	as above. The translational shape part is trivial in the interior, in the
	sense that
	\begin{equation}
		\nabla\uin_{V,\eta}=0,
	\end{equation}
	The moving trace normalisation fixes the resulting constant in
	$\uin_{V,\eta}$. None of the subsequent boundary-gradient formulas depends
	on that constant.
	The exterior part solves
	\begin{equation}
		\label{eq:exterior-translation-shape-datum}
		\begin{cases}
			\Delta\uout_{V,\eta}=0
			 & \text{in } \R^3\setminus\overline{B_1}, \\
			\displaystyle
			\partial_r\uout_{V,\eta}\big|_{r=1}
			=
			g_{V,\eta}^{\rm out}
			:=
			3V\eta Z
			-
			\frac{3V}{2}\nablaS\eta\cdot\nablaS Z,
			 & \text{on } r=1,                         \\
			\uout_{V,\eta}(y)\to0
			 & \text{as } \abs y\to\infty .
		\end{cases}
	\end{equation}
	If $g_{{\rm rot},\eta}=\sum_{\ell\ge1}g_\ell$ and
	$g_{V,\eta}^{\rm out}=\sum_{n\ge1}k_n$ are their spherical-harmonic
	decompositions, then on $r=1$
	\begin{align}
		\qin_{{\rm rot},\eta} \label{eq:rotational-boundary-gradient-in}
		 & :=\nabla\ein_{{\rm rot},\eta}\big|_{r=1}
		=
		\sum_{\ell\ge1}\left(g_\ell\omega+\frac1\ell\nablaS g_\ell\right), \\
		\qout_{{\rm rot},\eta} \label{eq:rotational-boundary-gradient-out}
		 & :=\nabla\eout_{{\rm rot},\eta}\big|_{r=1}
		=
		\sum_{\ell\ge1}\left(g_\ell\omega-\frac1{\ell+1}\nablaS g_\ell\right),
	\end{align}
	and
	\begin{equation}
		\label{eq:translational-shape-boundary-gradient}
		\nabla\uout_{V,\eta}\big|_{r=1}
		=
		\sum_{n\ge1}\left(k_n\omega-\frac1{n+1}\nablaS k_n\right).
	\end{equation}
	Moreover, for the pulled-back boundary gradients
	\[
		\qin_{t,V}(\omega)
		:=
		\nabla\ein_{t,V}((1+t\eta(\omega))\omega),
		\qquad
		\qout_{t,V}(\omega)
		:=
		\nabla\eout_{t,V}((1+t\eta(\omega))\omega),
	\]
	we have
	\begin{equation}
		\label{eq:boundary-gradient-expansion-inner}
		\qin_{t,V}
		=
		Ve_3+t\qin_{{\rm rot},\eta}+O(t^2),
	\end{equation}
	and
	\begin{align}
		\label{eq:boundary-gradient-expansion-outer}
		\qout_{t,V}
		 & =
		V\left(Z\omega-\frac12\nablaS Z\right)
		+t\qout_{{\rm rot},\eta}
		+t\left[
			  V\eta\left(-3Z\omega+\frac32\nablaS Z\right)
			  +\nabla\uout_{V,\eta}\big|_{r=1}
			  \right] \\
		 & \hphantom{=}+O(t^2). \nonumber
	\end{align}
	Therefore the tangential part of the exterior boundary gradient, with respect
	to the tangent space $T_\omega\Sph$, is
	\begin{equation}
		\label{eq:boundary-gradient-expansion-outer-tangential}
		(\qout_{t,V})_{\tan}
		=
		-\frac V2\nablaS Z
		+t(\qout_{{\rm rot},\eta})_{\tan}
		+t\left[
			\frac{3V\eta}{2}\nablaS Z
			+\left(\nabla\uout_{V,\eta}\big|_{r=1}\right)_{\tan}
			\right]
		+O(t^2),
	\end{equation}
	where the two tangential pieces are obtained from
	\eqref{eq:rotational-boundary-gradient-out} and
	\eqref{eq:translational-shape-boundary-gradient} by dropping their radial
	parts.
\end{lemma}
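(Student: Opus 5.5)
The plan is to derive everything from the Eulerian boundary-differentiation formulas of Lemma~\ref{lem:eulerian-boundary-differentiation}, applied to the Neumann families of Proposition~\ref{prop:neumann-sobolev} with $h=t\eta$. Those families are smooth in the fixed-domain pullback, so the material derivatives exist. The Eulerian derivative $(\phinout)'$ is then harmonic and, in the exterior case, decays like a dipole. The first step is to compute its Neumann datum. By \eqref{eq:moving-normal-eulerian}, the left side of the boundary condition has first-order coefficient $\partial_r\phi'+\eta\partial_{rr}\phi_0-\nablaS\eta\cdot(\nabla\phi_0)_{\tan}$. By \eqref{eq:W-normal-first-expansion}, the right side has coefficient $-\alpha\partial_\varphi\eta-V\nablaS\eta\cdot\nablaS Z$. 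Hence
\[
\partial_r\phi'=-\alpha\partial_\varphi\eta-V\nablaS\eta\cdot\nablaS Z-\eta\partial_{rr}\phi_0+\nablaS\eta\cdot(\nabla\phi_0)_{\tan}\qquad\text{on } r=1 .
\]
Since the problem is linear, I split $\phi'$ into the part driven by the rotational datum $-\alpha\partial_\varphi\eta$, which gives \eqref{eq:rotational-potentials-def}, and the remaining translational part $u_{V,\eta}$.

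For the interior phase, $\ein_V=Vy_3$ gives $\partial_{rr}\ein_V=0$ and $(\nabla\ein_V)_{\tan}=V\nablaS Z$. The translational datum therefore cancels exactly, and $\nabla\uin_{V,\eta}=0$. This is also clear directly: $Vy_3$ solves the Neumann problem for $Ve_3$ on every domain, so the only change is an additive constant fixed by the moving trace normalisation. For the exterior phase, $\eout_V=-\frac V2r^{-2}Z$ gives $\partial_{rr}\eout_V=-3VZ$ at $r=1$ and $(\nabla\eout_V)_{\tan}=-\frac V2\nablaS Z$. The datum becomes
\[
-V\nablaS\eta\cdot\nablaS Z+3V\eta Z-\tfrac V2\nablaS\eta\cdot\nablaS Z=3V\eta Z-\tfrac{3V}2\nablaS\eta\cdot\nablaS Z,
\]
which is \eqref{eq:exterior-translation-shape-datum}. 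One point needs checking before Lemma~\ref{lem:neumann-to-dirichlet} applies: each datum must have zero mean. For the rotational datum this holds because $\partial_\varphi$ is a divergence-free tangential derivative. For the translational datum it follows from the compatibility identity \eqref{eq:compatibility_condition_neumann}, differentiated in $t$, or from integrating by parts with $\Delta_{\Sph}Z=-2Z$. The boundary-gradient formulas \eqref{eq:rotational-boundary-gradient-in}--\eqref{eq:translational-shape-boundary-gradient} then follow directly from \eqref{eq:NtG-gradient-in}--\eqref{eq:NtG-gradient-out}. The pullback expansions \eqref{eq:potential-expansion-in-out} are just the identity $\dot\phi=\phi'+\mathscr X_\eta\cdot\nabla\phi_0$.

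For the boundary-gradient expansions I would use \eqref{eq:moving-gradient-eulerian}, which gives first-order coefficient $\nabla\phi'+\eta\partial_r\nabla\phi_0$. In the interior, $\partial_r\nabla(Vy_3)=0$ and $\nabla\uin=0$, which yields \eqref{eq:boundary-gradient-expansion-inner}. In the exterior, $\nabla\eout_V=-\frac V2\nabla(y_3|y|^{-3})$ is homogeneous of degree $-3$. Euler's relation then gives $\partial_r\nabla\eout_V=-3\nabla\eout_V=-3V(Z\omega-\frac12\nablaS Z)$ at $r=1$, which produces the bracket in \eqref{eq:boundary-gradient-expansion-outer}. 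Taking the tangential part removes the $\omega$ components and gives \eqref{eq:boundary-gradient-expansion-outer-tangential}.

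The main obstacle I anticipate is rigour rather than computation. Lemma~\ref{lem:eulerian-boundary-differentiation} requires $\eta$ smooth and $C^2$-in-$t$ material dependence, so I would first prove everything for smooth $\eta$. I would then extend to $\eta\in\H^{k+2}$ by the density remark following that lemma, using the continuity of the first variations in $\H^k$. Finally, the $O(t^2)$ remainders are uniform by the smoothness in Proposition~\ref{prop:neumann-sobolev}.
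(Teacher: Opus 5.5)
Your proposal is correct and follows essentially the paper's route. You linearise the Neumann condition via \eqref{eq:moving-normal-eulerian} against \eqref{eq:W-normal-first-expansion}, split the Eulerian datum into rotational and translational parts using the explicit round-sphere potentials, check zero-mean compatibility, apply the Neumann-to-gradient formulas, and read off the boundary gradients from \eqref{eq:moving-gradient-eulerian}. The only cosmetic differences are that you get $\partial_r\nabla\eout_V|_{r=1}=-3\nabla\eout_V|_{r=1}$ from degree $-3$ homogeneity rather than by differentiating the polar decomposition, and that you spell out the density extension from smooth $\eta$, which the paper leaves to the remark following Lemma~\ref{lem:eulerian-boundary-differentiation}.
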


\begin{proof}
	The pullbacks of the moving Neumann solutions by $T_t^\eta$ depend
	smoothly on $t$. Let $\chi^{\rm in}$ and $\chi^{\rm out}$ denote their
	Eulerian derivatives, as defined in
	Lemma~\ref{lem:eulerian-boundary-differentiation}, i.e. $\chi^{\rm in/out}=\left(\phi_{t,V}^{\rm in /out}\right)'$.
	By said Lemma $\chi^{\rm in}$ is harmonic in
	$B_1$ and $\chi^{\rm out}$ is harmonic in
	$\R^3\setminus\overline{B_1}$ with
	$\chi^{\rm out}=O(r^{-2})$ and $\nabla\chi^{\rm out}=O(r^{-3})$. It remains
	to identify their Neumann data on $r=1$.

	Next, we linearise the boundary condition. To treat both phases simultaneously,
	let $(\phi_{t,V},\phi_V,\chi)$ denote either
	$(\phiin_{t,V},\phiin_V,\chi^{\rm in})$ or
	$(\phiout_{t,V},\phiout_V,\chi^{\rm out})$. Thus
	\[
		\phi_V=\phi_{0,V},
		\qquad
		\chi
		=
		\left.\partial_t\right|_{t=0}
		(\phi_{t,V}\circ T_t^\eta)
		-\mathscr X_\eta\cdot\nabla\phi_V,
	\]
	as in
	Lemma~\ref{lem:eulerian-boundary-differentiation}.
	The third boundary identity of
	Lemma~\ref{lem:eulerian-boundary-differentiation} gives
	\begin{align}
		\label{eq:linearised-neumann-condition-template}
		 & (\nabla\phi_{t,V}\cdot n_t)\circ\Psi_{t\eta} \\
		 & =
		\partial_r\phi_V
		+t\left(
		  \partial_r\chi
		  +\eta\partial_{rr}\phi_V
		  -\nablaS\eta\cdot(\nabla\phi_V)_{\tan}
		\right)_{r=1}
		+O(t^2). \nonumber
	\end{align}
	The three order-$t$ terms are, respectively, the Neumann trace of the first
	correction, the displacement of the evaluation point, and the first variation
	of the normal.

	On the other hand, \eqref{eq:W-normal-first-expansion} gives the expansion
	of the prescribed datum
	\begin{equation}
		\label{eq:linearised-moving-datum}
		(W_{\alpha,V}\cdot n_t)\circ\Psi_{t\eta}
		=
		VZ+t\left(
		-\alpha\partial_\varphi\eta
		-V\nablaS\eta\cdot\nablaS Z
		\right)+O(t^2).
	\end{equation}
	Comparing \eqref{eq:linearised-neumann-condition-template} with
	\eqref{eq:linearised-moving-datum}, we find that the
	zeroth-order potentials are the translational potentials
	$\ein_V$ and $\eout_V$.

	Turning to the first-order terms and recalling
	\eqref{eq:round-translational-potentials}, we observe that the Neumann datum
	for $\chi$ is
	\begin{align}\label{eq:first-order-neumann-datum}
		\partial_r\chi=
		-\eta\partial_{rr}\phi_V
		+\nablaS\eta\cdot(\nabla\phi_V)_{\tan}-\alpha\partial_\varphi\eta
		-V\nablaS\eta\cdot\nablaS Z,
	\end{align}
	which is linear in $(\alpha,V)$. We may therefore consider separately the
	purely rotational part, obtained by setting $V=0$, and the purely
	translational part, obtained by setting $\alpha=0$.

	Let $V=0$. By Lemma~\ref{lem:round-translational-potentials},
	$\nabla\phi_{V=0}=0$, so only
	\begin{align*}
		\partial_r\chi=-\alpha\partial_\varphi\eta
	\end{align*}
	remains. Denoting the corresponding terms by
	$\phinout_{{\rm rot},\eta}$ therefore gives harmonic first variations with
	Neumann datum
	\[
		\partial_r\ein_{{\rm rot},\eta}\big|_{r=1}
		=
		\partial_r\eout_{{\rm rot},\eta}\big|_{r=1}
		=
		g_{{\rm rot},\eta}
		:=
		-\alpha\partial_\varphi\eta .
	\]
	The datum has zero average on $\Sph$, so the compatibility conditions
	for the interior and exterior Neumann
	problems are satisfied. This proves
	\eqref{eq:rotational-potentials-def}.

	Now set $\alpha=0$ to identify the translational shape part, and denote the
	corresponding variation by $u^{\rm in /\rm out}_{V,\eta}$. For the interior
	base solution $\ein_V=Vy_3$, one has, on $r=1$,
	\[
		\partial_r\ein_V=VZ,
		\qquad
		\partial_{rr}\ein_V=0,
		\qquad
		(\nabla\ein_V)_{\tan}=V\nablaS Z .
	\]
	Substitution into
	\eqref{eq:first-order-neumann-datum}
	yields
	\[
		\partial_r\uin_{V,\eta}\big|_{r=1}
		=
		-V\nablaS\eta\cdot\nablaS Z
		+V\nablaS\eta\cdot\nablaS Z
		=
		0.
	\]
	Thus $\uin_{V,\eta}$ is harmonic with homogeneous Neumann datum, i.e.\ $\nabla\uin_{V,\eta}=0$.

	For the exterior base solution
	\[
		\eout_V(r,\omega)
		=-\frac{V}{2}r^{-2}Z,
	\]
	we compute, on $r=1$,
	\begin{equation}
		\label{eq:exterior-base-radial-data-proof}
		\partial_r\eout_V=VZ,
		\qquad
		\partial_{rr}\eout_V=-3VZ,
		\qquad
		(\nabla\eout_V)_{\tan}=-\frac V2\nablaS Z .
	\end{equation}
	Substitution in \eqref{eq:first-order-neumann-datum} gives
	\[
		\partial_r\uout_{V,\eta}\big|_{r=1}
		=
		3V\eta Z
		-
		\frac{3V}{2}\nablaS\eta\cdot\nablaS Z,
	\]
	which is the boundary condition in
	\eqref{eq:exterior-translation-shape-datum}. The compatibility condition
	for this exterior Neumann problem follows from
	\[
		\int_{\Sph}\nablaS\eta\cdot\nablaS Z\dd S
		=
		-\int_{\Sph}\eta\Delta_{\Sph}Z\dd S
		=
		2\int_{\Sph}\eta Z\dd S,
	\]
	because $\Delta_{\Sph}Z=-2Z$.

	By linearity, the first variation is the sum of the rotational part and the
	translational shape part, which gives
	\eqref{eq:potential-expansion-in-out}. Applying the
	Neumann-to-gradient formulas of Lemma~\ref{lem:neumann-to-dirichlet} to
	$g_{{\rm rot},\eta}$ and $g_{V,\eta}^{\rm out}$ gives
	\eqref{eq:rotational-boundary-gradient-in}, \eqref{eq:rotational-boundary-gradient-out} and
	\eqref{eq:translational-shape-boundary-gradient}.

	It remains to expand the gradient at the moving boundary. For either
	moving potential family, write $\Phi_t=\ein_{t,V}$ or
	$\Phi_t=\eout_{t,V}$, let $\Phi_0$ denote the corresponding base field,
	and let $\chi:=\Phi'$ be its Eulerian derivative in the sense of
	Lemma~\ref{lem:eulerian-boundary-differentiation}. Then
	\eqref{eq:moving-gradient-eulerian} gives
	\begin{equation}
		\label{eq:linearised-boundary-gradient-template}
		(\nabla\Phi_t)\circ\Psi_{t\eta}
		=
		\nabla\Phi_0\big|_{r=1}
		+t\left(
		\nabla\chi\big|_{r=1}
		+\eta\partial_r\nabla\Phi_0\big|_{r=1}
		\right)+O(t^2).
	\end{equation}
	For the interior base field $\Phi_0=\phiin_V=Vy_3$,
	$\partial_r\nabla(Vy_3)=0$, and hence
	\eqref{eq:boundary-gradient-expansion-inner} follows. For the exterior
	base field $\Phi_0=\phiout_V=-\frac{V}{2}r^{-2}Z$, differentiating
	\[
		\nabla\eout_V
		=
		\partial_r\eout_V\,\omega
		+\frac1r\nablaS\eout_V
	\]
	and using \eqref{eq:exterior-base-radial-data-proof} gives
	\[
		\partial_r\nabla\eout_V\big|_{r=1}
		=
		V\left(-3Z\omega+\frac32\nablaS Z\right).
	\]
	Inserting this into
	\eqref{eq:linearised-boundary-gradient-template} gives
	\eqref{eq:boundary-gradient-expansion-outer}. Projection onto
	$T_\omega\Sph$ gives
	\eqref{eq:boundary-gradient-expansion-outer-tangential}.
\end{proof}

\subsection{Asymptotics for the $(2,1)$ kernel direction}
A key role is played by the deformation in the direction $XZ$ arising in the
kernel of the unperturbed equation.
\begin{lemma}
	Let $\eta=sXZ$, where $s\in\R$ is sufficiently small. Then the exterior
	translational shape datum from
	\eqref{eq:exterior-translation-shape-datum} is
	\begin{equation}
		\label{eq:gV-XZ-splitting}
		g_{V,\eta}^{\rm out}
		=
		Vs\left[
			-\frac3{10}X
			+6X\left(Z^2-\frac15\right)
			\right].
	\end{equation}
	Consequently,
	\begin{equation}
		\label{eq:uV-XZ-rotational-contribution}
		-\alpha(e_3\times\omega)\cdot\nabla\uout_{V,\eta}\big|_{r=1}
		=
		\alpha Vs
		\left[
			\frac3{20}Y
			-\frac32Y\left(Z^2-\frac15\right)
			\right].
	\end{equation}
\end{lemma}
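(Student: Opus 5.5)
The plan is a direct computation in two steps: evaluate the Neumann datum of \eqref{eq:exterior-translation-shape-datum} for $\eta=sXZ$, split it into spherical harmonics, and then apply the Neumann-to-gradient formula \eqref{eq:translational-shape-boundary-gradient}. Since $g^{\rm out}_{V,\eta}$ is linear in $\eta$, no smallness of $s$ is actually used.

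\emph{Step 1: the datum.} By \eqref{eq:XZ-YZ-products}, $\nablaS(XZ)\cdot\nablaS Z=X(1-2Z^2)$. Substituting into \eqref{eq:exterior-translation-shape-datum} gives
\[
g^{\rm out}_{V,\eta}
=Vs\left[3XZ^2-\frac32X(1-2Z^2)\right]
=Vs\left[6XZ^2-\frac32X\right].
\]
We rewrite $6XZ^2=6X\bigl(Z^2-\frac15\bigr)+\frac65X$ and use $\frac65-\frac32=-\frac3{10}$. This yields \eqref{eq:gV-XZ-splitting}. By \eqref{eq:degree-three-real}, this expresses the datum as a degree-one part $k_1=-\frac3{10}VsX$ plus a degree-three part $k_3=6VsX(Z^2-\frac15)$. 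In particular the splitting is exactly the spherical-harmonic decomposition required in \eqref{eq:translational-shape-boundary-gradient}. Its mean is zero, consistent with the compatibility condition.

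\emph{Step 2: the rotational contribution.} Take the scalar product of \eqref{eq:translational-shape-boundary-gradient} with $e_3\times\omega$. The radial parts drop out, because $(e_3\times\omega)\cdot\omega=0$. By \eqref{eq:dphi-def}, $(e_3\times\omega)\cdot\nablaS k_n=\partial_\varphi k_n$. Hence
\[
-\alpha(e_3\times\omega)\cdot\nabla\uout_{V,\eta}\big|_{r=1}
=\alpha\left(\frac12\partial_\varphi k_1+\frac14\partial_\varphi k_3\right).
\]
Since $\partial_\varphi X=-Y$ and $\partial_\varphi Z=0$, we get
\[
\partial_\varphi k_1=\frac3{10}VsY,
\qquad
\partial_\varphi k_3=-6VsY\Bigl(Z^2-\frac15\Bigr).
\]
This gives the coefficients $\frac3{20}$ and $-\frac32$ in \eqref{eq:uV-XZ-rotational-contribution}.

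The only step requiring care is the correct identification of the harmonic degrees: the factors $1/(n+1)$ depend on $n$. This is settled by the observation that $X(Z^2-\frac15)$ is a pure degree-three harmonic, as recorded in \eqref{eq:degree-three-real}.
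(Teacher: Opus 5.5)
Your proposal is correct and follows the same route as the paper. You substitute $\eta=sXZ$ into the exterior datum using \eqref{eq:XZ-YZ-products}, split the result into the degree-one harmonic $X$ and the degree-three harmonic $X(Z^2-\frac15)$, and apply the exterior Neumann-to-gradient formula, in which only the tangential parts survive via $(e_3\times\omega)\cdot\nablaS f=\partial_\varphi f$; your arithmetic, including the coefficients $-\frac{3}{10}$, $\frac{3}{20}$ and $-\frac32$, checks out.
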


\begin{proof}
	Using \eqref{eq:XZ-YZ-products},
	\[
		\eta Z=sXZ^2,
		\qquad
		\nablaS\eta\cdot\nablaS Z=sX(1-2Z^2).
	\]
	Substitution into \eqref{eq:exterior-translation-shape-datum} gives
	\[
		g_{V,\eta}^{\rm out}
		=6VsXZ^2-\frac32VsX
		=Vs\left[-\frac3{10}X+6X\left(Z^2-\frac15\right)\right],
	\]
	which is \eqref{eq:gV-XZ-splitting}. The first term is a degree-one
	spherical harmonic and the second is a degree-three spherical harmonic. By
	the exterior Neumann-to-gradient formula
	\eqref{eq:NtG-pure-mode}, only the tangential part contributes to
	$(e_3\times\omega)\cdot\nabla\uout_{V,\eta}$. Hence
	\[
		(e_3\times\omega)\cdot\nabla\uout_{V,\eta}
		=
		-\frac12\partial_\varphi\left(-\frac3{10}VsX\right)
		-\frac14\partial_\varphi\left(6VsX\left(Z^2-\frac15\right)\right),
	\]
	and \eqref{eq:dphi-def} as well as multiplication by $-\alpha$
	gives \eqref{eq:uV-XZ-rotational-contribution}.
\end{proof}

We use the preceding lemma to extract the $sV$ coefficient of the
Bernoulli equation, with the notation introduced at the beginning of
Section~\ref{sec:shape-calculus}. In particular, $s$ and $V$ are independent
variables in the Taylor expansion, and $[\,\cdot\,]_{sV}$ denotes the mixed
second-order coefficient.

\begin{lemma}
	\label{lem:mixed-sV-residual-XZ}
	Let $h=sXZ$. The
	exterior contribution to the Bernoulli quantity is
	\begin{equation}
		\label{eq:Fout-sV-residual}
		\left[
			F_{\alpha,V}(\Psi_h,\qout_{h,V})
			\right]_{sV}
		=
		\alpha
		\left[
			\frac9{20}Y
			-\frac52Y\left(Z^2-\frac15\right)
			\right].
	\end{equation}
	The interior contribution is zero.
	The Bernoulli jump contributes
	\begin{align}
		\label{eq:B-sV-residual}
		 & \left[
			   \tau F_{\alpha,V}(\Psi_h,\qin_{h,V})
			   -
			   F_{\alpha,V}(\Psi_h,\qout_{h,V})
			   +H_h \circ \Psi_h
			   \right]_{sV} \\
		 & =
		-\alpha
		\left[
			\frac9{20}Y
			-\frac52Y\left(Z^2-\frac15\right)
			\right]. \nonumber
	\end{align}
\end{lemma}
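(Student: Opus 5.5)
The plan is to expand the Bernoulli density in the two independent small parameters $s$ and $V$ and keep only the terms that are bilinear in $(s,V)$. The tools are the first-order boundary-gradient expansions of Lemma~\ref{lem:first-order-neumann-expansion} with $t=s$ and $\eta=XZ$, and the rotational-contribution formula \eqref{eq:uV-XZ-rotational-contribution}. At $y=\Psi_h(\omega)=(1+sXZ)\omega$ I will use the form
\[
F_{\alpha,V}(y,q)=\tfrac12\abs{q-Ve_3}^2-\alpha(1+sXZ)(e_3\times\omega)\cdot q .
\]
Terms of order $s^2$ in $q$ cannot contribute to $[\,\cdot\,]_{sV}$. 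The curvature $H_h\circ\Psi_h$ does not depend on $V$, so it contributes nothing either. The jump formula \eqref{eq:B-sV-residual} therefore follows immediately once \eqref{eq:Fout-sV-residual} and the vanishing of the interior contribution are established.

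\emph{Interior phase.} By \eqref{eq:boundary-gradient-expansion-inner}, $\qin_{h,V}-Ve_3=s\,\qin_{{\rm rot},\eta}+O(s^2)$. The rotational potential $\qin_{{\rm rot},\eta}$ is linear in $\alpha$ and independent of $V$. Hence the kinetic part is $O(s^2)$ and has no $sV$ term. In the rotational part, $(e_3\times\omega)\cdot Ve_3=0$, and the remaining term $s(e_3\times\omega)\cdot\qin_{{\rm rot},\eta}$ carries no factor $V$. So the interior $sV$ coefficient vanishes.

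\emph{Exterior kinetic part.} By \eqref{eq:round-translational-gradients}, the zeroth-order term is $\qout_V-Ve_3=-\tfrac{3V}{2}\nablaS Z$. The $sV$ coefficient of $\tfrac12\abs{q-Ve_3}^2$ is therefore $-\tfrac{3V}{2}\nablaS Z$ paired with the order-$s$, $V$-independent part of $\qout$, which is $\qout_{{\rm rot},\eta}$.

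To compute it, I use $\partial_\varphi X=-Y$, which gives $g_{{\rm rot},\eta}=-\alpha\partial_\varphi(XZ)=\alpha YZ$. This is a pure degree-two datum, so \eqref{eq:NtG-pure-mode} gives $(\qout_{{\rm rot},\eta})_{\tan}=-\tfrac\alpha3\nablaS(YZ)$. Using \eqref{eq:XZ-YZ-products}, the pairing is
\[
\tfrac{\alpha}{2}\,\nablaS(YZ)\cdot\nablaS Z=\tfrac\alpha2 Y(1-2Z^2)=\alpha\Bigl[\tfrac3{10}Y-Y\bigl(Z^2-\tfrac15\bigr)\bigr],
\]
per unit $sV$.

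\emph{Exterior rotational part.} The term $-\alpha sXZ\,(e_3\times\omega)\cdot\qout_V$ vanishes because $(e_3\times\omega)\cdot\nablaS Z=\partial_\varphi Z=0$ and $(e_3\times\omega)\perp\omega$. For the order-$sV$ part of $\qout$ in \eqref{eq:boundary-gradient-expansion-outer}, the piece $V\eta(-3Z\omega+\tfrac32\nablaS Z)$ is annihilated by $e_3\times\omega$ for the same reason. The only surviving piece is $-\alpha(e_3\times\omega)\cdot\nabla\uout_{V,\eta}$, which by \eqref{eq:uV-XZ-rotational-contribution} equals $\alpha[\tfrac3{20}Y-\tfrac32Y(Z^2-\tfrac15)]$ per unit $sV$.

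Adding the kinetic and rotational contributions gives $\alpha[\tfrac9{20}Y-\tfrac52Y(Z^2-\tfrac15)]$, which is \eqref{eq:Fout-sV-residual}. The jump $\tau F^{\rm in}-F^{\rm out}+H$ then yields \eqref{eq:B-sV-residual}.

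The main point requiring care is not analytic but bookkeeping. Every bilinear cross term must be accounted for. In particular, the rotational weight $(1+h)$ and the evaluation-point shift $\eta\,\partial_r\nabla\eout_V$ are order $s$ but pair only with axisymmetric or radial vectors, and they must be checked to vanish. The degree-two sign of $g_{{\rm rot},\eta}$ must also be tracked carefully through \eqref{eq:dphi-def}.
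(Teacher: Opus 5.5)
Your proposal is correct and follows the paper's proof essentially step by step. The interior term vanishes because the order-$s$ interior gradient $\qin_{{\rm rot},XZ}$ is $V$-independent. The exterior $sV$ coefficient is the sum of the kinetic cross term $(-\tfrac32\nablaS Z)\cdot(\qout_{{\rm rot},XZ})_{\tan}=\tfrac{\alpha}{2}Y(1-2Z^2)$ and the rotational contribution of $\nabla\uout_{V,XZ}$ from \eqref{eq:uV-XZ-rotational-contribution}, with the weight $(1+h)$ and the evaluation-point shift killed by $e_3\times\omega$ because they only involve the radial and $\nablaS Z$ directions (axisymmetry of $\qout_V$), exactly as in the paper.
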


\begin{proof}
	The curvature term has no factor $V$ and therefore gives no $sV$
	contribution. In the interior, \eqref{eq:boundary-gradient-expansion-inner}
	gives
	\begin{align*}
		q^{\rm in}_{s,V}=Ve_3+sq^{\rm in}_{{\rm rot},XZ}+O(s^2)
	\end{align*}
	with $q^{\rm in}_{{\rm rot},XZ}$ independent of $V$. Hence
	$[F_{\alpha,V}(\Psi_h,\qin_{h,V})]_{sV}=0$.

	For the exterior problem, \eqref{eq:round-translational-gradients} and
	\eqref{eq:boundary-gradient-expansion-outer} give
	\begin{align*}
		\qout_{s,V} & =\qout_V+s\qout_{{\rm rot},XZ}+s\left[VXZ\left(-3Z\omega+\frac{3}{2}\nablaS Z\right)+\nabla \uout_{V,XZ}\big|_{r=1}\right].
	\end{align*}
	By \eqref{eq:rotational-potentials-def}, the order-$s$ rotational datum is
	\[
		g_{{\rm rot},XZ}=-\alpha\partial_\varphi(XZ)=\alpha YZ .
	\]
	It is a degree-two datum, and hence \eqref{eq:NtG-pure-mode} implies
	\[
		\qout_{{\rm rot},XZ}
		=
		g_{{\rm rot},XZ}\omega-\frac13\nablaS g_{{\rm rot},XZ}.
	\]
	Using \eqref{eq:round-translational-gradients} and \eqref{eq:XZ-YZ-products} the kinetic cross term is
	\begin{align*}
		(\qout_V-Ve_3)\cdot (s\qout_{{\rm rot},XZ})
		 & =
		\left(-\frac{3V}{2}\nablaS Z\right)
		\cdot
		\left(-\frac13\nablaS(\alpha sYZ)\right) \\
		 & =
		\frac{\alpha Vs}{2}Y(1-2Z^2).
	\end{align*}
	Since $\nabla \uout_{V,XZ}$ is linear in $V$ by
	\eqref{eq:exterior-translation-shape-datum}, this is the only contribution
	to the $sV$ coefficient of the kinetic term
	$\frac{1}{2}\abs{\qout_{s,V}-Ve_3}^2$.

	For the rotational part
	$\alpha(1+sXZ)\omega\cdot(e_3\times\qout_{s,V})$, the displacement term
	is zero because
	$\qout_V$ is axisymmetric:
	\[
		\alpha h\,\omega\cdot(e_3\times \qout_V)=0.
	\]
	Similarly, the only remaining rotational term is the translational shape
	derivative of the
	exterior potential. By
	\eqref{eq:uV-XZ-rotational-contribution}, it equals
	\[
		\alpha Vs
		\left[
			\frac3{20}Y
			-\frac32Y\left(Z^2-\frac15\right)
			\right].
	\]
	Since
	\[
		\frac12Y(1-2Z^2)
		=\frac3{10}Y-Y\left(Z^2-\frac15\right),
	\]
	adding the two exterior terms gives \eqref{eq:Fout-sV-residual}. The
	Bernoulli jump is interior minus exterior, which gives
	\eqref{eq:B-sV-residual}.
\end{proof}

\subsection{Asymptotics for the endpoint case \texorpdfstring{$\tau=0$}{tau = 0}}

\begin{lemma}
	Let $\ell\ge1$ and let $\eta=Y_\ell^m$ be a complex
	$\L^2(\Sph)$-normalised spherical harmonic. The real shape derivatives for
	$\Re Y_\ell^m$ and $\Im Y_\ell^m$ are extended complex-linearly.
	For the exterior translational shape derivative in
	\eqref{eq:exterior-translation-shape-datum},
	\begin{equation}
		\label{eq:general-mode-translation-datum}
		\partial_r\uout_{V,Y_\ell^m}\big|_{r=1}
		= \frac{3V}{2} \left(
		(\ell+2)a_{\ell,+}^mY_{\ell+1}^m
		-
		(\ell-1)a_{\ell,-}^mY_{\ell-1}^m \right).
	\end{equation}
	Its tangential boundary gradient satisfies
	\begin{equation}
		\label{eq:general-mode-translation-tangential-gradient}
		\left(\nabla\uout_{V,Y_\ell^m}\right)_{\tan}\Big|_{r=1}
		=\frac{3V}{2} \left(
		-a_{\ell,+}^m\nablaS Y_{\ell+1}^m
		+
		\frac{\ell-1}{\ell}a_{\ell,-}^m\nablaS Y_{\ell-1}^m
		\right).
	\end{equation}
	In both formulas the degree-$(\ell-1)$ term is omitted whenever
	$a_{\ell,-}^m=0$.
\end{lemma}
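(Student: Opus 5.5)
The proof starts from the exterior translational shape datum \eqref{eq:exterior-translation-shape-datum},
\[
	g^{\rm out}_{V,\eta}=3V\eta Z-\frac{3V}{2}\nablaS\eta\cdot\nablaS Z,
\]
with $\eta=Y_\ell^m$. This expression is complex-linear in $\eta$, so the complex extension is consistent. We compute each of the two terms using the recursions \eqref{eq:Z-recursion} and \eqref{eq:gradZ-recursion}. The first gives $3VZY_\ell^m=3V(a_{\ell,+}^mY_{\ell+1}^m+a_{\ell,-}^mY_{\ell-1}^m)$. The second gives $-\frac{3V}{2}\bigl(-\ell a_{\ell,+}^mY_{\ell+1}^m+(\ell+1)a_{\ell,-}^mY_{\ell-1}^m\bigr)$. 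Collecting coefficients, the $Y_{\ell+1}^m$ coefficient is $\frac{3V}{2}(2+\ell)a_{\ell,+}^m$. The $Y_{\ell-1}^m$ coefficient is $\frac{3V}{2}(2-(\ell+1))a_{\ell,-}^m=-\frac{3V}{2}(\ell-1)a_{\ell,-}^m$. This is \eqref{eq:general-mode-translation-datum}. The omission convention carries over directly: if $\ell-1<\abs m$, then $a_{\ell,-}^m=0$ and the recursions already drop that term.

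For the tangential gradient we apply the exterior Neumann-to-gradient formula \eqref{eq:NtG-tangential} degree by degree. The datum computed above has only degrees $\ell+1$ and $\ell-1$. A degree-$n$ component $k_n$ therefore contributes $-\frac{1}{n+1}\nablaS k_n$. For $n=\ell+1$ the factor is $-\frac{1}{\ell+2}$, which cancels the $(\ell+2)$ and leaves $-\frac{3V}{2}a_{\ell,+}^m\nablaS Y_{\ell+1}^m$. For $n=\ell-1$ the factor is $-\frac{1}{\ell}$, which gives $+\frac{3V}{2}\frac{\ell-1}{\ell}a_{\ell,-}^m\nablaS Y_{\ell-1}^m$. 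This proves \eqref{eq:general-mode-translation-tangential-gradient}.

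Two points need care, and neither is a real obstacle. The first is the zero-average requirement of Lemma~\ref{lem:neumann-to-dirichlet}. The datum has no degree-zero part, since degree $\ell-1=0$ occurs only for $\ell=1$, where the coefficient $(\ell-1)$ vanishes. This is consistent with the compatibility computation in Lemma~\ref{lem:first-order-neumann-expansion}. The second is the passage from real to complex harmonics. The Neumann solution map is real-linear, so it extends uniquely to a complex-linear map, and applying it to $\Re Y_\ell^m+i\Im Y_\ell^m$ gives the stated formulas. The only delicate step is sign bookkeeping in the recursion \eqref{eq:gradZ-recursion}, which we take as given from the preceding subsection.
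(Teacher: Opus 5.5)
Your proposal is correct and matches the paper's proof: substitute $\eta=Y_\ell^m$ into the exterior shape datum, expand with the recursions \eqref{eq:Z-recursion} and \eqref{eq:gradZ-recursion}, then apply the exterior tangential Neumann-to-gradient formula \eqref{eq:NtG-tangential} to the degree-$(\ell+1)$ and degree-$(\ell-1)$ components. Your remarks on the vanishing degree-zero part for $\ell=1$ and on the complex-linear extension are accurate details that the paper leaves implicit.
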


\begin{proof}
	Insert $\eta=Y_\ell^m$ in
	\eqref{eq:exterior-translation-shape-datum} and use
	\eqref{eq:Z-recursion} and \eqref{eq:gradZ-recursion}. This gives the
	Neumann datum \eqref{eq:general-mode-translation-datum}. The two terms
	have degrees $\ell+1$ and $\ell-1$, so the exterior tangential
	Neumann-to-gradient formula \eqref{eq:NtG-tangential} gives
	\eqref{eq:general-mode-translation-tangential-gradient}.
\end{proof}

\begin{lemma}
	\label{lem:rotational-shape-axisymmetric-background}
	Let $V=0$, let $f\in \H^{k+2}(\Sph)$ be axisymmetric, i.e.\ $f=f(Z)$,
	let $\eta=Y_\ell^m$ with $m\ne0$, and set
	\begin{equation}
		g=-\alpha\partial_\varphi\eta .
	\end{equation}

	For small $\eps$ we define the axisymmetric background surface
	\[
		\Gamma_\eps
		=
		\set{(1+\eps f(\omega))\omega:\omega\in\Sph},
	\]
	and its exterior domain
	\[
		\Omega_\eps^{\rm out}
		=
		\R^3\setminus
		\overline{\set{r<1+\eps f(\omega)}} .
	\]
	We consider the two-parameter surface
	\[
		\Gamma_{\eps,t}
		=
		\set{(1+\eps f(\omega)+t\eta(\omega))\omega:\omega\in\Sph}
	\]
	with small parameter $t$ and define the $\eps$-dependent rotational Neumann datum by
	\begin{equation}
		g_\eps(\omega)
		:=
		\left.
		\frac{\ddd}{\ddd t}
		\right|_{t=0}
		\left[
			\bigl(W_{\alpha,0}\cdot n_{\eps f+t\eta}\bigr)
			\circ\Psi_{\eps f+t\eta}
			\right](\omega).
	\end{equation}
	Let $\eout_\eps$ be the exterior rotational potential
	generated by the perturbation direction $\eta$ on the background
	$r=1+\eps f$, i.e.\ the unique decaying exterior harmonic function
	solving
	\begin{equation}
		\begin{cases}
			\Delta \eout_\eps=0
			 & \text{in } \Omega_\eps^{\rm out}, \\[0.3em]
			\displaystyle
			\bigl(\nabla\eout_\eps\cdot n_{\eps f}\bigr)
			\circ\Psi_{\eps f}
			=
			g_\eps
			 & \text{on } \Sph,                  \\[0.6em]
			\eout_\eps(y)\to0
			 & \text{as } \abs y\to\infty .
		\end{cases}
	\end{equation}
	Let $T_\eps^f$ be the fixed diffeomorphism introduced before
	Lemma~\ref{lem:eulerian-boundary-differentiation}, and define
	\[
		\eoutdot
		:=
		\left.\partial_\eps\right|_{\eps=0}
		(\eout_\eps\circ T_\eps^f),
		\qquad
		v^{\rm out}
		:=
		\eoutdot-\mathscr X_f\cdot\nabla\eout_0.
	\]
	In the fixed-domain Sobolev spaces the pullback expansion is
	\begin{equation}
		\label{eq:rotational-eout-eps-expansion}
		\eout_\eps\circ T_\eps^f
		=
		\eout_0+\eps\left(
		v^{\rm out}+\mathscr X_f\cdot\nabla\eout_0
		\right)+O(\eps^2),
	\end{equation}
	where $\eout_0$ is the round-sphere exterior potential with Neumann datum
	$g$,
	\begin{equation}
		\eout_0(r,\omega)
		=
		-\frac1{\ell+1}
		r^{-\ell-1}
		g(\omega).
	\end{equation}
	The first correction $v^{\rm out}$ solves
	\begin{equation}
		\label{eq:rotational-shape-axisymmetric-outer-problem}
		\begin{cases}
			\Delta v^{\rm out}=0
			 & \text{in } \R^3\setminus\overline{B_1}, \\[0.4em]
			\displaystyle
			\partial_r v^{\rm out}\big|_{r=1}
			=
			(\ell+2)fg
			-
			\frac1{\ell+1}\nablaS f\cdot\nablaS g,
			 & \text{on } r=1,                         \\[0.8em]
			v^{\rm out}(y)\to0
			 & \text{as } \abs y\to\infty .
		\end{cases}
	\end{equation}
	Moreover, if
	\[
		\partial_r v^{\rm out}\big|_{r=1}
		=
		\sum_n k_n^{\rm out}
	\]
	is the spherical-harmonic decomposition of this Neumann datum, then the
	order-$\eps$ correction to the trace on the perturbed boundary is
	\begin{equation}
		\label{eq:rotational-shape-axisymmetric-trace-outer}
		\left[\eout_\eps\big|_{r=1+\eps f}\right]_\eps = \left.
		\frac{\ddd}{\ddd\eps}
		\right|_{\eps=0}
		\eout_\eps\bigl((1+\eps f(\omega))\omega\bigr)
		=
		fg
		-
		\sum_n\frac1{n+1}k_n^{\rm out}.
	\end{equation}
\end{lemma}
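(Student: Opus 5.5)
The statement is a first-order shape-derivative computation for an exterior Neumann problem whose datum itself depends on $\eps$. I will treat it exactly as in Lemma~\ref{lem:first-order-neumann-expansion}, now with background $\Phi_0=\eout_0$ and deformation direction $f$. The plan has four steps: expand the datum, expand the flux, compare, and then expand the trace.

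\emph{Datum.} First I expand the datum $g_\eps$ to first order in $\eps$. By \eqref{eq:W-normal-exact} with $V=0$ and $h=\eps f+t\eta$,
\[
	W_{\alpha,0}\cdot n_h\circ\Psi_h=-\alpha(1+h)\partial_\varphi h\,\bigl((1+h)^2+\abs{\nablaS h}^2\bigr)^{-1/2}.
\]
Since $f$ is axisymmetric, $\partial_\varphi f=0$. Hence $\partial_\varphi h=t\partial_\varphi\eta$, and differentiating in $t$ at $t=0$ gives
\[
	g_\eps=-\alpha(1+\eps f)\,\partial_\varphi\eta\,\bigl((1+\eps f)^2+\eps^2\abs{\nablaS f}^2\bigr)^{-1/2}.
\]
Expanding in $\eps$, this is $g+O(\eps^2)$. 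The first-order factor $(1+\eps f)(1+\eps f)^{-1}$ cancels exactly, so $[g_\eps]_\eps=0$. This is the point where axisymmetry of $f$ enters.

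\emph{Flux.} Next I apply Lemma~\ref{lem:eulerian-boundary-differentiation}, with $(t,\eta)$ replaced by $(\eps,f)$, to the family $\eout_\eps$. Its hypotheses (smooth material dependence, finite energy, dipole decay) come from the fixed-domain construction of Proposition~\ref{prop:neumann-sobolev}. The only difference is that the datum is now $\eps$-dependent, but smoothly so in $\H^{k+1}$, which does not affect the argument. The lemma gives the expansion \eqref{eq:rotational-eout-eps-expansion}, states that $v^{\rm out}$ is harmonic and decaying, and via \eqref{eq:moving-normal-eulerian} gives
\[
	\bigl(\nabla\eout_\eps\cdot n_{\eps f}\bigr)\circ\Psi_{\eps f}=\partial_r\eout_0+\eps\bigl(\partial_rv^{\rm out}+f\partial_{rr}\eout_0-\nablaS f\cdot(\nabla\eout_0)_{\tan}\bigr)+O(\eps^2).
\]
At zeroth order this reproduces the round-sphere datum $g$ and the formula for $\eout_0$ from Lemma~\ref{lem:neumann-to-dirichlet}.

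\emph{First-order comparison.} For a pure degree-$\ell$ datum we have $\eout_0=-\frac{1}{\ell+1}r^{-\ell-1}g$. On $r=1$ this gives $\partial_r\eout_0=g$, $\partial_{rr}\eout_0=-(\ell+2)g$ and $(\nabla\eout_0)_{\tan}=-\frac1{\ell+1}\nablaS g$. Here $g=-\alpha\partial_\varphi Y_\ell^m=-i m\alpha Y_\ell^m$ is still of degree $\ell$. Matching the order-$\eps$ flux with $[g_\eps]_\eps=0$ yields
\[
	\partial_rv^{\rm out}=(\ell+2)fg-\tfrac{1}{\ell+1}\nablaS f\cdot\nablaS g,
\]
which is \eqref{eq:rotational-shape-axisymmetric-outer-problem}. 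The compatibility condition is not needed because the problem is exterior, and decay follows from the lemma.

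\emph{Trace.} Finally \eqref{eq:moving-trace-eulerian} gives the order-$\eps$ trace correction $v^{\rm out}|_{r=1}+f\partial_r\eout_0=v^{\rm out}|_{r=1}+fg$. The exterior Neumann-to-Dirichlet formula \eqref{eq:NtD-traces} gives $v^{\rm out}|_{r=1}=-\sum_n\frac1{n+1}k_n^{\rm out}$. This requires $v^{\rm out}$ to have no $\ell=0$ part, i.e.\ its datum must have zero average. That holds because $k_0^{\rm out}$ is the projection of $(\ell+2)fg$ and of $\nablaS f\cdot\nablaS g$ onto constants, and both products have azimuthal number $m\neq0$. Together this gives \eqref{eq:rotational-shape-axisymmetric-trace-outer}.

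The main obstacle is the first step: checking that the datum has no $O(\eps)$ correction. The other steps are direct reuse of the Eulerian-derivative lemma, provided one notes that the $\eps$-dependent datum is harmless there.
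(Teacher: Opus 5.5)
Your proposal is correct and follows the paper's proof essentially step by step. Both arguments expand $g_\eps$ to show $g_\eps=g+O(\eps^2)$, then match the linearised flux identity from Lemma~\ref{lem:eulerian-boundary-differentiation} against the vanishing $\eps$-coefficient of the datum. Both then check that $\partial_r v^{\rm out}$ has zero flux: you argue via azimuthal number, while the paper computes $\int_{\Sph}(\cdots)\dd S=2\int_{\Sph}fg\dd S=0$. Finally, both pass through the trace expansion and the exterior Neumann-to-Dirichlet formula; reading the trace correction off \eqref{eq:moving-trace-eulerian}, rather than Taylor-expanding radially at $r=1$ as the paper does, is the same computation.
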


\begin{proof}
	The smooth fixed-domain inverse constructed in the proof of
	Proposition~\ref{prop:neumann-sobolev}, applied to $h=\eps f$ and the
	smoothly varying zero-mean datum $g_\eps J_{\eps f}$ proves that the pullbacks
	$\eout_\eps\circ T_\eps^f$ are $\C^2$ in the required Sobolev spaces.
	Hence the following differentiations are justified and, by density, it
	suffices to carry out the calculation for smooth $f$.

	We now compute the datum $g_\eps$. By the normal component
	formula \eqref{eq:W-normal-exact}, with
	\[
		h=\eps f+t\eta,
	\]
	we have
	\[
		\bigl(W_{\alpha,0}\cdot n_h\bigr)\circ\Psi_h
		=
		-\alpha
		\frac{(1+h)\partial_\varphi h}
		{\sqrt{(1+h)^2+\abs{\nablaS h}^2}} .
	\]
	Since $f$ is axisymmetric $\partial_\varphi f=0$.
	Therefore
	\[
		\partial_\varphi h
		=
		\partial_\varphi(\eps f+t\eta)
		=
		t\partial_\varphi\eta .
	\]
	At $t=0$ we have $h=\eps f$ and $\nablaS h=\eps\nablaS f$.
	Thus differentiating with respect to $t$ at $t=0$ gives
	\begin{equation}
		g_\eps
		=
		-\alpha
		\frac{(1+\eps f)\partial_\varphi\eta}
		{\sqrt{(1+\eps f)^2+\eps^2\abs{\nablaS f}^2}},
	\end{equation}
	which we rewrite as
	\[
		g_\eps
		=
		-\alpha\partial_\varphi\eta
		\left(
		1+
		\frac{\eps^2\abs{\nablaS f}^2}{(1+\eps f)^2}
		\right)^{-1/2}.
	\]
	Hence
	\begin{equation}
		\label{eq:geps-no-linear-term}
		g_\eps
		=
		g+O(\eps^2).
	\end{equation}
	In particular, there is no order-$\eps$ correction to the prescribed
	Neumann datum itself. The only order-$\eps$ terms come from moving the
	boundary and moving the normal.

	On the round sphere, \eqref{eq:NtD-potentials} shows that the exterior
	harmonic function with Neumann datum $g$
	of degree $\ell $ is
	\[
		\eout_0(r,\omega)
		=
		-\frac1{\ell+1}
		r^{-\ell-1}
		g(\omega).
	\]
	Moreover,
	\begin{equation}
		\label{eq:eout0-derivatives-axisymmetric}
		\partial_{rr}\eout_0\big|_{r=1}
		=
		-(\ell+2)g,
		\qquad
		(\nabla\eout_0)_{\tan}\big|_{r=1}
		=
		-\frac1{\ell+1}\nablaS g.
	\end{equation}

	The boundary condition for $\eout_\eps$ is
	\begin{equation}\label{eq:boundary_eout_epssss}
		\partial_{n_{\eps f}}\eout_\eps\circ\Psi_{\varepsilon f}
		=
		g_\eps
		\qquad
		\text{on }\Sph.
	\end{equation}
	Using \eqref{eq:geps-no-linear-term}, \eqref{eq:boundary_eout_epssss} and
	Lemma~\ref{lem:eulerian-boundary-differentiation}, together with the expansion \eqref{eq:rotational-eout-eps-expansion},
	we obtain
	\begin{align*}
		\partial_r\eout_0
		+
		\eps
		\left(
		\partial_r v^{\rm out}
		+
		f\partial_{rr}\eout_0
		-
		\nablaS f\cdot(\nabla\eout_0)_{\tan}
		\right)_{r=1}=g+O(\varepsilon^2).
	\end{align*}
	The zeroth-order terms cancel, and
	since the right-hand side has no
	order-$\eps$ term, the coefficient of $\eps$ must vanish:
	\[
		\partial_r v^{\rm out}
		+
		f\partial_{rr}\eout_0
		-
		\nablaS f\cdot(\nabla\eout_0)_{\tan}
		=
		0
		\qquad
		\text{on } r=1.
	\]
	Substituting \eqref{eq:eout0-derivatives-axisymmetric} gives
	\[
		\partial_r v^{\rm out}\big|_{r=1}
		=
		(\ell+2)fg
		-
		\frac1{\ell+1}\nablaS f\cdot\nablaS g.
	\]
	This datum has zero flux. Indeed,
	$\Delta_{\Sph}g=-\ell(\ell+1)g$ and hence
	\begin{align*}
		\int_{\Sph}\left((\ell+2)fg
		-
		\frac1{\ell+1}\nablaS f\cdot\nablaS g\right)\dd S
		=
		2\int_{\Sph}fg\dd S
		=0.
	\end{align*}
	The last equality follows because $f$ is axisymmetric whereas $g$
	has nonzero azimuthal number. This proves that $v^{\rm out}$ satisfies the
	elliptic problem
	\eqref{eq:rotational-shape-axisymmetric-outer-problem}.

	It remains to compute the trace on the perturbed boundary. Taylor expanding
	in the radial variable at $r=1$, with $\omega$ fixed, gives
	\begin{align*}
		\eout_\eps\bigl((1+\eps f(\omega))\omega\bigr)
		 & =
		\bigl(\eout_0+\eps v^{\rm out}+O(\eps^2)\bigr)
		\bigl((1+\eps f(\omega))\omega\bigr) \\
		 & =
		\eout_0\big|_{r=1}(\omega)
		+
		\eps f(\omega)\partial_r\eout_0\big|_{r=1}(\omega)
		+
		\eps v^{\rm out}\big|_{r=1}(\omega)
		+
		O(\eps^2).
	\end{align*}
	Hence
	\[
		\left[\eout_\eps\big|_{r=1+\eps f}\right]_\eps
		=
		f\partial_r\eout_0\big|_{r=1}
		+
		v^{\rm out}\big|_{r=1}
		=
		fg+v^{\rm out}\big|_{r=1}.
	\]
	If
	\[
		\partial_r v^{\rm out}\big|_{r=1}
		=
		\sum_n k_n^{\rm out},
	\]
	then the exterior Neumann-to-Dirichlet formula \eqref{eq:NtD-traces} gives
	\[
		v^{\rm out}\big|_{r=1}
		=
		-\sum_n\frac1{n+1}k_n^{\rm out},
	\]
	which yields \eqref{eq:rotational-shape-axisymmetric-trace-outer}
\end{proof}

\begin{lemma}
	\label{lem:curvature-cross-h02}
	Let
	\begin{equation}
		f=-\frac{3}{16}P_2(Z),\quad P_2(Z)=\frac{3Z^2-1}{2}.
	\end{equation}
	For any spherical harmonic $Y_\ell^m$, the coefficient of $\eps t$ in
	$H_{\eps f+tY_\ell^m}$ is
	\begin{equation}
		\label{eq:curvature-cross-h02-formula}
		\left[H_{\eps f+tY_\ell^m}\circ\Psi_{\eps f+tY_\ell^m}\right]_{\eps t}
		=
		\frac38\left(\ell(\ell+1)+4\right)
		P_2(Z)Y_\ell^m .
	\end{equation}
	For complex normalised harmonics, its diagonal matrix element is
	\begin{equation}
		\label{eq:curvature-cross-h02-diagonal}
		\avg{Y_\ell^m,
			\left[H_{\eps f+tY_\ell^m}\circ\Psi_{\eps f+tY_\ell^m}\right]_{\eps t}
		}_{\CC}
		=
		\frac38\left(\ell(\ell+1)+4\right)I_{\ell m},
	\end{equation}
	where
	\begin{equation}
		\label{eq:Ilm-def}
		I_{\ell m}:=\int_{\Sph}P_2\abs{Y_\ell^m}^2\dd S
		=
		\frac{3\left((a_{\ell,+}^m)^2+(a_{\ell,-}^m)^2\right)-1}{2}.
	\end{equation}
\end{lemma}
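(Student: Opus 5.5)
The plan is to expand the exact radial-graph curvature formula \eqref{eq:curvature-radial-graph} to second order in $h$, and then to extract the mixed $\eps t$ coefficient by polarisation. We write $\rho=1+h$ and $p=\nablaS h$. Since the mixed coefficient of a smooth map at $h=\eps f+tY_\ell^m$ only involves its second derivative at $h=0$, it suffices to find the quadratic part $Q(h,h)$ in
\[
H_h\circ\Psi_h=2-(\Delta_{\Sph}+2)h+Q(h,h)+O(\norm{h}^3).
\]
The mixed coefficient is then $[H]_{\eps t}=2Q_{\rm sym}(f,Y_\ell^m)$.

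\emph{Step 1: the quadratic part.} We expand numerator and denominator separately, keeping terms up to quadratic order. The numerator is
\[
2\rho^3+3\rho\abs{p}^2-(\rho^2+\abs{p}^2)\Delta_{\Sph} h+\nablaS^2h(p,p)=2+6h+6h^2+3\abs{p}^2-\Delta_{\Sph} h-2h\Delta_{\Sph} h+O(h^3),
\]
where the Hessian term is cubic and drops out. The denominator is
\[
\rho(\rho^2+\abs p^2)^{3/2}=(1+h)^4\bigl(1+\tfrac32\abs p^2\bigr)+O(h^3),
\]
so its inverse is $1-4h+10h^2-\frac32\abs p^2+O(h^3)$. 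Multiplying out:
\begin{itemize}
\item The linear terms give $-2h-\Delta_{\Sph} h$, which is consistent with \eqref{eq:curvature-expansion-one-parameter}.
\item In the quadratic terms, the gradient contributions $3\abs p^2$ and $2\cdot(-\frac32\abs p^2)$ cancel.
\item What remains is $Q(h,h)=2h^2+2h\Delta_{\Sph} h$.
\end{itemize}
Because this step decides the final constant, I would double-check it against a pure dilation $h=c$. This test fixes only the $h^2$ coefficient: the exact value $2/(1+c)=2-2c+2c^2-\dots$ has quadratic coefficient $2$, matching $Q(c,c)=2c^2$.

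\emph{Step 2: polarisation.} Substituting $h=\eps f+tY_\ell^m$, the $\eps t$ coefficient is
\[
4fY_\ell^m+2f\Delta_{\Sph}Y_\ell^m+2Y_\ell^m\Delta_{\Sph}f.
\]
We use $\Delta_{\Sph}f=-6f$, since $f$ is a multiple of the degree-two harmonic $P_2$, together with \eqref{eq:spherical-harmonic-eigen}. This gives $-2\bigl(\ell(\ell+1)+4\bigr)fY_\ell^m$. Inserting $f=-\frac3{16}P_2$ yields \eqref{eq:curvature-cross-h02-formula}.

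\emph{Step 3: the diagonal element.} Pairing with $Y_\ell^m$ in the complex $\L^2$ product gives $\frac38(\ell(\ell+1)+4)\int_{\Sph}P_2\abs{Y_\ell^m}^2\dd S$. To evaluate $I_{\ell m}$, we write $P_2=\frac{3Z^2-1}2$ and note that $\int Z^2\abs{Y_\ell^m}^2\dd S=\norm{ZY_\ell^m}_{\L^2}^2$. By the recursion \eqref{eq:Z-recursion} and orthonormality, this norm equals $(a_{\ell,+}^m)^2+(a_{\ell,-}^m)^2$, with the convention that omitted terms vanish. Together with $\norm{Y_\ell^m}_{\L^2}=1$, this gives \eqref{eq:Ilm-def}.

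The main obstacle is the bookkeeping in Step 1, in particular the expansion of the denominator and the cancellation of the $\abs{\nablaS h}^2$ terms. Everything after that is algebraic.
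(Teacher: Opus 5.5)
Your proposal is correct and follows the paper's proof essentially step for step. Both arguments take the quadratic part $Q(h)=2\bigl(h^2+h\Delta_{\Sph}h\bigr)$ of the radial-graph curvature formula, extract the mixed coefficient $4fY_\ell^m+2f\Delta_{\Sph}Y_\ell^m+2Y_\ell^m\Delta_{\Sph}f$, and evaluate $I_{\ell m}$ via \eqref{eq:Z-recursion} and orthonormality; your explicit numerator and denominator expansion, including the cancellation of the $\abs{\nablaS h}^2$ terms, fills in the Taylor step that the paper states without detail.
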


\begin{proof}
	Taylor expansion of the exact radial-graph formula \eqref{eq:curvature-radial-graph} gives the second-order expansion, for a general height function $h$,
	\[
		H_h\circ\Psi_h
		=
		2
		-(\Delta_{\Sph}+2)h
		+
		Q(h)
		+R_3(h),
		\qquad
		Q(h):=2\left(h^2+h\Delta_{\Sph}h\right),
	\]
	where, for instance as a map $\C^2(\Sph)\to \C^0(\Sph)$ near $0$,
	\[
		\norm{R_3(h)}_{\C^0}
		\le C\norm{h}_{\C^2}^3,
		\qquad
		R_3(0)=0,
		\qquad
		DR_3(0)=0,
		\qquad
		D^2R_3(0)=0.
	\]

	The constant and linear terms in the expansion of $H_h\circ\Psi_h$
	contain no mixed monomial $\eps t$, and the remainder $R_3$
	contributes none because $D^2R_3(0)=0$. Thus it suffices to compute
	the mixed coefficient of
	\[
		Q(h)=2(h^2+h\Delta_{\Sph}h)
	\]
	with $h=\eps f+tY_\ell^m$. Direct expansion gives
	\[
		[Q(\eps f+tY_\ell^m)]_{\eps t}
		=
		4fY_\ell^m
		+
		2f\Delta_{\Sph}Y_\ell^m
		+
		2Y_\ell^m\Delta_{\Sph}f.
	\]
	Using $f=-\frac{3}{16}P_2(Z)$, $\Delta_{\Sph}P_2=-6P_2$, and
	$\Delta_{\Sph}Y_\ell^m=-\ell(\ell+1)Y_\ell^m$, this becomes
	\[
		-\frac34P_2Y_\ell^m
		+
		\frac38\ell(\ell+1)P_2Y_\ell^m
		+
		\frac94P_2Y_\ell^m
		=
		\frac38\left(\ell(\ell+1)+4\right)P_2Y_\ell^m.
	\]
	Hence
	\[
		\left[
			H_{\eps f+tY_\ell^m}\circ
			\Psi_{\eps f+tY_\ell^m}
			\right]_{\eps t}
		=
		\frac38
		\left(\ell(\ell+1)+4\right)
		P_2(Z)Y_\ell^m,
	\]
	which proves \eqref{eq:curvature-cross-h02-formula}.

	For the diagonal matrix element, we take the complex inner product of
	\eqref{eq:curvature-cross-h02-formula} with the complex normalised spherical harmonic $Y_\ell^m$. This yields
	\[
		\avg{Y_\ell^m,
			\left[
				H_{\eps f+tY_\ell^m}\circ
				\Psi_{\eps f+tY_\ell^m}
				\right]_{\eps t}
		}_{\CC}
		=
		\frac38
		\left(\ell(\ell+1)+4\right)
		\int_{\Sph}P_2\abs{Y_\ell^m}^2\dd S .
	\]
	Thus \eqref{eq:curvature-cross-h02-diagonal} follows with
	\[
		I_{\ell m}
		=
		\int_{\Sph}P_2\abs{Y_\ell^m}^2\dd S .
	\]

	Using the recursion \eqref{eq:Z-recursion} and the orthonormality of
	the complex spherical harmonics, the cross terms vanish and therefore
	\[
		\int_{\Sph}Z^2\abs{Y_\ell^m}^2\dd S
		=
		(a_{\ell,+}^m)^2+(a_{\ell,-}^m)^2 .
	\]
	Substituting this into the expression for
	$\int_{\Sph}P_2\abs{Y_\ell^m}^2\dd S$ gives \eqref{eq:Ilm-def}.
\end{proof}
\section{Linear analysis at the round sphere}
If $V=0$, the round sphere is a solution of the Bernoulli system
\eqref{eq:main-problem-nondimensional}:
\begin{align}\label{eq:unperturbed_solution}
	\cB(0,\alpha,0)=0\text{ for all }\alpha\in\R.
\end{align}
We now turn to the linearisation along this family of solutions.
\begin{lemma}
	\label{lem:linearisation-round-sphere}
	Let $V=0$ and fix $\alpha>0$. For every real spherical harmonic
	$Y_\ell^m$ of degree $\ell\ge1$ and azimuthal number $0\le m\le\ell$,
	with arbitrary normalisation, the
	linearisation of the Bernoulli equation is
	diagonal:
	\begin{equation}\label{eq:lambda-lm}
		D_h\cB(0,\alpha,0)Y_\ell^m
		=
		\lambda_{\ell,m}(\alpha)Y_\ell^m,
	\end{equation}
	where
	\begin{equation}\label{eq:lambda-lm-formula}
		\lambda_{\ell,m}(\alpha)
		=
		\ell(\ell+1)-2
		-\alpha^2 m^2
		\left(\frac{\tau}{\ell}+\frac{1}{\ell+1}\right).
	\end{equation}
\end{lemma}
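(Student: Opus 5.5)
At $V=0$ the base configuration is completely explicit. By Lemma~\ref{lem:round-translational-potentials} the round-sphere potentials vanish, $\qin_0=\qout_0=0$, and $F_{\alpha,0}(\omega,0)=0$. We therefore linearise $\cB(t\eta,\alpha,0)=\Pi_{t\eta}(B_{t\eta}\circ\Psi_{t\eta})$ at $t=0$ for $\eta=Y_\ell^m$, using the first-order expansions of Section~\ref{sec:shape-calculus} with $V=0$.

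First consider the projection. We have $B_0\circ\Psi_0\equiv 2$, and $\Pi_h$ of a constant is zero for every $h$. Hence the derivative of $\Pi_h$ with respect to $h$, acting on the constant $2$, vanishes, and
\[
	D_h\cB(0,\alpha,0)\eta=\Pi_0\bigl([B_{t\eta}\circ\Psi_{t\eta}]_t\bigr).
\]
The curvature term contributes $-(\Delta_{\Sph}+2)\eta=(\ell(\ell+1)-2)\eta$ by \eqref{eq:curvature-expansion-one-parameter}.

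Next, the kinetic parts of both $F$ terms are quadratic in $q$. Since $q=O(t)$, they contribute only at order $t^2$. The rotational part is $\alpha\,\Psi_{t\eta}\cdot(e_3\times q)$. Because $q=O(t)$, its first-order term is $\alpha\,\omega\cdot(e_3\times q_t)=-\alpha(e_3\times\omega)\cdot q_t$, where $q_t$ denotes the $t$-coefficient. By \eqref{eq:boundary-gradient-expansion-inner} and \eqref{eq:boundary-gradient-expansion-outer} with $V=0$, we have $q_t=\qin_{{\rm rot},\eta}$ in the interior and $q_t=\qout_{{\rm rot},\eta}$ in the exterior. These are the round-sphere Neumann-to-gradient images of $g=-\alpha\partial_\varphi\eta$, which is again a degree-$\ell$ harmonic.

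Only the tangential part survives the product with the tangent field $e_3\times\omega$. Using \eqref{eq:NtG-pure-mode} and $(e_3\times\omega)\cdot\nablaS=\partial_\varphi$, we get
\[
	-\alpha(e_3\times\omega)\cdot\qin_{{\rm rot},\eta}=-\frac{\alpha}{\ell}\partial_\varphi g=\frac{\alpha^2}{\ell}\partial_\varphi^2\eta=-\frac{\alpha^2m^2}{\ell}\eta,
\]
and similarly the exterior term is $+\frac{\alpha^2}{\ell+1}\partial_\varphi^2\eta=-\frac{\alpha^2m^2}{\ell+1}\eta$, by \eqref{eq:spherical-harmonic-eigen}. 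For real harmonics with $m\ge1$, $\partial_\varphi$ maps $\Re Y_\ell^m$ to $-m\Im Y_\ell^m$ and vice versa. So $\partial_\varphi^2=-m^2$ on the real span, and $g$ stays within the degree-$\ell$ space, as required.

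The Bernoulli jump is $\tau\cdot(\text{interior})-(\text{exterior})$. It therefore contributes $-\alpha^2m^2\bigl(\frac{\tau}{\ell}+\frac{1}{\ell+1}\bigr)\eta$. For $\ell\ge1$, $\eta$ has zero mean, so $\Pi_0\eta=\eta$, which yields \eqref{eq:lambda-lm-formula}. When $\tau=0$ the interior term is simply absent, which is consistent with the formula.

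The only delicate point is justifying that the kinetic and displacement terms are really second order, with no hidden $O(t)$ from the Neumann datum. This follows because at $V=0$ the datum \eqref{eq:W-normal-first-expansion} is exactly $-t\alpha\partial_\varphi\eta+O(t^2)$, and the base potentials are zero, so all displacement corrections $\eta\partial_r\nabla\phi_0$ vanish. The smoothness from Proposition~\ref{prop:smooth-bernoulli-map} lets us identify the Fréchet derivative with this directional derivative.
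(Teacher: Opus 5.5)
Your argument follows the paper's proof essentially step for step. The constant base density $B_0\equiv 2$ makes the $h$-derivative of $\Pi_h$ irrelevant. The kinetic parts are quadratic in the $O(t)$ gradients. Only the tangential parts of the Neumann-to-gradient images of $g=-\alpha\partial_\varphi\eta$ pair with $e_3\times\omega$. The curvature supplies $\ell(\ell+1)-2$.

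There is, however, a sign slip in the exterior term that you should correct. Since $(\qout_{{\rm rot},\eta})_{\tan}=-\frac{1}{\ell+1}\nablaS g$, one gets
\[
-\alpha(e_3\times\omega)\cdot\qout_{{\rm rot},\eta}=\frac{\alpha}{\ell+1}\partial_\varphi g=-\frac{\alpha^2}{\ell+1}\partial_\varphi^2\eta=+\frac{\alpha^2m^2}{\ell+1}\eta,
\]
not $-\frac{\alpha^2m^2}{\ell+1}\eta$. With your value, ``interior minus exterior'' would give $-\alpha^2m^2\bigl(\frac{\tau}{\ell}-\frac{1}{\ell+1}\bigr)\eta$. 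With the corrected sign, the jump gives exactly $-\alpha^2m^2\bigl(\frac{\tau}{\ell}+\frac{1}{\ell+1}\bigr)\eta$. So your final formula is right, but it does not follow from the intermediate line as written.

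A minor point: $\partial_\varphi\Im Y_\ell^m=+m\Re Y_\ell^m$, so ``vice versa'' is off by a sign, although $\partial_\varphi^2=-m^2$ on the real span is unaffected.
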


\begin{proof}
	Let $\eta=Y_\ell^m$, $h=t\eta$,	where $Y_\ell^m$ is real, of degree $\ell\ge 1$ and azimuthal number
	$m$. We use the shape calculus from Section~\ref{sec:shape-calculus}.
	By Lemma~\ref{lem:first-order-neumann-expansion}, and in particular by
	\eqref{eq:W-normal-first-expansion}, with $V=0$, the first variation
	of the moving Neumann datum is the rotational datum
	\begin{equation}
		\label{eq:lrs-rotational-datum}
		g_\eta:=g_{{\rm rot},\eta}=-\alpha\partial_\varphi\eta,
	\end{equation}
	which is the datum appearing in \eqref{eq:rotational-potentials-def}.
	Moreover, \eqref{eq:boundary-gradient-expansion-inner} and
	\eqref{eq:boundary-gradient-expansion-outer} from
	Lemma~\ref{lem:first-order-neumann-expansion} give, for the pulled-back
	boundary gradients,
	\begin{equation}
		\label{eq:lrs-gradient-expansion}
		\qin_{t,0}=t\qin_{{\rm rot},\eta}+O(t^2),
		\qquad
		\qout_{t,0}=t\qout_{{\rm rot},\eta}+O(t^2).
	\end{equation}
	Since \eqref{eq:spherical-harmonic-eigen} implies
	$\partial_\varphi^2\eta=-m^2\eta$, the datum in
	\eqref{eq:lrs-rotational-datum} lies in the same degree-$\ell$ spherical
	harmonic subspace. Therefore Lemma~\ref{lem:neumann-to-dirichlet}, specifically the pure-mode
	Neumann-to-gradient formula \eqref{eq:NtG-pure-mode}, gives the tangential
	parts
	\begin{equation}
		\label{eq:lrs-rotational-tangential-gradients}
		(\qin_{{\rm rot},\eta})_{\tan}
		=
		\frac1\ell\nablaS g_\eta,
		\qquad
		(\qout_{{\rm rot},\eta})_{\tan}
		=
		-\frac1{\ell+1}\nablaS g_\eta.
	\end{equation}

	At $V=0$ one has
	\begin{equation}
		\label{eq:lrs-F-alpha-zero}
		F_{\alpha,0}(y,q)
		=
		\frac12\abs{q}^2-\alpha(e_3\times y)\cdot q.
	\end{equation}
	Since the zeroth-order boundary gradients vanish by
	\eqref{eq:lrs-gradient-expansion}, \eqref{eq:lrs-F-alpha-zero} implies that,
	for any boundary-gradient expansion $q_t=tq_1+O(t^2)$,
	\begin{equation}
		\label{eq:lrs-F-first-variation}
		\left[F_{\alpha,0}(\Psi_{t\eta},q_t)\right]_t
		=
		-\alpha(e_3\times\omega)\cdot q_1.
	\end{equation}
	The displacement of the evaluation point gives no additional term in
	\eqref{eq:lrs-F-first-variation}, because it is multiplied by the base
	gradient $q_0=0$ in \eqref{eq:lrs-F-alpha-zero}. The normal components from
	\eqref{eq:NtG-pure-mode} do not contribute to
	\eqref{eq:lrs-F-first-variation}, because $e_3\times\omega$ is tangential by
	\eqref{eq:dphi-def}. Using \eqref{eq:lrs-F-first-variation},
	\eqref{eq:lrs-gradient-expansion},
	\eqref{eq:lrs-rotational-tangential-gradients}, and
	\eqref{eq:dphi-def}, we obtain
	\begin{equation}
		\label{eq:lrs-F-linear-in-out}
		\begin{aligned}
			\left[F_{\alpha,0}(\Psi_{t\eta},\qin_{t,0})\right]_t
			 & =
			-\frac{\alpha}{\ell}\partial_\varphi g_\eta
			=
			\frac{\alpha^2}{\ell}\partial_\varphi^2\eta, \\
			\left[F_{\alpha,0}(\Psi_{t\eta},\qout_{t,0})\right]_t
			 & =
			\frac{\alpha}{\ell+1}\partial_\varphi g_\eta
			=
			-\frac{\alpha^2}{\ell+1}\partial_\varphi^2\eta .
		\end{aligned}
	\end{equation}

	The curvature contribution is read directly from Lemma~\ref{lem:radial-geometry},
	specifically \eqref{eq:curvature-expansion-one-parameter}. Using also
	\eqref{eq:spherical-harmonic-eigen},
	\begin{equation}
		\label{eq:lrs-curvature-linearisation}
		\left[H_{t\eta}\circ\Psi_{t\eta}\right]_t
		=
		-(\Delta_{\Sph}+2)\eta
		=
		\bigl(\ell(\ell+1)-2\bigr)\eta .
	\end{equation}

	Along the variation $h=t\eta$, the pulled-back, unprojected Bernoulli
	density is
	\begin{equation}
		\label{eq:lrs-unprojected-density}
		B_{t\eta}\circ\Psi_{t\eta}
		=
		\tau F_{\alpha,0}(\Psi_{t\eta},\qin_{t,0})
		-
		F_{\alpha,0}(\Psi_{t\eta},\qout_{t,0})
		+
		H_{t\eta}\circ\Psi_{t\eta}.
	\end{equation}
	Combining \eqref{eq:lrs-F-linear-in-out},
	\eqref{eq:lrs-curvature-linearisation}, and
	\eqref{eq:spherical-harmonic-eigen} gives
	\begin{align}
		\label{eq:lrs-unprojected-linearisation}
		[B_{t\eta}\circ\Psi_{t\eta}]_t
		 & =
		\alpha^2\left(\frac{\tau}{\ell}+\frac{1}{\ell+1}\right)
		\partial_\varphi^2\eta
		+
		\bigl(\ell(\ell+1)-2\bigr)\eta \\
		 & =
		\left[
			\ell(\ell+1)-2
			-
			\alpha^2m^2\left(\frac{\tau}{\ell}+\frac{1}{\ell+1}\right)
			\right]\eta .\nonumber
	\end{align}

	It remains to pass from the unprojected density to the projected Bernoulli map.
	By \eqref{eq:Bernoulli-map},
	\begin{equation}
		\cB(t\eta,\alpha,0)
		=
		\Pi_{t\eta}\left(B_{t\eta}\circ\Psi_{t\eta}\right).
	\end{equation}
	At $t=0$, \eqref{eq:lrs-F-alpha-zero}, \eqref{eq:lrs-gradient-expansion},
	\eqref{eq:lrs-unprojected-density}, and
	\eqref{eq:curvature-expansion-one-parameter} give the constant value
	$B_0=2$. Since the projection in \eqref{eq:Bernoulli-map} removes
	constants the derivative of $\Pi_{t\eta}B_0$ contributes nothing. We deduce
	\begin{equation}
		\label{eq:lrs-projected-linearisation}
		D_h\cB(0,\alpha,0)[\eta]
		=
		\Pi_0[B_{t\eta}\circ\Psi_{t\eta}]_t.
	\end{equation}
	Finally, \eqref{eq:Mh} gives $M_0=1$, so $\Pi_0$ is the usual removal of
	the spherical average. Since $\eta=Y_\ell^m$ is nonconstant, $\Pi_0\eta=\eta$.
	Applying \eqref{eq:lrs-projected-linearisation} to
	\eqref{eq:lrs-unprojected-linearisation} yields
	\[
		D_h\cB(0,\alpha,0)Y_\ell^m
		=
		\lambda_{\ell,m}(\alpha)Y_\ell^m,
	\]
	with $\lambda_{\ell,m}(\alpha)$ as in \eqref{eq:lambda-lm-formula}.
\end{proof}

\medskip

We explicitly compute the eigenvalues for the low modes. The constant mode
$\ell=0$ is
excluded by the volume constraint. For $\ell=1$ one has
\[
	\lambda_{10}(\alpha)=0,
\]
corresponding to vertical translations of the sphere, since the first-order
radial displacement of a vertical translation is proportional to $Z$. This
mode is removed by the vertical-centring condition
\eqref{eq:vertical-centring_C}. The horizontal translation modes are not neutral:
\begin{equation}\label{eq:lambda_11}
	\lambda_{11}(\alpha)
	=
	-\alpha^2\left(\tau+\frac{1}{2}\right)<0.
\end{equation}
This is due to the horizontal translations not being symmetries of the
rotational field $y\mapsto \alpha e_3\times y$.

We use the following dimensionless critical angular frequency:
\begin{equation}\label{eq:dimensionless-critical-constants}
	\alpha_*=\sqrt{\frac{24}{3\tau+2}}.
\end{equation}

For $\ell=2$,
\begin{equation}\label{eq:lambda_20}
	\lambda_{20}(\alpha)=4,
\end{equation}
and
\begin{equation*}
	\lambda_{21}(\alpha)
	=
	4
	-\alpha^2
	\left(\frac{\tau}{2}+\frac{1}{3}\right).
\end{equation*}
Thus $\lambda_{21}(\alpha)=0$ when
\[
	\alpha^2
	=
	\frac{4}{\frac{\tau}{2}+\frac{1}{3}}
	=
	\frac{24}{3\tau+2},
\]
which is $\alpha=\alpha_*$ as defined in \eqref{eq:dimensionless-critical-constants}. At this value,
\[
	\lambda_{22}(\alpha_*)
	=
	4
	-4\cdot4
	=
	-12,
\]
so the degree-two kernel consists only of the $m=1$ modes.

The transversality derivative is
\begin{equation}\label{eq:transversality}
	\partial_\alpha\lambda_{21}(\alpha_*)
	=
	-2\alpha_*
	\left(\frac{\tau}{2}+\frac{1}{3}\right)
	=
	-\frac{\alpha_*}{3}(3\tau+2)
	=
	-\frac{8}{\alpha_*}
	\neq0.
\end{equation}

\medskip

The real degree-two $m=1$ harmonics are spanned by $XZ$ and $YZ$. The corresponding real kernel is
\begin{equation*}
	K
	=
	\Span\{XZ,YZ\}.
\end{equation*}

We also record the degree-three $m=1$ eigenvalue at the critical
angular velocity. The functions
\[
	X\left(Z^2-\frac15\right),
	\qquad
	Y\left(Z^2-\frac15\right)
\]
are degree-three spherical harmonics with azimuthal number one.
Moreover,
\[
	\lambda_{31}(\alpha)
	=
	10
	-\alpha^2
	\left(\frac{\tau}{3}+\frac{1}{4}\right).
\]
Substituting $\alpha = \alpha_*$ from \eqref{eq:dimensionless-critical-constants} gives
\begin{align}\label{eq:lambda_31}
	\lambda_{31}(\alpha_*)
	 & =
	10
	-
	\frac{24}{3\tau+2}
	\left(\frac{\tau}{3}+\frac{1}{4}\right)
	=
	\frac{22\tau+14}{3\tau+2}
	>0 .
\end{align}

\medskip

At $\alpha=\alpha_*$, the desired kernel is the two-dimensional
space $K$, provided no further spherical harmonic satisfies
$\lambda_{\ell m}(\alpha_*)=0$. The remaining possible resonances are ruled
out by the following lemma.

\begin{lemma}\label{lem:nonresonance}
	At $\alpha=\alpha_*$, the only zero eigenvalues of
	\eqref{eq:lambda-lm-formula}, apart from the vertical translation mode
	$(\ell,m)=(1,0)$ and the desired pair $(\ell,m)=(2,1)$,
	occur, for positive density ratio $\tau$, at precisely the following
	five values:
	\[
		\begin{array}{c|c|c}
			(\ell,m) & \tau  & \text{resonant harmonic} \\
			\hline
			(5,4)    & 10/9  & Y_5^4                    \\
			(6,5)    & 2/7   & Y_6^5                    \\
			(8,8)    & 46/27 & Y_8^8                    \\
			(9,9)    & 23/60 & Y_9^9                    \\
			(10,10)  & 2/77  & Y_{10}^{10}.
		\end{array}
	\]
	At the endpoint $\tau=0$, there is one additional resonance,
	$(\ell,m)=(7,6)$, with resonant harmonic $Y_7^6$.
\end{lemma}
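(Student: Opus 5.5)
The plan is to turn the resonance condition into an explicit rational formula for $\tau$ and then do a finite enumeration. By Lemma~\ref{lem:linearisation-round-sphere}, together with $\alpha_*^2=24/(3\tau+2)$ from \eqref{eq:dimensionless-critical-constants}, the condition $\lambda_{\ell,m}(\alpha_*)=0$ with $0\le m\le\ell$ reads
\[
	(\ell-1)(\ell+2)(3\tau+2)=24m^2\Bigl(\frac{\tau}{\ell}+\frac1{\ell+1}\Bigr).
\]
Multiplying by $\ell(\ell+1)$ gives an equation that is affine in $\tau$. Solving it,
\[
	\tau\cdot 3(\ell+1)\bigl[\ell(\ell-1)(\ell+2)-8m^2\bigr]
	=2\ell\bigl[12m^2-(\ell-1)(\ell+1)(\ell+2)\bigr].
\]
The low cases are already settled in the text: $m=0$ gives $\lambda_{\ell0}=\ell(\ell+1)-2$, which vanishes only for $\ell=1$. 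The case $(1,1)$ is excluded by \eqref{eq:lambda_11}, and $(2,1)$ is the desired kernel. So I would restrict to $\ell\ge2$, $m\ge1$, $(\ell,m)\neq(2,1)$, and ask when this equation has a solution $\tau\ge0$.

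The second step is a sign analysis that reduces everything to finitely many pairs. Write $N:=12m^2-(\ell^2-1)(\ell+2)$ and $D:=\ell(\ell-1)(\ell+2)-8m^2$. A solution $\tau\ge0$ requires $N$ and $D$ to have the same sign or $N=0$; if $D=0$, then $N=0$ is needed, and one checks directly that this does not occur.

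First suppose both are negative. Then
\[
	\frac{\ell(\ell-1)(\ell+2)}{8}<m^2<\frac{(\ell-1)(\ell+1)(\ell+2)}{12},
\]
which forces $12\ell<8(\ell+1)$, i.e.\ $\ell<2$. This is excluded.

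So necessarily $N\ge0$. Using $m\le\ell$, this gives $12\ell^2\ge(\ell^2-1)(\ell+2)$. That inequality holds for $\ell\le10$ (at $\ell=10$: $1188\le1200$) and fails for $\ell\ge11$ (at $\ell=11$: $1560>1452$), and the right-hand side grows faster, so it keeps failing. Hence $\ell\le10$, and for each such $\ell$ one also needs $m^2\ge(\ell^2-1)(\ell+2)/12$. This leaves a short explicit list of pairs with $m$ close to $\ell$. For these, $D>0$ is automatic whenever $N>0$: one has $8m^2\le8\ell^2<\ell(\ell-1)(\ell+2)$ for $\ell\ge4$, and the small $\ell$ are checked by hand.

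The final step is the enumeration itself. For each remaining pair $(\ell,m)$ with $2\le\ell\le10$, I evaluate $\tau=2\ell N/(3(\ell+1)D)$ and record when it is nonnegative. I expect this to give exactly the table: $(5,4)\mapsto10/9$, $(6,5)\mapsto2/7$, $(8,8)\mapsto46/27$, $(9,9)\mapsto23/60$, $(10,10)\mapsto2/77$, and $(7,6)\mapsto0$, the last being the endpoint resonance. For every other admissible pair, such as $(3,3)$, $(4,4)$, $(5,5)$, $(6,6)$, $(7,7)$ and $(8,7)$, the value should come out negative or the pair should already fail $N\ge0$.

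The main obstacle is bookkeeping rather than analysis: the threshold $N\ge0$ must be applied carefully for each $\ell\le10$ so that no borderline $m$ is missed, and each rational value must be computed correctly. As a sanity check I would substitute each claimed $(\ell,m,\tau)$ back into \eqref{eq:lambda-lm-formula} and confirm that the eigenvalue vanishes. Since $\tau\mapsto\lambda_{\ell,m}(\alpha_*(\tau))$ is a Möbius-type function, each pair contributes at most one resonant $\tau$, which gives the claimed uniqueness.
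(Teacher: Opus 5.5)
Your argument is correct and follows the paper's proof: both reduce $\lambda_{\ell m}(\alpha_*)=0$ to the explicit formula $\tau=2\ell N/(3(\ell+1)D)$, use $m\le\ell$ to get $(\ell-1)(\ell+1)(\ell+2)\le 12\ell^2$ and hence $\ell\le10$, and then enumerate, which does give exactly the table. The paper obtains the bound from monotonicity of the $m^2$-coefficient in $\tau$, and you obtain it from the signs of $N$ and $D$. One aside is false but harmless: $8\ell^2<\ell(\ell-1)(\ell+2)$ holds only for $\ell\ge8$, so $D>0$ is not automatic when $N>0$. For example, $(4,4),\dots,(7,7)$ have $D<0$ and $(4,3)$ has $D=0$. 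Nothing depends on this, because your final step checks the sign of each $\tau$ directly, as you already anticipate for $(3,3),\dots,(7,7)$.
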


\begin{proof}
	At $\alpha=\alpha_*$, the equation $\lambda_{\ell m}=0$ becomes
	\begin{equation}\label{eq:resonance_condition_tau}
		\ell(\ell+1)-2
		=\frac{24m^2}{3\tau+2}
		\left(\frac{\tau}{\ell}+\frac1{\ell+1}\right).
	\end{equation}
	We only need to consider $\ell>2$. For fixed $\ell>2$, the coefficient
	\[
		\frac{24}{3\tau+2}\left(\frac{\tau}{\ell}+\frac1{\ell+1}\right)
	\]
	of $m^2$ is strictly decreasing in $\tau>0$, with limits
	$12/(\ell+1)$ and $8/\ell$. Hence a positive-$\tau$ resonance requires
	\begin{equation*}
		\frac8\ell
		<\frac{\ell(\ell+1)-2}{m^2}
		<\frac{12}{\ell+1},
	\end{equation*}
	with endpoint equality corresponding to $\tau=\infty$ or $\tau=0$, respectively. Since
	$m\le\ell$, the left-hand side is at least
	$(\ell(\ell+1)-2)/\ell^2$. For $\ell\ge11$ this lower bound is larger than or
	equal to $12/(\ell+1)$, so no resonances occur. The finitely many cases
	$3\le\ell\le10$ are then checked directly from
	\begin{equation*}
		\tau=
		\frac{24m^2/(\ell+1)-2(\ell(\ell+1)-2)}
		{3(\ell(\ell+1)-2)-24m^2/\ell}.
	\end{equation*}
	The five positive values are those displayed in the table. At $\tau=0$ we obtain $(\ell,m)=(7,6)$.
\end{proof}

\begin{remark}
	The case of vanishing exterior density is outside the present nondimensionalisation.
	In the corresponding one-phase drop problem there is another resonance at
	$(\ell,m) = (4,3)$. This obstruction was overcome in \cite{MR5001433},
	where bifurcating branches of rigidly rotating drops, without additional
	translation, were constructed.
\end{remark}

\section{The axisymmetric translating branch}
\label{sec:axisymmetric-translating-branch}

All three bifurcation regimes considered below use the same axisymmetric
translating solution as their base point. This branch was constructed also in \cite{MR4972964}.
We construct that solution once and record the properties needed later. For $q\in\N$, let
\begin{equation}
	\label{eq:axisymmetric-linear-slice}
	\H^q_{\rm ax}
	:=
	\set{
		\xi\in\H^q_{\rm lin}:
		\xi\circ\cR_\theta=\xi\text{ in }\H^q(\Sph)
		\text{ for all }\theta\in\R
	}.
\end{equation}
\begin{lemma}\label{lem:axisymmetric_input}
	Let $k\geq 4$. For all $\alpha\in\R$, $V\in\R$, $\tau\geq 0$ and axisymmetric $h\in U_\delta^{k+2}$ there holds
	\begin{equation}
		\label{eq:axisymmetric-bernoulli-reduction}
		\cB(h,\alpha,V)
		=
		\Pi_h\left[
			H_h\circ\Psi_h
			-\frac{V^2}{2}
			\abs{\Qout(h,0,1)-e_3}^2
			\right].
	\end{equation}
	In particular, on the axisymmetric set, the Bernoulli map is
	independent of $\alpha$ and $\tau$ and even in $V$. Moreover, $\cB(h,\alpha,V)$ is axisymmetric.
\end{lemma}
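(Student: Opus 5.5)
The plan is to use the rotational equivariance of the Neumann problems to show that, for axisymmetric $h$, the rotational part of the boundary datum vanishes, and that the remaining translational potentials are linear in $V$.

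\textbf{Step 1: the datum is purely translational.} Since $h$ is axisymmetric, $\partial_\varphi h=0$. Hence, by \eqref{eq:W-normal-exact},
\[
	(W_{\alpha,V}\cdot n_h)\circ\Psi_h=(Ve_3\cdot n_h)\circ\Psi_h .
\]
This says the rotational field $\alpha e_3\times y$ is tangent to the rotationally invariant surface $\Gamma_h$. By uniqueness in Proposition~\ref{prop:neumann-sobolev}, the potentials therefore do not depend on $\alpha$. Since the datum is linear in $V$, uniqueness also gives
\[
	\Qin(h,\alpha,V)=V\Qin(h,0,1),\qquad \Qout(h,\alpha,V)=V\Qout(h,0,1).
\]

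\textbf{Step 2: the interior gradient.} The function $Vy_3$ is harmonic in $\Omega_h$, has Neumann datum $Ve_3\cdot n_h$, and becomes admissible after subtracting its mean over $\Omega_h$. By uniqueness, $\nabla\ein_h=Ve_3$. Hence $F_{\alpha,V}(y,Ve_3)=\frac12\abs{Ve_3-Ve_3}^2+\alpha y\cdot(e_3\times Ve_3)=0$, so the interior term drops out and $\tau$ disappears. When $\din=0$ there is no interior term to begin with.

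\textbf{Step 3: the exterior term.} We have $F_{\alpha,V}(y,q)=\frac12\abs{q-Ve_3}^2-\alpha(e_3\times y)\cdot q$. Let $\psi:=\eout(h,0,1)$. Rotating $\psi$ by $\cR_\theta$ gives another solution of the same exterior Neumann problem, because both $\Gamma_h$ and $e_3$ are invariant under $\cR_\theta$. Uniqueness then gives $\psi\circ\cR_\theta=\psi$, so $\psi$ is axisymmetric. Consequently $(e_3\times y)\cdot\nabla\psi=\partial_\varphi\psi=0$, and the rotational term vanishes. What remains is
\[
	\frac{V^2}{2}\abs{\Qout(h,0,1)-e_3}^2 .
\]

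\textbf{Step 4: assembling.} Substituting into $B_h\circ\Psi_h=\tau F(\cdot,\Qin)-F(\cdot,\Qout)+H_h\circ\Psi_h$ and applying $\Pi_h$ gives \eqref{eq:axisymmetric-bernoulli-reduction}. This expression is independent of $\alpha$ and $\tau$ and depends on $V$ only through $V^2$, so it is even in $V$. The curvature $H_h\circ\Psi_h$ is axisymmetric. The quantity $\abs{\Qout(h,0,1)-e_3}^2$ is axisymmetric because $\Qout\circ\cR_\theta=\cR_\theta\Qout$ and $e_3$ is fixed by $\cR_\theta$. Finally, $\Pi_h$ only subtracts a constant.

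\textbf{Main obstacle.} The main obstacle is making the equivariance rigorous in the Sobolev class $U^{k+2}_\delta$ with $\xi\circ\cR_\theta=\xi$. One must check that the rotated exterior function still satisfies the decay condition, the finite-energy condition and the normalisation, so that the uniqueness statement of Proposition~\ref{prop:neumann-sobolev} applies. For the exterior problem this is immediate, since rotations preserve $\Hdot^1$ and decay at infinity. For the interior problem the mean-zero normalisation is also rotation invariant.
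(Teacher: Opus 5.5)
Your proof is correct and follows essentially the same route as the paper. Because $\partial_\varphi h=0$, the datum reduces to $Ve_3\cdot n_h$, so the interior gradient is $Ve_3$ and the interior Bernoulli term vanishes. Rotational equivariance and uniqueness then make the exterior potential axisymmetric and linear in $V$, which removes the rotational term. You simply spell out the equivariance and uniqueness details that the paper states without elaboration: that rotated solutions remain admissible, and that $\Qout(h,0,1)(\cR_\theta\omega)=\cR_\theta\Qout(h,0,1)(\omega)$.
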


\begin{proof}
	If $h$ is
	axisymmetric, then the rotational field $\alpha e_3\times y$ is tangent to
	$\Gamma_h$. Thus the Neumann datum is the translational datum
	$Ve_3\cdot n_h$. The inner Neumann problem, whenever present, consequently
	has $\nabla\phiin_h=Ve_3$. In particular,
	$F_{\alpha,V}(y,\nabla\phiin_h)=0$. The outer potential is axisymmetric,
	independent of $\alpha$ and $\tau$, and linear in $V$ for fixed $h$. Its gradient has
	no azimuthal component, so the rotational part of $F_{\alpha,V}$ vanishes as
	well. Since
	$\Qout(h,\alpha,V)=V\Qout(h,0,1)$, this proves
	\eqref{eq:axisymmetric-bernoulli-reduction}. The axisymmetry of $\cB(h,\alpha,V)$ follows from \eqref{eq:curvature-radial-graph} and axisymmetry of the induced outer potential $\phi^{\rm out}_h$ of Proposition \ref{prop:neumann-sobolev}.
\end{proof}

\begin{lemma}
	\label{lem:axisymmetric-translating-branch}
	Fix $k\ge4$. There exist $V_{\rm ax}>0$ and $\delta_{\rm ax}>0$,
	independent of $\tau\ge0$ and $\alpha\in\R$, and a unique smooth map
	\begin{equation}
		\label{eq:axisymmetric-branch-map}
		(-V_{\rm ax},V_{\rm ax})\ni V
		\longmapsto
		\xi^{\rm ax}(V)\in\H^{k+2}_{\rm ax}
	\end{equation}
	with $\xi^{\rm ax}(0)=0$ such that, upon setting
	\begin{equation}
		\label{eq:axisymmetric-branch-definition}
		h^{\rm ax}_{0,V}:=\Theta(\xi^{\rm ax}(V)),
	\end{equation}
	the graph $h^{\rm ax}_{0,V}\in\cM^{k+2}$ satisfies
	\begin{equation}
		\label{eq:axisymmetric-branch-bernoulli}
		\cB(h^{\rm ax}_{0,V},\alpha,V)=0
		\qquad
		\text{for every }\tau\ge0\text{ and every }\alpha\in\R.
	\end{equation}
	If $h\in\cM^{k+2}$ is axisymmetric,
	$\norm{h}_{\H^{k+2}}<\delta_{\rm ax}$, $\abs V<V_{\rm ax}$, and
	$\cB(h,\alpha,V)=0$ for some $\tau\ge0$ and $\alpha\in\R$, then
	\begin{equation}
		\label{eq:axisymmetric-branch-uniqueness}
		h=h^{\rm ax}_{0,V}.
	\end{equation}
	The branch is independent of $\tau$ and $\alpha$, is even in the
	translation speed, and is invariant under reflection in the equatorial plane:
	\begin{equation}
		\label{eq:axisymmetric-branch-symmetries}
		h^{\rm ax}_{0,-V}=h^{\rm ax}_{0,V},
		\qquad
		h^{\rm ax}_{0,V}(X,Y,-Z)=h^{\rm ax}_{0,V}(X,Y,Z).
	\end{equation}
	The chart coordinate has the same symmetries:
	\begin{equation}
		\label{eq:axisymmetric-coordinate-symmetries}
		\xi^{\rm ax}(-V)=\xi^{\rm ax}(V),
		\qquad
		\xi^{\rm ax}(V)(X,Y,-Z)=\xi^{\rm ax}(V)(X,Y,Z).
	\end{equation}
	The centroid of
	$\Omega_{h^{\rm ax}_{0,V}}$ is the origin.
	Finally, with
	\begin{equation}
		P_2(Z):=\frac{3Z^2-1}{2},
		\qquad
		h_{02}:=-\frac{3}{16}P_2(Z),
	\end{equation}
	we have
	\begin{align}
		\label{eq:axisymmetric-branch-expansion}
		h^{\rm ax}_{0,V}
		 & =V^2h_{02}+O_{\H^{k+2}}(V^4), \\
		\label{eq:axisymmetric-coordinate-expansion}
		\xi^{\rm ax}(V)
		 & =V^2h_{02}+O_{\H^{k+2}}(V^4).
	\end{align}
	Consequently,
	\begin{equation}
		\label{eq:axisymmetric-weight-expansion}
		M_{h^{\rm ax}_{0,V}}
		=1+2V^2h_{02}+O_{\H^{k+2}}(V^4),
		\qquad
		\partial_Vh^{\rm ax}_{0,V}\big|_{V=0}
		=
		\partial_V\xi^{\rm ax}(0)=0.
	\end{equation}
	For each fixed $V$, the graph and the associated potentials are smooth up to
	the interface, with the exterior decay stated in
	Proposition~\ref{prop:neumann-sobolev}.
\end{lemma}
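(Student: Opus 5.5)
The plan is to apply the implicit-function theorem to the restriction of the Bernoulli map to the axisymmetric slice. By Lemma~\ref{lem:axisymmetric_input}, for axisymmetric $h$ the map $\cB(h,\alpha,V)$ is independent of $\alpha$ and $\tau$ and depends on $V$ only through $V^2$. It also maps axisymmetric graphs to axisymmetric functions. We therefore define
\[
	G(\xi,w):=\Pi_0^{\rm ax}\,\cB\bigl(\Theta(\xi),0,\sqrt{w}\bigr),
	\qquad \xi\in\H^{k+2}_{\rm ax},\ w=V^2,
\]
where the $V^2$-dependence is made polynomial through the factor in \eqref{eq:axisymmetric-bernoulli-reduction}. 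This makes $G$ smooth in $(\xi,w)$ near $w=0$, and it maps into the axisymmetric part of $\H^k$.

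We also need a suitable target space. The Bernoulli map has zero $M_h$-weighted mean by construction. Its $Z$-component vanishes automatically for critical points, by the argument in Lemma~\ref{lem:variational_structure} (translation invariance forces the multiplier $\mu=0$). More directly, we compose with $\proj_{\rm lin}$. A solution of $\proj_{\rm lin}\cB=0$ on $\cM^{k+2}$ then gives a critical point of $\scrE$ restricted to $\cM^{k+2}$. Lemma~\ref{lem:variational_structure} upgrades this to $\cB=0$: the $Z$-component is the multiplier $\mu$, which vanishes.

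At $(\xi,w)=(0,0)$, Lemma~\ref{lem:linearisation-round-sphere} with $m=0$ gives $D_\xi G(0,0)Y_\ell^0=(\ell(\ell+1)-2)Y_\ell^0$. On $\H^{k+2}_{\rm ax}$ all modes have $\ell\ge2$, since $\ell=0,1$ are removed by the slice. So $D_\xi G(0,0)=-(\Delta_{\Sph}+2)$ is an isomorphism from $\H^{k+2}_{\rm ax}$ onto $\H^k_{\rm ax}$, and it involves neither $\alpha$ nor $\tau$. The implicit-function theorem then gives a unique smooth $\xi(w)$ with $\xi(0)=0$, hence $\xi^{\rm ax}(V)=\xi(V^2)$. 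This yields smoothness in $V$, evenness, independence of $\tau$ and $\alpha$, and local uniqueness among axisymmetric $h$. For uniqueness we write $h=\Theta(\xi)$ with $\xi$ the slice projection; this works because $\Theta$ is a chart.

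Equatorial reflection symmetry follows from uniqueness. The problem is equivariant under $Z\mapsto -Z$ combined with $V\mapsto -V$, since the datum $Ve_3\cdot n$ flips sign. Evenness in $V$ then shows that the reflected solution is again a solution, so it coincides with the original. A reflection-symmetric domain has vertical centroid $0$, and axisymmetry makes the horizontal centroid $0$.

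For the expansion, differentiate $G(\xi(w),w)=0$ at $w=0$. This gives $-(\Delta_{\Sph}+2)\xi'(0)=\Pi\bigl[\tfrac12|\Qout(0,0,1)-e_3|^2\bigr]$. By Lemma~\ref{lem:round-translational-potentials}, the right-hand side is $\Pi\,\tfrac98(1-Z^2)=-\tfrac34P_2$. Since $(\Delta_{\Sph}+2)P_2=-4P_2$, we get $\xi'(0)=-\tfrac{3}{16}P_2=h_{02}$, and \eqref{eq:Theta-expansion} transfers the expansion to $h^{\rm ax}_{0,V}$, with remainder $O(w^2)=O(V^4)$. The formula for $M_h=(1+h)^2$ follows immediately. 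Finally, smoothness up to the interface comes from Lemma~\ref{lem:small-sobolev-solution-spatial-regularity}.

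The main obstacle is the careful setup. We must ensure that the projection onto the slice together with the automatic vanishing of the $Z$-multiplier really recovers $\cB=0$. We must also make the dependence on $w=V^2$ smooth, for which the linearity of $\Qout$ in $V$ in Lemma~\ref{lem:axisymmetric_input} is the key.
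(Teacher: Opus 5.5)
The gap is in the step you yourself flag as the main obstacle: recovering $\cB=0$ from $\proj_{\rm lin}\cB=0$. The rest of your plan is sound and is essentially the paper's argument: the IFT on the axisymmetric slice with linearisation $-(\Delta_{\Sph}+2)$, the reflection argument via uniqueness, and the $V^2$-coefficient computation giving $h_{02}=-\frac3{16}P_2$. Using $w=V^2$ as the IFT parameter is a harmless variant that makes evenness automatic.

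\textbf{Where it fails.} The projected equation only gives $\cB(h,\alpha,V)=a+bZ$ for constants $a,b$. This does \emph{not} make $h$ a critical point of $\scrE$ restricted to $\cM^{k+2}$. For $\eta\in T_h\cM^{k+2}$ one gets $D_h\scrE(h)[\eta]=\int_{\Sph}(a+bZ)M_h\eta\dd S=b\int_{\Sph}(1+h)^2Z\eta\dd S$. But $T_h\cM^{k+2}$ is cut out by orthogonality to $(1+h)^2$ and $(1+h)^3Z$, so this integral need not vanish for non-round $h$ unless $b=0$. Correspondingly, the multiplier form in Lemma~\ref{lem:variational_structure} is $E_h=\lambda+\mu y_3$ with $y_3\circ\Psi_h=(1+h)Z$. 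So the $Z$-coefficient $b$ is not the multiplier $\mu$, and that lemma cannot be invoked at this point.

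\textbf{How to repair it.} Use the translation mechanism you mention, but directly rather than through criticality. The identity \eqref{eq:tranlational_EEE} holds for \emph{every} $h$, not only critical points, because it comes from the invariance of the reduced action under $y\mapsto y+te_3$. Since $\cB$ and $E_h\circ\Psi_h$ differ by a constant and $\int_{\Gamma_h}e_3\cdot n_h\dd S=0$, it gives $\int_{\Sph}\mathsf K_h\,\cB\dd S=0$, where $\mathsf K_h=(e_3\cdot(n_h\circ\Psi_h))J_h$. The definition of $\Pi_h$ gives $\int_{\Sph}M_h\cB\dd S=0$. Substituting $\cB=a+bZ$ into these two identities yields a homogeneous $2\times2$ system for $(a,b)$. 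Its matrix at $h=0$ is $\operatorname{diag}(4\pi,4\pi/3)$, so it stays invertible near the sphere and $a=b=0$. This is exactly how the paper closes the step.

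An alternative fix is to run the IFT on the axisymmetric restriction of the gradient $\cF=D\Theta(\xi)^*[\cB M_{\Theta(\xi)}]$ instead of $\proj_{\rm lin}\cB$. Its zeros are by construction the constrained critical points, so Lemma~\ref{lem:variational_structure} then applies legitimately, and its linearisation at $0$ is the same operator. With either fix, the rest of your argument goes through.
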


\begin{proof}
	We apply the implicit-function theorem directly to the Bernoulli map on the
	fixed-volume, vertically centred axisymmetric slice. The chart $\Theta$
	preserves axisymmetry by uniqueness of the two constraint corrections in
	\eqref{eq:constraint-correction-equations}. Let $\mathsf P_{\rm ax}$ be the
	$\L^2(\Sph)$-orthogonal projection that retains the axisymmetric modes and
	removes the modes $1$ and $Z$. It restricts to a bounded map from
	$\H^k(\Sph)$ onto $\H^k_{\rm ax}$. Set
	\begin{equation}
		\label{eq:axisymmetric-ift-map}
		\cG_{\rm ax}(\xi,V)
		:=
		\mathsf P_{\rm ax}\cB(\Theta(\xi),1,V),
		\qquad
		\xi\in\H^{k+2}_{\rm ax}.
	\end{equation}
	By Proposition~\ref{prop:smooth-bernoulli-map}, this is a smooth map into
	$\H^k_{\rm ax}$. Equation~\eqref{eq:axisymmetric-bernoulli-reduction} shows
	that neither the choice of angular velocity, fixed here at $1$, nor the value
	of $\tau$ used to define $\cB$ affects the map.

	We claim that, near the origin, zeros of \eqref{eq:axisymmetric-ift-map} are
	exactly zeros of the full Bernoulli map. Only the converse implication needs
	proof. Put $h=\Theta(\xi)$ and suppose $\cG_{\rm ax}(\xi,V)=0$. Since
	$\cB(h,1,V)$ is axisymmetric and its projection onto
	$\H^k_{\rm ax}$ vanishes, there are $a,b\in\R$ such that
	\begin{equation}
		\label{eq:axisymmetric-bernoulli-two-multipliers}
		\cB(h,1,V)=a+bZ.
	\end{equation}
	The definition of $\Pi_h$ gives the first identity
	\begin{equation}
		\label{eq:axisymmetric-multiplier-identities}
		\int_{\Sph}M_h\cB(h,1,V)\dd S=0.
	\end{equation}
	Defining the vertical-translation weight
	\begin{equation*}
		\mathsf K_h
		:=
		\bigl(e_3\cdot(n_h\circ\Psi_h)\bigr)J_h,
	\end{equation*}
	vertical-translation invariance of the reduced action, as used in the proof
	of Lemma~\ref{lem:variational_structure}, gives
	\begin{equation}
		\label{eq:axisymmetric-vertical-noether-identity}
		\int_{\Sph}\mathsf K_h\cB(h,1,V)\dd S=0.
	\end{equation}
	Indeed, this follows from \eqref{eq:tranlational_EEE}, minding that the variational and Bernoulli densities differ only by a constant,
	and $\int_{\Sph}\mathsf K_h\dd S=\int_{\Gamma_h}e_3\cdot n_h\dd S=0$.
	Substitution of \eqref{eq:axisymmetric-bernoulli-two-multipliers} into
	\eqref{eq:axisymmetric-multiplier-identities} and
	\eqref{eq:axisymmetric-vertical-noether-identity} gives a homogeneous system
	for $(a,b)$ with coefficient matrix
	\begin{equation*}
		\begin{pmatrix}
			\displaystyle\int_{\Sph}M_h\dd S
			 & \displaystyle\int_{\Sph}M_hZ\dd S         \\[0.6em]
			\displaystyle\int_{\Sph}\mathsf K_h\dd S
			 & \displaystyle\int_{\Sph}\mathsf K_hZ\dd S
		\end{pmatrix}.
	\end{equation*}
	At $h=0$ this matrix is
	$\operatorname{diag}(4\pi,4\pi/3)$, so it remains invertible near the round
	sphere. Thus $a=b=0$, proving the claim.

	At the round sphere, $D\Theta(0)=\id$ and
	$\cB(0,1,0)=0$. Lemma~\ref{lem:linearisation-round-sphere} therefore gives
	\begin{equation}
		\label{eq:axisymmetric-branch-linearisation}
		D_\xi\cG_{\rm ax}(0,0)Y_\ell^0
		=
		\bigl(\ell(\ell+1)-2\bigr)Y_\ell^0,
		\qquad \ell\ge2.
	\end{equation}
	The constant mode is absent from $\H^{k+2}_{\rm lin}$, and the mode $Z$ is
	absent because of the vertical-centring condition. The remaining
	eigenvalues are bounded below by a positive multiple of $1+\ell^2$. Hence
	\begin{equation*}
		D_\xi\cG_{\rm ax}(0,0):
		\H^{k+2}_{\rm ax}\longrightarrow\H^k_{\rm ax}
	\end{equation*}
	is an isomorphism. The implicit-function theorem yields the unique smooth
	map $\xi^{\rm ax}(V)$. The equivalence just proved gives
	\eqref{eq:axisymmetric-branch-bernoulli}, and IFT uniqueness gives
	\eqref{eq:axisymmetric-branch-uniqueness}. Together with
	\eqref{eq:axisymmetric-bernoulli-reduction}, this also proves independence of
	$\tau$ and $\alpha$.

	Evenness of the axisymmetric equation in $V$ and uniqueness give
	$h^{\rm ax}_{0,-V}=h^{\rm ax}_{0,V}$. If
	$S(X,Y,Z):=(X,Y,-Z)$, reflection of the graph by $S$ and transformation of the
	potentials by
	\begin{equation*}
		\widetilde\phi^{\rm in/out}(y)
		=-\phi^{\rm in/out}(Sy)
	\end{equation*}
	produce another axisymmetric solution with the same $V$. The two constraints
	are preserved by $S$, so local uniqueness gives the second identity in
	\eqref{eq:axisymmetric-branch-symmetries}. The chart $\Theta$ is injective
	and commutes with equatorial reflection. Applying $\Theta^{-1}$ to
	\eqref{eq:axisymmetric-branch-symmetries} therefore gives
	\eqref{eq:axisymmetric-coordinate-symmetries}. Moreover, axisymmetry eliminates the two
	horizontal components of the centroid, while the vertical-centring constraint
	eliminates its third component. Hence the centroid is the origin.

	It remains to compute the leading term. Smoothness and evenness give
	$h^{\rm ax}_{0,V}=V^2h_{02}+O_{\H^{k+2}}(V^4)$. By
	Lemma~\ref{lem:round-translational-potentials},
	\begin{equation*}
		\cB(0,1,V)
		=
		\Pi_0\left[-\frac98V^2(1-Z^2)+2\right]
		=
		\frac34V^2P_2(Z).
	\end{equation*}
	The coefficient of $V^2$ in the Bernoulli equation is therefore
	\begin{equation*}
		\left[\cB(h^{\rm ax}_{0,V},1,V)\right]_{V^2}=D_h\cB(0,1,0)h_{02}+\frac34P_2=0.
	\end{equation*}
	Since $P_2$ is the $(\ell,m)=(2,0)$ mode and its eigenvalue is $4$ by
	\eqref{eq:lambda_20}, this gives $h_{02}=-3P_2/16$. Finally,
	$\Theta(\xi)-\xi=O(\norm\xi^2)$ and $h^{\rm ax}_{0,V}=O(V^2)$ yield
	\eqref{eq:axisymmetric-coordinate-expansion}. The identities
	$M_h=(1+h)^2$ and \eqref{eq:axisymmetric-branch-expansion} give
	\eqref{eq:axisymmetric-weight-expansion}. Spatial smoothness follows
	from Lemma~\ref{lem:small-sobolev-solution-spatial-regularity}.
\end{proof}

\section{Gradient Lyapunov--Schmidt reduction with \texorpdfstring{$\so$}{SO(2)} symmetry}
Let $\cR_\theta$ denote rotation through angle $\theta$ around the
$e_3$-axis. The field $W_{\alpha,V}$ is equivariant:
\begin{equation*}
	W_{\alpha,V}(\cR_\theta y)
	=\cR_\theta W_{\alpha,V}(y).
\end{equation*}
In the setting of the two-phase reduced action
\eqref{eq:two_phase_reduced_action_definition}, let
$D_\theta=\cR_\theta D$, where
$D\in\set{\Omega,\R^3\setminus\overline{\Omega}}$. Uniqueness gives
$u_{D_\theta}(\cR_\theta y)=u_D(y)$; gradients, normals, surface measure,
and the Neumann data transform accordingly. Therefore
\begin{equation}\label{eq:action_is_invariant_under_rotations}
	\scrE(\cR_\theta\Omega,\alpha,V)
	=
	\scrE(\Omega,\alpha,V).
\end{equation}
This invariance carries over to the normal-graph formulation of the action
defined in \eqref{eq:graph_parametrized_two_phase_action}.
The fixed-volume condition \eqref{eq:vertical-centring_V} and the vertical-centring condition \eqref{eq:vertical-centring_C}
are also invariant under these rotations.

The two ingredients used in the lemma below are standard in bifurcation
theory. Symmetries are routinely used to reduce complexity,
while variational or gradient structures often remove
bad directions of the reduced problem; see, for example,
\cite{MR950168,MR2004250}. In free boundary fluid problems, related
mechanisms play a similar role in the construction of special coherent
states \cite{MR5001433,baldi2025bifurcationmultipleeigenvaluesrotating,MR1781220}.
We decided to combine both effects in a single lemma.

\begin{lemma}
	\label{lem:gradient-LS}
	Let $\cX_0$ be a real Hilbert space with a strongly continuous
	orthogonal $\so$-action $(\theta,\xi)\mapsto\cU_\theta\xi$. Let
	\[
		\cX_2\hookrightarrow\cX_1\hookrightarrow\cX_0
	\]
	be continuous embeddings of invariant Hilbert spaces, with $\cX_2$ dense
	in $\cX_1$, and assume that
	the restrictions of $\cU_\theta$ to $\cX_1$ and $\cX_2$ are strongly
	continuous bounded representations. Let $U\subset\cX_2$ be an invariant
	neighbourhood of $0$, let $\Lambda\subset\R^d$ be a neighbourhood of $0$,
	and let
	\[
		\scrJ\colon U\times\Lambda\to\R
	\]
	be a smooth $\so$-invariant functional. Suppose that its $\cX_0$-gradient
	\[
		\cF\colon U\times\Lambda\to\cX_1
	\]
	is smooth and satisfies
	\begin{equation}\label{eq:L2-gradient-representation}
		D_\xi\scrJ(\xi,\mu)[\eta]
		=
		\avg{\cF(\xi,\mu),\eta}_{\cX_0}
		, \qquad \eta \in \cX_2.
	\end{equation}
	Assume $\cF(0,0)=0$ and that
	\[
		L:=D_\xi\cF(0,0)\colon\cX_2\to\cX_1
	\]
	is Fredholm with a two-dimensional kernel $K\subset\cX_2$ containing no
	nonzero fixed vector of the $\so$-action. Let $\proj$ be the
	$\cX_0$-orthogonal projection onto $K$ and put $\idproj=\id-\proj$.
	Assume that
	\[
		L\colon K^\perp\cap\cX_2\longrightarrow K^\perp\cap\cX_1
	\]
	is an isomorphism, where orthogonality is taken in $\cX_0$. Then, for
	$a\in K$ and $\mu\in\Lambda$ small, the range equation
	\[
		\idproj\cF(a+w,\mu)=0,
		\qquad w\in K^\perp\cap\cX_2,
	\]
	has a unique small smooth solution $w=w(a,\mu)$. Moreover, the reduced
	kernel equation has the form
	\begin{equation}\label{eq:reduced-gradient-form}
		\proj\cF(a+w(a,\mu),\mu)
		=
		A(\abs a^2,\mu)a
	\end{equation}
	for a smooth scalar function $A$, where $\abs a$ is the norm induced by
	the $\cX_0$-inner product on $K$.
\end{lemma}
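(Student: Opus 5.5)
The plan is to carry out a standard Lyapunov--Schmidt reduction first, and then to extract the form \eqref{eq:reduced-gradient-form} from two structural facts. The first is equivariance of $\cF$ together with the classification of $\so$-equivariant maps on the two-dimensional kernel. The second is the gradient structure, which rules out the rotational ("$Ja$") component.

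\emph{Step 1: equivariance of $\cF$ and of the splitting.} Since $\scrJ$ is invariant and $\cU_\theta$ is orthogonal on $\cX_0$, differentiating $\scrJ(\cU_\theta\xi,\mu)=\scrJ(\xi,\mu)$ and using \eqref{eq:L2-gradient-representation} with density of $\cX_2$ gives $\avg{\cF(\cU_\theta\xi,\mu),\cU_\theta\eta}_{\cX_0}=\avg{\cF(\xi,\mu),\eta}_{\cX_0}$. Hence $\cF(\cU_\theta\xi,\mu)=\cU_\theta\cF(\xi,\mu)$, and consequently $L\cU_\theta=\cU_\theta L$. The kernel $K$ is therefore invariant, and because the action is orthogonal, both $\proj$ and $\idproj$ commute with $\cU_\theta$. $L$ is self-adjoint in the sense that $\avg{L\xi,\eta}_{\cX_0}=\avg{\xi,L\eta}_{\cX_0}$, since it is the Hessian of $\scrJ$; this makes $\idproj$ map $\cX_1$ onto $K^\perp\cap\cX_1$, and it is also consistent with the assumed isomorphism on $K^\perp$.

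\emph{Step 2: range equation.} I would apply the implicit function theorem to $(a,w,\mu)\mapsto\idproj\cF(a+w,\mu)$ from $K\times(K^\perp\cap\cX_2)\times\Lambda$ to $K^\perp\cap\cX_1$. The derivative in $w$ at the origin is $\idproj L=L$ on $K^\perp\cap\cX_2$, which is an isomorphism by assumption. This yields a unique small smooth solution $w(a,\mu)$. Uniqueness combined with equivariance gives $w(\cU_\theta a,\mu)=\cU_\theta w(a,\mu)$, and therefore $G(a,\mu):=\proj\cF(a+w(a,\mu),\mu)$ is an equivariant smooth map $K\to K$.

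\emph{Step 3: structure on $K$ (main step).} Because $K$ is two-dimensional and contains no nonzero fixed vector, the restricted representation is a rotation $\cU_\theta|_K=e^{\kappa\theta J}$ with $\kappa\neq0$, where $J$ is the rotation by $\pi/2$ in an orthonormal basis. The reduced functional is $j(a,\mu):=\scrJ(a+w(a,\mu),\mu)$. Its gradient is $G$: the $w$-derivative term is $\avg{\cF,D_aw[b]}$, and this vanishes because $D_aw[b]\in K^\perp$ and $\idproj\cF=0$, so $\nabla_a j=\proj\cF=G$. The function $j$ is invariant under rotations of $K$, hence $j(a,\mu)=\tilde j(\abs a^2,\mu)$. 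The hardest part is to show that $\tilde j$ is smooth. Restricting to a line shows that $r\mapsto j(re_1,\mu)$ is smooth and even, and Whitney's theorem on even functions then gives $\tilde j$ smooth in $(\abs a^2,\mu)$. Then $G=2\partial_1\tilde j(\abs a^2,\mu)a$, so $A:=2\partial_1\tilde j$ is smooth.

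As a fallback that avoids Whitney's theorem, I would argue directly with $G$. Equivariance forces $G(a,\mu)=\alpha(\abs a^2,\mu)a+\beta(\abs a^2,\mu)Ja$; for smoothness, write $G(re_1)=r\int_0^1 D_aG(sre_1)e_1\dd s$ and use evenness in $r$. The gradient property then gives $\beta=0$, since $\avg{G,Ja}=\frac{\ddd}{\ddd\theta}j(e^{\theta J}a)|_{\theta=0}=0$.
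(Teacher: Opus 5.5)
Your proposal is correct, and Steps 1 and 2 match the paper's argument: equivariance of $\cF$ via density of $\cX_2$ in $\cX_1$, Hessian symmetry of $L$, the implicit-function theorem for the range equation, equivariance of $w$ by uniqueness, and $\nabla_a j=\proj\cF(a+w,\mu)$ for the reduced functional. The routes differ in Step 3.

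\textbf{The paper's route.} It applies the smooth equivariant normal form for maps on a nontrivial two-dimensional $\so$-representation (citing Golubitsky--Stewart--Schaeffer). This gives $\proj\cF(a+w,\mu)=A(\abs a^2,\mu)a+B(\abs a^2,\mu)Ja$. It then removes $B$ by differentiating the invariance of the reduced functional along the group orbit: $0=\avg{\nabla_a j,\kappa Ja}_{\cX_0}=\kappa B\abs a^2$.

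\textbf{Your main route.} You work with the scalar invariant instead. The reduced functional is a smooth function of $(\abs a^2,\mu)$ by Whitney's even-function theorem; you need the version with parameters, which is standard. Differentiating gives the radial form directly with $A=2\partial_1\tilde j$. No separate argument is needed to eliminate the $Ja$-component.

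\textbf{Comparison.} Your route is slightly more economical, since one scalar invariance statement replaces the vector-valued normal form plus the orbit identity. The paper's route stays at the level of the gradient map and shows explicitly that the gradient structure is what kills the rotational component.

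\textbf{Remarks.} Your fallback is exactly the paper's argument. Its smoothness step, evenness of $r\mapsto G(re_1,\mu)/r$, still needs Whitney's theorem, so it does not actually avoid it. Also, $\idproj$ maps $\cX_1$ into $K^\perp\cap\cX_1$ simply because $K\subset\cX_2$ is finite-dimensional. Hessian symmetry is what you need for $L(K^\perp\cap\cX_2)\subset K^\perp$, i.e.\ $\idproj L=L$ on $K^\perp\cap\cX_2$.
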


\begin{proof}
	Invariance of $\scrJ$ and orthogonality of the action on $\cX_0$ give,
	for $\eta\in\cX_2$,
	\[
		\avg{\cF(\cU_\theta\xi,\mu),\cU_\theta\eta}_{\cX_0}
		=
		\avg{\cF(\xi,\mu),\eta}_{\cX_0}.
	\]
	The difference
	$\cF(\cU_\theta\xi,\mu)-\cU_\theta\cF(\xi,\mu)$ belongs to $\cX_1$.
	Choose a sequence $(\eta_j)_j$ in $\cX_2$ converging to this difference in $\cX_1$,
	hence also in $\cX_0$. The preceding identity then shows that its
	$\cX_0$-norm vanishes. Thus
	\[
		\cF(\cU_\theta\xi,\mu)=\cU_\theta\cF(\xi,\mu).
	\]
	Differentiating at $(\xi,\mu)=(0,0)$ shows that $L$ is equivariant, so
	$K$ is invariant. Furthermore, differentiating
	\eqref{eq:L2-gradient-representation} once more gives the Hessian symmetry
	\[
		\avg{L\xi,\eta}_{\cX_0}
		=
		\avg{\xi,L\eta}_{\cX_0}
		, \qquad \xi,\eta \in \cX_2.
	\]
	Thus $L$ maps $K^\perp\cap\cX_2$ into $K^\perp\cap\cX_1$. Since $K$ is
	finite-dimensional and contained in $\cX_2$, $\proj$ and $\idproj$ restrict
	to bounded operators on each $\cX_j$, $j=0,1,2$, and commute with the
	action.

	The derivative with respect to $w$ of
	\[
		(a,w,\mu)\longmapsto\idproj\cF(a+w,\mu)
	\]
	at $(0,0,0)$ is the assumed isomorphism
	$L\colon K^\perp\cap\cX_2\to K^\perp\cap\cX_1$. The implicit-function
	theorem therefore gives the unique smooth solution $w=w(a,\mu)$. By
	equivariance and uniqueness,
	\begin{equation}\label{eq:w-equivariant}
		w(\cU_\theta a,\mu)=\cU_\theta w(a,\mu).
	\end{equation}

	Define
	\[
		\Phi(a,\mu)=\scrJ(a+w(a,\mu),\mu).
	\]
	Equations \eqref{eq:w-equivariant} and the invariance of $\scrJ$ show that
	$\Phi(\cU_\theta a,\mu)=\Phi(a,\mu)$. For $b\in K$, the range equation and
	$D_aw(a,\mu)[b]\in K^\perp$ give
	\begin{align*}
		D_a\Phi(a,\mu)[b]
		 & =\avg{\cF(a+w,\mu),b+D_aw(a,\mu)[b]}_{\cX_0} \\
		 & =\avg{\proj\cF(a+w,\mu),b}_{\cX_0}.
	\end{align*}
	Consequently,
	\begin{equation}\label{eq:reduced-gradient-identity}
		\nabla_a\Phi(a,\mu)=\proj\cF(a+w(a,\mu),\mu).
	\end{equation}

	Because $K$ is a two-dimensional nontrivial real representation of
	$\so$, there is an orthogonal complex structure $J$ on $K$ and an integer
	$q\ge1$ such that
	\[
		\cU_\theta\big|_K=\cos(q\theta)\id+\sin(q\theta)J.
	\]
	The standard smooth equivariant normal form \cite[Chapter~XII, Sections~4--5]{MR950168} on this representation gives
	\[
		\proj\cF(a+w(a,\mu),\mu)
		=A(\abs a^2,\mu)a+B(\abs a^2,\mu)Ja
	\]
	with smooth scalar functions $A$ and $B$. On the other hand, differentiating
	the invariance of $\Phi$ along the group orbit and using
	\eqref{eq:reduced-gradient-identity} yields
	\[
		0=\avg{\nabla_a\Phi(a,\mu),qJa}_{\cX_0}
		=qB(\abs a^2,\mu)\abs a^2.
	\]
	Since $q\ge1$, it follows that $B=0$ away from $a=0$, and continuity gives
	$B(0,\mu)=0$. This proves
	\eqref{eq:reduced-gradient-form}.
\end{proof}

\begin{remark}
	\label{rem:LS_higher_dimensional_kernel}
	If the kernel has dimension greater than two, the range construction,
	equivariance of \(w\), and the reduced-gradient identity
	\eqref{eq:reduced-gradient-identity} remain valid, although the reduced
	equation need not be radial. Setting
	\[
		g(a,\mu)
		:=
		\proj\cF(a+w(a,\mu),\mu),
	\]
	invariance gives
	\[
		\left\langle g(a,\mu),Ja\right\rangle_{\cX_0}=0,
		\qquad
		Ja
		=
		\frac{\ddd}{\ddd\theta}\Big|_{\theta=0}\cU_\theta a.
	\]

	In particular, suppose that \(K=E\oplus F\) is an orthogonal invariant splitting,
	where \(E\) is a nontrivial two-dimensional real representation.
	Write \(a=e+f\) and \(g=g_E+g_F\) according to this splitting. If
	\[
		e\ne0,
		\qquad
		g_F(a)=0,
		\qquad
		\left\langle g_E(a),e\right\rangle_{\cX_0}=0,
	\]
	then \(g(a)=0\). Indeed, the orbit identity gives
	\(\left\langle g_E(a),Je\right\rangle_{\cX_0}=0\), and
	\(e,Je\) span \(E\).
\end{remark}

\section{The nonresonant case \texorpdfstring{$\tau>0$}{tau > 0}}
\label{C3}
We recall
\begin{equation*}
	\cO=
	\set{\frac{2}{77}, \frac{2}{7},
		\frac{23}{60},\frac{10}{9},\frac{46}{27}}.
\end{equation*}

First, we prove the nondimensional version of Theorem \ref{thm:din-pos-dimensional}
in the case $\tau>0$ and $\tau\notin\cO$.

\begin{theorem}
	\label{thm:din-pos-dimensionless}
	Assume
	\[
		\tau>0,
		\qquad
		\tau\notin\cO .
	\]
	Let
	\[
		\alpha_*^2=\frac{24}{3\tau+2},
		\qquad
		\lambda_{31}(\alpha_*)
		=
		\frac{22\tau+14}{3\tau+2}.
	\]
	Then, for every $k\ge4$, there are constants
	$V_0,s_0,\eta>0$, depending only on $k$ and $\tau$,
	such that, for every $V\in\R$ with $\abs{V}<V_0$ and every $\abs{s}<s_0$,
	there exists $\alpha=\alpha(s,V)>0$ such that the nondimensional
	free boundary problem \eqref{eq:main-problem-nondimensional}, with
	$\dout=1$ and $\din=\tau$, has a smooth solution $\Omega(s,V)$ whose
	boundary is the radial graph
	\[
		\Gamma(s,V)
		=
		\{(1+h(s,V)(\omega))\omega:\omega\in\Sph\}
	\]
	with $h(s,V)\in \C^\infty(\Sph)$.
	After rotation about the $e_3$-axis, the branch is normalised by the
	kernel direction $sXZ$, and then has the following expansion:
	\begin{equation}
		\label{eq:h-expansion-dimensionless-theorem}
		\begin{aligned}
			h(s,V)
			={} &
			-\frac{3V^2}{16}\frac{3Z^2-1}{2}
			+sXZ                     \\
			    & \quad
			-\sqrt{6(3\tau+2)}\,Vs\,Y
			\left[
				\frac{3}{40(2\tau+1)}
				+
				\frac{5}{22\tau+14}
				\left(Z^2-\frac15\right)
				\right] \\
			    & \quad
			+O_{\H^{k+2}(\Sph)}
			\left(s^2+\abs{s}V^2+V^4\right).
		\end{aligned}
	\end{equation}
	Moreover, the associated frequencies and centroids satisfy
	\begin{gather}
		\label{eq:alpha-expansion-dimensionless-theorem}
		\alpha(s,V)
		=
		\alpha_*+O(s^2+V^2),\\\label{eq:centroid-expansion-dimensionless-theorem}
		\bar{y}_{s,V}:=\frac{3}{4\pi}\int_{\Omega(s,V)}y\dd y
		=-\frac{9}{10(2\tau+1)\alpha_*}sVe_2+O(s^2+\abs{s}V^2)
	\end{gather}
	with $\bar{y}_{s,V}\cdot e_3=0$.
	In particular, if in addition $0<\abs{s}\le\eta\abs{V}$, then
	$\bar{y}_{s,V}\cdot e_2\neq 0$. If $s=0$, then $h(0,V)$ is axisymmetric and $\bar{y}_{0,V}=0$.
	For each fixed $V$, the non-axisymmetric branch is locally unique
	modulo rotations about the $e_3$-axis among near-spherical solutions in the
	fixed-volume, vertically centred slice with frequency near $\alpha_*$. On the axisymmetric set, the shape is
	locally unique, while $\alpha$ remains free. The precise statement is
	Lemma~\ref{lem:nonlinear-branch-smooth-main-proof}.
\end{theorem}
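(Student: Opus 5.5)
The plan is to apply the gradient Lyapunov--Schmidt reduction of Lemma~\ref{lem:gradient-LS}, centred at the round sphere, to the reduced action on the constraint manifold. Concretely, take $\cX_0=\L^2(\Sph)$ (more precisely $\H^0_{\rm lin}$), $\cX_1=\H^k_{\rm lin}$ and $\cX_2=\H^{k+2}_{\rm lin}$. The $\so$-action is $\xi\mapsto\xi\circ\cR_{-\theta}$ and the parameters are $\mu=(\alpha-\alpha_*,V)$. The functional is
\[
\scrJ(\xi,\mu)=\scrE(\Theta(\xi),\alpha,V).
\]
By Lemma~\ref{lem:variational_structure}, its $\L^2$-gradient is $\proj_{\rm lin}\bigl(D\Theta(\xi)^*[\cB(\Theta(\xi),\alpha,V)M_{\Theta(\xi)}]\bigr)$, which is smooth into $\H^k$ by Proposition~\ref{prop:smooth-bernoulli-map}. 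Zeros of this gradient correspond to zeros of $\cB$, up to the Lagrange-multiplier argument already used in Lemma~\ref{lem:variational_structure}.

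By Lemma~\ref{lem:linearisation-round-sphere}, the linearisation at $(0,\alpha_*,0)$ is diagonal with eigenvalues $\lambda_{\ell m}(\alpha_*)$. Lemma~\ref{lem:nonresonance} says that for $\tau\notin\cO$, $\tau>0$, the only zeros in $\H_{\rm lin}$ are the $(2,1)$ modes, so the kernel is $K=\Span\{XZ,YZ\}$. This space carries weight one and contains no fixed vectors. The eigenvalues grow like $\ell^2$, which gives the isomorphism $K^\perp\cap\H^{k+2}\to K^\perp\cap\H^k$. The reduced equation is therefore $A(|a|^2,\alpha-\alpha_*,V)a=0$. Normalise $a=sXZ$ by a rotation. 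Since $\partial_\alpha A(0,0,0)$ is a nonzero multiple of $\partial_\alpha\lambda_{21}(\alpha_*)=-8/\alpha_*\neq0$ by \eqref{eq:transversality}, the implicit function theorem solves $A(s^2,\alpha-\alpha_*,V)=0$ for $\alpha=\alpha(s,V)$, smooth in $(s,V)$ and even in $s$. This gives \eqref{eq:alpha-expansion-dimensionless-theorem}. Local uniqueness modulo rotations follows from uniqueness in both implicit function theorem steps. When $a=0$, uniqueness of $w$ together with Lemma~\ref{lem:axisymmetric-translating-branch} identifies the solution with $h^{\rm ax}_{0,V}$, for arbitrary $\alpha$. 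Smoothness of $h$ follows from Lemma~\ref{lem:small-sobolev-solution-spatial-regularity}.

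The expansions come from Taylor expanding $w(sXZ,\mu)$.

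\emph{The $V^2$ term.} Since $w(0,\mu)=\xi^{\rm ax}(V)$, this term is $V^2h_{02}$ by Lemma~\ref{lem:axisymmetric-translating-branch}.

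\emph{The $sV$ term.} Solve $L w_{sV}=-\idproj[\cB]_{sV}$. Lemma~\ref{lem:mixed-sV-residual-XZ} gives the forcing $-\alpha_*\bigl[\tfrac9{20}Y-\tfrac52Y(Z^2-\tfrac15)\bigr]$ in the Bernoulli jump. The $Y$ part lies on $\lambda_{11}(\alpha_*)=-\alpha_*^2(\tau+\tfrac12)$, and the $Y(Z^2-\frac15)$ part lies on $\lambda_{31}(\alpha_*)$. Inverting gives the coefficients
\[
-\frac{9}{10\alpha_*(2\tau+1)},\qquad -\frac{5\alpha_*(3\tau+2)}{2(22\tau+14)}.
\]
Rewriting with $\alpha_*=\sqrt{24/(3\tau+2)}$ should reproduce \eqref{eq:h-expansion-dimensionless-theorem}. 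Here one must take care that the mode $Y$ lies in $\H_{\rm lin}$ (it does: it is orthogonal to $1$ and $Z$), and that the $\Theta$-correction and the weight $M_h$ contribute only at order $s^2$ or $sV^2$.

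\emph{The $s$ term and the remainder.} The $s$-linear part is exactly $sXZ$, because $w$ has vanishing derivative in $a$ at the origin. The remaining terms are bounded by $s^2+|s|V^2+V^4$. To justify this, use that $w$ is even in $V$ at $s=0$, and that the $sV^3$ terms can be absorbed into $|s|V^2$.

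\emph{The centroid.} Use $\int_{\Omega_h}y\dd y=\frac14\int_{\Sph}(1+h)^4\omega\dd S$ together with \eqref{eq:centroid-integrals}. The $XZ$, $P_2$ and degree-three terms are orthogonal to the degree-one functions, so only the $Y$ coefficient contributes. This gives the $e_2$ component $-\frac{9}{10(2\tau+1)\alpha_*}sV$, with error $O(s^2+|s|V^2)$. The vertical component vanishes by the constraint $\cC_3=0$. For $|s|\le\eta|V|$, the error is dominated by the leading term, which gives nonvanishing.

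The main obstacle I expect is the bookkeeping in the gradient set-up. The $\L^2$-gradient of $\scrJ$ is not literally $\cB$: it carries the weight $M_h$, the adjoint of $D\Theta$, and the projection $\proj_{\rm lin}$. Two things must be checked. First, this weighted gradient still has linearisation $L$ at the origin; this holds because $M_0=1$ and $D\Theta(0)=\id$. Second, in the mixed $sV$ coefficient, these corrections do not alter the forcing. This should hold because $D\Theta$ has vanishing second derivative against $(\partial_s,\partial_V)$: $\Theta$ depends only on $\xi$, and $\partial_V\xi^{\rm ax}(0)=0$. I would verify both points before carrying out the inversion.
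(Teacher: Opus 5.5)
Your route is the paper's: a gradient Lyapunov--Schmidt reduction at the round sphere for $\scrJ=\scrE\circ\Theta$ with $\mu=(\alpha-\alpha_*,V)$, transversality from \eqref{eq:transversality}, and the identification $w(0,\alpha,V)=\xi^{\rm ax}(V)$. The $sV$ forcing of Lemma~\ref{lem:mixed-sV-residual-XZ} is then inverted on the $(1,1)$ and $(3,1)$ modes, and the centroid is computed via $\frac14\int_{\Sph}(1+h)^4\omega\dd S$. Your coefficients match.

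One step does not follow as written, however. The implicit-function theorem for $A(s^2\norm{XZ}_{\L^2}^2,\alpha,V)=0$, together with evenness in $s$, only gives
$\alpha(s,V)=\alpha_*+\beta V+O(s^2+V^2)$ with $\beta=\partial_V\alpha(0,0)$. Nothing you wrote forces $\beta=0$, so \eqref{eq:alpha-expansion-dimensionless-theorem} is not yet proved. The paper closes this inside the $sV$ computation (Lemma~\ref{lem:h11-coefficient-main-proof}). The full $sV$ identity reads $L_*h_{11}+N_{sV}=-\beta\lambda_{21}'(\alpha_*)XZ$. The left side lies in $K^\perp$, since the forcing has only degrees one and three, while the right side lies in $K$. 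Hence $\beta=0$. You kept only the $\idproj$-component of this identity; its $\proj$-component is exactly what you are missing. Alternatively, use the reflection $S(y)=(y_1,y_2,-y_3)$ with $\phi\mapsto\phi\circ S$. It maps solutions at $(\alpha,V)$ to solutions at $(\alpha,-V)$ and sends $XZ\mapsto-XZ$. Composing with rotation by $\pi$ and using local uniqueness gives $\alpha(s,-V)=\alpha(s,V)$.

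Two smaller points. First, the reason you offer for why $M_h$, $D\Theta(\xi)^*$ and the $h$-dependence of $\Pi_h$ do not alter the $sV$ forcing is not the operative one. These factors enter multiplied by the round-sphere residual. What kills their $sV$ contribution is that this residual, $2-\frac98V^2(1-Z^2)$ by \eqref{eq:round-translational-F}, has no term linear in $V$. The fact $\partial_V\xi^{\rm ax}(0)=0$ only controls the chart correction from $\xi_{sV}$ to $h_{11}$. Second, for the centroid, inserting the $h$-expansion with its $O(V^4)$ remainder would leave an $O(V^4)$ error. You must first use $\bar y_{0,V}=0$, which follows from axisymmetry of $h^{\rm ax}_{0,V}$ together with $\cC_3=0$. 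Then write $\bar y_{s,V}=s\,\partial_s\bar y_{0,V}+O(s^2)$, as in Lemma~\ref{lem:helical-centroid-main-proof}.
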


\subsection{The \texorpdfstring{$\so$}{SO(2)} Lyapunov--Schmidt construction}

\begin{lemma}
	\label{lem:nonlinear-branch-smooth-main-proof}
	Assume $\tau>0$ and $\tau\notin\cO$, and fix an integer
	$k\ge4$. Then there are $s_0,V_0>0$ and a smooth map
	\[
		(-s_0,s_0)\times(-V_0,V_0)
		\ni (s,V)
		\mapsto
		\left(\alpha(s,V),h(s,V)\right)
		\in (0,\infty)\times\cM^{k+2}
	\]
	with
	\[
		\alpha(0,0)=\alpha_*,
		\qquad
		h(0,0)=0,
		\qquad h(0,V)=h^{\rm ax}_{0,V},\qquad
		\partial_s h(0,0)=XZ,
	\]
	such that, for every $\abs{s}<s_0$ and $\abs{V}<V_0$, the near-spherical
	domain $\Omega(s,V)$ with boundary given by the radial graph
	\[
		\Gamma(s,V)
		=
		\{(1+h(s,V)(\omega))\omega:\omega\in\Sph\},
	\]
	together with the associated interior and exterior Neumann potentials and a
	suitable constant $c=c(s,V)$ solves the overdetermined free boundary value
	problem \eqref{eq:main-problem-nondimensional} with $\alpha=\alpha(s,V)$.
	Let $P_K$ be the $\L^2(\Sph)$-orthogonal projection onto
	$K=\Span\set{XZ,YZ}$. After decreasing $s_0,V_0$, there is
	$\varepsilon>0$ such that the following local classification holds. Fix the
	speed $V$ with $\abs V<V_0$, and let
	$(\widetilde\alpha,\widetilde h)$ be any solution in the same
	fixed-volume, vertically centred slice satisfying
	\[
		\abs{\widetilde\alpha-\alpha_*}
		+\norm{\widetilde h}_{\H^{k+2}}<\varepsilon .
	\]
	Set $a=P_K\Theta^{-1}(\widetilde h)$. If $a\ne0$, there are unique
	\[
		0<s=\frac{\norm a_{\L^2}}{\norm{XZ}_{\L^2}}<s_0,
		\qquad \theta\in\R/(2\pi\mathbb Z),
	\]
	such that
	\[
		(\widetilde\alpha,\widetilde h)
		=
		\bigl(\alpha(s,V),\cU_\theta h(s,V)\bigr).
	\]
	If $a=0$, then $\widetilde h=h(0,V)$, whereas
	$\widetilde\alpha$ is arbitrary in the indicated neighbourhood. Thus the non-axisymmetric branch is locally unique modulo rotations about the
	$e_3$-axis. On the axisymmetric set, the shape is locally unique, while
	$\widetilde\alpha$ remains free.
	Moreover, each solution on this branch is smooth, i.e.
	$h(s,V)\in \C^\infty(\Sph)$, and the corresponding potentials are smooth
	up to $\Gamma(s,V)$.
\end{lemma}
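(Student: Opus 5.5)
We apply Lemma~\ref{lem:gradient-LS} with $\cX_0=\L^2(\Sph)$, orthogonal on the linear slice, with $\cX_2=\H^{k+2}_{\rm lin}$ and $\cX_1=\H^k_{\rm lin}$. The $\so$-action is $\cU_\theta\xi=\xi\circ\cR_\theta^{-1}$. The parameter is $\mu=(\alpha-\alpha_*,V)$, and the functional is $\scrJ(\xi,\mu)=\scrE(\Theta(\xi),\alpha,V)$. The chart $\Theta$ commutes with rotations by uniqueness of $c(\xi),d(\xi)$, so $\scrJ$ is invariant by \eqref{eq:action_is_invariant_under_rotations}.

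By Lemma~\ref{lem:variational_structure}, for $\eta\in\H^{k+2}_{\rm lin}$ we have
\[
D_\xi\scrJ[\eta]=\int_{\Sph}\cB\,M_h\,D\Theta(\xi)\eta\dd S .
\]
Since $D\Theta(\xi)\eta$ lies in $T_h\cM^{k+2}$, this pairing is represented by the gradient
\[
\cF(\xi,\mu)=\proj_{\rm lin}\bigl(D\Theta(\xi)^{*}(M_h\cB(\Theta(\xi),\alpha,V))\bigr),
\]
where the adjoint is computed explicitly from $D\Theta(\xi)\eta=\eta+Dc\,\eta+Dd\,\eta\,Z$ with $Dc,Dd$ finite rank. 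The formula shows that $\cF$ is smooth into $\H^k_{\rm lin}$ by Proposition~\ref{prop:smooth-bernoulli-map}.

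Next we check that zeros of $\cF$ are zeros of $\cB$. If $\cF=0$, then $M_h\cB$ annihilates $T_h\cM^{k+2}$, so $\cB=\lambda+\mu y_3\circ\Psi_h$. The multiplier argument in the proof of Lemma~\ref{lem:variational_structure}, using vertical translation invariance and the definition of $\Pi_h$, then forces $\lambda=\mu=0$.

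At $\mu=0$ we have $D\Theta(0)=\id$ and $M_0=1$. Lemma~\ref{lem:linearisation-round-sphere} then gives the linearisation $L=D_\xi\cF(0,0)=\operatorname{diag}(\lambda_{\ell m}(\alpha_*))$ on $\H^{k+2}_{\rm lin}$. The mode $(1,0)$ is excluded by the slice, so by Lemma~\ref{lem:nonresonance} and $\tau\notin\cO$, $\tau>0$, its kernel is exactly $K=\Span\{XZ,YZ\}$. The group acts on $K$ with weight $q=1$, so $K$ contains no nonzero fixed vector.

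On $K^\perp$ the eigenvalues satisfy $|\lambda_{\ell m}|\ge c(1+\ell^2)$, because $\lambda_{\ell m}\sim\ell^2(1-\alpha_*^2(m/\ell)^2(\tau+1)/\ell)$ with $m\le\ell$. Hence $L\colon K^\perp\cap\H^{k+2}\to K^\perp\cap\H^k$ is an isomorphism. Lemma~\ref{lem:gradient-LS} now applies and yields
\[
\proj\cF(a+w(a,\mu),\mu)=A(\abs a^2,\alpha-\alpha_*,V)\,a .
\]

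The reduced equation then splits. For $a=0$, the range equation has the unique small solution $w(0,\mu)$. This solution is axisymmetric by equivariance and uniqueness, so it coincides with $\xi^{\rm ax}(V)$ by Lemma~\ref{lem:axisymmetric-translating-branch}, and this holds for every $\alpha$. For $a\ne0$ we must solve $A(\abs a^2,\alpha-\alpha_*,V)=0$. Testing $\cF$ against $a/\abs a^2$ shows that $A(0,\alpha-\alpha_*,0)$ is the eigenvalue $\lambda_{21}(\alpha)$ on $K$, so
\[
\partial_\alpha A(0,0,0)=\partial_\alpha\lambda_{21}(\alpha_*)=-8/\alpha_*\ne0
\]
by \eqref{eq:transversality}. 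The implicit-function theorem gives a smooth $\alpha=\alpha(s^2\norm{XZ}^2,V)$. We set $h(s,V)=\Theta(sXZ+w(sXZ,\mu))$; this is smooth in $(s,V)$, and $h(0,V)=h^{\rm ax}_{0,V}$.

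The main obstacle is the local classification. Given a solution $(\widetilde\alpha,\widetilde h)$, let $\xi=\Theta^{-1}(\widetilde h)$ and $a=P_K\xi$; note that $P_K$ coincides with $\proj$ on the slice. Uniqueness in the range equation forces $\xi=a+w(a,\mu)$. If $a\ne0$, then $A=0$ and IFT uniqueness give $\widetilde\alpha=\alpha(\abs a^2,V)$. Choosing $\theta$ with $\cU_\theta(sXZ)=a$, where $s=\norm a/\norm{XZ}$, and applying equivariance of $w$ identifies $\widetilde h=\cU_\theta h(s,V)$. Uniqueness of $\theta$ holds because the action on $K$ has weight one. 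If $a=0$, then $\widetilde h=h(0,V)$ while $\widetilde\alpha$ remains free. Finally, smoothness of $h(s,V)$ and of the potentials up to the interface follows from Lemma~\ref{lem:small-sobolev-solution-spatial-regularity}, and $\alpha>0$ holds near $\alpha_*$.
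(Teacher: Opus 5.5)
Your proposal is correct and follows the paper's proof essentially step by step: the equivariant chart $\Theta$, the gradient Lyapunov--Schmidt lemma with $\cF=D\Theta(\xi)^*[\cB M_h]$, the diagonal linearisation with kernel $K$, transversality via $\partial_\alpha\lambda_{21}(\alpha_*)\neq0$, classification by range uniqueness plus equivariance, and the regularity bootstrap. The only difference is cosmetic: you obtain $A(0,\alpha,0)=\lambda_{21}(\alpha)$ by pairing with $a$ and using $w\perp K$, whereas the paper first shows $D_aw(0,\alpha,0)=0$; you should still record that $D_aw(0,\alpha_*,0)=0$, which holds because $K=\ker L$ and is what gives $\partial_s h(0,0)=XZ$.
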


\begin{proof}
	We recall that $\cR_\theta$ denotes rotation by angle $\theta$ around the
	$e_3$-axis, and let
	\[
		(\cU_\theta f)(\omega)
		=
		f(\cR_{-\theta}\omega)
	\]
	be the induced action on functions on $\Sph$.

	In order to incorporate the constraints \eqref{eq:vertical-centring_C} and
	\eqref{eq:vertical-centring_V} defining the manifold $\cM^{k+2}$, we prefer
	to work in the local smooth chart
	$\Theta:U\subset\H^{k+2}_{\rm lin}\rightarrow\cM^{k+2}$ constructed in
	\eqref{eq:linear-slice} and \eqref{eq:Theta-chart}.
	In particular, \eqref{eq:Theta-expansion} gives
	\[
		\Theta(0)=0,\qquad
		D\Theta(0)=\id_{\H^{k+2}_{\rm lin}},
		\qquad
		\Theta(\xi)-\xi=O\left(\norm{\xi}_{\H^{k+2}}^2\right).
	\]

	The chart is $\so$-equivariant. Indeed, the constraints
	$\cV$, $\cC_3$, and the linear slice $\H^{k+2}_{\rm lin}$ are invariant
	under $\cU_\theta$, and the
	correction $c(\xi)+d(\xi)Z$ in \eqref{eq:Theta-chart} is uniquely determined.
	Hence we may choose $U$ invariant and obtain
	\begin{equation}
		\Theta(\cU_\theta\xi)
		=
		\cU_\theta\Theta(\xi),
		\qquad \xi\in U, \,
		\theta\in\R/(2\pi\mathbb Z).
	\end{equation}

	We apply Lemma~\ref{lem:gradient-LS} with
	\[
		\cX_0=\L^2_{\rm lin}
		:=\set{g\in\L^2(\Sph):\avg{g,1}_{\L^2}=\avg{g,Z}_{\L^2}=0},
		\qquad
		\cX_1=\H^k_{\rm lin},
		\qquad
		\cX_2=\H^{k+2}_{\rm lin}.
	\]
	These spaces have the required continuous dense embeddings, and rotations
	act orthogonally on $\cX_0$ and boundedly on both Sobolev spaces.
	Define
	the pulled-back constrained action
	\[
		\scrJ(\xi,\alpha,V)
		:=
		\scrE(\Omega_{\Theta(\xi)},\alpha,V)=\scrE(\Theta(\xi),\alpha,V)
	\]
	with $\xi\in U$ and $(\alpha,V)\in\R^2$, as our functional. For the
	definition of $\scrE$, we recall
	\eqref{eq:two_phase_reduced_action_definition} and
	\eqref{eq:graph_parametrized_two_phase_action}.

	First of all, Lemma~\ref{lem:variational_structure} and the regularity of the
	chart $\Theta$ imply that $\scrJ$ is smooth. Moreover,
	$\cR_\theta\Omega_{\Theta(\xi)}=\Omega_{\cU_\theta\Theta(\xi)}
		=\Omega_{\Theta(\cU_\theta\xi)}$ and
	\eqref{eq:action_is_invariant_under_rotations} imply that
	$\scrJ(\cdot,\alpha,V)$ is $\so$-invariant.

	Let $\cF(\xi,\alpha,V)$
	be its
	$\L^2(\Sph)$-gradient, i.e.
	\begin{equation*}
		D_\xi\scrJ(\xi,\alpha,V)[\eta]
		=
		\avg{\cF(\xi,\alpha,V),\eta}_{\L^2(\Sph)}
		\qquad
		\text{for every }\eta\in\H^{k+2}_{\rm lin}.
	\end{equation*}
	The regularity of $\scrJ$ a priori only implies that
	$\cF(\cdot,\alpha,V)$ is smooth as a map
	$U\rightarrow\H^{-(k+2)}_{\rm lin}$. However,
	since $D\Theta(\xi)$ maps $\H^{k+2}_{\rm lin}$ isomorphically onto
	$T_{\Theta(\xi)}\cM$,
	the chain rule and Lemma~\ref{lem:variational_structure} imply that
	\begin{align}\label{eq:derivative-scrJ}
		D_\xi\scrJ(\xi,\alpha,V)[\eta]=\int_{\Sph}\cB(\Theta(\xi),\alpha,V)M_{\Theta(\xi)}D\Theta(\xi)\eta\dd S
	\end{align}
	for all $\xi\in U$, $(\alpha,V)\in\R^2$, and
	$\eta\in\H^{k+2}_{\rm lin}$.
	The Bernoulli map and the chart $\Theta$ are smooth, cf. Proposition \ref{prop:smooth-bernoulli-map}. In order to precisely characterise $\cF$ and its regularity, we now compute the adjoint of $D\Theta(\xi)$.
	For $\xi\in U$ set $h=\Theta(\xi)$ and
	\[
		M_h=(1+h)^2,
		\qquad
		N_h=(1+h)^3Z.
	\]
	Since
	\[
		D\Theta(\xi)[\eta]
		=
		\eta+Dc(\xi)[\eta]+Dd(\xi)[\eta]Z,
	\]
	differentiating the two constraint identities
	$\cV(\Theta(\xi))=\cC_3(\Theta(\xi))=0$ in the direction
	$\eta\in\H^{k+2}_{\rm lin}$ and using
	\eqref{eq:radial-volume-weight}--\eqref{eq:radial-C3-variation} gives
	\[
		\mathsf A_h
		\binom{Dc(\xi)[\eta]}{Dd(\xi)[\eta]}
		=
		-\binom{\avg{M_h,\eta}_{\L^2}}{\avg{N_h,\eta}_{\L^2}},
		\qquad
		\mathsf A_h
		:=
		\begin{pmatrix}
			\avg{M_h,1}_{\L^2} & \avg{M_h,Z}_{\L^2} \\
			\avg{N_h,1}_{\L^2} & \avg{N_h,Z}_{\L^2}
		\end{pmatrix}.
	\]
	At $h=0$ this matrix is
	\[
		\mathsf A_0
		=
		\begin{pmatrix}
			4\pi & 0               \\
			0    & \dfrac{4\pi}{3}
		\end{pmatrix},
	\]
	so it is invertible for $h$ near zero. Write
	$\mathsf A_h^{-1}=(\mathsf a_{ij}(h))_{i,j=1}^2$. Then
	\[
		\begin{aligned}
			r_c(\xi)
			 & :=-\proj_{\rm lin}\bigl(
			\mathsf a_{11}(h)M_h+\mathsf a_{12}(h)N_h\bigr), \\
			r_d(\xi)
			 & :=-\proj_{\rm lin}\bigl(
			\mathsf a_{21}(h)M_h+\mathsf a_{22}(h)N_h\bigr)
		\end{aligned}
	\]
	belong to $\H^{k+2}_{\rm lin}$, depend smoothly on $\xi$, and satisfy
	\[
		Dc(\xi)[\eta]=\avg{r_c(\xi),\eta}_{\L^2},
		\qquad
		Dd(\xi)[\eta]=\avg{r_d(\xi),\eta}_{\L^2}.
	\]
	The $\L^2$-adjoint of
	$D\Theta(\xi)\colon\H^{k+2}_{\rm lin}\to\H^{k+2}(\Sph)$ when paired with an $\H^k$-function defines
	the smooth map
	$D\Theta(\xi)^*\colon\H^k(\Sph)\to\H^k_{\rm lin}$ given by
	\[
		D\Theta(\xi)^*g
		=
		\proj_{\rm lin}g
		+
		\avg{g,1}_{\L^2}r_c(\xi)
		+
		\avg{g,Z}_{\L^2}r_d(\xi).
	\]
	Hence
	\begin{equation}\label{eq:relation_F_to_cB}
		\cF(\xi,\alpha,V)
		=
		D\Theta(\xi)^*
		\left[\cB(\Theta(\xi),\alpha,V)M_{\Theta(\xi)}\right].
	\end{equation}
	Proposition~\ref{prop:smooth-bernoulli-map} and the smoothness of $\Theta$
	now show that
	\[
		\cF\colon U\times\R^2\longrightarrow\H^k_{\rm lin}
	\]
	is smooth.

	Furthermore, $\cF(0,\alpha_*,0)=0$ because $\Theta(0)=0$ and
	\eqref{eq:unperturbed_solution} holds. Let
	\[
		L_*:=D_\xi\cF(0,\alpha_*,0)
		:\H^{k+2}_{\rm lin}\longrightarrow\H^k_{\rm lin}.
	\]
	Differentiating \eqref{eq:relation_F_to_cB} and using
	$\Theta(0)=0$, $D\Theta(0)=\id_{\H^{k+2}_{\rm lin}}$,
	$M_0=1$, and $\cB(0,\alpha_*,0)=0$ gives first
	\[
		L_*=\proj_{\rm lin}D_h\cB(0,\alpha_*,0)
		\quad\text{on }\H^{k+2}_{\rm lin}.
	\]
	The spherical-harmonic diagonalisation
	\eqref{eq:lambda-lm} shows that $D_h\cB(0,\alpha_*,0)$ preserves the linear
	slice. Hence the projection may be dropped and
	\[
		L_*=D_h\cB(0,\alpha_*,0)\big|_{\H^{k+2}_{\rm lin}}.
	\]

	The same diagonalisation shows that
	$L_*$ acts on every admissible spherical harmonic of
	degree $\ell$ and azimuthal number $m$ as multiplication by
	$\lambda_{\ell m}(\alpha_*)$. The constant mode is removed by the volume
	constraint and the mode $Z$ is removed by the vertical-centring gauge, i.e. $Y^0_0,Y^0_1\notin \H^{k+2}_{\rm lin}$. By
	Lemma~\ref{lem:nonresonance},
	\[
		K:=\ker L_*
		=
		\Span\{XZ,YZ\}.
	\]
	Note that $K$ does not have any fixed points of the $\so$ action except $0$.

	Next we claim
	that
	\[
		L_*\colon
		K^\perp\cap\H^{k+2}_{\rm lin}
		\to
		K^\perp\cap\H^k_{\rm lin}
	\]
	is an isomorphism.
	Indeed, with respect to the real spherical harmonics,
	we have that $K^\perp\cap \H^{k+2}_{\rm lin}$ is spanned by $\left(Y^m_\ell\right)$, $(\ell,m)\notin \set{(0,0),(1,0),(2,1)}$, and, as said, in this basis
	$L_*$ is diagonal with
	eigenvalues
	\[
		\lambda_{\ell m}(\alpha_*)
		=
		\ell(\ell+1)-2
		-
		\alpha_*^2m^2
		\left(\frac{\tau}{\ell}+\frac{1}{\ell+1}\right).
	\]
	Since $m\le\ell$, the second term is $O(\ell)$ uniformly in $m$, while the
	first term is positive and of order $\ell^2$. Hence there are constants
	$c>0$ and $\ell_0\in\N$ such that
	\[
		\abs{\lambda_{\ell m}(\alpha_*)}
		\ge
		c(1+\ell)^2
		\qquad
		\text{for all }\ell\ge\ell_0 .
	\]
	For the finitely many remaining modes $\ell\le \ell_0$
	outside	$K$, nonresonance gives a positive minimum of
	$\abs{\lambda_{\ell m}(\alpha_*)}$. Therefore
	\begin{equation}
		\norm{u}_{\H^{k+2}(\Sph)}
		\le
		C\norm{L_*u}_{\H^k(\Sph)}
		\qquad
		\text{for all }
		u\in K^\perp\cap\H^{k+2}_{\rm lin}.
	\end{equation}
	Thus $L_*$ is Fredholm and has a bounded inverse from
	$K^\perp\cap\H^k_{\rm lin}$ to $K^\perp\cap\H^{k+2}_{\rm lin}$.

	We therefore can apply Lemma \ref{lem:gradient-LS}.
	Let $\proj$ be the $\L^2(\Sph)$-orthogonal projection onto $K$, and let
	$\idproj=\id-\proj$. Applying said lemma with the parameters
	$
		\mu=(\alpha-\alpha_*,V),
	$
	we obtain a unique local smooth range correction
	\[
		w=w(a,\alpha,V)\in K^\perp\cap\H^{k+2}_{\rm lin},
		\qquad a\in K,
	\]
	satisfying
	\begin{equation*}
		\idproj\cF(a+w(a,\alpha,V),\alpha,V)=0.
	\end{equation*}
	The remaining reduced kernel equation has the radial form
	\begin{equation}
		\label{eq:kernel-equation-main-proof}
		\proj\cF(a+w(a,\alpha,V),\alpha,V)
		=
		A(\norm{a}_{\L^2}^2,\alpha,V)a
	\end{equation}
	for a scalar smooth function $A$.

	To identify the coefficient $A$ at the origin of the reduced equation, one
	must linearise \eqref{eq:kernel-equation-main-proof} in the kernel variable
	$a$. Fix $V=0$ and
	for $\alpha$ near $\alpha_*$, write
	\begin{equation}\label{eq:L_alpha_expressions}
		L_\alpha:=D_\xi\cF(0,\alpha,0)
		=\proj_{\rm lin}D_h\cB(0,\alpha,0).
	\end{equation}
	The round-sphere operator $D_h\cB(0,\alpha,0)$ is diagonal in spherical
	harmonics and therefore preserves $\H^{k+2}_{\rm lin}$. Thus, on the linear
	slice, $L_\alpha=D_h\cB(0,\alpha,0)$.
	Since the round sphere is a solution for every $\alpha$ when $V=0$, cf. \eqref{eq:unperturbed_solution}, we have
	$w(0,\alpha,0)=0$.
	Moreover, differentiating the range equation
	\[
		\idproj\cF(a+w(a,\alpha,0),\alpha,0)=0
	\]
	at $a=0$ in a direction $b\in K$ gives
	\[
		\idproj L_\alpha\left(b+D_aw(0,\alpha,0)[b]\right)=0.
	\]
	The space $K$ is invariant under $L_\alpha$,
	as it acts as multiplication by $\lambda_{21}(\alpha)$. Hence
	$\idproj L_\alpha b=0$, and the invertibility of $\idproj L_\alpha$ on $K^\perp$
	persists, after shrinking the neighbourhood of $\alpha_*$, by
	operator-norm continuity of $L_\alpha$ and openness of the set of bounded
	isomorphisms. It therefore
	implies
	\[
		D_aw(0,\alpha,0)[b]=0 .
	\]
	Therefore the linearisation of the left-hand side of
	\eqref{eq:kernel-equation-main-proof} is
	\[
		D_a\left[
			\proj\cF(a+w(a,\alpha,0),\alpha,0)
			\right]_{a=0}[b]
		=
		\proj L_\alpha b
		=
		\lambda_{21}(\alpha)b .
	\]
	On the other hand,
	\[
		D_a\left[
			A(\norm{a}_{\L^2}^2,\alpha,0)a
			\right]_{a=0}[b]
		=
		A(0,\alpha,0)b.
	\]
	Comparing the two
	linearisations for arbitrary $b\in K$ gives
	\[
		A(0,\alpha,0)
		=
		\lambda_{21}(\alpha).
	\]
	Therefore, $A(0,\alpha_*,0)=0$ by the definition of $\alpha_*$, and by \eqref{eq:transversality},
	\begin{equation}
		\partial_\alpha A(0,\alpha_*,0)
		=
		\partial_\alpha\lambda_{21}(\alpha_*)
		=
		-\frac{8}{\alpha_*}
		\neq0 .
	\end{equation}
	The implicit-function theorem gives a unique smooth function
	\[
		\alpha=\widehat\alpha(\varrho,V),
		\qquad
		\varrho=\norm{a}_{\L^2}^2,
	\]
	near $(0,0)$, such that
	\[
		A(\varrho,\widehat\alpha(\varrho,V),V)=0,
		\qquad
		\widehat\alpha(0,0)=\alpha_* .
	\]
	Consequently, for every small $a\in K$, the element
	\[
		\xi(a,V)
		=
		a+w(a,\widehat\alpha(\norm{a}_{\L^2}^2,V),V)
	\]
	solves the full
	gradient equation
	\[
		\cF(\xi(a,V),\widehat\alpha(\norm{a}_{\L^2}^2,V),V)=0.
	\]
	By \eqref{eq:derivative-scrJ}, and since $D\Theta(\xi)$ is an isomorphism
	between $\H^{k+2}_{\rm lin}$ and $T_{\Theta(\xi)}\cM^{k+2}$, this is
	equivalent to $\Theta(\xi)$ being a critical point of
	$\scrE(\cdot,\widehat\alpha(\norm{a}_{\L^2}^2,V),V)$ on $\cM^{k+2}$ and
	thus, by Lemma~\ref{lem:variational_structure}, equivalent to
	$\cB(\Theta(\xi(a,V)),\widehat\alpha(\norm{a}_{\L^2}^2,V),V)=0$. In turn,
	this is equivalent to the free boundary problem
	\eqref{eq:main-problem-nondimensional}; cf. \eqref{eq:Bernoulli-map}.

	We now fix the phase by choosing
	\[
		a=sXZ,
		\qquad s\in\R,
		\abs{s}\text{ small}.
	\]
	Define
	\[
		\alpha(s,V)
		:=
		\widehat\alpha(\norm{sXZ}_{\L^2}^2,V),
		\qquad
		h(s,V):=\Theta(\xi(sXZ,V)).
	\]
	Since $w(0,\alpha_*,0)=0$, $D_aw(0,\alpha_*,0)=0$ on $K$, and
	$D\Theta(0)=\id$, we have
	\[
		\alpha(0,0)=\alpha_*,
		\qquad
		h(0,0)=0,
		\qquad
		\partial_s h(0,0)=XZ .
	\]
	After shrinking the neighbourhood if necessary, we may assume
	$\alpha(s,V)>0$ and
	$1+h(s,V)>0$ throughout the branch.
	The other phases are obtained by applying $\cU_\theta$, which is
	equivalently rotation of the interface around the $e_3$-axis.
	This proves the existence part of the lemma.

	We next prove the asserted classification.
	After decreasing $V_0$ if necessary, choose $\varepsilon>0$ so that
	every solution satisfying the neighbourhood condition in the lemma lies,
	in the above coordinates, within the neighbourhoods of both
	implicit-function arguments.
	A free-boundary solution in
	the constraint slice satisfies the constrained-gradient equation. Hence, if
	$\widetilde\xi=\Theta^{-1}(\widetilde h)$ and
	$a=P_K\widetilde\xi$, then
	\[
		\cF(\widetilde\xi,\widetilde\alpha,V)=0
	\]
	and uniqueness in the range equation gives
	\begin{equation}
		\label{eq:nonresonant-classification-range}
		\widetilde\xi=a+w(a,\widetilde\alpha,V).
	\end{equation}
	The action on the kernel is
	\[
		\cU_\theta(XZ)=\cos\theta\,XZ+\sin\theta\,YZ,
		\qquad
		\cU_\theta(YZ)=-\sin\theta\,XZ+\cos\theta\,YZ.
	\]
	If $a\ne0$, there are unique
	\[
		s=\frac{\norm a_{\L^2}}{\norm{XZ}_{\L^2}}>0,
		\qquad \theta\in\R/(2\pi\mathbb Z),
	\]
	such that $a=\cU_\theta(sXZ)$. The kernel equation obtained from
	\eqref{eq:nonresonant-classification-range} is
	\[
		A(\norm a_{\L^2}^2,\widetilde\alpha,V)a=0.
	\]
	Since $a\ne0$, the scalar implicit-function theorem gives
	\[
		\widetilde\alpha
		=
		\widehat\alpha(\norm a_{\L^2}^2,V)
		=
		\alpha(s,V).
	\]
	Equivariance of $w$ and of $\Theta$ now yields
	\[
		(\widetilde\alpha,\widetilde h)
		=
		\bigl(\alpha(s,V),\cU_\theta h(s,V)\bigr).
	\]
	Projecting this identity back onto $K$ proves uniqueness of $s$ and
	$\theta$.

	Decrease $V_0$ further, if necessary, so that
	$V_0\le V_{\rm ax}$.
	Suppose instead that $a=0$. Equivariance and uniqueness in the range
	equation imply that $w(0,\widetilde\alpha,V)$, and hence
	$\widetilde h$, is axisymmetric. The local uniqueness assertion in
	Lemma~\ref{lem:axisymmetric-translating-branch} therefore gives
	\[
		\widetilde h=h^{\rm ax}_{0,V}=h(0,V).
	\]
	Conversely, Lemma~\ref{lem:axisymmetric-translating-branch} shows that this
	graph solves the Bernoulli equation for every nearby
	$\widetilde\alpha$.
	\medskip
	The construction gives
	\[
		h(s,V)\in\H^{k+2}(\Sph)
	\]
	and the preceding argument proves that it satisfies the Bernoulli
	equation.
	Lemma~\ref{lem:small-sobolev-solution-spatial-regularity} therefore gives
	\[
		h(s,V)\in\C^\infty(\Sph)
	\]
	for every fixed $(s,V)$, as well as smoothness of the corresponding
	potentials up to the interface and the stated exterior decay. This
	completes the proof of the lemma.
\end{proof}

\subsection{Asymptotics of the branch}
In order to guarantee a genuine helial centroid motion we compute further
Taylor coefficients of the branch
constructed by the Lyapunov--Schmidt reduction. Recall that the branch is
parametrised by
\[
	a=sXZ
\]
and
\[
	\xi(s,V)
	=
	sXZ
	+
	w(sXZ,\alpha(s,V),V),
	\qquad
	h(s,V)
	=
	\Theta(\xi(s,V)).
\]
The function $sXZ$ is the chosen direction in the kernel
\[
	K=\ker L_*
	=
	\Span\{XZ,YZ\},
\]
where $L_*=L_{\alpha_*}$ with $L_\alpha$ given in \eqref{eq:L_alpha_expressions}.
The remaining part
\[
	w(sXZ,\alpha(s,V),V)\in K^\perp
\]
is the range correction determined by the range equation.

We write a Taylor expansion
\begin{equation} \label{eq:taylor-h-main-proof}
	h(s,V)
	=
	h_{00}
	+s h_{10}
	+Vh_{01}
	+s^2h_{20}
	+sVh_{11}
	+V^2h_{02}
	+\cdots
\end{equation}
with the convention that no
factorials are included in the notation for Taylor coefficients.

By Lemma \ref{lem:nonlinear-branch-smooth-main-proof} we already know that
\begin{align*}
	h_{00}=0,\quad h_{10}=XZ.
\end{align*}
Moreover, the identification
\begin{equation}
	\label{eq:nonresonant-axisymmetric-identification}
	h(0,V)=h^{\rm ax}_{0,V}
\end{equation}
of Lemma \ref{lem:nonlinear-branch-smooth-main-proof}
and the evenness in $V$, cf. \eqref{eq:axisymmetric-branch-symmetries}, imply $h_{01}=0$ and
\begin{align}\label{eq:preliminary_expansion_of_hsv}
	h(s,V)
	=
	sXZ
	+
	V^2h_{02}
	+
	sVh_{11}
	+
	O(s^2+\abs{s}V^2+V^4).
\end{align}

\medskip

\begin{lemma}
	\label{lem:h11-coefficient-main-proof}
	The mixed $sV$-coefficient in the expansion \eqref{eq:taylor-h-main-proof} is
	\[
		h_{11}
		=
		-\frac{9}{20(\tau+1/2)}\frac1{\alpha_*}Y
		-
		\frac{5\alpha_*}{2\lambda_{31}(\alpha_*)}
		Y\left(Z^2-\frac15\right).
	\]
\end{lemma}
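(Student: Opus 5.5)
We differentiate the identity $\cB(h(s,V),\alpha(s,V),V)=0$ once in $s$ and once in $V$ at $(s,V)=(0,0)$ and read off the resulting linear equation for $h_{11}$. By Lemma~\ref{lem:variational_structure}, on the branch $\cF=0$ is equivalent to $\cB=0$, so we may work with $\cB$ directly; the projection $\Pi_h$ only removes constants, and $M_h=1+O(h)$. Expanding $\cB(sXZ+V^2h_{02}+sVh_{11}+\dots,\alpha,V)$ to the mixed order $sV$ yields the equation
\[
	D_h\cB(0,\alpha_*,0)h_{11}+\bigl[\cB(sXZ,\alpha_*,V)\bigr]_{sV}=0,
\]
plus terms that we must show vanish.

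\emph{Discarding the extra terms.} Any term involving $\partial_V\alpha$ or $\partial_s\alpha$ drops out. Indeed, $\alpha(s,V)=\widehat\alpha(s^2\norm{XZ}^2,V)$, and $\widehat\alpha$ is even in $V$ because the equation is invariant under $V\mapsto-V$ combined with the reflection $Z\mapsto -Z$. Hence $\partial_s\alpha(0,0)=\partial_V\alpha(0,0)=0$. Alternatively, $\partial_\alpha\cB(0,\alpha,0)=0$, since the sphere solves the problem for every $\alpha$, and $\partial_\alpha D_h\cB\,XZ\cdot\partial_V\alpha$ is absent because $h_{01}=0$ forces $\partial_V\alpha=0$ anyway. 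Terms coming from $h_{01}$ vanish since $h_{01}=0$. The chart correction $\Theta(\xi)-\xi$ is quadratic and contributes nothing at order $sV$. The mixed residual is exactly Lemma~\ref{lem:mixed-sV-residual-XZ} with $\alpha=\alpha_*$, and after the projection (the residual already has zero mean) we obtain
\[
	L_*h_{11}=\alpha_*\Bigl[\frac{9}{20}Y-\frac52Y\Bigl(Z^2-\frac15\Bigr)\Bigr].
\]

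\emph{Solving.} By Lemma~\ref{lem:linearisation-round-sphere}, $L_*$ is diagonal. On $Y$ it has the eigenvalue $\lambda_{11}(\alpha_*)=-\alpha_*^2(\tau+\tfrac12)$, and on $Y(Z^2-\tfrac15)$ it has $\lambda_{31}(\alpha_*)$. The component of $h_{11}$ in $K$ is zero: the branch satisfies $P_K\xi=sXZ$ exactly and $\Theta$ is quadratic, and the right-hand side has no $K$-component, which confirms the solvability condition. The components along $1$ and $Z$ vanish by the slice constraints. Any other component lies in $\ker L_*\cap K^\perp=\{0\}$ by nonresonance. Dividing by the eigenvalues gives
\[
	h_{11}=\frac{9\alpha_*/20}{-\alpha_*^2(\tau+1/2)}\,Y-\frac{5\alpha_*}{2\lambda_{31}(\alpha_*)}\,Y\Bigl(Z^2-\frac15\Bigr),
\]
which is the claimed formula.

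\emph{Main obstacle.} The delicate step is justifying that the mixed coefficient of $\cB$ along the branch reduces to $L_*h_{11}$ plus the residual of Lemma~\ref{lem:mixed-sV-residual-XZ}. Three possible cross terms must be ruled out: $D_h^2\cB(0)[XZ,h_{01}]$ (zero since $h_{01}=0$), $\partial_V D_h\cB(0,\alpha_*,0)XZ$ (this is precisely what the lemma computes), and $\partial_\alpha$-terms, which we kill using $\partial_V\alpha(0,0)=0$ from evenness. We also need the weight $M_h$ and the projection $\Pi_h$ to produce no mixed term. This holds because $\cB(0,\alpha,0)=0$ and $\cB(0,\alpha,V)=\tfrac34V^2P_2+\dots$ has no linear part in $V$.
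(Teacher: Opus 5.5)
Your argument is correct and follows the paper's skeleton. You expand $\cB(h(s,V),\alpha(s,V),V)=0$ at order $sV$ around $sXZ$, insert the residual of Lemma~\ref{lem:mixed-sV-residual-XZ} at $\alpha_*$, note $h_{11}\in K^\perp\cap\H^{k+2}_{\rm lin}$, and invert the diagonal $L_*$ using $\lambda_{11}(\alpha_*)$ and $\lambda_{31}(\alpha_*)$. The one genuine difference is how you dispose of $\beta=\partial_V\alpha(0,0)$.

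The paper needs no extra symmetry for this. It keeps the term $\beta\lambda_{21}'(\alpha_*)XZ$ and observes that it is the only $K$-component of the $sV$-equation, since $N_{sV}$ and $L_*h_{11}$ lie in $K^\perp$. Transversality \eqref{eq:transversality} then gives $\beta=0$, so the vanishing of $\beta$ is an output of the same computation; it is what feeds Lemma~\ref{lem:alpha-expansion-order-main-proof}.

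You instead use the discrete symmetry $(\Omega,\alpha,V)\mapsto(S\Omega,\alpha,-V)$, where $S(X,Y,Z)=(X,Y,-Z)$ and the potentials are $\phi\circ S$. This works, but you only assert it. The checks are:
\begin{itemize}
\item $SW_{\alpha,V}(y)=W_{\alpha,-V}(Sy)$;
\item $F$, $H$, the two constraints and $K$ are invariant;
\item hence $\scrJ$, $\Theta$, $w$ and $A$ are equivariant, and $A(\varrho,\alpha,-V)=A(\varrho,\alpha,V)$;
\item so $\widehat\alpha$ is even in $V$ by uniqueness in the implicit-function theorem.
\end{itemize}
The paper proves a reflection statement only for the axisymmetric branch, and there it uses $-\phi\circ S$, which flips $\alpha$ rather than $V$. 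In exchange, your route gives more: every odd power of $V$ drops out of $\widehat\alpha$, not just the first. With $\beta=0$ in hand, the absence of a $K$-component on the right-hand side is then only a consistency check, as you say.

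Delete the ``alternative'' claim that $h_{01}=0$ forces $\partial_V\alpha=0$; it is a non sequitur. On the axisymmetric set the equation places no constraint on $\alpha$, so evenness of $h^{\rm ax}_{0,V}$ says nothing about $\widehat\alpha(0,V)$. The value $\partial_V\alpha(0,0)$ is determined only through the non-axisymmetric branch, which is why you need either your reflection argument or the paper's $K$-projection argument. Likewise, $\partial_\alpha\cB(0,\alpha,0)=0$ is true but irrelevant to the dangerous term $\beta\,\partial_\alpha D_h\cB(0,\alpha_*,0)[XZ]$.
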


\begin{proof}
	Set
	\[
		\delta h:=h(s,V)-sXZ .
	\]
	Then the $sV$-part of $\delta h$ is $sVh_{11}$. Taylor expanding
	the Bernoulli map in the $h$-variable around the leading deformation $sXZ$
	gives
	\begin{align*}
		0=\cB(h(s,V),\alpha(s,V),V)
		 & =
		\cB(sXZ,\alpha(s,V),V) \\
		 & \quad
		+
		D_h\cB(sXZ,\alpha(s,V),V)[\delta h]
		+
		O(\norm{\delta h}^2).
	\end{align*}
	Since
	\[
		\delta h
		=
		V^2h_{02}
		+
		sVh_{11}
		+
		O(s^2+\abs{s}V^2+V^4),
	\]
	the quadratic remainder contains no $sV$-term. Moreover, by smoothness of
	$\cB$,
	\[
		D_h\cB(sXZ,\alpha(s,V),V)
		=
		D_h\cB(0,\alpha_*,0)
		+
		O(s)+O(V)+O(\alpha(s,V)-\alpha_*).
	\]
	Hence, when this operator is applied to $\delta h$, only the leading operator
	can contribute to the $sV$-coefficient. Therefore
	\[
		\left[
			D_h\cB(sXZ,\alpha(s,V),V)[\delta h]
			\right]_{sV}
		=
		L_*h_{11},
		\qquad
		L_*=D_h\cB(0,\alpha_*,0).
	\]

	It remains to identify the order-$sV$ part of
	\[
		\cB(sXZ,\alpha(s,V),V).
	\]
	There are two possible sources. First, for fixed $\alpha=\alpha_*$, the
	leading deformation $sXZ$ interacts with the translational datum $VZ$.
	Second, the parameter $\alpha(s,V)$ itself may have a linear dependence on
	$V$, and this can multiply the linear $s$-dependence of the Bernoulli map.
	We separate these two effects.

	Since
	\begin{equation}\label{eq:first_expansion_alpha}
		\alpha(s,V)
		=
		\alpha_*
		+
		\beta V
		+
		O(s^2+V^2),
		\qquad
		\beta:=\partial_V\alpha(0,0),
	\end{equation}
	where there is no term linear in $s$ because
	$\alpha(s,V)=\widehat\alpha(\norm{sXZ}_{\L^2}^2,V)$, Taylor expansion in
	$\alpha$ gives
	\begin{align*}
		 & \cB(sXZ,\alpha(s,V),V)   \\
		 & =
		\cB(sXZ,\alpha_*,V)
		+
		\beta V
		D_\alpha\cB(sXZ,\alpha_*,V) \\
		 & \hphantom{=}+
		\text{terms which do not contribute to order }sV .
	\end{align*}
	The only possible additional order-$sV$ contribution coming from the linear
	$V$-variation of $\alpha$ is therefore
	\[
		\beta sV\,
		D_\alpha D_h\cB(0,\alpha_*,0)[XZ].
	\]
	There is no separate order-$\alpha V$ term at $h=0$, by the round-sphere
	identity \eqref{eq:round-translational-F}.

	We recall that
	$
		L_\alpha=D_h\cB(0,\alpha,0).
	$
	Since $XZ$ is the $\ell=2$, $m=1$ eigenmode,
	\[
		L_\alpha XZ
		=
		\lambda_{21}(\alpha)XZ,
		\qquad
		\lambda_{21}(\alpha_*)=0.
	\]
	Hence
	\[
		D_\alpha D_h\cB(0,\alpha_*,0)[XZ]
		=
		\lambda_{21}'(\alpha_*)XZ
		\in K.
	\]
	So far this gives the following identity for the $sV$-coefficients
	\begin{align}\label{eq:intermediate_summary_sV}
		0=\left[\cB(sXZ,\alpha_*,V)\right]_{sV}+\beta\lambda'_{21}(\alpha_*)XZ+L_*h_{11}.
	\end{align}

	The remaining
	term is the mixed $sV$ shape-calculus
	coefficient in Lemma~\ref{lem:mixed-sV-residual-XZ}, evaluated at
	$\alpha=\alpha_*$.
	This is the only point in the argument where the round-sphere computation
	\eqref{eq:round-translational-F} is needed. Since the unprojected Bernoulli
	residual at $h=0$ has no term linear in $V$, differentiating $\Pi_h$ with
	respect to $h$ produces no additional order-$sV$ term. Thus the mixed
	coefficient is obtained by applying the fixed projection $\Pi_0$, which
	subtracts its spherical average.
	To be more precise,
	\begin{align*}
		N_{sV} & :=\left[\cB(sXZ,\alpha_*,V)\right]_{sV}=\left[\Pi_{sXZ}\left(B_{sXZ,\alpha_*,V}\circ\Psi_{sXZ}\right)\right]_{sV} \\
		       & =\Pi_{0}\left(\left[B_{sXZ,\alpha_*,V}\circ\Psi_{sXZ}\right]_{sV}\right)=	-\alpha_*
		\left[
			\frac9{20}Y
			-\frac52Y\left(Z^2-\frac15\right)
			\right].
	\end{align*}
	Here the last equality follows from \eqref{eq:B-sV-residual} and the fact that $\Pi_0[Y]=Y$, $\Pi_0[YZ^2]=YZ^2$.
	The
	modes
	appearing on the right-hand side are the $\ell=1,m=1$ mode $Y$ and the
	$\ell=3,m=1$ mode $Y(Z^2-1/5)$, hence $N_{sV}\in K^\perp$.

	By \eqref{eq:intermediate_summary_sV} we obtain
	\begin{align*}
		L_*h_{11}+N_{sV}=-\beta\lambda'_{21}(\alpha_*)XZ
	\end{align*}
	with the left-hand side being in $K^\perp$, recall that $L_*$ maps into $K^\perp$, and the right-hand side in $K$. By \eqref{eq:transversality} we deduce
	\begin{align}\label{eq:beta_gleich_0}
		\beta=\partial_V\alpha(0,0)=0,
	\end{align}
	as well as
	\[
		L_*h_{11}+N_{sV}=0.
	\]

	We claim that $h_{11}\in K^\perp$. Indeed,
	Lemma \ref{lem:nonlinear-branch-smooth-main-proof} gives
	$\partial_Vh(0,0)=0$. Differentiating the volume and vertical-centring
	constraints then shows that $h_{11}\in\H^{k+2}_{\rm lin}$. Moreover,
	$\Theta(\xi)=\xi+c(\xi)+d(\xi)Z$ and
	$\xi(s,V)=sXZ+w(sXZ,\alpha(s,V),V)$ with $w\in K^\perp$, so that differentiation shows,
	$h_{11}\in K^\perp$.

	Hence, $L_*h_{11}$ uniquely determines $h_{11}$ and we can proceed
	as in the proof of Lemma~\ref{lem:axisymmetric-translating-branch}. We use
	\[
		\lambda_{11}(\alpha_*)
		=-\alpha_*^2\left(\tau+\frac12\right),
		\qquad
		\lambda_{31}(\alpha_*)
		=\frac{22\tau+14}{3\tau+2},
	\]
	cf.\ \eqref{eq:lambda_11} and \eqref{eq:lambda_31}. Solving mode by mode gives
	\begin{equation*}
		h_{11}
		=
		-\frac{9}{20(\tau+1/2)}\frac1{\alpha_*}Y
		-
		\frac{5\alpha_*}{2\lambda_{31}(\alpha_*)}
		Y\left(Z^2-\frac15\right).
	\end{equation*}
	This is the stated coefficient.
\end{proof}

Combining Lemmas \ref{lem:nonlinear-branch-smooth-main-proof}--\ref{lem:h11-coefficient-main-proof}, the dimensionless branch expansion is
\begin{equation}\label{eq:dimensionless-h-expansion-main-proof}
	\begin{aligned}
		h(s,V)
		={} &
		-\frac{3}{16}V^2P_2(Z)
		+sXZ
		-\frac{9}{20(\tau+1/2)}\frac{V}{\alpha_*}sY \\
		    & \quad
		-\frac{5\alpha_*}{2\lambda_{31}(\alpha_*)}Vs\,
		Y\left(Z^2-\frac15\right)
		+O(s^2+\abs{s}V^2+V^4).
	\end{aligned}
\end{equation}

Finally, by means of \eqref{eq:first_expansion_alpha}, \eqref{eq:beta_gleich_0} we also have shown the following statement.
\begin{lemma}
	\label{lem:alpha-expansion-order-main-proof}
	There holds
	\begin{equation}
		\alpha(s,V)
		=
		\alpha_*+O(s^2+V^2).
	\end{equation}
\end{lemma}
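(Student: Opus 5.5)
The expansion follows from the construction in Lemma~\ref{lem:nonlinear-branch-smooth-main-proof}, which gives $\alpha(s,V)=\widehat\alpha(\norm{sXZ}_{\L^2}^2,V)$ with $\widehat\alpha$ smooth near $(0,0)$ and $\widehat\alpha(0,0)=\alpha_*$. I will Taylor expand $\widehat\alpha$ to first order in both arguments:
\[
	\widehat\alpha(\varrho,V)=\alpha_*+\partial_\varrho\widehat\alpha(0,0)\,\varrho+\partial_V\widehat\alpha(0,0)\,V+O(\varrho^2+V^2).
\]
The term in $\varrho=s^2\norm{XZ}_{\L^2}^2$ is $O(s^2)$, and $\varrho^2=O(s^4)$. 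The only term that could spoil the claimed bound is therefore the one linear in $V$, with coefficient $\beta=\partial_V\alpha(0,0)=\partial_V\widehat\alpha(0,0)$.

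That coefficient was already shown to vanish in \eqref{eq:beta_gleich_0}, in the proof of Lemma~\ref{lem:h11-coefficient-main-proof}. The argument there compares the $sV$-coefficients of the identity $\cB(h(s,V),\alpha(s,V),V)=0$. The contribution $\beta\lambda_{21}'(\alpha_*)XZ$ lies in $K$. The contributions $N_{sV}$ (from Lemma~\ref{lem:mixed-sV-residual-XZ}, involving only the modes $Y$ and $Y(Z^2-\tfrac15)$) and $L_*h_{11}$ lie in $K^\perp$. Splitting orthogonally gives $\beta\lambda_{21}'(\alpha_*)=0$, and transversality \eqref{eq:transversality}, $\lambda_{21}'(\alpha_*)=-8/\alpha_*\neq0$, forces $\beta=0$.

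An independent check is available. Along $s=0$, i.e.\ on the axisymmetric branch, the dependence of $\widehat\alpha(0,V)$ on $V$ is the $\alpha$-value at which the $(2,1)$ eigenvalue of the linearisation at $h^{\rm ax}_{0,V}$ vanishes. By the evenness in Lemma~\ref{lem:axisymmetric-translating-branch} together with the reflection $V\mapsto -V$, this should be even in $V$, which again gives $\beta=0$. I would rely on the already-established \eqref{eq:beta_gleich_0} rather than this.

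Inserting $\beta=0$ into \eqref{eq:first_expansion_alpha} yields $\alpha(s,V)=\alpha_*+O(s^2+V^2)$. No step is a genuine obstacle. The only point needing care is that the absence of a term linear in $s$ uses the radial structure \eqref{eq:reduced-gradient-form} from Lemma~\ref{lem:gradient-LS}, which makes $\alpha$ depend on $s$ only through $s^2$.
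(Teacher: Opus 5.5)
Your proof is correct and is the paper's own argument: the paper deduces the lemma from \eqref{eq:first_expansion_alpha}, which has no linear $s$-term because $\alpha(s,V)=\widehat\alpha(\norm{sXZ}_{\L^2}^2,V)$, together with $\beta=0$ from \eqref{eq:beta_gleich_0}. Your side check via evenness in $V$ is not quite justified as stated: evenness of the axisymmetric branch alone does not suffice, and one would also need that the equatorial reflection $Z\mapsto -Z$ maps solutions at $(\alpha,V)$ to solutions at $(\alpha,-V)$ and acts as $-\id$ on $K$; since you do not rely on this check, the proof is unaffected.
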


\subsection{Helical motion of the centroid}

\begin{lemma}
	\label{lem:helical-centroid-main-proof}
	For the branch constructed above, as $(s,V)\to(0,0)$,
	\begin{equation}\label{eq:dimensionless-centroid-expansion-main-proof}
		\bar{y}_{s,V}:=\frac1{\abs{\Omega(s,V)}}
		\int_{\Omega(s,V)}y\dd y
		=
		-\frac{9}{20(\tau+1/2)}
		\frac{V}{\alpha_*}s\,e_2
		+
		O(s^2+\abs{s}V^2),
	\end{equation}
	and $\bar{y}_{s,V}\cdot e_3=0$.
	Moreover, there is $\eta>0$ such that, after possibly decreasing
	$V_0>0$, whenever
	$0<\abs{V}<V_0$ and
	$
		0<\abs{s}\le \eta\abs{V},
	$
	one has
	$
		\bar{y}_{s,V}\cdot e_2
		\neq 0.
	$
\end{lemma}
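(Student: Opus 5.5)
The plan is to write the centroid as a boundary integral over the sphere and feed in the branch expansion \eqref{eq:dimensionless-h-expansion-main-proof}. For a radial graph $h$, the divergence theorem in polar coordinates gives
\[
	\int_{\Omega_h}y\dd y=\frac14\int_{\Sph}(1+h)^4\omega\dd S
	=\int_{\Sph}h\,\omega\dd S+O\bigl(\norm h_{\C^0}^2\bigr),
\]
because $\int_{\Sph}\omega\dd S=0$. The volume is fixed, $\abs{\Omega(s,V)}=4\pi/3$, so
\[
	\bar y_{s,V}=\frac{3}{4\pi}\int_{\Sph}h\,\omega\dd S+\frac{3}{4\pi}\cdot\frac14\int_{\Sph}\bigl(6h^2+4h^3+h^4\bigr)\omega\dd S .
\]
The vertical component vanishes identically, since $h(s,V)\in\cM^{k+2}$ satisfies $\cC_3(h)=0$. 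This gives $\bar y_{s,V}\cdot e_3=0$.

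For the horizontal part, I will insert \eqref{eq:dimensionless-h-expansion-main-proof} into the linear term. Only the degree-one components of $h$ survive, by orthogonality of spherical harmonics. The modes $P_2$, $XZ$ and $Y(Z^2-\frac15)$ have degree two, two and three, so they drop out. Only $-\frac{9}{20(\tau+1/2)}\frac{V}{\alpha_*}sY$ remains, and by \eqref{eq:centroid-integrals} it contributes $-\frac{9}{20(\tau+1/2)}\frac{V}{\alpha_*}s\,e_2$. The remainder $O(s^2+\abs sV^2+V^4)$ in $\H^{k+2}\subset\C^0$ contributes an error of the same order.

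That error has to be sharpened: $V^4$ is not acceptable, and we need $O(s^2+\abs sV^2)$. I will use the exact axisymmetric structure. At $s=0$, $h(0,V)=h^{\rm ax}_{0,V}$ has centroid $0$ by Lemma~\ref{lem:axisymmetric-translating-branch}. Hence the horizontal centroid $\perpProj\bar y_{s,V}$ is a smooth function of $(s,V)$ vanishing at $s=0$, so it equals $s\,G(s,V)$ with $G$ smooth. Writing $G(s,V)=G(0,V)+O(s)$, I then expand $G(0,V)=\partial_s\perpProj\bar y_{s,V}|_{s=0}$ in $V$:
\begin{itemize}
\item[] The $V^0$ coefficient is $\perpProj\frac{3}{4\pi}\int XZ\,\omega\dd S=0$.
\item[] The $V^1$ coefficient comes from $h_{11}$, with no nonlinear contribution, since $h_{01}=0$.
\end{itemize}
This gives $G(0,V)=-\frac{9}{20(\tau+1/2)}\frac{V}{\alpha_*}e_2+O(V^2)$, and therefore an error $O(s^2+\abs sV^2)$. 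Since $\frac{9}{20(\tau+1/2)}=\frac{9}{10(2\tau+1)}$, this matches \eqref{eq:centroid-expansion-dimensionless-theorem}.

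The nonvanishing statement then follows. Suppose $0<\abs s\le\eta\abs V$. The leading term has modulus $c_\tau\abs{sV}$ with $c_\tau>0$. The error is at most
\[
	C(s^2+\abs sV^2)\le C\abs{s}(\eta\abs V+V^2).
\]
Choosing $\eta<c_\tau/(2C)$ and then $V_0<c_\tau/(2C)$ makes the error smaller than the main term, so $\bar y_{s,V}\cdot e_2\neq0$.

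The step I expect to be the main obstacle is the second paragraph. The raw branch expansion only gives a $V^4$ error, and removing it requires two facts together: that the $s$-linear term vanishes exactly at $s=0$ through the axisymmetric identification, and that the quadratic terms of $(1+h)^4$ produce no $sV$ contribution. The latter holds because the $s$-linear part of $h$ has no $V$-linear companion besides $h_{11}$.
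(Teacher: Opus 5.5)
Your proposal is correct and follows essentially the paper's argument. It uses $\bar y_{0,V}=0$ from the identification $h(0,V)=h^{\rm ax}_{0,V}$ to factor out $s$, then expands $\partial_s\bar y_{s,V}|_{s=0}$ to first order in $V$ using $h_{01}=0$ and the degree-one part of $h_{11}$, and absorbs the remainder when $\abs{s}\le\eta\abs{V}$. The only detail to add is that $V_0$ must also be decreased so that $\eta V_0<s_0$, which keeps these points $(s,V)$ in the domain of the branch.
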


\begin{proof}
	For a
	radial graph $r=1+h(\omega)$, there holds
	\[
		\int_{\Omega_h}y\dd y
		=
		\frac14\int_{\Sph}(1+h)^4\omega\dd S,
	\]
	cf. \eqref{eq:vertical-centring_C}.
	Since $\abs{\Omega_h}=\frac{4\pi}{3}$,
	we thus have
	\begin{equation}
		\label{eq:horizontal-centroid-exact-main-proof}
		\bar{y}_{s,V}
		=
		\frac3{16\pi}\int_{\Sph}(1+h(s,V))^4\omega\dd S .
	\end{equation}
	The constraint $h(s,V)\in\cM^{k+2}$ gives
	$\bar y_{s,V}\cdot e_3=0$. At $s=0$, the identification
	\eqref{eq:nonresonant-axisymmetric-identification} and
	Lemma~\ref{lem:axisymmetric-translating-branch} give
	$\bar y_{0,V}=0$ for all sufficiently small $V$.
	Thus the centroid has no pure $V$-terms. By
	smoothness,
	\begin{equation}
		\label{eq:centroid-factor-s-main-proof}
		\bar{y}_{s,V}
		=
		s\partial_s \bar{y}_{0,V}+O(s^2).
	\end{equation}
	It remains to compute $\partial_s \bar{y}_{0,V}$. Differentiating
	\eqref{eq:horizontal-centroid-exact-main-proof} gives
	\[
		\partial_s \bar{y}_{0,V}
		=
		\frac{3}{4\pi}\int_{\Sph}
		(1+h(0,V))^3
		\partial_s h(0,V)
		\omega\dd S .
	\]
	Now the Taylor expansion
	\eqref{eq:preliminary_expansion_of_hsv}
	gives
	\[
		h(0,V)=O(V^2),
		\qquad
		\partial_s h(0,V)
		=
		XZ+Vh_{11}+O(V^2).
	\]
	Consequently
	\begin{align*}
		\partial_s \bar{y}_{0,V}
		 & =
		\frac{3}{4\pi}\int_{\Sph}(XZ+Vh_{11})\omega\dd S
		+
		O(V^2)=\frac{3V}{4\pi}\int_{\Sph}h_{11} \omega \dd S+O(V^2).
	\end{align*}
	The coefficient $h_{11}$ has been computed in Lemma \ref{lem:h11-coefficient-main-proof}. The $\ell=3$ part of
	$h_{11}$, i.e. $Y(Z^2-\frac15)$, is orthogonal to the
	$\ell=1$ modes $X,Y,Z$, while the $\ell=1$
	part
	together with
	\eqref{eq:centroid-integrals}
	gives the
	contribution
	\[
		\partial_s \bar{y}_{0,V}
		=
		-\frac{9}{20(\tau+1/2)}
		\frac{V}{\alpha_*}e_2
		+
		O(V^2).
	\]
	Combining this with \eqref{eq:centroid-factor-s-main-proof}, we obtain
	\[
		\bar y_{s,V}
		=
		-\frac{9}{20(\tau+1/2)}
		\frac{sV}{\alpha_*}e_2
		+
		O(s^2+\abs{s}V^2),
	\]
	which is \eqref{eq:dimensionless-centroid-expansion-main-proof}.

	Finally, choose $\eta>0$ sufficiently small, decrease $V_0$ if
	necessary so that $\eta V_0<s_0$, and assume
	$0<\abs{s}\le\eta\abs{V}$.
	The coefficient of $sVe_2$ in \eqref{eq:dimensionless-centroid-expansion-main-proof} is non-zero,
	so the leading term has size comparable to $\abs{s}\abs{V}$, whereas
	the ratio of $s^2+\abs{s}V^2$ to $\abs{sV}$ is bounded by
	$\eta+\abs V$. It is therefore a lower-order perturbation after choosing
	$\eta$ and $V_0$ sufficiently small.
	Hence $\bar y_{s,V}\cdot e_2\neq0$ whenever $s\neq0$ in this regime.
\end{proof}

\subsection{Conclusion}
\begin{proof}[Proof of Theorem \ref{thm:din-pos-dimensionless}]
	Lemma~\ref{lem:nonlinear-branch-smooth-main-proof} gives a smooth branch of
	solutions of the nondimensional problem
	\eqref{eq:main-problem-nondimensional}, with
	\[
		\alpha(0,0)=\alpha_*,
		\qquad
		h(0,0)=0,
		\qquad h(0,V)=h^{\rm ax}_{0,V},\qquad
		\partial_s h(0,0)=XZ .
	\]
	For solutions with nonzero kernel projection, the phase is fixed by
	choosing the unique representative $sXZ$ with $s>0$; all other phases are
	obtained by rotation about the $e_3$-axis.
	The final part of
	Lemma~\ref{lem:nonlinear-branch-smooth-main-proof} gives the asserted local
	classification, including the free value of $\alpha$ on the axisymmetric
	set.

	The expansion of the interface is exactly
	\eqref{eq:dimensionless-h-expansion-main-proof}, while
	Lemma~\ref{lem:alpha-expansion-order-main-proof} gives
	\eqref{eq:alpha-expansion-dimensionless-theorem}. Finally,
	Lemma~\ref{lem:helical-centroid-main-proof} gives
	\eqref{eq:centroid-expansion-dimensionless-theorem} and the non-vanishing of
	the horizontal centroid in the regime
	$0<\abs{s}\le\eta\abs V$.
\end{proof}

\begin{proof}[Proof of Theorem \ref{thm:din-pos-dimensional} in the case $\tau>0$ and $\tau\notin\cO$]
	Set
	\[
		\tau=\frac{\din}{\dout},
		\qquad
		\bar V=\left(\frac{\sigma}{\dout R}\right)^{1/2},
		\qquad
		\widehat V=\frac{V}{\bar V}.
	\]
	Then
	\[
		\widehat V=\sqrt{\We}>0,
		\qquad
		\widehat V^2=\We.
	\]
	Apply
	Theorem~\ref{thm:din-pos-dimensionless} with $k=4$, this value of $\tau$,
	and the speed $\widehat V$.
	To distinguish the nondimensional solution objects from their rescaled
	counterparts below, we denote the resulting domain, graph function,
	potentials, angular velocity, and Bernoulli constant by hats. We also write
	\[
		\widehat\alpha_*
		:=
		\left(\frac{24}{3\tau+2}\right)^{1/2}>0,
		\qquad
		\widehat\lambda_{31}(\widehat\alpha_*)
		:=
		\frac{22\tau+14}{3\tau+2}
	\]
	for the corresponding dimensionless critical constants. Set
	\[
		\We_0:=\widehat{V}_0^2 .
	\]
	Thus, in the case of positive velocities,
	$0<\We<\We_0$ is exactly $0<\widehat V<\widehat{V}_0$.

	Rescale the nondimensional solution as in
	Section~\ref{sec:nondimensionalisation}:
	\[
		\Omega(s,\We)=R\widehat\Omega(s,\widehat V),
		\qquad
		\Gamma(s,\We)=R\widehat\Gamma(s,\widehat V),
		\qquad
		h(s,\We)=\widehat h(s,\widehat V),
	\]
	\[
		\phinout(y)
		=
		R\bar V\,
		\widehat\phi^{\rm in/out}\!\left(\frac yR\right),
		\qquad
		\alpha(s,\We)=\frac{\bar V}{R}\widehat\alpha(s,\widehat V),
		\qquad
		c(s,\We)=\frac{\sigma}{R}\widehat c(s,\widehat V).
	\]
	By the nondimensionalisation, this gives a solution of
	\eqref{eq:main-problem}, and the volume constraint becomes
	\[
		\abs{\Omega(s,\We)}
		=
		R^3\abs{\widehat\Omega(s,\widehat V)}
		=
		\frac{4\pi}{3}R^3 .
	\]

	Since $h=\widehat h$ by
	\eqref{eq:radial-height-rescaling}, the expansion
	\eqref{eq:h-expansion} follows immediately from
	\eqref{eq:h-expansion-dimensionless-theorem}, using
	\[
		\widehat V=\sqrt{\We},
		\qquad
		\widehat V^2=\We,
	\]
	and the identities, cf. \eqref{eq:dimensionless-critical-constants}
	and \eqref{eq:lambda_31},
	\begin{align}\label{eq:numerical_identities}
		\frac{9}{20(\tau+1/2)}\frac1{\widehat\alpha_*}
		 & =
		\sqrt{6(3\tau+2)}\,\frac{3}{40(2\tau+1)},
		\\
		\frac{5\widehat\alpha_*}{2\widehat\lambda_{31}(\widehat\alpha_*)}
		 & =
		\sqrt{6(3\tau+2)}\,\frac{5}{22\tau+14}.\nonumber
	\end{align}
	The dimensionless remainder is
	$O_{\H^6}(s^2+\abs{s}\We+\We^2)$.  The embedding
	$\H^6(\Sph)\hookrightarrow\C^4(\Sph)$ and
	$\abs{s}\We\le\frac12(s^2+\We^2)$ yield
	\eqref{eq:h-expansion}.

	Moreover,
	\[
		\alpha_*
		=
		\frac{\bar V}{R}\widehat\alpha_*,
		\qquad
		\alpha_*^2
		=
		\frac{\sigma}{\dout R^3}\frac{24}{3\tau+2}
		=
		\frac{24\sigma}{R^3(3\din+2\dout)} .
	\]
	Remark~\ref{rem:nonresonant-sharp-alpha-expansion} below and
	$\widehat V=\sqrt{\We}$ give \eqref{eq:alpha-expansion}.

	Finally, the centroid rescales by
	\[
		\bar{y}_{s,\We}
		=
		\frac1{\abs{\Omega(s,\We)}}\int_{\Omega(s,\We)}y\dd y
		=
		\frac{R}{\abs{\widehat\Omega(s,\widehat V)}}
		\int_{\widehat\Omega(s,\widehat V)}\widehat y\dd \widehat y
		=R\widehat{\bar y}_{s,\widehat V}.
	\]
	Therefore \eqref{eq:centroid-expansion} follows from
	\eqref{eq:centroid-expansion-dimensionless-theorem}
	and the first identity in \eqref{eq:numerical_identities}.
	Choose the same $\eta$ as in the dimensionless theorem. If
	$0<\abs{s}\le \eta\sqrt{\We}$, then
	$0<\abs{s}\le \eta\abs{\widehat V}$, so the non-vanishing of the horizontal
	centroid is inherited from Theorem~\ref{thm:din-pos-dimensionless}.
	The rescaling is bijective and preserves the fixed-volume,
	vertically centred radial-graph class and rotations about the $e_3$-axis. Hence it also transfers smoothness and the stated
	local classification.
\end{proof}

\section{The endpoint case \texorpdfstring{$\tau=0$}{tau = 0}}
In this section we prove the existence of the spiralling bubbles in the endpoint case $\tau=0$.
Throughout this section, weight $m$ refers to the azimuthal frequency $m$.
At the endpoint, the linearisation of the Bernoulli equation at the sphere, at the Rayleigh--Lamb frequency and at speed $V=0$, restricted to the fixed-volume,
vertically centred slice, has two, instead of one, resonant planes, given by
\begin{equation}
	K_1=\Span\set{XZ,YZ}
\end{equation}
and
\begin{equation}
	K_6
	=
	\Span
	\set{
		Z\Re((X+iY)^6),
		Z\Im((X+iY)^6)
	}.
\end{equation}
The subscripts denote azimuthal weight, not spherical-harmonic degree: $K_1$
is degree two and $K_6$ is degree seven.

Following the additional neutral modes of azimuthal weight six along the axisymmetric translating branch, it turns out that the associated
eigenvalue becomes nonzero but is only of order $V^2$, so solving for this mode
loses a factor of order $V^{-2}$. The underlying $\so$-equivariance compensates for this
loss: the primary weight-one mode can force the weight-six mode only at sixth
order in its amplitude.

After solving for the modes outside $K_1\oplus K_6$, we capture this balance
by an anisotropic blow-up, meaning that the different variables are rescaled
by different powers of $V$:
\[
	a=VtXZ,
	\qquad
	u=V^2U,
	\qquad
	\alpha=\alpha_1(V)+V^2\beta,
\]
where $a\in K_1$ is the primary amplitude and $u\in K_6$ is the additional
mode. Here $t\in\mathbb R$ is the rescaled amplitude of the primary mode, and
$\alpha_1(V)$ is the critical angular velocity at translation speed $V$
from which the non-axisymmetric branch bifurcates.
In these variables, the two remaining, appropriately rescaled equations extend smoothly to
$V=0$, and their linearisation with respect to $(U,\beta)$ is invertible. The
implicit-function theorem in combination with the $\so$-gradient structure then determines the additional-mode correction and
the angular velocity for $\norm a=O(V)$, with
\[
	\norm u=O\left(\frac{\norm a^6}{V^2}\right).
\]

We obtain the following result.

\begin{theorem}
	\label{thm:endpoint-din-zero}
	Assume $\tau=0$. For every
	$k\ge4$, there exist constants $V_0>0$ and $\eta>0$ such that, for every
	$0<V<V_0$ and every $\abs{s}\le\eta V$, there are smooth solutions of the
	nondimensional free-boundary problem \eqref{eq:main-problem-nondimensional},
	with prescribed volume $4\pi/3$, angular speed
	$\alpha=\alpha(s,V)>0$, and boundaries
	\[
		\Gamma(s,V)=\set{(1+h(s,V)(\omega))\omega:\omega\in\Sph}.
	\]
	After rotation about the $e_3$-axis, the branch is normalised by the
	$(\ell,m)=(2,1)$ direction $sXZ$. For $\omega=(X,Y,Z)$,
	\begin{equation}
		\label{eq:endpoint-h-expansion}
		\begin{aligned}
			h(s,V)
			={} &
			-\frac{3V^2}{16}\frac{3Z^2-1}{2}
			+sXZ
			-\frac9{10}\frac{V}{2\sqrt{3}}sY \\
			    & \quad
			-\frac{5\sqrt{3}}{7}Vs\,
			Y\left(Z^2-\frac15\right)
			+O_{\H^{k+2}}\bigl(s^2+\abs{s}V^2+V^4\bigr).
		\end{aligned}
	\end{equation}
	The angular speed satisfies
	\begin{equation}
		\label{eq:endpoint-alpha-expansion}
		\alpha(s,V)
		=
		2\sqrt{3}
		-
		\frac{2601}{1960}\frac{V^2}{2\sqrt{3}}
		+O(s^2+V^3).
	\end{equation}
	At $s=0$, this value of $\alpha$ is only the continuous selection inherited
	from the non-axisymmetric branch; the same axisymmetric surface $h(0,V)=h^{\rm ax}_{0,V}$ solves the
	physical problem for every nearby $\alpha$.
	If $0<\abs{s}\le\eta V$,
	then
	\begin{equation}
		\label{eq:endpoint-centroid-expansion}
		\perpProj\left(
		\frac1{\abs{\Omega(s,V)}}\int_{\Omega(s,V)}y\,\dd y
		\right)
		=
		-\frac9{10}\frac{V}{2\sqrt{3}}s\,e_2
		+O(s^2+\abs{s}V^2),
	\end{equation}
	and this horizontal centroid is non-zero after decreasing $V_0$ and $\eta$.
	For each fixed $V>0$, the non-axisymmetric branch is locally unique
	modulo rotations about the $e_3$-axis in an endpoint-size neighbourhood of
	the axisymmetric branch. On the axisymmetric set, the shape is locally
	unique, while $\alpha$ remains free. The precise uniqueness statement is
	Lemma~\ref{lem:endpoint-local-uniqueness}.
\end{theorem}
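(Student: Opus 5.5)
The plan is to follow the blow-up strategy sketched before the statement, building on the same gradient $\so$ Lyapunov--Schmidt framework as in Lemma~\ref{lem:nonlinear-branch-smooth-main-proof}, but now with the four-dimensional kernel $K_1\oplus K_6$ at $(\alpha_*,V)=(2\sqrt3,0)$ (note $\alpha_*^2=12$ for $\tau=0$).

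\textbf{Step 1: range reduction onto $K_1\oplus K_6$.} We first use the chart $\Theta$ and the functional $\scrJ(\xi,\alpha,V)=\scrE(\Theta(\xi),\alpha,V)$ with $\L^2$-gradient $\cF$ from \eqref{eq:relation_F_to_cB}. Applying the range part of Lemma~\ref{lem:gradient-LS} and Remark~\ref{rem:LS_higher_dimensional_kernel} with $K=K_1\oplus K_6$ (by Lemma~\ref{lem:nonresonance}, $L_*$ is invertible on the complement) yields an equivariant $w(a,u,\alpha,V)$ and a reduced gradient $g=g_1+g_6$. The $\so$-action has weight $1$ on $K_1$ and weight $6$ on $K_6$. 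Equivariance then gives the structure
\[
g_1 = A(|a|^2,\alpha,V)\,a + (\text{terms with } u),
\qquad
g_6 = \Lambda_6(\alpha,V)\,u + c(\alpha,V)\,a^{6} + \ldots .
\]
Here $a^6$ means the weight-six monomial obtained by identifying $K_1\cong\CC$, and evenness in $V$ follows from the reflection symmetries of the base branch.

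\textbf{Step 2: linear coefficients along the axisymmetric branch.} Next we compute the linear coefficients at $a=u=0$, using the axisymmetric branch of Lemma~\ref{lem:axisymmetric-translating-branch} as base point. The starting values are $A(0,\alpha,V)=\lambda_{21}(\alpha)+V^2\gamma_1(\alpha)+O(V^4)$ and $\Lambda_6(\alpha,V)=\lambda_{76}(\alpha)+V^2\gamma_6(\alpha)+O(V^4)$. Their $V^2$ corrections come from three sources: Lemma~\ref{lem:curvature-cross-h02}, Lemma~\ref{lem:rotational-shape-axisymmetric-background} (applied with $f=h_{02}$), and the translational cross terms from the general-mode formulas \eqref{eq:general-mode-translation-datum}--\eqref{eq:general-mode-translation-tangential-gradient}. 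We then define $\alpha_1(V)$ by $A(0,\alpha_1(V),V)=0$; this is possible via the IFT using \eqref{eq:transversality}. The key nondegeneracy to verify is
\[
\kappa:=\bigl[\Lambda_6(\alpha_1(V),V)\bigr]_{V^2}=\lambda_{76}'(\alpha_*)\,[\alpha_1]_{V^2}+\gamma_6(\alpha_*)\neq0 .
\]
This explicit computation is the main obstacle: it requires careful shape calculus for the $(7,6)$ mode, and it also produces the coefficient $-\tfrac{2601}{1960}$ in $\alpha$. If the calculation is uncooperative, I would first check the diagonal matrix elements via $I_{\ell m}$ and use complex harmonics to reduce each cross term to a recursion coefficient.

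\textbf{Step 3: blow-up and IFT.} We substitute $a=Vt\,\cU_\theta(XZ)$, $u=V^2U$, and $\alpha=\alpha_1(V)+V^2\beta$, and divide $g_6$ by $V^4$ and $g_1$ by $V^3$. Since the forcing $c\,a^6=O(V^6)$, the rescaled $g_6/V^4$ becomes $\kappa U+O(V)$. For the primary equation, the orbit identity reduces $g_1/V^3$ to $t\bigl(\lambda_{21}'(\alpha_*)\beta+O(t^2)+O(V)\bigr)$. Both rescaled equations extend smoothly to $V=0$, and their Jacobian in $(U,\beta)$ is $\operatorname{diag}(\kappa,\lambda_{21}'(\alpha_*))$, which is invertible. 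The IFT therefore gives $U(t,V)$ and $\beta(t,V)$ for $|t|\le\eta$. Remark~\ref{rem:LS_higher_dimensional_kernel} then upgrades the two scalar equations to the full $g=0$, and Lemma~\ref{lem:variational_structure} converts this into solutions of \eqref{eq:main-problem-nondimensional}. Smoothness follows from Lemma~\ref{lem:small-sobolev-solution-spatial-regularity}, and local uniqueness follows from uniqueness in both IFTs together with Lemma~\ref{lem:axisymmetric-translating-branch} in the case $a=0$.

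\textbf{Step 4: asymptotics.} With $s=Vt$, the $K_6$ component satisfies $u=O(s^6/V^2)$, so it is negligible. The coefficient $h_{11}$ is obtained exactly as in Lemma~\ref{lem:h11-coefficient-main-proof} with $\tau=0$: here $\lambda_{11}=-6$ and $\lambda_{31}=7$, which yields the stated $Y$ and $Y(Z^2-\tfrac15)$ terms. The centroid formula then follows verbatim from the argument of Lemma~\ref{lem:helical-centroid-main-proof}.
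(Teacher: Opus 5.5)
Your architecture matches the paper's. You do the range reduction onto $K_1\oplus K_6$ based at the axisymmetric branch, select $\alpha_1(V)$ from the weight-one eigenvalue, and use the blow-up $a=VtXZ$, $u=V^2U$, $\alpha=\alpha_1(V)+V^2\beta$, made smooth by the equivariant vanishing orders. You then get a diagonal Jacobian in $(U,\beta)$ and compute $h_{11}$ and the centroid as in the nonresonant case.

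The genuine gap is that the one input that makes the endpoint case work is never established. You call $\kappa\neq0$ ``the key nondegeneracy to verify'' and leave it there. Without it the blow-up Jacobian is not known to be invertible. Nor is the coefficient $-\frac{2601}{1960}$ in \eqref{eq:endpoint-alpha-expansion} derived, and it is part of the statement. What has to be proved is that, for $\abs{\alpha-\alpha_*}\lesssim V^2$, the two block eigenvalues satisfy $\mu_1=\lambda_{21}(\alpha)-\frac{867}{980}V^2+O(V^3)$ and $\mu_6=\lambda_{76}(\alpha)-\frac{4174947}{284240}V^2+O(V^3)$. Since $\lambda_{76}'(\alpha_*)/\lambda_{21}'(\alpha_*)=\frac{27}{2}$, this gives $\kappa=\frac{27}{2}\cdot\frac{867}{980}-\frac{4174947}{284240}=-\frac{38228049}{13927760}\neq0$.

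The recipe you sketch for these $V^2$ coefficients is also incomplete, and followed literally it gives wrong numbers. The curvature cross term, the rotational potential on the background $r=1+\eps h_{02}$, and the translational kinetic cross term together produce only the diagonal element $d_{\ell m}=\langle Y_\ell^m,L_2^{\alpha_*}Y_\ell^m\rangle_{\CC}$ of the second-order operator. What is missing is the second-order effect of the first-order operator $L_1^{\alpha_*}=\partial_VD_h\cB(0,\alpha_*,0)$. This operator is nonzero and couples $Y_\ell^m$ to $Y_{\ell\pm1}^m$ (Lemma~\ref{lem:endpoint-L1}). Consequently the range correction satisfies $D_aw(0,\alpha,V)=-V(L_0^\alpha)^{-1}\idproj_{1,6}L_1^\alpha+O(V^2)$. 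This feeds the Schur-complement term $-\sum_{n=\ell\pm1}b^{(m)}_{\ell n}b^{(m)}_{n\ell}/\lambda_{nm}(\alpha_*)$ back into the reduced eigenvalue at order $V^2$. It is not small relative to $d_{\ell m}$: for $(2,1)$, $d_{21}=-\frac{129}{280}$, while the Schur terms contribute $\frac{81}{40}-\frac{120}{49}$.

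You must also check that the chart and the weight $M_h$ do not contribute at this order. This uses $D\Theta(\xi^{\rm ax}(V))Y_\ell^m=Y_\ell^m$ for $m\neq0$ (Lemma~\ref{lem:chart_axisymmetric_xi_vs_m}) and the vanishing of $L_0^{\alpha_*}$ on the kernel.

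A minor point: evenness of $\mu_1,\mu_6$ in $V$ comes from the reflection $Z\mapsto-Z$ of the full problem, which maps $V$ to $-V$ and acts as $-\id$ on $K_1\oplus K_6$. The symmetry of the base branch alone does not give it. In any case $O(V^3)$ errors suffice.
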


For the proof, let $\proj_j$ be the fixed $\L^2(\Sph)$-orthogonal projection onto
$K_j$, $j\in\{1,6\}$, and set
\[
	\proj_{1,6}:=\proj_1+\proj_6,
	\qquad
	\idproj_{1,6}:=\id-\proj_{1,6}.
\]
We also abbreviate
\begin{align*}
	K_{16}:=K_1\oplus K_6.
\end{align*}

\begin{lemma}\label{lem:endpoint-complement-isomorphism-simple}
	At $\tau=0$ and $\alpha_*^2=12$, the round linearisation restricts to an
	isomorphism
	\[
		D_\xi\cF(0,\alpha_*,0):
		K_{16}^{\perp}\cap\H^{k+2}_{\rm lin}
		\longrightarrow
		K_{16}^{\perp}\cap\H^k_{\rm lin}.
	\]
\end{lemma}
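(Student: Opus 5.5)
The plan mirrors the corresponding step in the proof of Lemma~\ref{lem:nonlinear-branch-smooth-main-proof}. First identify the operator. Since $\Theta(0)=0$, $D\Theta(0)=\id_{\H^{k+2}_{\rm lin}}$, $M_0=1$ and $\cB(0,\alpha_*,0)=0$, differentiating \eqref{eq:relation_F_to_cB} at the origin gives
\[
	D_\xi\cF(0,\alpha_*,0)=\proj_{\rm lin}D_h\cB(0,\alpha_*,0)
	\quad\text{on }\H^{k+2}_{\rm lin}.
\]
By Lemma~\ref{lem:linearisation-round-sphere} the round operator is diagonal in real spherical harmonics. It therefore preserves the linear slice, and the projection $\proj_{\rm lin}$ can be dropped. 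The operator acts on $Y_\ell^m$ as multiplication by $\lambda_{\ell m}(\alpha_*)$, where at $\tau=0$
\[
	\lambda_{\ell m}(\alpha_*)=\ell(\ell+1)-2-\frac{12m^2}{\ell+1}.
\]
Being diagonal, it also preserves $K_{16}$ and $K_{16}^\perp$, since both are spanned by spherical harmonics and $K_6$ is precisely the real span of the degree-seven, weight-six harmonics.

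Next, describe $K_{16}^\perp\cap\H^{k+2}_{\rm lin}$. In the real spherical-harmonic basis it is the closed span of all modes except $(0,0)$, $(1,0)$, $(2,1)$ and $(7,6)$. The first two are excluded by the slice and the last two by $K_{16}^\perp$. By Lemma~\ref{lem:nonresonance} at $\tau=0$, every remaining mode has $\lambda_{\ell m}(\alpha_*)\neq0$. For instance, $\lambda_{11}=-6$, $\lambda_{22}=-12$ and $\lambda_{31}=7$, consistent with \eqref{eq:lambda_11} and \eqref{eq:lambda_31}. As a sanity check on the resonance, $\lambda_{76}=54-432/8=0$.

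The key estimate is uniform: I aim to show $\abs{\lambda_{\ell m}(\alpha_*)}\ge c(1+\ell)^2$ on these modes. Since $m\le\ell$, we have $12m^2/(\ell+1)\le12\ell$. Hence for $\ell\ge\ell_0$ (for example $\ell_0=20$),
\[
	\lambda_{\ell m}\ge\ell^2-11\ell-2\ge c(1+\ell)^2.
\]
The finitely many modes with $\ell<\ell_0$ outside the excluded set have nonzero eigenvalues, so they admit a positive lower bound on $\abs{\lambda_{\ell m}}/(1+\ell)^2$.

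Finally, the spherical-harmonic characterisation of the $\H^j(\Sph)$ norms,
\[
	\norm{u}_{\H^j}^2\simeq\sum(1+\ell)^{2j}\abs{u_{\ell m}}^2,
\]
yields $\norm{u}_{\H^{k+2}}\le C\norm{D_\xi\cF(0,\alpha_*,0)u}_{\H^k}$. Surjectivity follows by defining the inverse modewise, dividing coefficients by $\lambda_{\ell m}$, which gains two derivatives. This proves the asserted isomorphism.

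The only step needing care is the finite check that no further zero eigenvalue occurs at $\tau=0$ besides $(1,0)$, $(2,1)$ and $(7,6)$. This is exactly the content of Lemma~\ref{lem:nonresonance}, so here it is routine.
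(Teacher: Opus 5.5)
Your proposal is correct and follows essentially the paper's argument. You identify $D_\xi\cF(0,\alpha_*,0)$ with the diagonal round-sphere operator on the slice, invoke Lemma~\ref{lem:nonresonance} to see that at $\tau=0$ the only zero modes in the slice are $K_1$ and $K_6$, and combine the uniform bound $\abs{\lambda_{\ell m}(\alpha_*)}\ge c(1+\ell)^2$ with the spectral characterisation of the Sobolev norms; your write-up only adds the explicit derivation of the operator from \eqref{eq:relation_F_to_cB} and a concrete threshold $\ell_0$, which the paper leaves implicit.
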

\begin{proof}
	By Lemmas~\ref{lem:linearisation-round-sphere} and
	\ref{lem:nonresonance}, the only non-axisymmetric zero modes in the fixed
	linear slice are $K_1$ and $K_6$. On a spherical harmonic $Y_\ell^m$ the
	remaining eigenvalue is
	\[
		\lambda_{\ell m}(\alpha_*)
		=
		\ell(\ell+1)-2-\frac{12m^2}{\ell+1}.
	\]
	Since $\abs m\le\ell$, this has magnitude comparable to
	$1+\ell(\ell+1)$ for all sufficiently large $\ell$. The finitely many
	remaining non-kernel modes have a positive minimum after division by
	$1+\ell(\ell+1)$. Hence
	\[
		\abs{\lambda_{\ell m}(\alpha_*)}
		\ge c\bigl(1+\ell(\ell+1)\bigr)
	\]
	for all modes $Y^m_\ell\in K_{16}^\perp$, which gives the
	isomorphism by spherical-harmonic expansion.
\end{proof}

We use the same chart $\Theta$, functional $\scrJ$ and gradient $\cF$ as in the previous analysis. Recalling \eqref{eq:derivative-scrJ}, there in particular holds
\begin{align}\label{eq:recalled_relation_F_B}
	\int_{\Sph}\cF(\xi,\alpha,V)\eta\dd S=D_\xi\scrJ(\xi,\alpha,V)[\eta]=\int_{\Sph}\cB(\Theta(\xi),\alpha,V)M_{\Theta(\xi)}D\Theta(\xi)\eta\dd S.
\end{align}
Consequently, $\cF(\xi,\alpha,V)=0$, $\xi\in\H^{k+2}_{\rm lin}$ small if and only if $\cB(h,\alpha,V)=0$, $h=\Theta(\xi)\in\cM^{k+2}$.
The first implication follows from
Lemma~\ref{lem:variational_structure}: if $\cF(\xi,\alpha,V)=0$, then
$h=\Theta(\xi)$ is a constrained critical point of $\scrE$. The reverse implication is clear.

Performing a Lyapunov-Schmidt reduction as in the proof of Lemma \ref{lem:nonlinear-branch-smooth-main-proof}, cf. Remark \ref{rem:LS_higher_dimensional_kernel}, using Lemma~\ref{lem:endpoint-complement-isomorphism-simple} for the range isomorphism, we find for any $a\in K_{16}$ near $0$, $\alpha$ near $\alpha_*$, $V$ near $0$, a unique $w(a,\alpha,V)\in K_{16}^\perp$ near $0$, such that
\begin{align}\label{eq:range_equation_endpoint_case}
	{\idproj_{1,6}}\cF(a+w(a,\alpha,V),\alpha,V)=0,
\end{align}
where ${\idproj_{1,6}}$ denotes the $\L^2(\Sph)$-orthogonal projection $\H^{k}_{\rm lin}\rightarrow K_{16}^\perp$. Denoting ${\proj_{1,6}}:=\id-{\idproj_{1,6}}$, it therefore remains to find $a\in K_{16}$ small such that the kernel equation
\begin{align}\label{eq:endpoint_kernel_equation}
	\cG(a,\alpha,V):={\proj_{1,6}}\cF(a+w(a,\alpha,V),\alpha,V)=0
\end{align}
is satisfied for $V\neq 0$ small and suitable $\alpha$ near $\alpha_*$.

We first identify the zero-amplitude range solution with the universal
axisymmetric branch of Lemma~\ref{lem:axisymmetric-translating-branch}. This
will in particular show that $a=0$ solves
\eqref{eq:endpoint_kernel_equation} for every admissible $(\alpha,V)$.
As indicated above the key idea is then to investigate the linearisation along this translating branch instead of the round sphere solution. Indeed, the linearisation of \eqref{eq:endpoint_kernel_equation} at $a=0$, $V=0$ is identically vanishing, but, as a careful expansion will show, admits a continuation of solutions with $a\neq 0$ for small $V\neq 0$.

\begin{lemma}
	\label{lem:endpoint_axisymmetric_branch_with_w}
	After shrinking the domain of $w$ so that $\abs V<V_{\rm ax}$ and that
	$\xi^{\rm ax}(V)$ lies in the range-uniqueness ball, let
	$(\alpha,V)$ be such that $w(0,\alpha,V)$ is well-defined. Then
	\begin{equation}
		\label{eq:endpoint-axisymmetric-range-identification}
		w(0,\alpha,V)=\xi^{\rm ax}(V),
		\qquad
		\Theta(w(0,\alpha,V))=h^{\rm ax}_{0,V},
	\end{equation}
	independently of $\alpha$, and $a=0$ solves
	\eqref{eq:endpoint_kernel_equation}. In particular, all the symmetry,
	regularity, uniqueness, and expansion conclusions of
	Lemma~\ref{lem:axisymmetric-translating-branch} apply, including
	\eqref{eq:axisymmetric-branch-expansion} and
	\eqref{eq:axisymmetric-coordinate-expansion}.
\end{lemma}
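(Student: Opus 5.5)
The plan is to show that $\xi^{\rm ax}(V)$ is itself an admissible solution of the range equation \eqref{eq:range_equation_endpoint_case} with $a=0$, and then to invoke the uniqueness of the range correction. The chart identity $\Theta(\xi^{\rm ax}(V))=h^{\rm ax}_{0,V}$ is the definition \eqref{eq:axisymmetric-branch-definition}. So it suffices to verify three things: first, that $\xi^{\rm ax}(V)\in K_{16}^\perp\cap\H^{k+2}_{\rm lin}$; second, that $\idproj_{1,6}\cF(\xi^{\rm ax}(V),\alpha,V)=0$; third, that $\xi^{\rm ax}(V)$ lies in the uniqueness ball of the implicit-function theorem. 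The third point is exactly the shrinking hypothesis in the statement, and it is available because $\xi^{\rm ax}(V)=O(V^2)$ by \eqref{eq:axisymmetric-coordinate-expansion}.

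For membership in $K_{16}^\perp$, observe that $\xi^{\rm ax}(V)\in\H^{k+2}_{\rm ax}$ contains only azimuthal weight $m=0$. Every element of $K_1$ and $K_6$ has weight $1$ or $6$, so their $\L^2$ inner products with $\xi^{\rm ax}(V)$ vanish after integrating in $\varphi$.

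For the equation, I will prove the stronger statement $\cF(\xi^{\rm ax}(V),\alpha,V)=0$ for every $\alpha$. By \eqref{eq:axisymmetric-branch-bernoulli} we have $\cB(h^{\rm ax}_{0,V},\alpha,V)=0$ for all $\tau\ge0$, in particular $\tau=0$, and for all $\alpha$. Inserting this into \eqref{eq:relation_F_to_cB}, namely $\cF=D\Theta(\xi)^*[\cB M_{\Theta(\xi)}]$, gives $\cF(\xi^{\rm ax}(V),\alpha,V)=0$. Applying $\idproj_{1,6}$ then yields the range equation, and applying $\proj_{1,6}$ shows that $a=0$ solves \eqref{eq:endpoint_kernel_equation}.

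Uniqueness of $w(0,\alpha,V)$ in the small ball of $K_{16}^\perp$ now forces $w(0,\alpha,V)=\xi^{\rm ax}(V)$. The right-hand side does not depend on $\alpha$, so neither does $w(0,\alpha,V)$. All remaining conclusions (symmetries, regularity, local uniqueness, and the expansions \eqref{eq:axisymmetric-branch-expansion}, \eqref{eq:axisymmetric-coordinate-expansion}) are then inherited directly from Lemma~\ref{lem:axisymmetric-translating-branch}.

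The only delicate point is bookkeeping of neighbourhoods. We need $\abs V<V_{\rm ax}$, and the uniqueness radius of the range IFT must contain $\xi^{\rm ax}(V)$ uniformly for $\alpha$ near $\alpha_*$. I would handle this by shrinking $V$ using the $O(V^2)$ bound, uniformly in $\alpha$, since the axisymmetric branch does not depend on $\alpha$.
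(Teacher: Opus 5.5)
Your proposal is correct and follows essentially the same route as the paper: $\cF(\xi^{\rm ax}(V),\alpha,V)=0$ comes from the axisymmetric branch, axisymmetry places $\xi^{\rm ax}(V)$ in $K_{16}^\perp$, and uniqueness of the range correction gives the identification. One point of order: $\cG(0,\alpha,V)=\proj_{1,6}\cF(w(0,\alpha,V),\alpha,V)=0$ should be deduced only after you have established $w(0,\alpha,V)=\xi^{\rm ax}(V)$, which is the order the paper uses.
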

\begin{proof}
	At $\tau=0$, Lemma~\ref{lem:axisymmetric-translating-branch} and the
	equivalence following \eqref{eq:recalled_relation_F_B} give
	\begin{equation*}
		\cF(\xi^{\rm ax}(V),\alpha,V)=0.
	\end{equation*}
	The coordinate $\xi^{\rm ax}(V)$ is axisymmetric and hence belongs to
	$K_{16}^{\perp}$. It therefore solves the range equation
	\eqref{eq:range_equation_endpoint_case} with $a=0$. Uniqueness of the range
	correction gives the first identity in
	\eqref{eq:endpoint-axisymmetric-range-identification}; the second follows
	from \eqref{eq:axisymmetric-branch-definition}. Projecting the displayed
	full-gradient identity onto $K_{16}$ yields
	$\cG(0,\alpha,V)=0$.
\end{proof}

\subsection{Linearisation along the translating branch} We recall that $\so$ acts on $K_{16}$ via
\begin{align}
	(\cU_\theta a)(\omega)=a(\cos\theta X+\sin\theta Y,-\sin \theta X+\cos\theta Y,Z).
\end{align}
Based on the $\so$-gradient structure we deduce that
the linearisation of \eqref{eq:endpoint_kernel_equation} at the axisymmetric branch has the following structure.
\begin{lemma}\label{lem:derivative_G}
	In block matrix form, corresponding to $K_{16}=K_1\oplus K_6$, there holds
	\begin{align*}
		D_a\cG(0,\alpha,V)=\begin{pmatrix}
			                   \mu_1(\alpha,V)\id_{K_1} & 0                        \\
			                   0                        & \mu_6(\alpha,V)\id_{K_6}
		                   \end{pmatrix},
	\end{align*}
	where
	\begin{gather}\begin{gathered}\label{eq:formula_mu_m}
			\mu_m(\alpha,V)=\int_{\Sph}D_h\cB(h^{\rm ax}_{0,V},\alpha,V)\left[D\Theta(w(0,\alpha,V))b_m\right]M_{h^{\rm ax}_{0,V}}D\Theta(w(0,\alpha,V))a_m\dd S,\\
			b_m:=a_m+D_aw(0,\alpha,V)a_m,
		\end{gathered}
	\end{gather}
	for any $a_m\in K_m$ with $\norm{a_m}_{\L^2(\Sph)}=1$, $m=1,6$.
\end{lemma}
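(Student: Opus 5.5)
We need to prove that the derivative of $\cG$ at $a=0$ is block diagonal with scalar blocks, and to identify the scalar on each block. The plan has three steps: compute $D_a\cG$ by the chain rule, use the $\so$-equivariance to get the block structure, and use the gradient (Hessian) symmetry to show each block is a real multiple of the identity.

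\textbf{Step 1: chain rule.} By \eqref{eq:recalled_relation_F_B}, for $\eta\in\H^{k+2}_{\rm lin}$,
\[
	\avg{\cF(\xi,\alpha,V),\eta}_{\L^2}=\int_{\Sph}\cB(\Theta(\xi),\alpha,V)M_{\Theta(\xi)}D\Theta(\xi)\eta\dd S .
\]
We differentiate this in $\xi$ at $\xi_0:=w(0,\alpha,V)=\xi^{\rm ax}(V)$, using Lemma~\ref{lem:endpoint_axisymmetric_branch_with_w}. There $\cB(\Theta(\xi_0),\alpha,V)=\cB(h^{\rm ax}_{0,V},\alpha,V)=0$. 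Hence every term in which the derivative falls on $M_{\Theta(\xi)}$ or on $D\Theta(\xi)$ vanishes. What remains is
\[
	\avg{D_\xi\cF(\xi_0,\alpha,V)\zeta,\eta}=\int_{\Sph}D_h\cB(h^{\rm ax}_{0,V},\alpha,V)[D\Theta(\xi_0)\zeta]\,M_{h^{\rm ax}_{0,V}}D\Theta(\xi_0)\eta\dd S .
\]
Now take $\zeta=b=a'+D_aw(0,\alpha,V)a'$ with $a'\in K_{16}$. Since $\proj_{1,6}$ is the $\L^2$-projection, for $a''\in K_{16}$ we get
\[
	\avg{D_a\cG(0,\alpha,V)a',a''}=\avg{D_\xi\cF(\xi_0,\alpha,V)b,a''}.
\]

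\textbf{Step 2: block structure from equivariance.} The map $\cG$ is $\so$-equivariant: $w$ is equivariant (as in \eqref{eq:w-equivariant}), $\cF$ is equivariant by the argument in the proof of Lemma~\ref{lem:gradient-LS}, and $\proj_{1,6}$ commutes with the action. Because $\cG(0,\cdot)=0$ and $0$ is a fixed point, $D_a\cG(0,\alpha,V)$ commutes with $\cU_\theta$ on $K_{16}$. The action on $K_m$ is rotation by $m\theta$, so $K_1$ and $K_6$ carry inequivalent irreducible real representations. Schur's lemma (decomposing into Fourier weights) then shows that the off-diagonal blocks vanish. It also shows that each diagonal block commutes with rotation, so it has the form $c_m\id+d_mJ$.

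\textbf{Step 3: symmetry kills the $J$-part.} We need $D_a\cG(0)$ to be symmetric on $K_{16}$, which forces $d_m=0$. I would get this from the reduced function $\Phi(a)=\scrJ(a+w(a),\alpha,V)$. Exactly as in \eqref{eq:reduced-gradient-identity}, $\nabla_a\Phi=\cG$, so $D_a\cG(0)$ is the Hessian of $\Phi$ at $0$ and is therefore symmetric. A symmetric operator $c_m\id+d_mJ$ with $J$ skew must have $d_m=0$, so each block is $\mu_m\id$. Finally, the scalar is $\mu_m=\avg{D_a\cG(0)a_m,a_m}$ for a unit vector $a_m$. Substituting Step 1 with $a'=a''=a_m$ gives \eqref{eq:formula_mu_m}.

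The main obstacle is justifying the differentiations rigorously. Step 1 needs $\cB$ to vanish at the base point, which holds for every $\alpha$ by Lemma~\ref{lem:axisymmetric-translating-branch}. Step 3 needs the Hessian of $\Phi$ to be symmetric at a point $\xi_0\neq0$. Here $w$ enters the Hessian through second derivatives, but those terms pair with $\idproj_{1,6}\cF(\xi_0)=0$ and drop out. The rest is Schur-type bookkeeping.
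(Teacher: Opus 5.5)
Your proposal is correct and follows essentially the same route as the paper: equivariance of $D_a\cG(0,\alpha,V)$ kills the off-diagonal blocks, symmetry of the Hessian of the reduced functional forces scalar diagonal blocks, and the chain rule at the axisymmetric branch (where $\cB(h^{\rm ax}_{0,V},\alpha,V)=0$) yields \eqref{eq:formula_mu_m}. The only differences are cosmetic. The paper evaluates equivariance at the specific angles $\theta=\pi$, $\pi/2$, $\pi/12$ rather than invoking Schur's lemma. Also, your worry about second derivatives of $w$ is unnecessary, since the Hessian of the smooth finite-dimensional function $\Phi$ is automatically symmetric.
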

\begin{proof}
	We first of all recall that $\cG(\cdot,\alpha,V)$ is the gradient of the reduced functional $a\mapsto \scrJ(a+w(a,\alpha,V),\alpha,V)$, cf. Remark \ref{rem:LS_higher_dimensional_kernel}. Thus $D_a\cG(\cdot,\alpha,V)$ is its Hessian and therefore symmetric.

	Moreover, $\cG(\cdot,\alpha,V)$ is $\so$-equivariant. Differentiation at $0$ implies the same for $D_a\cG(0,\alpha,V)$. Since the action is orthogonal, it follows that
	\begin{align*}
		\langle D_a\cG(0,\alpha,V)a_1,a_6 \rangle_{\L^2(\Sph)} = \langle D_a\cG(0,\alpha,V)\cU_\theta a_1,\cU_\theta a_6 \rangle_{\L^2(\Sph)}
	\end{align*}
	for all $a_1\in K_1$, $a_6\in K_6$, $\theta\in\R/(2\pi\mathbb Z)$. Picking the class $\theta=\pi$, for which $\cU_\pi a_1=-a_1$ and $\cU_\pi a_6=a_6$, we deduce that the off-diagonal blocks must vanish.

	On the diagonal blocks we have that $\cU_{\frac{\pi}{2}}a_1=-\cU_{-\frac{\pi}{2}}a_1$ is $\L^2(\Sph)$-orthogonal to $a_1$. By symmetry and equivariance of $D_a\cG(0,\alpha,V)$ we deduce
	\begin{align*}
		\langle D_a\cG(0,\alpha,V)a_1,\cU_{\frac{\pi}{2}}a_1\rangle_{\L^2(\Sph)} & =-\langle \cU_{\frac{\pi}{2}}D_a\cG(0,\alpha,V)a_1,a_1\rangle_{\L^2(\Sph)} \\&=-\langle \cU_{\frac{\pi}{2}}a_1,D_a\cG(0,\alpha,V)a_1\rangle_{\L^2(\Sph)}.
	\end{align*}
	The argument applies on $K_6$ with $\theta =\frac{\pi}{12}$ instead of $\frac{\pi}{2}$. Hence the diagonal blocks are multiples of the identity.

	The precise characterisation of the eigenvalues then follows from
	\begin{align*}
		\mu_m(\alpha,V)=\langle D_a\cG(0,\alpha,V)a_m,a_m\rangle_{\L^2(\Sph)},\quad \norm{a_m}_{\L^2(\Sph)}=1
	\end{align*}
	and differentiation of \eqref{eq:endpoint_kernel_equation} and \eqref{eq:recalled_relation_F_B}, minding that $\cB(h^{\rm ax}_{0,V},\alpha,V)=0$.
\end{proof}

Our goal is to obtain the following expansion for the eigenvalues $\mu_m(\alpha,V)$.

\begin{lemma}\label{lem:expansion_mu} For each $C_0>0$ there is a constant $V_0>0$, such that in the regime
	\[
		0\le V\le V_0,\quad \abs{\alpha-\alpha_*}\le C_0V^2
	\]
	we have
	\begin{align}
		\label{eq:endpoint-mu1-expansion}
		\mu_1(\alpha,V)
		 & =
		\lambda_{21}(\alpha)-\frac{867}{980}V^2+O(V^3), \\
		\label{eq:endpoint-mu6-expansion}
		\mu_6(\alpha,V)
		 & =
		\lambda_{76}(\alpha)-\frac{4174947}{284240}V^2+O(V^3)
	\end{align}
	uniformly as $V\rightarrow 0$, i.e. the constants in the $O(V^3)$ terms may depend on $C_0$, but not on $(V,\alpha)$ in the stated regime. Here $\lambda_{21}(\alpha)$, $\lambda_{76}(\alpha)$ are the eigenvalues in \eqref{eq:lambda-lm} with $\tau=0$.
\end{lemma}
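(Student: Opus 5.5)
Starting from formula \eqref{eq:formula_mu_m}, I Taylor expand $\mu_m(\alpha,V)$ to second order in $V$ at $\alpha=\alpha_*$. The error terms are uniform because everything is smooth in $(\alpha,V)$ and $\abs{\alpha-\alpha_*}\le C_0V^2$.

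\emph{Step 1: reduction to a Rayleigh quotient at the sphere.} By Lemma~\ref{lem:endpoint_axisymmetric_branch_with_w}, $w(0,\alpha,V)=\xi^{\rm ax}(V)=V^2h_{02}+O(V^4)$. Hence $D\Theta(w(0,\alpha,V))=\id+O(V^2)$, with the $O(V^2)$ correction only adding multiples of $1$ and $Z$, and $M_{h^{\rm ax}_{0,V}}=1+2V^2h_{02}+O(V^4)$. Next I differentiate the range equation \eqref{eq:range_equation_endpoint_case} at $a=0$ in the direction $a_m$. Since $D_h\cB(0,\alpha,0)$ is diagonal and preserves $K_m$, the correction $D_aw(0,\alpha,V)a_m$ vanishes at $V=0$; in fact it is $O(V)$, driven by the $V$-linear cross terms. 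Writing $b_m=a_m+Vb_m^{(1)}+O(V^2)$, the range equation at order $V$ gives $b_m^{(1)}=-L_*^{-1}\idproj_{1,6}[D_h\cB]_{V}a_m$ on $K_{16}^\perp$. Expanding \eqref{eq:formula_mu_m} then yields
\[
\mu_m=\lambda_{\ell m}(\alpha)+V^2\Big(\avg{[D_h\cB(h^{\rm ax}_{0,V},\alpha_*,V)]_{V^2}a_m,a_m}+2\avg{h_{02}\lambda_{\ell m}a_m,a_m}-\avg{L_*^{-1}\idproj N_m,N_m}\Big)+O(V^3).
\]
Here $N_m:=[D_h\cB(0,\alpha_*,V)]_{V}a_m$. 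The $M$-weight term vanishes because $\lambda_{\ell m}(\alpha_*)=0$. Also $\alpha-\alpha_*=O(V^2)$ can be frozen at $\alpha_*$ in every $V^2$ coefficient, which is where the constant $C_0$ enters the $O(V^3)$ bound.

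\emph{Step 2: the first-order cross term $N_m$.} This is the computation of Lemma~\ref{lem:mixed-sV-residual-XZ}, generalised to $\eta=Y_\ell^m$ by the lemma giving \eqref{eq:general-mode-translation-datum}--\eqref{eq:general-mode-translation-tangential-gradient}. At $\tau=0$ only the exterior phase contributes. There are two sources: the kinetic cross term $-\tfrac32V\nablaS Z\cdot(\qout_{{\rm rot},\eta})_{\tan}$, and the rotational term built from $\nabla\uout_{V,\eta}$. Both are expressed through \eqref{eq:gradZ-recursion} as combinations of $Y^m_{\ell\pm1}$. The $L_*^{-1}$ term is then the sum of these coefficients squared divided by $\lambda_{\ell\pm1,m}(\alpha_*)$.

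\emph{Step 3 (main obstacle): the genuine second-order term} $[D_h\cB(h^{\rm ax}_{0,V},\alpha_*,V)]_{V^2}$ restricted to its diagonal matrix element. It splits into three pieces.
\begin{itemize}
\item[(i)] The curvature cross term with the oblate background, which is exactly Lemma~\ref{lem:curvature-cross-h02} and gives $\tfrac38(\ell(\ell+1)+4)I_{\ell m}$.
\item[(ii)] The rotational potential on the deformed background, for which Lemma~\ref{lem:rotational-shape-axisymmetric-background} with $f=h_{02}$ supplies the $O(V^2)$ correction, together with the displacement and normal-variation terms of Lemma~\ref{lem:eulerian-boundary-differentiation}.
\item[(iii)] The purely translational second-order terms at $V^2$. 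These are the quadratic-in-$V$ kinetic energy $\tfrac12\abs{\qout-Ve_3}^2$ linearised in $\eta$: the product of the first-order translational shape gradient with itself and with the base flow $-\tfrac32V\nablaS Z$, plus the second-order Neumann datum \eqref{eq:translation-normal-second-expansion}.
\end{itemize}
I would organise each piece as a diagonal matrix element $\avg{\cdot\,Y_\ell^m,Y_\ell^m}_{\CC}$, using \eqref{eq:Z-recursion}, \eqref{eq:gradZ-recursion} and the $a^m_{\ell,\pm}$ coefficients. Only $\ell$ and $\ell\pm2$ couplings survive, and the diagonal part reduces to $I_{\ell m}$-type integrals. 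This bookkeeping, especially piece (iii), where second-order shape derivatives of the exterior translational potential appear, is the laborious and error-prone part.

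Finally, I evaluate the result at $(\ell,m)=(2,1)$ and $(7,6)$ with $\alpha_*^2=12$, which should produce $-\tfrac{867}{980}$ and $-\tfrac{4174947}{284240}$. As a consistency check I would use that the (2,1) value must reproduce the $V^2$ correction to $\alpha$ in \eqref{eq:endpoint-alpha-expansion} via $\lambda_{21}'(\alpha_*)=-8/\alpha_*$: indeed $\tfrac{867}{980}\cdot\tfrac{\alpha_*}{8}=\tfrac{2601}{1960}\cdot\tfrac{1}{2\sqrt3}$.
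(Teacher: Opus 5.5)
Your framework is the paper's. The reduction of $\mu_m$ to $\avg{(L_0^\alpha+VL_1^\alpha+V^2L_2^\alpha)(a_m+D_aw\,a_m),(1+2V^2h_{02})a_m}$, the formula $D_aw(0,\alpha,V)=-V(L_0^\alpha)^{-1}\idproj_{1,6}L_1^\alpha+O(V^2)$, the resulting Schur term, and freezing $\alpha$ at $\alpha_*$ in everything except the $L_0^\alpha$ term are exactly Lemmas~\ref{lem:first_simplification_of_eigenvalues}, \ref{lem:expansion_of_D_aw}, \ref{lem:conclusion_eigenvalue_expansion} and Corollary~\ref{cor:eigenvalue_formula_general_lm}. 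Two small points. Writing the Schur term as $\avg{L_*^{-1}\idproj N_m,N_m}$ uses symmetry of $L_1^{\alpha_*}$ on the slice; this holds because it is the $V$-derivative of the Hessian of $\scrJ$ along the branch, equivalently $b^{(m)}_{\ell n}=\overline{b^{(m)}_{n\ell}}$. You should also state that the order-$V$ term $V\avg{L_1^\alpha a_m,a_m}$ vanishes because $L_1^\alpha$ shifts the degree by $\pm1$. The real problem is that the lemma's entire content is the two constants. You do not compute them, and the ingredient you call the main obstacle is set up incorrectly.

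Your piece (iii) is misidentified. Since $h^{\rm ax}_{0,V}=V^2h_{02}+O(V^4)$, the $V^2$-coefficient of $D_h\cB(h^{\rm ax}_{0,V},\alpha,V)$ is $L_2^\alpha=\frac12\partial_V^2D_h\cB(0,\alpha,0)+D_h^2\cB(0,\alpha,0)[h_{02},\cdot\,]$. Equivalently, with $h=\eps h_{02}+t\eta$ and $\eps=V^2$, you only need $[\,\cdot\,]_{V^2t}$ at $\eps=0$ plus $[\,\cdot\,]_{\eps t}$ at $V=0$.

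\begin{itemize}
\item Second-order shape derivatives are therefore taken only at $V=0$, where there is no translational potential. They produce your (i) and (ii). In (ii), the kinetic and displacement contributions drop because the rotational potential of an axisymmetric graph vanishes.
\item The pure-speed part needs only the first-order calculus of Lemma~\ref{lem:first-order-neumann-expansion}. The rotational term is affine in $V$ for a fixed graph, so $[\scrR]_{V^2t}=0$.
\item The kinetic term reduces to pairing the base flow $-\frac32\nablaS Z$ with the tangential part $\frac32Y_\ell^m\nablaS Z+\Upsilon_{\ell m}$ of the $tV$-coefficient of the exterior gradient, where $\Upsilon_{\ell m}:=(V^{-1}\nabla\uout_{V,Y_\ell^m})_{\tan}|_{r=1}$.
\end{itemize}

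The terms you list do not contribute at order $V^2t$. The square of the first-order translational shape gradient is $O(t^2V^2)$. The expansion \eqref{eq:translation-normal-second-expansion} enters only at order $t^2V$ or $\eps tV=V^3t$. Including these, or any second-order shape derivative of the translational potential, would corrupt $d^T_{\ell m}$.

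Done this way you obtain \eqref{eq:endpoint-dlm-formula}, hence $d_{21}=-\frac{129}{280}$ and $d_{76}=-\frac{75447}{9520}$. Adding the Schur contributions $\frac{81}{40}-\frac{120}{49}$ and $\frac{13689}{1330}-\frac{25515}{1496}$ gives the stated constants.

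Finally, your closing consistency check is circular. The $V^2$-coefficient of $\alpha$ in \eqref{eq:endpoint-alpha-expansion} is itself derived from \eqref{eq:endpoint-mu1-expansion} via Lemma~\ref{lem:endpoint-crossings-simple}.
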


The restriction of $\alpha$ can be motivated from the non-resonant case, where \eqref{eq:alpha-expansion-dimensionless-theorem} holds true.
The proof of Lemma \ref{lem:expansion_mu} is carried out in the following four subsections. Section \ref{sec:expansion_of_the_chart} essentially allows us to ignore the derivatives of the chart $\Theta$, in Section \ref{sec:expansion_bernoulli_linearisation} we treat the derivative of the Bernoulli map $\cB$ at the axisymmetric branch, and Section \ref{sec:schur_complements} deals with $D_aw(0,\alpha,V)$. We conclude in Section \ref{sec:proof_lemma_expansion}.

\subsection{Expansion of the chart}\label{sec:expansion_of_the_chart}
The key observation concerning our chart $\Theta$ is the following.

\begin{lemma}\label{lem:chart_axisymmetric_xi_vs_m}
	Let $\xi \in \H^{k+2}_{\rm lin}$ be sufficiently small, axisymmetric and even in $Z$. Then for every spherical harmonic $Y_{\ell}^m$ with azimuthal weight $m\neq 0$ there holds
	\begin{align*}
		D\Theta(\xi)[Y_{\ell}^m]=Y_{\ell}^m.
	\end{align*}
\end{lemma}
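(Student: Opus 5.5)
We prove the claim by differentiating the defining constraint identities of the chart and showing that the corrections $Dc(\xi)[Y_\ell^m]$ and $Dd(\xi)[Y_\ell^m]$ vanish. Recall that $\Theta(\xi)=\xi+c(\xi)+d(\xi)Z$, so that
\[
	D\Theta(\xi)[\eta]=\eta+Dc(\xi)[\eta]+Dd(\xi)[\eta]Z .
\]
Set $h=\Theta(\xi)$. Differentiating $\cV(\Theta(\xi))=0$ and $\cC_3(\Theta(\xi))=0$ in a direction $\eta$, as in the proof of Lemma~\ref{lem:nonlinear-branch-smooth-main-proof}, and using \eqref{eq:radial-volume-weight}--\eqref{eq:radial-C3-variation}, gives the linear system
\[
	\mathsf A_h
	\binom{Dc(\xi)[\eta]}{Dd(\xi)[\eta]}
	=
	-\binom{\avg{M_h,\eta}_{\L^2}}{\avg{N_h,\eta}_{\L^2}},
	\qquad M_h=(1+h)^2,\quad N_h=(1+h)^3Z .
\]
The matrix $\mathsf A_h$ is invertible for small $\xi$, so it suffices to show that the right-hand side vanishes for $\eta=Y_\ell^m$ with $m\neq0$. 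For a complex $Y_\ell^m$ the identity is meant after complex-linear extension; alternatively, treat the real and imaginary parts separately, both of which have azimuthal weight $\pm m\neq 0$.

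First we check that $h$ is axisymmetric, which is the only genuine point. The constraints $\cV$ and $\cC_3$ are invariant under the rotations $\cU_\theta$. By uniqueness of the implicit-function solution $(c,d)$ in \eqref{eq:constraint-correction-equations}, we get $c(\cU_\theta\xi)=c(\xi)$ and $d(\cU_\theta\xi)=d(\xi)$. This is the same equivariance argument already used for $\Theta$ in the proof of Lemma~\ref{lem:nonlinear-branch-smooth-main-proof}. Hence, for axisymmetric $\xi$, the function $h=\xi+c(\xi)+d(\xi)Z$ is axisymmetric, and so are $M_h$ and $N_h$. Evenness in $Z$ is not needed for this step; it only additionally gives $d(\xi)=0$ by reflection uniqueness, which we need not use.

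It then remains to observe that an axisymmetric function $f\in\L^2(\Sph)$ is $\L^2$-orthogonal to every $Y_\ell^m$ with $m\neq0$. Indeed, writing the integral in spherical coordinates, $\int_{\Sph} f\,Y_\ell^m\dd S$ contains the factor $\int_0^{2\pi}e^{im\varphi}\dd\varphi=0$. Equivalently, $\partial_\varphi$ is skew-adjoint, so
\[
	im\avg{f,Y_\ell^m}=\avg{f,\partial_\varphi Y_\ell^m}=-\avg{\partial_\varphi f,Y_\ell^m}=0 .
\]
Hence $\avg{M_h,Y_\ell^m}=\avg{N_h,Y_\ell^m}=0$, so $Dc(\xi)[Y_\ell^m]=Dd(\xi)[Y_\ell^m]=0$, and the claim $D\Theta(\xi)[Y_\ell^m]=Y_\ell^m$ follows.

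We also need $Y_\ell^m\in\H^{k+2}_{\rm lin}$ for this to make sense. This holds automatically for $m\ne0$, since such a harmonic is orthogonal to $1$ and to $Z$.

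The main obstacle, mild as it is, is the axisymmetry of the corrections $c(\xi)$ and $d(\xi)$. This rests on uniqueness in the implicit-function theorem together with the invariance of the neighbourhood on which $\Theta$ is defined.
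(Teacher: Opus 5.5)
Your proof is correct and is essentially the paper's argument: differentiate $\cV(\Theta(\xi))=\cC_3(\Theta(\xi))=0$ and kill the terms $\avg{M_h,\eta}_{\L^2}$ and $\avg{N_h,\eta}_{\L^2}$ by axisymmetry. The one difference is how the $2\times2$ system is solved. You invert $\mathsf A_h$, whereas the paper uses evenness in $Z$ to make $\int_{\Sph}(1+h)^2Z\dd S$ vanish, so your route shows the parity hypothesis is not needed. The step you call the main obstacle is immediate: $c(\xi)$ and $d(\xi)$ are real numbers and $1$, $Z$ are axisymmetric, so $h=\xi+c(\xi)+d(\xi)Z$ is axisymmetric whenever $\xi$ is, and no implicit-function uniqueness argument is required.
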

\begin{proof}
	We recall that
	$
		\Theta(\xi)=\xi+c(\xi)+d(\xi)Z
	$
	is defined by the volume and vertical centring constraints $\cV(\Theta(\xi))=0$, $\cC_3(\Theta(\xi))=0$, and that it is $\so$-equivariant and commutes with reflection in $Z$. Differentiating the volume constraint in direction $\eta=Y_\ell^m$, we find
	\begin{align*}
		0 & =\int_{\Sph}(1+\Theta(\xi))^2\eta\dd S+Dc(\xi)[\eta]\int_{\Sph}(1+\Theta(\xi))^2\dd S+ Dd(\xi)[\eta]\int_{\Sph}(1+\Theta(\xi))^2Z\dd S.
	\end{align*}
	The first integral vanishes because $\Theta(\xi)$ is axisymmetric and $m\neq0$, while the last integral vanishes by parity in $Z$. Thus $Dc(\xi)[\eta]=0$. Differentiating the vertical-centring constraint similarly gives $Dd(\xi)[\eta]=0$.
\end{proof}

\begin{lemma}\label{lem:first_simplification_of_eigenvalues} The eigenvalues $\mu_m(\alpha,V)$, $m=1,6$ defined in \eqref{eq:formula_mu_m} satisfy
	\begin{align*}
		\mu_m(\alpha,V)=\int_{\Sph}D_h\cB(h^{\rm ax}_{0,V},\alpha,V)\left[a_m+D_aw(0,\alpha,V)a_m\right](1+2V^2h_{02})a_m\dd S+O(V^3)
	\end{align*}
	for any $a_m \in K_m$, $\norm{a_m}_{\L^2(\Sph)}=1$,
	uniformly in $\alpha$ in a neighbourhood of $\alpha_*$, as $V\rightarrow 0$.
\end{lemma}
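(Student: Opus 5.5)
The plan is to start from formula \eqref{eq:formula_mu_m} and remove the two occurrences of $D\Theta(w(0,\alpha,V))$, then expand the weight $M_{h^{\rm ax}_{0,V}}$. By Lemma~\ref{lem:endpoint_axisymmetric_branch_with_w}, $w(0,\alpha,V)=\xi^{\rm ax}(V)$, which is axisymmetric and, by \eqref{eq:axisymmetric-coordinate-symmetries}, even in $Z$. Lemma~\ref{lem:chart_axisymmetric_xi_vs_m} therefore gives $D\Theta(\xi^{\rm ax}(V))a_m=a_m$ for $a_m\in K_m$, since $a_m$ is a combination of spherical harmonics of azimuthal weight $m\neq0$.

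For $b_m=a_m+D_aw(0,\alpha,V)a_m$ the same conclusion needs an additional observation, because $D_aw(0,\alpha,V)a_m$ is not a single harmonic. Differentiating the equivariance \eqref{eq:w-equivariant} at $a=0$, and using $w(0,\alpha,V)=\xi^{\rm ax}(V)$, shows that $D_aw(0,\alpha,V)$ commutes with $\cU_\theta$. Hence $D_aw(0,\alpha,V)a_m$ lies in the closed weight-$m$ isotypic subspace, i.e.\ its spherical-harmonic expansion contains only azimuthal numbers $\pm m$. Lemma~\ref{lem:chart_axisymmetric_xi_vs_m} holds for each such harmonic, and $D\Theta(\xi)$ is bounded on $\H^{k+2}$. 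Expanding in harmonics and passing to the limit therefore gives $D\Theta(\xi^{\rm ax}(V))b_m=b_m$. Alternatively, the proof of Lemma~\ref{lem:chart_axisymmetric_xi_vs_m} can be repeated verbatim: for any $\eta$ of weight $m\neq0$, the integrals of $(1+\Theta(\xi))^2\eta$ and $(1+\Theta(\xi))^3Z\eta$ vanish by axisymmetry.

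For the weight, \eqref{eq:axisymmetric-weight-expansion} gives $M_{h^{\rm ax}_{0,V}}=1+2V^2h_{02}+O_{\H^{k+2}}(V^4)$. Inserting this, the error term is bounded by
\[
\norm{D_h\cB(h^{\rm ax}_{0,V},\alpha,V)b_m}_{\L^2}\,O(V^4)\,\norm{a_m}_{\L^\infty}.
\]
This is $O(V^4)\subset O(V^3)$, provided $D_h\cB$ and $D_aw$ are bounded uniformly for $\alpha$ near $\alpha_*$ and small $V$. That uniform bound follows from smoothness of $\cB$ (Proposition~\ref{prop:smooth-bernoulli-map}) and of $w$ from the implicit-function theorem, both on a fixed compact neighbourhood. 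In fact the error is $O(V^4)$, which is better than the stated $O(V^3)$.

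The main obstacle is justifying $D\Theta\,b_m=b_m$, specifically the weight-$m$ isotypic structure of $D_aw(0,\alpha,V)a_m$. I would derive it from the equivariance of $w$ together with the fact that the base point $w(0,\alpha,V)$ is $\so$-fixed. The remaining steps are routine bookkeeping.
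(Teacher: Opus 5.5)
Your proof is correct, but it handles the correction term $D_aw(0,\alpha,V)a_m$ inside $b_m$ differently from the paper.

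The paper uses the identity $D\Theta(w(0,\alpha,V))a_m=a_m$ only for the kernel element itself. For the correction term it combines two estimates:
\begin{itemize}
\item the crude bound $D\Theta(w(0,\alpha,V))-\id=O(V^2)$, which follows from $w(0,\alpha,V)=V^2h_{02}+O(V^4)$ and $D\Theta(0)=\id$;
\item the bound $D_aw(0,\alpha,V)a_m=O(V)$ from Lemma~\ref{lem:expansion_of_D_aw}, which the paper cites by forward reference.
\end{itemize}
The product of these two is exactly where the $O(V^3)$ remainder in the statement comes from.

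You instead use the $\so$-equivariance of $w$, which remains valid for the four-dimensional kernel by Remark~\ref{rem:LS_higher_dimensional_kernel}. This places $D_aw(0,\alpha,V)a_m$ in the azimuthal-weight-$\pm m$ subspace. On that subspace $D\Theta$ at the axisymmetric base point acts exactly as the identity, so $D\Theta(w(0,\alpha,V))b_m=b_m$ holds exactly.

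What your route gains:
\begin{itemize}
\item a sharper $O(V^4)$ remainder, coming only from the expansion of $M_{h^{\rm ax}_{0,V}}$;
\item independence from the Schur-complement expansion: you need only uniform boundedness of $D_aw$, which follows from the implicit-function theorem.
\end{itemize}

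What it costs: the short Fourier-in-$\varphi$ isotypic argument. The paper's route needs no information about the azimuthal structure of $D_aw(0,\alpha,V)a_m$.

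Since only $O(V^3)$ is used downstream, either argument suffices.
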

\begin{proof}
	{By \eqref{eq:axisymmetric-branch-expansion} we have}
	\begin{align*}
		{M_{h^{\rm ax}_{0,V}}=(1+h^{\rm ax}_{0,V})^2
			=1+2V^2h_{02}+O(V^4)}\end{align*}
	independently of $\alpha$.
	Moreover, since \(m\neq0\), Lemmas
	\ref{lem:endpoint_axisymmetric_branch_with_w} and
	\ref{lem:chart_axisymmetric_xi_vs_m} give
	\[
		D\Theta(w(0,\alpha,V))[a_m]=a_m.
	\]
	Furthermore,
	\[
		w(0,\alpha,V)=V^2h_{02}+O(V^4),
	\]
	so the smoothness of \(\Theta\) and \(D\Theta(0)=\id\) imply
	\[
		D\Theta(w(0,\alpha,V))-\id=O(V^2).
	\]
	Lemma~\ref{lem:expansion_of_D_aw} gives
	\[
		D_aw(0,\alpha,V)a_m=O(V)
	\]
	uniformly in \(\alpha\). Consequently,
	\[
		D\Theta(w(0,\alpha,V))
		[D_aw(0,\alpha,V)a_m]
		=
		D_aw(0,\alpha,V)a_m+O(V^3).
	\]
	Therefore
	\[
		D\Theta(w(0,\alpha,V))[b_m]
		=
		a_m+D_aw(0,\alpha,V)a_m+O(V^3).
	\]
	Substitution in \eqref{eq:formula_mu_m}, together with
	\[
		M_{h^{\rm ax}_{0,V}}
		=
		1+2V^2h_{02}+O(V^4),
	\]
	proves the assertion.
\end{proof}

\subsection{Expansion of the Bernoulli linearisation}\label{sec:expansion_bernoulli_linearisation}

Let
\[
	L^{\cB}_{V,\alpha}:=D_h\cB(h^{\rm ax}_{0,V},\alpha,V)
	:\H^{k+2}(\Sph)\longrightarrow\H^k(\Sph).
\]
Define
\begin{align}
	\label{eq:endpoint-abstract-L0}
	L_0^\alpha\eta
	 & :=D_h\cB(0,\alpha,0)[\eta],                                           \\
	L_1^\alpha\eta
	 & :=\partial_VD_h\cB(0,\alpha,0)[\eta], \label{eq:endpoint-abstract-L1} \\
	L_2^\alpha\eta
	 & :=\frac12\partial_V^2D_h\cB(0,\alpha,0)[\eta]
	+D_h^2\cB(0,\alpha,0)[h_{02},\eta]. \label{eq:endpoint-abstract-L2}
\end{align}

\begin{lemma}
	\label{lem:endpoint-L-expansion}
	There holds
	\begin{equation}
		\label{eq:endpoint-L-expansion-prelemma}
		L^{\cB}_{V,\alpha}
		=
		L_0^\alpha+VL_1^\alpha+V^2L_2^\alpha
		+O_{\cL(\H^{k+2},\H^k)}(V^3),
	\end{equation}
	uniform in $\alpha$
	in a neighbourhood of $\alpha_*$.
	In addition,
	for every $C_0>0$, uniformly in the regime
	$\abs{\alpha-\alpha_*}\le C_0V^2$, we have
	\begin{equation}
		\label{eq:endpoint-L-expansion-frozen}
		L^{\cB}_{V,\alpha}
		=
		L_0^\alpha+VL_1^{\alpha_*}+V^2L_2^{\alpha_*}
		+O_{\cL(\H^{k+2},\H^k)}(V^3).
	\end{equation}
\end{lemma}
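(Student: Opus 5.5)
The plan is to obtain both expansions by Taylor expanding the smooth map $(h,\alpha,V)\mapsto D_h\cB(h,\alpha,V)\in\cL(\H^{k+2},\H^k)$ along the curve $V\mapsto(h^{\rm ax}_{0,V},\alpha,V)$. By Proposition~\ref{prop:smooth-bernoulli-map}, $\cB$ is smooth from $U^{k+2}_\delta\times\R^2$ into $\H^k(\Sph)$. Hence $D_h\cB$ is smooth into $\cL(\H^{k+2},\H^k)$, and its derivatives are bounded uniformly on a fixed compact neighbourhood of $(0,\alpha_*,0)$. The composition
\[
	V\longmapsto L^{\cB}_{V,\alpha}=D_h\cB(h^{\rm ax}_{0,V},\alpha,V)
\]
is therefore smooth in $(V,\alpha)$. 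Its third-order Taylor remainder in $V$ at $V=0$ is $O(V^3)$ in operator norm, uniformly for $\alpha$ in that neighbourhood.

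To identify the coefficients, I insert $h^{\rm ax}_{0,V}=V^2h_{02}+O_{\H^{k+2}}(V^4)$ from \eqref{eq:axisymmetric-branch-expansion}, which is independent of $\alpha$.

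\textbf{Order $V^0$.} The constant term is $L_0^\alpha$.

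\textbf{Order $V^1$.} Since $\partial_Vh^{\rm ax}_{0,V}|_{V=0}=0$ by \eqref{eq:axisymmetric-weight-expansion}, the chain rule leaves only $\partial_VD_h\cB(0,\alpha,0)=L_1^\alpha$.

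\textbf{Order $V^2$.} Three types of term can appear: $\tfrac12\partial_V^2D_h\cB(0,\alpha,0)$, then $D_h^2\cB(0,\alpha,0)[h_{02},\cdot\,]$ from the $V^2h_{02}$ part of the base point, and mixed terms of the form $\partial_VD_h^2\cB\,[\partial_Vh^{\rm ax},\cdot\,]$. The mixed terms vanish because $\partial_Vh^{\rm ax}=0$ at $V=0$. The $O(V^4)$ remainder of $h^{\rm ax}_{0,V}$ enters only at order $V^4$, with a bound uniform in $\alpha$. This gives exactly $L_2^\alpha$ as defined in \eqref{eq:endpoint-abstract-L2}, and proves \eqref{eq:endpoint-L-expansion-prelemma}.

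For \eqref{eq:endpoint-L-expansion-frozen}, I freeze $\alpha$ in the two higher coefficients. The maps $\alpha\mapsto L_1^\alpha$ and $\alpha\mapsto L_2^\alpha$ are smooth, hence locally Lipschitz, into $\cL(\H^{k+2},\H^k)$, because they are derivatives of a smooth map. In the regime $\abs{\alpha-\alpha_*}\le C_0V^2$ this gives
\[
	V\norm{L_1^\alpha-L_1^{\alpha_*}}\le CC_0V^3,
	\qquad
	V^2\norm{L_2^\alpha-L_2^{\alpha_*}}\le CC_0V^4.
\]
I take $V_0$ small enough that this regime lies inside the neighbourhood where the uniform bounds hold.

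The only delicate point is making sure that every derivative bound really holds in operator norm and uniformly. This follows from the smoothness of $\cB$ as a Banach-space map. The paper's derivative notation then transfers directly: mixed partials of a smooth map are continuous multilinear maps, and the second-order Taylor formula with integral remainder holds in $\cL(\H^{k+2},\H^k)$. I do not expect a genuine obstacle here, since everything reduces to the chain rule together with $\partial_Vh^{\rm ax}|_{V=0}=0$.
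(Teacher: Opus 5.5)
Your proposal is correct and follows the paper's proof: Taylor expand the smooth map $(V,\alpha)\mapsto D_h\cB(h^{\rm ax}_{0,V},\alpha,V)$, identify the coefficients via $\partial_Vh^{\rm ax}_{0,V}|_{V=0}=0$ and $h^{\rm ax}_{0,V}=V^2h_{02}+O(V^4)$, and freeze $\alpha$ in $L_1^\alpha,L_2^\alpha$ using their smooth dependence on $\alpha$ together with $\abs{\alpha-\alpha_*}\le C_0V^2$. One small wording slip: a neighbourhood in $U^{k+2}_\delta\times\R^2$ cannot be compact, so the uniform bounds should instead come from continuity of the derivatives on a sufficiently small ball, or on a compact rectangle in the finite-dimensional parameters $(V,\alpha)$.
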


\begin{proof}
	By Proposition~\ref{prop:smooth-bernoulli-map}, $\cB$ is $\C^\infty$ as a map
	$U^{k+2}_\delta\times\R^2\to\H^k(\Sph)$. Since
	$V\mapsto h^{\rm ax}_{0,V}$ is smooth and independent of $\alpha$, the operator
	\[
		L^{\cB}_{V,\alpha}=D_h\cB(h^{\rm ax}_{0,V},\alpha,V)
		:\H^{k+2}(\Sph)\longrightarrow \H^k(\Sph)
	\]
	depends
	smoothly on $V$ and $\alpha$. Taylor expansion in $V$ at the round sphere gives \eqref{eq:endpoint-L-expansion-prelemma}.
	Because the axisymmetric profile only starts at $V^2$, the three coefficients have the form \eqref{eq:endpoint-abstract-L0}--\eqref{eq:endpoint-abstract-L2}.
	The frozen expansion \eqref{eq:endpoint-L-expansion-frozen} follows from the smooth dependence of $L_1^\alpha$, $L_2^\alpha$ on $\alpha$, and $\abs{\alpha-\alpha_*}\le C_0V^2$.
\end{proof}

For the
exact computation of the Taylor coefficients, we use the notation and coefficient-extraction
convention of Section~\ref{sec:shape-calculus}. Moreover, it is more convenient to pass to complex spherical harmonics. Thus $Y_\ell^m$ denotes the
complex $\L^2(\Sph)$-normalised spherical harmonic satisfying
$\partial_\varphi Y_\ell^m=imY_\ell^m$.
We define
\[
	\avg{f,g}_{\CC}=\int_{\Sph}\overline f\,g\,\dd S.
\]
The coefficients
$a_{\ell,\pm}^m$ are those in \eqref{eq:a-coefficients}, and the recursions
\eqref{eq:Z-recursion}--\eqref{eq:gradZ-recursion} are used throughout.

If $f=\sum_{n\ge0} f_n$ is its spherical-harmonic decomposition by degree,
we write
\[
	\frP_{\ell} f:=f_\ell
\]
for the degree-$\ell$ projection.

The
order-zero term $L_0^\alpha$ is the endpoint round-sphere
linearisation, already computed in Lemma~\ref{lem:linearisation-round-sphere}.
We record the endpoint form for later use.

\begin{lemma}
	\label{lem:endpoint-L0}
	Let $\tau=0$. For every $\alpha>0$ and every spherical harmonic
	$Y_\ell^m$, $\ell\ge1$, one has
	\begin{equation}
		L^\alpha_0Y_\ell^m
		=
		\lambda_{\ell m}(\alpha)Y_\ell^m,
	\end{equation}
	where
	\begin{equation}
		\lambda_{\ell m}(\alpha)
		=
		\ell(\ell+1)-2
		-
		\alpha^2\frac{m^2}{\ell+1}.
	\end{equation}
\end{lemma}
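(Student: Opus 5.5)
This is the specialisation of Lemma~\ref{lem:linearisation-round-sphere} to $\tau=0$, so the plan is to reread that proof with the interior phase switched off, not to redo any shape calculus. By definition \eqref{eq:endpoint-abstract-L0}, $L_0^\alpha=D_h\cB(0,\alpha,0)$, which is exactly the operator diagonalised in \eqref{eq:lambda-lm}. For $\tau=0$ the interior Neumann problem and all interior-potential terms are omitted, by the convention stated after \eqref{eq:main-problem}.

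The key steps, in order, are as follows.

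\textbf{Step 1.} Taking $h=t\,Y_\ell^m$, I would note that the unprojected Bernoulli density \eqref{eq:lrs-unprojected-density} reduces to
\[
-F_{\alpha,0}(\Psi_{t\eta},\qout_{t,0})+H_{t\eta}\circ\Psi_{t\eta}.
\]

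\textbf{Step 2.} The exterior first variation \eqref{eq:lrs-F-linear-in-out} gives the contribution $\frac{\alpha^2}{\ell+1}\partial_\varphi^2\eta=-\frac{\alpha^2m^2}{\ell+1}\eta$. The curvature term \eqref{eq:lrs-curvature-linearisation} contributes $(\ell(\ell+1)-2)\eta$.

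\textbf{Step 3.} The projection $\Pi_0$ does not change a nonconstant harmonic, so these two contributions give the formula. Equivalently, one can simply set $\tau=0$ in \eqref{eq:lambda-lm-formula}; the interior term $\tau\alpha^2m^2/\ell$ then disappears.

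The statement is phrased for complex harmonics $Y_\ell^m$, while Lemma~\ref{lem:linearisation-round-sphere} is stated for real ones. I would bridge this by writing $Y_\ell^m$ as a complex combination of the real harmonics $\Re Y_\ell^m$ and $\Im Y_\ell^m$, both of degree $\ell$ and azimuthal number $|m|$, and extending the real operator $L_0^\alpha$ complex-linearly. Since $\lambda_{\ell m}$ depends only on $m^2$, both real parts carry the same eigenvalue, so the complex harmonic does too. For negative $m$, the relation $\partial_\varphi^2 Y_\ell^m=-m^2Y_\ell^m$ covers the case directly.

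I expect no real obstacle. The only point needing care is that the $\ell=1$, $m=0$ case also fits the formula, with $\lambda_{10}=0$, and it does.
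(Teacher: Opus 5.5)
Your proposal is correct and follows the paper's route. The paper gives no separate argument and simply records this lemma as the $\tau=0$ specialisation of Lemma~\ref{lem:linearisation-round-sphere}, with $L_0^\alpha=D_h\cB(0,\alpha,0)$. Your passage from real to complex harmonics is a step the paper leaves implicit: $\Re Y_\ell^m$ and $\Im Y_\ell^m$ share the same eigenvalue because $\lambda_{\ell m}$ depends only on $m^2$, so complex-linear extension gives the eigenvalue for $Y_\ell^m$.
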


Next we turn to the first-order term $L_1^{\alpha_*}$.

\begin{lemma}
	\label{lem:endpoint-L1}
	We have
	\[
		L_1^{\alpha_*}Y_\ell^m
		=
		b_{\ell+1,\ell}^{(m)}Y_{\ell+1}^m
		+
		b_{\ell-1,\ell}^{(m)}Y_{\ell-1}^m
	\]
	with
	\begin{equation}
		\label{eq:endpoint-b-coefficients}
		b_{\ell+1,\ell}^{(m)}
		=
		-\frac{3i\alpha_*m}{2}
		\frac{2\ell+1}{\ell+1}a_{\ell,+}^m,
		\qquad
		b_{\ell-1,\ell}^{(m)}
		=
		\frac{3i\alpha_*m}{2}
		\frac{2\ell-1}{\ell}a_{\ell,-}^m,
	\end{equation}
	with the lower term omitted if $a_{\ell,-}^m=0$.
	In particular $L_1^{\alpha_*}$ has no diagonal
	part in the degree $\ell$.
\end{lemma}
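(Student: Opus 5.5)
The plan is to compute $L_1^{\alpha_*}Y_\ell^m=\partial_VD_h\cB(0,\alpha_*,0)[Y_\ell^m]$ as the mixed $tV$ coefficient $[\cB(tY_\ell^m,\alpha_*,V)]_{tV}$. I would follow exactly the scheme of Lemma~\ref{lem:mixed-sV-residual-XZ}, now for a general complex harmonic $\eta=Y_\ell^m$ with $m\neq0$ and with complex-linear extension of the real shape derivatives.

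\textbf{Reductions.} First I would remove the terms that cannot contribute.
\begin{itemize}
\item At $\tau=0$ only the exterior term $-F_{\alpha,V}(\Psi_{t\eta},\qout_{t,V})$ and the curvature enter.
\item The curvature carries no factor $V$, so it gives nothing at order $tV$.
\item By \eqref{eq:round-translational-F}, the unprojected density at $h=0$ has no term linear in $V$. Hence differentiating $\Pi_h$ produces no $tV$ term, and the projection acts as $\Pi_0$.
\item $\Pi_0$ is harmless on the result, since the degrees $\ell\pm1$ that appear are at least $1$ for $m\neq0$.
\end{itemize}
So it suffices to show
\[
[\cB]_{tV}=-\bigl[F_{\alpha_*,V}(\Psi_{t\eta},\qout_{t,V})\bigr]_{tV}.
\]

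\textbf{Expansion.} Next I insert the expansion \eqref{eq:boundary-gradient-expansion-outer} into
\[
F=\tfrac12\abs{q-Ve_3}^2+\alpha(1+t\eta)\,\omega\cdot(e_3\times q)
\]
and sort the contributions.
\begin{itemize}
\item The kinetic part contributes only the cross term $(\qout_V-Ve_3)\cdot t\qout_{{\rm rot},\eta}$. The term with $V\eta(-3Z\omega+\tfrac32\nablaS Z)$ and the term $\nabla\uout_{V,\eta}$ are already linear in $V$, so crossing them with $\qout_V-Ve_3=-\tfrac{3V}2\nablaS Z$ gives order $V^2$.
\item For the rotational part, two terms vanish:
 \begin{itemize}
 \item the displacement term $\alpha t\eta\,\omega\cdot(e_3\times\qout_V)$, because $\qout_V$ is axisymmetric;
 \item the $V\eta(\dots)$ term, because $(e_3\times\omega)\cdot\nablaS Z=\partial_\varphi Z=0$.
 \end{itemize}
 The only remaining rotational term is $-\alpha(e_3\times\omega)\cdot\nabla\uout_{V,\eta}=-\alpha\partial_\varphi$ applied to the tangential potential.
\end{itemize}

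\textbf{Computing the two surviving terms.}

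For the kinetic cross term, the rotational datum is $g=-\alpha\partial_\varphi\eta=-i\alpha mY_\ell^m$, so \eqref{eq:NtG-pure-mode} gives $(\qout_{{\rm rot},\eta})_{\tan}=-\frac1{\ell+1}\nablaS g$. The cross term is then $\frac{3V}{2(\ell+1)}\nablaS Z\cdot\nablaS g$. The recursion \eqref{eq:gradZ-recursion} turns this into
\[
-\frac{3i\alpha mV}{2(\ell+1)}\bigl(-\ell a_{\ell,+}^mY_{\ell+1}^m+(\ell+1)a_{\ell,-}^mY_{\ell-1}^m\bigr).
\]

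For the rotational term, I apply $-\alpha\partial_\varphi=-i\alpha m$ to the tangential potential read off from \eqref{eq:general-mode-translation-tangential-gradient}. This gives
\[
-\frac{3i\alpha mV}{2}\Bigl(-a_{\ell,+}^mY_{\ell+1}^m+\frac{\ell-1}{\ell}a_{\ell,-}^mY_{\ell-1}^m\Bigr).
\]

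Adding the two terms and changing the sign (the density is minus the exterior $F$), the coefficient of $Y_{\ell+1}^m$ is $-\frac{3i\alpha m}{2}\bigl(\frac{\ell}{\ell+1}+1\bigr)a_{\ell,+}^m$. The coefficient of $Y_{\ell-1}^m$ is $\frac{3i\alpha m}{2}\bigl(1+\frac{\ell-1}{\ell}\bigr)a_{\ell,-}^m$. These are exactly \eqref{eq:endpoint-b-coefficients}.

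The lower term is absent whenever $a_{\ell,-}^m=0$, by the omission convention. Only the degrees $\ell\pm1$ occur, so $L_1^{\alpha_*}$ has no diagonal part.

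\textbf{Main obstacle.} I expect the main difficulty to be the bookkeeping in the second step: making sure no further $tV$ term hides in the normal components or in the displacement term. The justification is that $e_3\times\omega$ is tangential and that every term carrying $\eta$ without a rotational partner is already of order $V^2$.
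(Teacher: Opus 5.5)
Your proposal is correct and follows the paper's proof essentially step by step. You reduce to the exterior $tV$ coefficient, where the projection only subtracts an average because the round-sphere density has no term linear in $V$. You then evaluate the kinetic cross term $(-\tfrac32\nablaS Z)\cdot\qout_{{\rm rot},\eta}$ via \eqref{eq:gradZ-recursion}, and the single surviving rotational term from $\nabla\uout_{V,\eta}$ via \eqref{eq:general-mode-translation-tangential-gradient}; your coefficients match the paper's. The only omission is $m=0$, which the paper dispatches in one line: both surviving terms carry a factor $\partial_\varphi$ and vanish, consistent with $b^{(0)}_{\ell\pm1,\ell}=0$.
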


\begin{proof}
	By \eqref{eq:endpoint-abstract-L1}, $L_1^{\alpha_*}\eta$ is the coefficient of $Vt$ in an expansion of
	$\cB(t\eta,\alpha_*,V)$ at the round sphere $(t,V)=(0,0)$.
	Since $\tau=0$, and since the curvature has no
	explicit dependence on $V$, this coefficient comes only from the exterior
	Bernoulli term
	\[
		F_{\alpha_*,V}(y,q)
		=
		\frac12\abs{q-Ve_3}^2+
		\alpha_*y\cdot(e_3\times q),
	\]
	which has to be considered at $y=\Psi_{t\eta}(\omega)$ and $q=\qout_{t,V}$. The latter is precisely the pulled-back exterior gradient investigated in Lemma \ref{lem:first-order-neumann-expansion}.

	Regarding the projection $\Pi_{t\eta}$, abbreviating $F_{\alpha_*,V}:=F_{\alpha_*,V}(\Psi_{t\eta},\qout_{t,V})$, we observe that
	\begin{align*}
		\left[\cB(t\eta,\alpha_*,V)\right]_{tV} & =-\left[\Pi_{t\eta}\left(F_{\alpha_*,V}\right)\right]_{tV}                                                                                             \\
		                                        & =-\left[F_{\alpha_*,V}-\frac{\int_{\Sph}M_{t\eta}\left(V[F_{\alpha_*,V}]_V+tV[F_{\alpha_*,V}]_{tV}\right)\dd S}{\int_{\Sph}M_{t\eta}\dd S}\right]_{tV} \\
		                                        & =-\left([F_{\alpha_*,V}]_{tV}-\frac{1}{4\pi}\int_{\Sph}[F_{\alpha_*,V}]_{tV} \dd S\right),
	\end{align*}
	due to $[F_{\alpha_*,V}]_V=0$ by \eqref{eq:round-translational-F}.
	Regarding the $tV$-coefficient, the projection thus subtracts only the average.

	Hence it
	will not change the coefficient for non-axisymmetric spherical harmonics, $m\neq 0$,
	below.
	And if $m=0$, all
	terms below will anyhow vanish because $\partial_\varphi Y_\ell^0=0$.

	Let now $\eta=Y_\ell^m$. We use the first-order boundary-gradient expansion
	\eqref{eq:boundary-gradient-expansion-outer}. It separates the exterior
	gradient into three pieces relevant at order $Vt$:
	\[
		\qout_{t,V}
		=
		\qout_V
		+t\qout_{{\rm rot},\eta}
		+t\left[
			V\eta\left(-3Z\omega+\frac32\nablaS Z\right)
			+\nabla\uout_{V,\eta}\big|_{r=1}
			\right]
		+O(t^2).
	\]
	The round translational field is given by
	\eqref{eq:round-translational-gradients}:
	\begin{equation}
		\label{eq:endpoint-L1-qoutV-recall}
		\qout_V-Ve_3=-\frac{3V}{2}\nablaS Z.
	\end{equation}
	Since $\uout_{V,\eta}$ has also order $V$, cf. \eqref{eq:exterior-translation-shape-datum},
	the kinetic part of $[F_{\alpha_*,V}]_{Vt}$ pairs
	\eqref{eq:endpoint-L1-qoutV-recall} only with the rotational gradient
	$\qout_{{\rm rot},\eta}$.

	The rotational Neumann datum, cf. \eqref{eq:rotational-potentials-def}, is
	\[
		g_{{\rm rot},\eta}=-\alpha_*\partial_\varphi Y_\ell^m,
	\]
	and \eqref{eq:rotational-boundary-gradient-out} gives
	\[
		\qout_{{\rm rot},\eta}
		=
		g_{{\rm rot},\eta}\omega
		-
		\frac1{\ell+1}\nablaS g_{{\rm rot},\eta}.
	\]
	Hence the kinetic $Vt$ coefficient for $\eta=Y_\ell^m$ is
	\begin{align}
		\label{eq:endpoint-L1-kinetic-piece}
		\cK_1Y_\ell^m
		 & :=
		\left(-\frac32\nablaS Z\right)
		\cdot\qout_{{\rm rot},Y_\ell^m}                      \\
		 & =
		-\frac{3}{2(\ell+1)}\nablaS Z\cdot
		\nablaS(\alpha_*\partial_\varphi Y_\ell^m) \nonumber \\
		 & =
		-\frac{3\alpha_*}{2(\ell+1)}
		\partial_\varphi(\nablaS Z\cdot\nablaS Y_\ell^m). \nonumber
	\end{align}
	In the last line we used $\partial_\varphi Z=0$, cf.~\eqref{eq:dphi-def}.
	We expand this, using
	\eqref{eq:gradZ-recursion}:
	\[
		\nablaS Z\cdot\nablaS Y_\ell^m
		=
		-\ell a_{\ell,+}^mY_{\ell+1}^m
		+(\ell+1)a_{\ell,-}^mY_{\ell-1}^m.
	\]
	Substitution into \eqref{eq:endpoint-L1-kinetic-piece} gives
	\begin{equation}
		\label{eq:endpoint-L1-kinetic-expanded}
		\cK_1Y_\ell^m
		=
		\frac{3\alpha_*}{2}
		\left[
			\frac{\ell}{\ell+1}a_{\ell,+}^m\partial_\varphi Y_{\ell+1}^m
			-
			a_{\ell,-}^m\partial_\varphi Y_{\ell-1}^m
			\right].
	\end{equation}

	The rotational part $\alpha_*y\cdot(e_3\times q)$ has two possible $Vt$
	contributions. The displacement of the evaluation point gives
	\[
		\alpha_*\eta\,\omega\cdot\left(e_3\times\frac{\qout_V}{V}\right)
		=
		-\alpha_*\eta\,(e_3\times\omega)\cdot\frac{\qout_V}{V}=0,
	\]
	because $\qout_V$ is axisymmetric by
	\eqref{eq:round-translational-gradients}.

	The remaining $Vt$ contribution therefore is
	\begin{align}
		\cR_1Y_\ell^m
		 & :=
		-\alpha_*(e_3\times\omega)\cdot\left[
			                               \eta\left(-3Z\omega+\frac32\nablaS Z\right)
			                               +\frac{1}{V}\nabla\uout_{V,\eta}\big|_{r=1}
			                               \right] \\
		 & =
		-\alpha_*(e_3\times\omega)\cdot\frac1V\nabla\uout_{V,Y_\ell^m}\Big|_{r=1}. \nonumber
	\end{align}
	By \eqref{eq:general-mode-translation-tangential-gradient},
	\begin{equation}
		\left(\frac1V\nabla\uout_{V,Y_\ell^m}\right)_{\tan}\Big|_{r=1}
		=
		-\frac32a_{\ell,+}^m\nablaS Y_{\ell+1}^m
		+
		\frac32\frac{\ell-1}{\ell}a_{\ell,-}^m\nablaS Y_{\ell-1}^m.
	\end{equation}
	Since $(e_3\times\omega)\cdot\nablaS f=\partial_\varphi f$, this gives
	\begin{equation}
		\label{eq:endpoint-L1-rotational-piece}
		\cR_1Y_\ell^m
		=
		\frac{3\alpha_*}{2}
		\left[
			a_{\ell,+}^m\partial_\varphi Y_{\ell+1}^m
			-
			\frac{\ell-1}{\ell}a_{\ell,-}^m\partial_\varphi Y_{\ell-1}^m
			\right].
	\end{equation}

	Adding \eqref{eq:endpoint-L1-kinetic-expanded} and
	\eqref{eq:endpoint-L1-rotational-piece} gives the exterior $Vt$ coefficient
	\[
		(\cK_1+\cR_1)Y_\ell^m
		=
		\frac{3\alpha_*}{2}
		\left[
			\frac{2\ell+1}{\ell+1}a_{\ell,+}^m\partial_\varphi Y_{\ell+1}^m
			-
			\frac{2\ell-1}{\ell}a_{\ell,-}^m\partial_\varphi Y_{\ell-1}^m
			\right].
	\]
	Finally, the Bernoulli jump is interior minus exterior, and the interior part
	is absent at $\tau=0$. Therefore
	\begin{equation}
		L_1^{\alpha_*}Y_\ell^m
		=
		-\frac{3\alpha_*}{2}
		\left[
			\frac{2\ell+1}{\ell+1}a_{\ell,+}^m\partial_\varphi Y_{\ell+1}^m
			-
			\frac{2\ell-1}{\ell}a_{\ell,-}^m\partial_\varphi Y_{\ell-1}^m
			\right].
	\end{equation}
	Using $\partial_\varphi Y_\ell^m=imY_\ell^m$ gives \eqref{eq:endpoint-b-coefficients}.
\end{proof}

Concerning the second-order term $L_2^{\alpha_*}$ we will only require its action on the diagonal.

\begin{lemma}
	\label{lem:endpoint-L2-diagonal}
	Let $m\neq0$ and $\ell\ge\abs m$. For the complex $\L^2(\Sph)$-normalised spherical
	harmonics we set
	\[
		d_{\ell m}:=\avg{Y_\ell^m,L_2^{\alpha_*}Y_\ell^m}_{\CC}.
	\]
	Then
	\begin{equation}
		\label{eq:endpoint-dlm-formula}
		\begin{aligned}
			d_{\ell m}
			={} &
			\Bigg\{
			\frac38\left(\ell(\ell+1)+4\right)I_{\ell m} \\
			    & \quad+
			\frac94
			\left[
				1
				-
				(\ell+3)(a_{\ell,+}^m)^2
				-
				\left(
				1+\frac{(\ell-1)^2}{\ell}
				\right)(a_{\ell,-}^m)^2
				\right]                    \\
			    & \quad+
			\frac94
			\frac{m^2(\ell-2)}{(\ell+1)^2}I_{\ell m}
			\Bigg\},
		\end{aligned}
	\end{equation}
	where
	\begin{equation}
		\label{eq:endpoint-I-lm}
		I_{\ell m}
		:=
		\int_{\Sph}P_2\abs{Y_\ell^m}^2\,\dd S
		=
		\frac{3\left((a_{\ell,+}^m)^2+(a_{\ell,-}^m)^2\right)-1}{2}.
	\end{equation}
\end{lemma}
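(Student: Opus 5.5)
By definition \eqref{eq:endpoint-abstract-L2}, $d_{\ell m}$ is the diagonal matrix element of the $V^2t$-coefficient of $\cB(V^2h_{02}+tY_\ell^m,\alpha_*,V)$ at the round sphere. I will split it into two pieces: the mixed curvature/shape term $D_h^2\cB(0,\alpha_*,0)[h_{02},Y_\ell^m]$, and the pure kinetic/rotational term $\frac12\partial_V^2D_h\cB(0,\alpha_*,0)Y_\ell^m$. I then compute each diagonal element separately; these should give the three lines of \eqref{eq:endpoint-dlm-formula}. The projection $\Pi_h$ only subtracts weighted averages, which are constants. Since $m\ne0$, constants pair to zero with $Y_\ell^m$, so the projection can be ignored throughout, as in the proof of Lemma~\ref{lem:endpoint-L1}.

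\textbf{The $h_{02}$-cross term.} With $\eps=V^2$ this is the $\eps t$ coefficient of $\cB(\eps h_{02}+tY_\ell^m,\alpha_*,0)$. At $V=0$ and $\tau=0$ it has two sources. The curvature contributes, by Lemma~\ref{lem:curvature-cross-h02}, exactly $\frac38(\ell(\ell+1)+4)I_{\ell m}$ on the diagonal, which is the first line. The exterior rotational term is $-F_{\alpha_*,0}$ with $F_{\alpha_*,0}=\frac12|q|^2-\alpha_*(e_3\times y)\cdot q$. Because $q$ is $O(t)$, only the part of $-\alpha_*(e_3\times y)\cdot q$ linear in $t$ contributes. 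Here Lemma~\ref{lem:rotational-shape-axisymmetric-background} applies with $f=h_{02}$ and $g=-\alpha_*\partial_\varphi Y_\ell^m=-i m\alpha_*Y_\ell^m$, and it gives the $\eps$-correction of the exterior trace.

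The steps for this rotational contribution are as follows.
\begin{enumerate}
\item Write $(e_3\times y)\cdot\nabla\phi=\partial_\varphi$ of the boundary trace. This is legitimate on an axisymmetric background, where $\partial_\varphi$ commutes with the radial graph map.
\item Insert \eqref{eq:rotational-shape-axisymmetric-trace-outer}. This requires expanding $(\ell+2)fg-\frac1{\ell+1}\nablaS f\cdot\nablaS g$ in degrees $\ell,\ell\pm2$ using \eqref{eq:Z-recursion}--\eqref{eq:gradZ-recursion} applied twice.
\item Keep only the degree-$\ell$ part, since only that survives pairing with $Y_\ell^m$.
\end{enumerate}
Any term $\nablaS f\cdot\nablaS Y_\ell^m$ pairs on the diagonal into a multiple of $I_{\ell m}$, via $\int\nablaS P_2\cdot\nablaS|Y|^2=6I_{\ell m}$. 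I expect all of these contributions to combine into $\frac94\frac{m^2(\ell-2)}{(\ell+1)^2}I_{\ell m}$, the third line. The factor $m^2\alpha_*^2=12m^2$ times $\frac3{16}$ explains the prefactor $\frac94$.

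\textbf{The pure $V^2t$ term.} This is $-[F_{\alpha_*,V}(\Psi_{t\eta},\qout_{t,V})]_{V^2t}$ on the round sphere. In the kinetic part $\frac12|q-Ve_3|^2$, the $V^2t$ piece pairs $\qout_V-Ve_3=-\frac32V\nablaS Z$ with the order-$Vt$ translational gradient $V\eta(-3Z\omega+\frac32\nablaS Z)+\nabla\uout_{V,\eta}$ from \eqref{eq:boundary-gradient-expansion-outer}. The tangential part of that gradient is known from \eqref{eq:general-mode-translation-tangential-gradient}. The contraction with $\nablaS Z$ is computed by \eqref{eq:gradZ-recursion}, and the term $\frac94\eta|\nablaS Z|^2$ contributes $\frac94(1-\text{diag}(Z^2))$. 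The rotational part contributes nothing: a $V^2$ rotational term would need the $V^2t$ Neumann correction, and the rotational datum carries no $V$, so I expect it to vanish. I have to check this via the second-order translational datum \eqref{eq:translation-normal-second-expansion}, which is axisymmetric in the $V$-only part and multiplied by $\alpha$ only through $\partial_\varphi$ of a first-order piece; that piece is $O(t)$ but has no $V^2$.

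Collecting diagonal coefficients with $\avg{Y_{\ell\pm1}^m,ZY_\ell^m}=a_{\ell,\pm}^m$ should give the middle line $\frac94[1-(\ell+3)(a_{\ell,+}^m)^2-(1+\frac{(\ell-1)^2}{\ell})(a_{\ell,-}^m)^2]$.

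\textbf{Main obstacle.} The hardest part is correctly assembling the second-order moving-boundary expansion of the exterior gradient at order $V^2t$. This needs the second-order material and Eulerian derivative, a $t$-derivative of the $V^2$ Neumann correction, which Lemma~\ref{lem:first-order-neumann-expansion} gives only to first order. I would handle it by symmetry: treat the $V^2t$ coefficient as the $t$-linearisation of the quadratic energy $\frac12|q-Ve_3|^2$ evaluated on the exact $V$-linear translational potential of the deformed sphere. This is possible because at $V$-linear order the exterior potential is exactly linear in $V$ for fixed shape, so only first-order shape derivatives of that potential are needed.
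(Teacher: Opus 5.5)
Your proposal is correct and follows the paper's proof essentially step for step. It uses the same split of $d_{\ell m}$ into two coefficients, with the projection discarded since $m\neq0$:
\begin{itemize}
\item the background-shape coefficient $[\,\cdot\,]_{\eps t}$ at $V=0$, where the curvature comes from Lemma~\ref{lem:curvature-cross-h02}, the rotational term comes from the $\partial_\varphi$-of-the-trace identity and Lemma~\ref{lem:rotational-shape-axisymmetric-background}, and there is no kinetic contribution;
\item the round-sphere coefficient $[\,\cdot\,]_{V^2t}$, obtained by pairing $-\frac32V\nablaS Z$ with the tangential part of the order-$Vt$ exterior gradient.
\end{itemize}
The ``main obstacle'' you describe does not actually arise, and your final remark is the whole resolution, exactly as in the paper. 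For a fixed shape the Neumann datum $W_{\alpha,V}\cdot n$, and hence the boundary gradient, is affine in $V$. So there is no $V^2$ Neumann correction, the rotational $V^2t$ term vanishes at once, and the second-order datum \eqref{eq:translation-normal-second-expansion}, which is of order $t^2$, plays no role.
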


\begin{proof}
	The proof has four
	parts. First we
	utilise an
	affine two-parameter family $h_{\eps,t}=\eps f+t\eta$
	to characterise $d_{\ell,m}$ and to split it into three parts
	\[
		d_{\ell m}=d^T_{\ell m}+d^H_{\ell m}+d^R_{\ell m},
	\]
	that will be identified as a translational, curvature, and rotational contribution.
	The remaining three parts compute the
	contributions separately.

	\textbf{Part 1.}
	Let $\eta=Y_\ell^m$. From \eqref{eq:endpoint-abstract-L2} at $\alpha=\alpha_*$,
	\begin{equation}
		\label{eq:endpoint-dlm-split-proof}
		d_{\ell m}
		=
		\frac12\avg{\eta
			,
			\partial_V^2D_h\cB(0,\alpha_*,0)[\eta]}_{\CC}
		+
		\avg{\eta
			,
			D_h^2\cB(0,\alpha_*,0)[h_{02},\eta]}_{\CC}.
	\end{equation}

	We let
	\[
		f:=h_{02}=-\frac{3}{16}P_2(Z)
	\]
	and introduce an auxiliary parameter $\eps$, independent of $V$, by setting
	\[
		h_{\eps,t}:=\eps f+t\eta,
		\qquad
		\Psi_{\eps,t}:=\Psi_{h_{\eps,t}}.
	\]
	For any smooth expression $(\varepsilon,t,V)\mapsto G(\eps,t,V)$, our coefficient convention gives,
	after the substitution $\eps=V^2$,
	\[
		[(t,V)\mapsto G(V^2,t,V)]_{V^2t}=[G]_{V^2t}+[G]_{\eps t}.
	\]
	Taylor expansion and application to $(\varepsilon,t,V)\mapsto \cB(h_{\varepsilon,t},\alpha_*,V)$ show that
	\begin{align}\label{eq:formula_d_ell_m_in_terms_of_cB}
		d_{\ell m} & =\avg{\eta,\left[\cB(h_{V^2,t},\alpha_*,V)\right]_{V^2t}}_{\CC}                                                                                        \\
		           & =\avg{\eta,\left[\cB(t\eta,\alpha_*,V)\right]_{V^2t}}_{\CC}+\avg{\eta,\left[\cB(h_{\varepsilon,t},\alpha_*,0)\right]_{\varepsilon t}}_{\CC}. \nonumber
	\end{align}
	Thus the coefficient in \eqref{eq:endpoint-dlm-split-proof} is the sum of the
	pure speed coefficient $[\cdot]_{V^2t}$ at the round sphere and the
	background-shape coefficient $[\cdot]_{\eps t}$ at $V=0$. Terms of type
	$\eps Vt$, $\eps V^2t$, or $\eps^2t$ become, respectively,
	$V^3t$, $V^4t$, or $V^4t$ after this substitution and do not contribute to $L_2^{\alpha_*}$.

	The projection in the Bernoulli map
	does not affect the diagonal
	coefficient. Indeed, for every $h$ and every scalar function $E$,
	\[
		\avg{Y_\ell^m,\Pi_hE}_{\CC}
		=
		\avg{Y_\ell^m,E}_{\CC}
		-
		\avg{E}_{M_h}\int_{\Sph}\overline{Y_\ell^m}\,\dd S
		=
		\avg{Y_\ell^m,E}_{\CC},
	\]
	because $m\ne0$. Hence the $h$-dependence of $\Pi_h$ contributes only constants,
	which are orthogonal to $Y_\ell^m$
	, and
	we may therefore work with the unprojected pulled-back Bernoulli density $B_{h_{\varepsilon,t}}\circ\Psi_{\varepsilon,t}$.

	Let
	$Q_{\eps,t,V}^{\rm out}$ denote the exterior boundary gradient pulled back
	to the reference sphere,
	\[
		Q_{\eps,t,V}^{\rm out}
		:=
		\nabla\phiout_{h_{\eps,t},\alpha_*,V}
		\circ\Psi_{\eps,t}.
	\]
	Write
	\begin{align*}
		\scrK_{\eps,t,V}
		 & :=
		\frac12\abs{Q_{\eps,t,V}^{\rm out}-Ve_3}^2,    \\
		\scrR_{\eps,t,V}
		 & :=
		\alpha_*\Psi_{\eps,t}\cdot
		\left(e_3\times Q_{\eps,t,V}^{\rm out}\right), \\
		\scrH_{\eps,t}
		 & :=
		H_{h_{\eps,t}}\circ\Psi_{\eps,t}.
	\end{align*}
	Since $\tau=0$,
	\[
		B_{h_{\eps,t}}\circ\Psi_{\eps,t}
		=
		-\scrK_{\eps,t,V}
		-\scrR_{\eps,t,V}
		+\scrH_{\eps,t}.
	\]
	Consequently,
	\begin{align}
		\label{eq:endpoint-L2-bookkeeping-all-terms}
		d_{\ell m}
		 & =
		\avg{Y_\ell^m,
				-[\scrK]_{V^2t}-[\scrK]_{\eps t}
				-[\scrR]_{V^2t}-[\scrR]_{\eps t}
				+[\scrH]_{\eps t}}_{\CC}           \\
		 & =:d_{\ell m}^T+d_{\ell m}^R+d_{\ell m}^H
		\nonumber
	\end{align}
	where $[\scrH]_{V^2t}=0$ because the curvature has no explicit
	$V$-dependence.
	As in \eqref{eq:formula_d_ell_m_in_terms_of_cB} the $[\cdot]_{V^2t}$-coefficients can be considered with $\varepsilon=0$, while the $[\cdot]_{\varepsilon t}$-coefficients can be considered with $V=0$.
	We now identify each term in
	\eqref{eq:endpoint-L2-bookkeeping-all-terms}.

	\textbf{Part 2.}
	First consider the kinetic term. For a fixed shape the Neumann problem is
	linear in the vector field $W_{\alpha_*,V}=\alpha_*e_3\times y+Ve_3$. Hence
	\[
		Q_{\eps,t,V}^{\rm out}
		=
		Q_{\eps,t}^{\rm rot}+VQ_{\eps,t}^{\rm tr},
	\]
	where $Q^{\rm rot}$ is the solution for $V=0$ and $Q^{\rm tr}$ is the solution
	for the unit translational field, i.e. $\alpha=0$, $V=1$. If $t=0$, the graph $r=1+\eps f$ is
	axisymmetric. The exact normal-component formula
	\eqref{eq:W-normal-exact}, with $V=0$ and $\partial_\varphi f=0$, gives
	\[
		\bigl(W_{\alpha_*,0}\cdot n_{\eps f}\bigr)
		\circ\Psi_{\eps f}=0.
	\]
	Thus $Q_{\eps,0}^{\rm rot}=0$ and
	$Q_{\eps,t}^{\rm rot}=O(t)$. At $V=0$ we obtain
	\begin{equation}
		\scrK_{\eps,t,0}
		=
		\frac12\abs{Q_{\eps,t}^{\rm rot}}^2
		=O(t^2),
	\end{equation}
	so
	\begin{equation}
		[\scrK]_{\eps t}=0.
	\end{equation}
	At $\eps=0$, \eqref{eq:boundary-gradient-expansion-outer} gives
	\[
		Q_{0,t,V}^{\rm out}-Ve_3
		=
		-\frac{3V}{2}\nabla_{\Sph}Z
		+
		t\qout_{{\rm rot},\eta}
		+
		tV\left[
			\eta\left(-3Z\omega+\frac32\nabla_{\Sph}Z\right)
			+
			\frac1V\nabla\uout_{V,\eta}\big|_{r=1}
			\right]
		+
		O(t^2).
	\]
	For the $V^2t$-coefficient of the corresponding kinetic term, we only have the pairing of $-\frac{3V}{2}\nabla_{\Sph}Z$ with
	the tangential part of the $tV$-coefficient
	, namely
	\[
		\frac32\eta\nabla_{\Sph}Z
		+
		\left(\frac1V\nabla\uout_{V,\eta}\right)_{\tan}\Big|_{r=1}
	\]
	as relevant contribution.
	With $\eta=Y^m_\ell$, define
	\[
		\Upsilon_{\ell m}
		:=
		\left(\frac1V\nabla\uout_{V,Y_\ell^m}\right)_{\tan}\Big|_{r=1}.
	\]
	We therefore have
	\begin{align*}
		[\scrK]_{V^2t}
		 & =
		\left(-\frac32\nablaS Z\right)\cdot
		\left(
		\frac32Y_\ell^m\nablaS Z+\Upsilon_{\ell m}
		\right) \\
		 & =
		-\frac94Y_\ell^m\abs{\nablaS Z}^2
		-
		\frac32\nablaS Z\cdot\Upsilon_{\ell m}.
	\end{align*}
	Since the Bernoulli density contains the exterior kinetic term with a minus
	sign, its contribution to the diagonal coefficient is
	\begin{equation}
		\label{eq:endpoint-kinetic-survives-as-T}
		d^T_{\ell m}
		=
		-\avg{Y_\ell^m,[\scrK]_{V^2t}}_{\CC}
		=
		\avg{Y_\ell^m,
			\frac94Y_\ell^m\abs{\nablaS Z}^2
			+
			\frac32\nablaS Z\cdot\Upsilon_{\ell m}}_{\CC}.
	\end{equation}

	We now evaluate the two terms in
	\eqref{eq:endpoint-kinetic-survives-as-T}. The first one is
	\begin{equation}
		\label{eq:endpoint-gradZ2-diagonal}
		\avg{Y_\ell^m,
			Y_\ell^m\abs{\nablaS Z}^2}_{\CC}
		=
		1-(a_{\ell,+}^m)^2-(a_{\ell,-}^m)^2,
	\end{equation}
	because $\abs{\nablaS Z}^2=1-Z^2$, cf. \eqref{eq:low-mode-products}, and
	\eqref{eq:Z-recursion} gives the displayed diagonal coefficient of
	$Z^2$.
	For the second factor, we use
	\eqref{eq:general-mode-translation-tangential-gradient}:
	\begin{equation}
		\Upsilon_{\ell m}
		=
		-\frac32a_{\ell,+}^m\nablaS Y_{\ell+1}^m
		+
		\frac32\frac{\ell-1}{\ell}a_{\ell,-}^m\nablaS Y_{\ell-1}^m.
	\end{equation}
	Applying \eqref{eq:gradZ-recursion} to $Y_{\ell+1}^m$ and
	$Y_{\ell-1}^m$, and observing that $a^m_{\ell\pm 1,\mp}=a^m_{\ell,\pm}$ gives the two degree-$\ell$ coefficients
	\[
		\frP_{\ell}(\nablaS Z\cdot\nablaS Y_{\ell+1}^m)
		=
		(\ell+2)a_{\ell,+}^mY_\ell^m,
		\qquad
		\frP_{\ell}(\nablaS Z\cdot\nablaS Y_{\ell-1}^m)
		=
		-(\ell-1)a_{\ell,-}^mY_\ell^m.
	\]
	Therefore
	\begin{equation}
		\label{eq:endpoint-gradZ-tau-diagonal}
		\avg{Y_\ell^m,
			\nablaS Z\cdot\Upsilon_{\ell m}}_{\CC}
		=
		-\frac32
		\left[
			(\ell+2)(a_{\ell,+}^m)^2
			+
			\frac{(\ell-1)^2}{\ell}(a_{\ell,-}^m)^2
			\right].
	\end{equation}
	Substituting \eqref{eq:endpoint-gradZ2-diagonal} and
	\eqref{eq:endpoint-gradZ-tau-diagonal} into
	\eqref{eq:endpoint-kinetic-survives-as-T} gives
	\begin{equation}
		\label{eq:endpoint-dT}
		d_{\ell m}^{T}
		=
		\frac94
		\left[
			1
			-
			(\ell+3)(a_{\ell,+}^m)^2
			-
			\left(
			1+\frac{(\ell-1)^2}{\ell}
			\right)(a_{\ell,-}^m)^2
			\right].
	\end{equation}

	\textbf{Part 3.}
	Next consider the rotational part. At $\eps=0$ the expression
	$\scrR_{0,t,V}$ is linear in $Q_{0,t,V}^{\rm out}$ and has no other
	explicit $V$-dependence. Since $Q_{0,t,V}^{\rm out}$ is affine in $V$ for a
	fixed graph, $\scrR_{0,t,V}$ is affine in $V$. Hence
	\begin{equation}
		[\scrR]_{V^2t}=0.
	\end{equation}

	It remains to compute $-[\scrR]_{\eps t}$ at $V=0$. We first of all observe that $Q^{\rm out}_{\varepsilon,0,0}$ is vanishing. Indeed, $f$ and thus the induced domains are axisymmetric. Consequently $(W_{\alpha_*,0}\cdot n_{\varepsilon,0})\circ \Psi_{\varepsilon,0}=0$ on $\Sph$, which implies $\phiout_{h_{\varepsilon,0},\alpha_*,0}=0$.
	Thus we have
	\begin{align*}
		-[\scrR]_{\eps t} & =-\alpha_*\left[\Psi_{\eps,t}\cdot\left(e_3\times Q^{\rm out}_{\eps,t,0}\right)\right]_{\varepsilon t}                                                                                                              \\
		                  & =-\alpha_*\left[(1+\varepsilon f)\omega\cdot\left(e_3\times Q^{\rm out}_{\eps,t,0}\right)\right]_{\varepsilon t}-\alpha_*\eta\,\omega\cdot\left(e_3\times \left[Q^{\rm out}_{\eps,0,0}\right]_{\varepsilon }\right) \\
		                  & =\alpha_*\left[(1+\varepsilon f)(e_3\times\omega)\cdot \left[Q^{\rm out}_{\eps,t,0}\right]_t\right]_{\varepsilon }.
	\end{align*}
	Now, since $\phiout_{h_{\varepsilon,0},\alpha_*,0}=0$, the $t$-coefficient of $Q^{\rm out}_{\varepsilon,t,0}$ has only one contribution
	\begin{align*}
		\left[Q^{\rm out}_{\eps,t,0}\right]_t & =\nabla\left(\left.\frac{\ddd}{\ddd t}\right|_{t=0}\phiout_{h_{\varepsilon,t},\alpha_*,0}\right)((1+\varepsilon f)\omega).
	\end{align*}
	Differentiating the equation defining $\phiout_{h_{\varepsilon,t},\alpha_*,0}$ with respect to $t$ at $t=0$, one sees that $\left.\frac{\ddd}{\ddd t}\right|_{t=0}\phiout_{h_{\varepsilon,t},\alpha_*,0}$ is precisely the potential $\phiout_\varepsilon$ in Lemma \ref{lem:rotational-shape-axisymmetric-background}, i.e. the exterior rotational potential
	obtained by linearising in the $t\eta$ direction on the background
	$r=1+\eps f$.

	Next, observe that for every smooth potential $\phi$ defined near the surface induced by $\varepsilon f$, we have
	\begin{equation}
		(e_3\times y)\cdot\nabla\phi(y)
		=\partial_\varphi\left(\phi\bigl((1+\eps f(\omega))\omega\bigr)\right),
		\qquad
		y=(1+\eps f(\omega))\omega,
	\end{equation}
	because $f$ is axisymmetric.

	We therefore have
	\begin{equation}
		d_{\ell m}^R=\avg{Y_\ell^m,-\,[\scrR]_{\eps t}}_{\CC}
		=
		\alpha_*\avg{Y_{\ell}^m,\partial_\varphi
			\left[
				\phiout_{\eps}\big|_{r=1+\eps f}
				\right]_{\eps}}_{\CC}.
	\end{equation}

	The remaining $\varepsilon$-coefficient is computed in Lemma \ref{lem:rotational-shape-axisymmetric-background}, in particular in \eqref{eq:rotational-shape-axisymmetric-trace-outer}, by which
	\begin{equation}
		\label{eq:endpoint-rot-trace-shape-recall}
		\left[\eout_\eps\big|_{r=1+\eps f}\right]_{\eps}
		=
		fg-\sum_n\frac1{n+1}k_n^{\rm out}
	\end{equation}
	with
	\[
		f=h_{02}=-\frac{3}{16}P_2,
		\qquad
		g:=-\alpha_*\partial_\varphi Y_\ell^m,
	\]
	and
	Neumann datum $k^{\rm out}=\sum_n k_n^{\rm out}$ given by
	\begin{equation}
		\label{eq:endpoint-rot-k-datum-recall}
		k^{\rm out}
		=
		\left(
		(\ell+2)fg
		-
		\frac1{\ell+1}\nablaS f\cdot\nablaS g
		\right).
	\end{equation}
	Only the degree-$\ell$ part of \eqref{eq:endpoint-rot-trace-shape-recall}
	can contribute to the diagonal matrix element.
	By \eqref{eq:Ilm-def}
	\begin{equation}
		\frP_{\ell}(P_2Y_\ell^m)=I_{\ell m}Y_\ell^m,
		\qquad
		I_{\ell m}
		=
		\frac{3\left((a_{\ell,+}^m)^2+(a_{\ell,-}^m)^2\right)-1}{2},
	\end{equation} and thus
	\begin{equation}
		\label{eq:endpoint-fg-degree-l}
		\frP_{\ell}(fg)
		=
		-\frac{3}{16}I_{\ell m}g,
	\end{equation}
	Since $P_2$ is axisymmetric, the coefficient field $\nablaS P_2$ is
	axisymmetric. Hence differentiation in $\varphi$ commutes with $u\mapsto \nablaS P_2\cdot\nablaS u$. Using
	$g=-\alpha_*\partial_\varphi Y_\ell^m$, we therefore have
	\[
		\nablaS P_2\cdot\nablaS g
		=
		-\alpha_*
		\partial_\varphi
		\bigl(
		\nablaS P_2\cdot\nablaS Y_\ell^m
		\bigr).
	\]
	We now compute the degree-$\ell$ part of
	$\nablaS P_2\cdot\nablaS Y_\ell^m$. The product identity
	\[
		2\nablaS P_2\cdot\nablaS Y_\ell^m
		=
		\Delta_{\Sph}(P_2Y_\ell^m)
		-
		(\Delta_{\Sph}P_2)Y_\ell^m
		-
		P_2\Delta_{\Sph}Y_\ell^m
	\]
	gives, after taking the degree-$\ell$ part,
	\[
		\begin{aligned}
			2\frP_{\ell}(\nablaS P_2\cdot\nablaS Y_\ell^m)
			 & =
			-\ell(\ell+1)\frP_{\ell}(P_2Y_\ell^m)
			+6\frP_{\ell}(P_2Y_\ell^m)
			+\ell(\ell+1)\frP_{\ell}(P_2Y_\ell^m) \\
			 & =
			6I_{\ell m}Y_\ell^m.
		\end{aligned}
	\]
	Here we used
	\[
		\frP_{\ell}(P_2Y_\ell^m)=I_{\ell m}Y_\ell^m,
		\qquad
		\Delta_{\Sph}P_2=-6P_2,
		\qquad
		\Delta_{\Sph}Y_\ell^m=-\ell(\ell+1)Y_\ell^m.
	\]
	Thus
	\[
		\frP_{\ell}(\nablaS P_2\cdot\nablaS Y_\ell^m)
		=
		3I_{\ell m}Y_\ell^m.
	\]
	Since $\partial_\varphi$ preserves the degree-$\ell$ subspace, it follows
	that
	\[
		\begin{aligned}
			\frP_{\ell}(\nablaS P_2\cdot\nablaS g)
			 & =
			-\alpha_*
			\partial_\varphi
			\frP_{\ell}(\nablaS P_2\cdot\nablaS Y_\ell^m) \\
			 & =
			-\alpha_*
			\partial_\varphi
			\bigl(3I_{\ell m}Y_\ell^m\bigr)
			=
			3I_{\ell m}g.
		\end{aligned}
	\]
	Finally, using $f=-\frac{3}{16}P_2$, we obtain
	\begin{equation}
		\label{eq:endpoint-gradf-gradg-degree-l}
		\frP_{\ell}(\nablaS f\cdot\nablaS g)
		=
		-\frac{3}{16}
		\frP_{\ell}(\nablaS P_2\cdot\nablaS g)
		=
		-\frac{9}{16}I_{\ell m}g.
	\end{equation}
	Combining \eqref{eq:endpoint-rot-k-datum-recall},
	\eqref{eq:endpoint-fg-degree-l}, and
	\eqref{eq:endpoint-gradf-gradg-degree-l} gives
	\begin{equation}
		\frP_{\ell}(k^{\rm out})
		=
		-\frac{3}{16}I_{\ell m}
		\left[
			(\ell+2)-\frac3{\ell+1}
			\right]g.
	\end{equation}
	Substitution into the trace formula
	\eqref{eq:endpoint-rot-trace-shape-recall} gives the degree-$\ell$ trace
	correction
	\begin{equation}
		\frP_{\ell}\left(fg+v\big|_{r=1}\right)
		=
		-\frac{3}{16}I_{\ell m}
		\left[
			1-
			\frac{(\ell+2)-3/(\ell+1)}{\ell+1}
			\right]g.
	\end{equation}

	Because
	$g=-\alpha_*\partial_\varphi Y_\ell^m$ and
	$\partial_\varphi g=\alpha_*m^2Y_\ell^m$, we obtain
	\begin{align}
		\label{eq:endpoint-dR}
		d_{\ell m}^{R}
		 & =
		-\frac{3\alpha_*^2}{16}m^2I_{\ell m}
		\left[
			1-
			\frac{(\ell+2)-3/(\ell+1)}{\ell+1}
			\right] \\
		 & =
		\frac94
		\frac{m^2(\ell-2)}{(\ell+1)^2}I_{\ell m}. \nonumber
	\end{align}

	\textbf{Part 4.}
	Finally, we turn to the curvature part, where
	\begin{equation}
		d^H_{\ell m}=\avg{Y_\ell^m,[\scrH]_{\eps t}}_{\CC}.
	\end{equation}

	This coefficient has been computed in Lemma \ref{lem:curvature-cross-h02}. Formula \eqref{eq:curvature-cross-h02-diagonal} states that
	\begin{equation}
		\label{eq:endpoint-dH}
		d_{\ell m}^{H}=\avg{Y_\ell^m,
			\left[H_{\eps f+tY_\ell^m}\circ\Psi_{\eps f+tY_\ell^m}\right]_{\eps t}
		}_{\CC}
		=
		\frac38
		\left(\ell(\ell+1)+4\right)I_{\ell m}.
	\end{equation}

	\smallskip

	Combining \eqref{eq:endpoint-dT}, \eqref{eq:endpoint-dR}, and \eqref{eq:endpoint-dH}
	gives \eqref{eq:endpoint-dlm-formula}.
\end{proof}

\subsection{Schur complements}\label{sec:schur_complements}

We now turn to the expansion of $D_aw(0,\alpha,V)$. Here the first order term is sufficient.

\begin{lemma}\label{lem:expansion_of_D_aw}
	Uniformly in $\alpha$ from a neighbourhood of $\alpha_*$, as $V\rightarrow 0$ there holds
	\begin{align*}
		D_aw(0,\alpha,V)=-V(L_0^\alpha)^{-1}{\idproj_{1,6}}L_1^\alpha+O_{\cL(K_{16}\cap\H^{k+2},K_{16}^\perp\cap \H^{k+2})}(V^2),
	\end{align*}
	where $L_0^\alpha$, $L_1^\alpha$ are the coefficients from \eqref{eq:endpoint-L-expansion-prelemma}, and $(L_0^\alpha)^{-1}$ is understood as the inverse of $L_0^\alpha:K_{16}^\perp\cap \H^{k+2}_{\rm lin}\rightarrow K_{16}^\perp\cap\H^k_{\rm lin}$.
\end{lemma}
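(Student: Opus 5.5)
The plan is to differentiate the range equation \eqref{eq:range_equation_endpoint_case} with respect to $a$ at $a=0$. We then expand the resulting linear identity in $V$, using the expansion \eqref{eq:endpoint-L-expansion-prelemma} of the Bernoulli linearisation along the axisymmetric branch. Fix $b\in K_{16}$ and write $W:=D_aw(0,\alpha,V)b\in K_{16}^\perp\cap\H^{k+2}_{\rm lin}$. By Lemma~\ref{lem:endpoint_axisymmetric_branch_with_w}, $w(0,\alpha,V)=\xi^{\rm ax}(V)$. Differentiating gives
\[
	\idproj_{1,6}D_\xi\cF(\xi^{\rm ax}(V),\alpha,V)[b+W]=0 .
\]

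First I will expand the operator $\cL_{V,\alpha}:=D_\xi\cF(\xi^{\rm ax}(V),\alpha,V)$ in $V$, starting from \eqref{eq:relation_F_to_cB}. Since $\cB(h^{\rm ax}_{0,V},\alpha,V)=0$, every term in which the derivative falls on $D\Theta^*$ or on $M$ carries the factor $\cB$ and therefore vanishes. Hence
\[
	\cL_{V,\alpha}\eta=D\Theta(\xi^{\rm ax})^*\bigl[M_{h^{\rm ax}_{0,V}}L^{\cB}_{V,\alpha}D\Theta(\xi^{\rm ax})\eta\bigr].
\]
By \eqref{eq:axisymmetric-coordinate-expansion} and \eqref{eq:axisymmetric-weight-expansion}, we have $\xi^{\rm ax}=O(V^2)$ and $M=1+O(V^2)$, so $D\Theta(\xi^{\rm ax})=\id+O(V^2)$ and likewise for its adjoint. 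Combined with Lemma~\ref{lem:endpoint-L-expansion}, this gives
\[
	\cL_{V,\alpha}=\proj_{\rm lin}L_0^\alpha+V\proj_{\rm lin}L_1^\alpha+O(V^2)
\]
in $\cL(\H^{k+2}_{\rm lin},\H^k_{\rm lin})$, uniformly for $\alpha$ near $\alpha_*$. Since $L_0^\alpha$ is diagonal in spherical harmonics, it preserves the linear slice and commutes with $\proj_{1,6}$.

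Next, I will obtain a priori smallness of $W$. At $V=0$ we have $\idproj_{1,6}L_0^\alpha b=0$, because $L_0^\alpha$ maps $K_{16}$ into $K_{16}$. For $\alpha$ near $\alpha_*$, the operator $\idproj_{1,6}L_0^\alpha$ restricted to $K_{16}^\perp$ is invertible with bounded inverse; this follows from Lemma~\ref{lem:endpoint-complement-isomorphism-simple} together with operator-norm continuity in $\alpha$ and the openness of the set of isomorphisms. The operator $\idproj_{1,6}\cL_{V,\alpha}$ is an $O(V)$ perturbation of it on $K_{16}^\perp$, so it remains uniformly invertible for small $V$. Solving
\[
	\idproj_{1,6}\cL_{V,\alpha}W=-\idproj_{1,6}\cL_{V,\alpha}b
	=-V\idproj_{1,6}\proj_{\rm lin}L_1^\alpha b+O(V^2)\norm{b}
\]
then gives $W=O(V)$. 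Here $\proj_{\rm lin}$ may be dropped on $K_{16}^\perp\cap\H^k_{\rm lin}$ after projecting. Substituting back, we find $L_0^\alpha W+V\idproj_{1,6}L_1^\alpha b=O(V^2)$, because the term $V\idproj_{1,6}L_1^\alpha W$ is $O(V^2)$. Inverting $L_0^\alpha$ on $K_{16}^\perp$ yields the claimed formula, with all error bounds uniform in $\alpha$.

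The main obstacle I expect is the bookkeeping of projections. One has to check that $L_1^\alpha b$ lands in $\H^k_{\rm lin}$ up to the components that $\idproj_{1,6}$ removes, or else carry $\proj_{\rm lin}$ through the computation. One also has to justify that the derivative terms of $D\Theta^*$ and $M$ genuinely drop out because $\cB$ vanishes on the branch. For the first point, Lemma~\ref{lem:endpoint-L1} shows that $L_1^{\alpha_*}$ preserves the azimuthal weight $m\neq0$, so the modes $1$ and $Z$ never appear and no extra constants are generated. For general $\alpha$ near $\alpha_*$, the same weight-preservation follows from the $\so$-equivariance of $D_h\cB$ along the axisymmetric branch.
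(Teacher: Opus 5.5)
Your proposal is correct and follows essentially the paper's argument. You differentiate the range equation at $a=0$, use $\cB(h^{\rm ax}_{0,V},\alpha,V)=0$ to reduce the linearisation to $D\Theta(\xi^{\rm ax})^*\bigl[M_{h^{\rm ax}_{0,V}}L^{\cB}_{V,\alpha}D\Theta(\xi^{\rm ax})\,\cdot\,\bigr]$ with $\xi^{\rm ax}(V)=O(V^2)$, and invert the diagonal $L_0^\alpha$ on $K_{16}^\perp$. The paper organises this differently: it first shows $D_aw(0,\alpha,0)=0$ and then obtains $\partial_VD_aw(0,\alpha,0)$ by differentiating in $V$, whereas you invert the perturbed complement operator uniformly and bootstrap from $W=O(V)$. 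Your $\so$-equivariance remark also justifies dropping $\proj_{\rm lin}$ in front of $L_1^\alpha b$, a point the paper leaves implicit.
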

\begin{proof}
	First of all we recall that $L^{\alpha_*}_0$ is an isomorphism from $K_{16}^\perp\cap \H^{k+2}_{\rm lin}$ to $K_{16}^\perp\cap\H^k_{\rm lin}$ by Lemma~\ref{lem:endpoint-complement-isomorphism-simple}. Since the dependence on $\alpha$ is smooth, the same remains true for $L_0^\alpha$ with $\alpha$ close to $\alpha_*$. Thus $(L^\alpha_0)^{-1}$ as stated is well-defined.

	Differentiation of \eqref{eq:range_equation_endpoint_case} at $a=0$ in the direction of $\tilde{a}\in K_{16}$ gives
	\begin{align}\label{eq:derivative_range_equation_endpoint_case}
		{\idproj_{1,6}} D_\xi\cF(w(0,\alpha,V),\alpha,V)[\tilde{a}+D_aw(0,\alpha,V)\tilde{a}]=0.
	\end{align}
	Evaluation at $V=0$, use of $w(0,\alpha,0)=0$ and $D_\xi\cF(0,\alpha,0)=L_0^\alpha$, cf. \eqref{eq:recalled_relation_F_B}, imply
	\begin{align*}
		{\idproj_{1,6}}L_0^\alpha D_aw(0,\alpha,0)\tilde{a}=-{\idproj_{1,6}}L_0^\alpha\tilde{a}.
	\end{align*}
	Since $L_0^\alpha$ is diagonal, cf. Lemma \ref{lem:endpoint-L0}, the right-hand side vanishes and thus $L_0^\alpha D_aw(0,\alpha,0)\tilde{a}$ is an element of $K_{16}$. On the other hand, by construction $w(a,\alpha,V)\in K_{16}^\perp$, such that the diagonality of $L^\alpha_0$ gives $L_0^\alpha D_aw(0,\alpha,0)\tilde{a}\in K_{16}^\perp$. Together and since $L_0^\alpha$ is an isomorphism on $K_{16}^\perp$ we find $D_aw(0,\alpha,0)=0$ for all $\alpha$ sufficiently close to $\alpha_*$.

	Consequently
	\begin{align}\label{eq:first_order_dvda_w_0}
		D_aw(0,\alpha,V)=V\partial_VD_aw(0,\alpha,0)+O(V^2).
	\end{align}
	In order to compute $\partial_VD_aw(0,\alpha,0)$, we differentiate
	\eqref{eq:derivative_range_equation_endpoint_case} with respect to $V$ at
	$V=0$. Since $\partial_Vw(0,\alpha,0)=0$ by
	\eqref{eq:axisymmetric-coordinate-expansion}, there holds
	\begin{align}\label{eq:first_order_dvda_w_1}
		{\idproj_{1,6}} L_0^\alpha\partial_VD_aw(0,\alpha,0)[\tilde{a}]=-{\idproj_{1,6}}\partial_VD_\xi\cF(0,\alpha,0)[\tilde{a}].
	\end{align}
	We claim that
	\begin{align}\label{eq:first_order_dvda_w_2}
		{\idproj_{1,6}}\partial_VD_\xi\cF(0,\alpha,0)[\tilde{a}]={\idproj_{1,6}}L_1^\alpha[\tilde{a}].
	\end{align}
	Indeed, differentiation of \eqref{eq:recalled_relation_F_B} at $\xi=w(0,\alpha,V)$ in the direction of $\tilde{a}$ gives
	\begin{align*}
		\int_{\Sph} & D_\xi\cF(w(0,\alpha,V),\alpha,V)[\tilde{a}]\eta\dd S                                                                                          \\
		            & =\int_{\Sph}D_h\cB(\Theta(w(0,\alpha,V)),\alpha,V)D\Theta(w(0,\alpha,V))[\tilde{a}]M_{\Theta(w(0,\alpha,V))}D\Theta(w(0,\alpha,V))[\eta]\dd S
	\end{align*}
	for all $\eta\in K_{16}^\perp\cap\H^{k+2}_{\rm lin}$.
	Here we used that $\cB(\Theta(w(0,\alpha,V)),\alpha,V)=\cB(h^{\rm ax}_{0,V},\alpha,V)=0$. Taking now the $V$-derivative at $V=0$, minding again that $\partial_Vw(0,\alpha,0)=0$, we find
	\begin{align*}
		\int_{\Sph} & \partial_VD_\xi\cF(0,\alpha,0)[\tilde{a}]\eta\dd S=\int_{\Sph}L_1^\alpha[\tilde{a}]\eta\dd S
	\end{align*}
	by means of \eqref{eq:Theta-expansion} and \eqref{eq:endpoint-L-expansion-prelemma}. This implies \eqref{eq:first_order_dvda_w_2}.

	Combining \eqref{eq:first_order_dvda_w_1} and \eqref{eq:first_order_dvda_w_2} we therefore have
	\begin{align}\label{eq:first_order_dvda_w_4}
		{\idproj_{1,6}} L_0^\alpha\partial_VD_aw(0,\alpha,0)=-{\idproj_{1,6}} L_1^\alpha.
	\end{align}
	Since $w$ anyhow maps into $K_{16}^\perp$ and $L_0^\alpha$ is diagonal, we can drop the projection on the left-hand side, such that \eqref{eq:first_order_dvda_w_0} and \eqref{eq:first_order_dvda_w_4} imply the statement.
\end{proof}

\subsection{Conclusion of the eigenvalue expansion}\label{sec:proof_lemma_expansion}

The coefficients $L_i^\alpha$, $i=0,1,2$, below are again taken from the expansion \eqref{eq:endpoint-L-expansion-prelemma}, and as in Lemma \ref{lem:expansion_of_D_aw}, $(L^\alpha_0)^{-1}:K_{16}^\perp\cap \H^k_{\rm lin}\rightarrow K_{16}^\perp\cap \H^{k+2}_{\rm lin}$. For now we keep the expansion at an arbitrary $\alpha$ near $\alpha_*$.

\begin{lemma}\label{lem:conclusion_eigenvalue_expansion}
	Let $a_m\in K_m$, $m=1,6$ with $\norm{a_m}_{\L^2(\Sph)}=1$. Then
	\begin{align*}
		\mu_m(\alpha,V) & =\int_{\Sph}a_mL_0^\alpha a_m\dd S+V\int_{\Sph}a_mL_1^\alpha a_m\dd S                                                                            \\
		                & \phantom{===}+V^2\int_{\Sph}a_m\left(2h_{02}L_0^\alpha +L_2^\alpha -L_1^\alpha(L_0^\alpha)^{-1}{\idproj_{1,6}} L_1^\alpha \right)a_m\dd S+O(V^3)
	\end{align*}
	uniformly in $\alpha$ near $\alpha_*$ as $V\rightarrow 0$.
\end{lemma}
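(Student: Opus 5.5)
The plan is to begin from Lemma~\ref{lem:first_simplification_of_eigenvalues}, which already removes the chart derivatives up to $O(V^3)$:
\[
	\mu_m(\alpha,V)=\int_{\Sph}L^{\cB}_{V,\alpha}\left[a_m+D_aw(0,\alpha,V)a_m\right](1+2V^2h_{02})a_m\dd S+O(V^3).
\]
Into this I will insert two expansions. The first is the operator expansion \eqref{eq:endpoint-L-expansion-prelemma}, $L^{\cB}_{V,\alpha}=L_0^\alpha+VL_1^\alpha+V^2L_2^\alpha+O(V^3)$. The second is the Schur-complement expansion of Lemma~\ref{lem:expansion_of_D_aw}, $D_aw(0,\alpha,V)a_m=-V(L_0^\alpha)^{-1}\idproj_{1,6}L_1^\alpha a_m+O(V^2)$. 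Both hold uniformly for $\alpha$ near $\alpha_*$, and the remainders are bounded in $\H^{k+2}$ and in operator norm. Since $a_m$ is a fixed smooth unit vector, all products of remainders are uniformly $O(V^3)$.

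Next I will collect terms by powers of $V$.
\begin{itemize}
	\item At order $V^0$ the only term is $\int a_mL_0^\alpha a_m$.
	\item At order $V$ there are $\int a_mL_1^\alpha a_m$ and $\int a_mL_0^\alpha D_aw\,a_m$. The second vanishes: $L_0^\alpha$ is diagonal in spherical harmonics, so it maps $K_{16}^\perp\cap\H^{k+2}_{\rm lin}$ into $K_{16}^\perp$, which is $\L^2$-orthogonal to $a_m$.
	\item At order $V^2$ there are four contributions. The weight $2h_{02}$ against $L_0^\alpha a_m$ gives $\int 2h_{02}(L_0^\alpha a_m)a_m$, which I write as $\int a_m\,2h_{02}L_0^\alpha a_m$. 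There is $\int a_mL_2^\alpha a_m$. There is the cross term $V\cdot V\int (L_1^\alpha D_aw\text{-part})\,a_m$. Finally there is the $O(V^2)$ part of $D_aw$ hit by $L_0^\alpha$; this again lies in $K_{16}^\perp$ and vanishes against $a_m$ by the same diagonality argument.
\end{itemize}

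The main step is the cross term, which equals
\[
	-V^2\int_{\Sph}\bigl(L_1^\alpha(L_0^\alpha)^{-1}\idproj_{1,6}L_1^\alpha a_m\bigr)a_m\dd S.
\]
This already has the claimed form, so no symmetrisation is needed. The orthogonality $\int (L_0^\alpha u)a_m=0$ for $u\in K_{16}^\perp$ also follows from the symmetry of $L_0^\alpha$, which is diagonal with real eigenvalues, together with $L_0^\alpha a_m=\lambda a_m$. This is the point that kills both dangerous terms.

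It then remains to check that every leftover term is $O(V^3)$. These are the terms with $V^2\cdot V$ combinations, the term $2V^2h_{02}$ times $V L_1^\alpha$, and the remainder of the expansions. Here I will rely on the uniformity of the $O$-constants in $\alpha$ established in Lemmas~\ref{lem:endpoint-L-expansion} and~\ref{lem:expansion_of_D_aw}, and on the boundedness of multiplication by the smooth function $h_{02}$ on $\H^k$. I expect the main obstacle to be only bookkeeping: tracking which pairings vanish by orthogonality of $K_{16}^\perp$ to $a_m$, and confirming that the remainders are uniform in $\alpha$.
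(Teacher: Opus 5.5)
Your proposal is correct and matches the paper's proof: you start from Lemma~\ref{lem:first_simplification_of_eigenvalues}, substitute the expansions of Lemmas~\ref{lem:endpoint-L-expansion} and~\ref{lem:expansion_of_D_aw}, and use $w\in K_{16}^\perp$ together with the diagonality of $L_0^\alpha$ to make $\int_{\Sph}(L_0^\alpha D_aw(0,\alpha,V)a_m)a_m\dd S$ vanish exactly. One small point deserves to be named among your $O(V^3)$ leftovers: the weighted term $2V^2\int_{\Sph}h_{02}\,(L_0^\alpha D_aw(0,\alpha,V)a_m)\,a_m\dd S$ is not removed by orthogonality, since multiplication by $h_{02}$ does not preserve $K_{16}^\perp$, but it is $O(V^3)$ because $D_aw(0,\alpha,V)=O(V)$.
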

\begin{proof}
	By Lemmas \ref{lem:first_simplification_of_eigenvalues}, \ref{lem:endpoint-L-expansion} and \ref{lem:expansion_of_D_aw} we have
	\begin{align*}
		\mu_m(\alpha,V) & =\int_{\Sph}L_0^\alpha[a_m+D_aw(0,\alpha,V)a_m](1+2V^2h_{02})a_m\dd S                                               \\
		                & \phantom{===}+V\int_{\Sph}L_1^\alpha[a_m+D_aw(0,\alpha,V)a_m]a_m\dd S+V^2\int_{\Sph}L_2^\alpha [a_m]a_m\dd S+O(V^3)
	\end{align*}
	uniformly in $\alpha$. Since
	\begin{align*}
		\int_{\Sph}L_0^\alpha[D_aw(0,\alpha,V)a_m]a_m\dd S=0,
	\end{align*}
	which is a consequence of $w(a,\alpha,V)\in K_{16}^\perp$ and the diagonality of $L_0^\alpha$,
	the stated expansion follows by another application of Lemma \ref{lem:expansion_of_D_aw}.
\end{proof}

Considering now $\alpha$ to be order $V^2$ close to $\alpha_*$ we can characterise the eigenvalues $\mu_m(\alpha,V)$ in terms of the quantities $\lambda_{\ell m}(\alpha)$, $b^{(m)}_{\ell\pm 1,\ell}$ and $d_{\ell m}$ from Lemmas \ref{lem:endpoint-L0}, \ref{lem:endpoint-L1} and \ref{lem:endpoint-L2-diagonal}.
\begin{corollary}\label{cor:eigenvalue_formula_general_lm}
	Let $(\ell,m)\in\set{(2,1),(7,6)}$.
	For each $C_0>0$ there is a constant $V_0>0$ such that, if
	$
		0\le V\le V_0,$ $\abs{\alpha-\alpha_*}\le C_0V^2,
	$
	then
	\begin{align}\label{eq:eigenvalue_formula_general_lm}
		\mu_m(\alpha,V)=\lambda_{\ell m}(\alpha)+V^2\left(d_{\ell m}-\sum_{n=\ell\pm 1}\frac{b^{(m)}_{\ell, n}b^{(m)}_{n,\ell }}{\lambda_{nm}(\alpha_*)}\right)+O(V^3)
	\end{align}
	{uniformly as $V\rightarrow 0$.}
\end{corollary}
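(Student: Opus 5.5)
The plan is to start from Lemma~\ref{lem:conclusion_eigenvalue_expansion} and evaluate each of its terms in the regime $\abs{\alpha-\alpha_*}\le C_0V^2$, taking $a_m$ to be a complex normalised harmonic $Y_\ell^m$. Since all operators involved are real, the quadratic-form identities extend sesquilinearly: real $a_m=\Re Y_\ell^m$ and $\Im Y_\ell^m$ combinations give the same diagonal values by the rotation invariance of Lemma~\ref{lem:derivative_G}. Hence it suffices to compute $\avg{Y_\ell^m,\cdot\,Y_\ell^m}_{\CC}$.

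\textbf{Order zero.} By Lemma~\ref{lem:endpoint-L0}, $\avg{Y_\ell^m,L_0^\alpha Y_\ell^m}_\CC=\lambda_{\ell m}(\alpha)$, and this term is kept exact in $\alpha$.

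\textbf{Order one.} Lemma~\ref{lem:endpoint-L1} (frozen at $\alpha_*$ via \eqref{eq:endpoint-L-expansion-frozen}, with error $O(V^3)$) shows that $L_1^{\alpha_*}$ shifts degree by $\pm1$. Its diagonal matrix element therefore vanishes, so there is no $V$-term.

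\textbf{Order two.} Three contributions appear.
\begin{enumerate}
\item[(i)] $\avg{Y_\ell^m,2h_{02}L_0^\alpha Y_\ell^m}_\CC=2\lambda_{\ell m}(\alpha)\avg{Y_\ell^m,h_{02}Y_\ell^m}_\CC$. Since $\lambda_{\ell m}(\alpha)=\lambda_{\ell m}(\alpha_*)+O(V^2)=O(V^2)$ for the resonant pairs $(2,1),(7,6)$, this becomes $O(V^4)$ after multiplication by $V^2$.
\item[(ii)] $\avg{Y_\ell^m,L_2^{\alpha}Y_\ell^m}_\CC=d_{\ell m}+O(V^2)$ by Lemma~\ref{lem:endpoint-L2-diagonal} and smoothness in $\alpha$.
\item[(iii)] The Schur term. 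Here $L_1^{\alpha_*}Y_\ell^m=b^{(m)}_{\ell+1,\ell}Y^m_{\ell+1}+b^{(m)}_{\ell-1,\ell}Y^m_{\ell-1}$. Both $Y_{\ell\pm1}^m$ lie in $K_{16}^\perp\cap\H^k_{\rm lin}$: for $(2,1)$ these are degrees $1,3$ with $m=1$, and for $(7,6)$ degrees $6,8$ with $m=6$ (the degree-$6$ term is present since $a^6_{7,-}\neq0$). None of them is a kernel mode, and none is $1$ or $Z$, so $\idproj_{1,6}$ acts as the identity on them. Since $L_0^\alpha$ is diagonal, $(L_0^\alpha)^{-1}Y_n^m=\lambda_{nm}(\alpha)^{-1}Y_n^m$, and $\lambda_{nm}(\alpha)=\lambda_{nm}(\alpha_*)+O(V^2)$ is bounded away from zero. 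Pairing back with $Y_\ell^m$ through $L_1^{\alpha_*}$ picks the coefficient $b^{(m)}_{\ell,n}$ of $Y_\ell^m$ in $L_1^{\alpha_*}Y_n^m$. This yields $\sum_{n=\ell\pm1}b^{(m)}_{\ell,n}b^{(m)}_{n,\ell}/\lambda_{nm}(\alpha_*)+O(V^2)$.
\end{enumerate}

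The point needing care is the pairing convention. The integral $\int a_m L a_m$ is a bilinear pairing, while with complex harmonics one must use $\avg{\cdot,\cdot}_\CC$ consistently and verify that the real-form identity passes to the Hermitian form for the real basis of $K_m$. This holds because $\mu_m$ is the same for $\Re$ and $\Im$ parts and $L$'s are real and $\so$-equivariant.

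The main obstacle is uniformity: the $O(V^3)$ remainders from Lemma~\ref{lem:conclusion_eigenvalue_expansion} must be combined with the replacement $\alpha\to\alpha_*$ in $L_1,L_2,(L_0)^{-1}$. This replacement costs $O(C_0V^2)$ and is multiplied by $V$ or $V^2$, so all errors stay $O(V^3)$ with constants depending on $C_0$. Summing the pieces gives \eqref{eq:eigenvalue_formula_general_lm}.
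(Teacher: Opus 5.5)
Your proposal is correct and follows the paper's proof essentially step by step. You start from Lemma~\ref{lem:conclusion_eigenvalue_expansion}, freeze everything except the leading term at $\alpha_*$ at cost $O(V^3)$, and use the fact that $L_1^{\alpha_*}$ has no diagonal part. You dispose of the $2h_{02}L_0$ term via $\lambda_{\ell m}(\alpha)=O(V^2)$, where the paper instead uses $a_m\in\ker L_0^{\alpha_*}$ after freezing. You then evaluate the Schur term on the complexified plane with $a_m=Y_\ell^m$ and the Hermitian pairing, which is exactly the paper's remark that the scalar block eigenvalue may be computed on $K_m\otimes_{\R}\CC$.
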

\begin{remark}
	In the positive resonant cases with $\ell=m$, see
	Section~\ref{sec:positive-resonant-density-ratios}, the lower neighbour
	$Y_{\ell-1}^{m}$ does not exist. The sum over $n=\ell\pm1$ therefore consists
	only of the term $n=\ell+1$.
\end{remark}

\begin{proof}[Proof of Corollary \ref{cor:eigenvalue_formula_general_lm}.]
	Evaluating every term except the first one in the formula of Lemma~\ref{lem:conclusion_eigenvalue_expansion} at $\alpha_*$ changes the order-$V$ and order-$V^2$ contributions by $O(V\abs{\alpha-\alpha_*})$ and $O(V^2\abs{\alpha-\alpha_*})$, respectively. Since $\abs{\alpha-\alpha_*}\le C_0V^2$, the combined additional error is $O(V^3)$. Thus
	\begin{align*}
		\mu_m(\alpha,V) & =\int_{\Sph}a_mL_0^\alpha a_m\dd S+V\int_{\Sph}a_mL_1^{\alpha_*} a_m\dd S                                                                                            \\
		                & \phantom{===}+V^2\int_{\Sph}a_m\left(2h_{02}L_0^{\alpha_*} +L_2^{\alpha_*} -L_1^{\alpha_*}(L_0^{\alpha_*})^{-1}{\idproj_{1,6}} L_1^{\alpha_*} \right)a_m\dd S+O(V^3) \\
		                & =\lambda_{\ell m}(\alpha)+V^2\left(d_{\ell m}-\int_{\Sph}a_mL_1^{\alpha_*}(L_0^{\alpha_*})^{-1}{\idproj_{1,6}} L_1^{\alpha_*}a_m\dd S\right)+O(V^3).
	\end{align*}
	For the second equality we have used the definitions of $\lambda_{\ell m}(\alpha)$ and $d_{\ell m}$ in Lemmas \ref{lem:endpoint-L0}, \ref{lem:endpoint-L2-diagonal}, as well as $a_m\in K_{16}=\ker L_0^{\alpha_*}$ and the fact that $L_1^{\alpha_*}$ has no diagonal contribution in view of Lemma \ref{lem:endpoint-L1}.

	We compute the remaining term by means of Lemmas~\ref{lem:endpoint-L1} and \ref{lem:endpoint-L0}. After complexifying the real plane $K_m$ and the real-linear operators, the same scalar block eigenvalue may be computed on $K_m\otimes_{\R}\CC$. We may therefore take $a_m=Y_\ell^m\in K_m\otimes_{\R}\CC$ and use the Hermitian pairing $\avg{\cdot,\cdot}_{\CC}$ defined above. We then have
	\begin{align*}
		(L_0^{\alpha_*})^{-1}{\idproj_{1,6}} L_1^{\alpha_*}a_m & =(L_0^{\alpha_*})^{-1}{\idproj_{1,6}}\left(b^{(m)}_{\ell+1,\ell}Y_{\ell+1}^m+b^{(m)}_{\ell-1,\ell}Y_{\ell-1}^m\right)                           \\
		                                                       & =(L_0^{\alpha_*})^{-1}\left(b^{(m)}_{\ell+1,\ell}Y_{\ell+1}^m+b^{(m)}_{\ell-1,\ell}Y_{\ell-1}^m\right)                                          \\
		                                                       & =\frac{b^{(m)}_{\ell+1,\ell}}{\lambda_{\ell+1,m}(\alpha_*)}Y_{\ell+1}^m+\frac{b^{(m)}_{\ell-1,\ell}}{\lambda_{\ell-1,m}(\alpha_*)}Y_{\ell-1}^m.
	\end{align*}
	Applying $L_1^{\alpha_*}$ once more to each of these two terms produces two further terms, of which only the return to degree $\ell$ contributes to the Hermitian pairing $\avg{Y_\ell^m,\cdot}_{\CC}$. For $Y^m_{\ell+1}$ this gives the additional factor $b^{(m)}_{\ell,\ell+1}$, while for $Y^m_{\ell-1}$ it gives the factor $b^{(m)}_{\ell,\ell-1}$.
\end{proof}

\begin{proof}[Proof of Lemma \ref{lem:expansion_mu}]
	By means of Corollary \ref{cor:eigenvalue_formula_general_lm} it remains to compute the coefficients in \eqref{eq:eigenvalue_formula_general_lm} for $(\ell,m)=(2,1)$ and $(\ell,m)=(7,6)$. We recall the definition of $a_{\ell,\pm}^m$ in \eqref{eq:a-coefficients} and $I_{\ell m}$ in \eqref{eq:endpoint-I-lm}.

	For $(\ell,m)=(2,1)$,
	\[
		(a_{2,+}^1)^2=\frac8{35},
		\qquad
		(a_{2,-}^1)^2=\frac15,
		\qquad
		I_{21}=\frac17,
	\]
	and Lemma~\ref{lem:endpoint-L2-diagonal} gives $d_{21}=-129/280$. The
	neighbouring eigenvalues are $\lambda_{11}(\alpha_*)=-6$ and
	$\lambda_{31}(\alpha_*)=7$. Using \eqref{eq:endpoint-b-coefficients},
	\[
		\frac{b_{2,1}^{(1)}b_{1,2}^{(1)}}{-\lambda_{11}(\alpha_*)}
		=
		\frac{
			\left(-\frac{9i\alpha_*}{4}a_{1,+}^1\right)
			\left(\frac{9i\alpha_*}{4}a_{2,-}^1\right)
		}{6}
		=
		\frac{81}{40},
	\]
	and
	\[
		\frac{b_{2,3}^{(1)}b_{3,2}^{(1)}}{-\lambda_{31}(\alpha_*)}
		=
		\frac{
			\left(\frac{5i\alpha_*}{2}a_{3,-}^1\right)
			\left(-\frac{5i\alpha_*}{2}a_{2,+}^1\right)
		}{-7}
		=
		-\frac{120}{49}.
	\]
	Here we used $\alpha_*^2=12$, $(a_{1,+}^1)^2=(a_{2,-}^1)^2=1/5$, and
	$(a_{3,-}^1)^2=(a_{2,+}^1)^2=8/35$. Therefore
	\[
		d_{21}
		+
		\sum_{n=1,3}\frac{b_{2,n}^{(1)}b_{n,2}^{(1)}}{-\lambda_{n1}(\alpha_*)}
		=
		-\frac{129}{280}+\frac{81}{40}-\frac{120}{49}
		=
		-\frac{867}{980}.
	\]

	For $(\ell,m)=(7,6)$,
	\[
		(a_{7,+}^6)^2=\frac{28}{255},
		\qquad
		(a_{7,-}^6)^2=\frac1{15},
		\qquad
		I_{76}=-\frac4{17},
	\]
	and Lemma~\ref{lem:endpoint-L2-diagonal} gives $d_{76}=-75447/9520$. The
	neighbouring eigenvalues are
	$\lambda_{66}(\alpha_*)=-152/7$ and $\lambda_{86}(\alpha_*)=22$. Again using
	\eqref{eq:endpoint-b-coefficients},
	\[
		\frac{b_{7,6}^{(6)}b_{6,7}^{(6)}}{-\lambda_{66}(\alpha_*)}
		=
		\frac{
			\left(-\frac{117i\alpha_*}{7}a_{6,+}^6\right)
			\left(\frac{117i\alpha_*}{7}a_{7,-}^6\right)
		}{152/7}
		=
		\frac{13689}{1330},
	\]
	and
	\[
		\frac{b_{7,8}^{(6)}b_{8,7}^{(6)}}{-\lambda_{86}(\alpha_*)}
		=
		\frac{
			\left(\frac{135i\alpha_*}{8}a_{8,-}^6\right)
			\left(-\frac{135i\alpha_*}{8}a_{7,+}^6\right)
		}{-22}
		=
		-\frac{25515}{1496}.
	\]
	Here $(a_{6,+}^6)^2=(a_{7,-}^6)^2=1/15$ and
	$(a_{8,-}^6)^2=(a_{7,+}^6)^2=28/255$. Therefore
	\[
		d_{76}
		+
		\sum_{n=6,8}\frac{b_{7,n}^{(6)}b_{n,7}^{(6)}}{-\lambda_{n6}(\alpha_*)}
		=
		-\frac{75447}{9520}+\frac{13689}{1330}-\frac{25515}{1496}
		=
		-\frac{4174947}{284240}.
	\]
	Substitution in \eqref{eq:eigenvalue_formula_general_lm} proves
	\eqref{eq:endpoint-mu1-expansion} and \eqref{eq:endpoint-mu6-expansion}.
\end{proof}

\subsection{Anisotropic blow-up variables}

We now carry out the anisotropic rescaling announced at the
beginning of this section. The following equivariant selection
estimate supplies the vanishing orders required for smoothness of the rescaled
system introduced in \eqref{eq:endpoint-anisotropic-maps-simple}, \eqref{eq:inddividual_maps}.

\begin{lemma}
	\label{lem:sharp-weight-one-secondary-variable}
	Let $E_1$ and $E_m$ be real two-dimensional rotation representations of
	weights $1$ and $m\ge2$. Let
	$\Psi=(\Psi_1,\Psi_m):E_1\oplus E_m\to E_1\oplus E_m$ be a smooth
	$\so$-equivariant map, depending smoothly on auxiliary scalar parameters,
	with
	\[
		\Psi(0)=0,
		\qquad
		D\Psi(0)=
		\mu_1\id_{E_1}\oplus\mu_m\id_{E_m}.
	\]
	Then, on a fixed small neighbourhood of $0$, uniformly in the auxiliary
	parameters,
	\begin{align}
		\label{eq:selection-secondary-forcing-simple}
		\norm{\Psi_m(a,0)}
		 & \le C\norm a^m,                                \\
		\label{eq:selection-secondary-linearisation-simple}
		\norm{D_u\Psi_m(a,u)-\mu_m\id_{E_m}}
		 & \le C\bigl(\norm a^2+\norm u\bigr),            \\
		\label{eq:selection-primary-simple}
		\norm{\Psi_1(a,0)-\mu_1a}
		 & \le C\norm a^3,                                \\
		\label{eq:selection-primary-secondary-simple}
		\norm{D_u\Psi_1(a,u)}
		 & \le C\bigl(\norm a^{m-1}+\norm a\norm u\bigr).
	\end{align}
\end{lemma}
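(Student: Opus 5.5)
The plan is to identify each $E_j$ with $\CC$, so that the action reads $\theta\cdot(z,w)=(e^{i\theta}z,e^{im\theta}w)$, and to apply invariant theory to the Taylor expansion of $\Psi$. Since $\Psi$ is smooth, for each fixed order $N$ we can write $\Psi=P_N+R_N$, where $P_N$ is the Taylor polynomial of degree $N$ in $(z,\bar z,w,\bar w)$ with coefficients smooth in the parameters, and $R_N=O(\norm{(a,u)}^{N+1})$ together with derivatives of order one lower. The exact equivariance of $\Psi$ passes to each homogeneous Taylor component, because averaging over $\so$ leaves an equivariant map unchanged. Hence every monomial $z^{p}\bar z^{q}w^{r}\bar w^{s}$ occurring in the $E_1$-component must satisfy $p-q+m(r-s)=1$, and every monomial in the $E_m$-component must satisfy $p-q+m(r-s)=m$. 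We take $N=m$ and use the fact that the neighbourhood is fixed and bounded, so that remainders of order $\ge m+1$ can be absorbed into the lower powers claimed in each estimate.

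For \eqref{eq:selection-secondary-forcing-simple} we put $u=0$, i.e. $r=s=0$. The $E_m$-component monomials then satisfy $p-q=m$, so $p\ge m$ and the degree is at least $m$; the remainder is $O(\norm a^{m+1})$. For \eqref{eq:selection-primary-simple} we again put $u=0$. The $E_1$-monomials satisfy $p-q=1$, so the degree $p+q=2q+1$ is odd. The degree-one part is $D\Psi(0)a=\mu_1 a$, and everything else has degree at least $3$.

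For the derivative estimates we differentiate in $w$ or $\bar w$, which lowers $r$ or $s$ by one. In \eqref{eq:selection-secondary-linearisation-simple} we look at the terms of $D_u\Psi_m$ of total degree $\le 1$ in $(a,u)$. Terms linear in $u$ already contribute $O(\norm u)$. The remaining dangerous terms are those of the form $z\,w$ or $\bar z\,w$ (and their conjugate counterparts) in $\Psi_m$, whose constraint reads $\pm1+m(r-s)=m$ with $r+s=1$. This is impossible for $m\ge2$, so $\Psi_m$ has no terms bilinear in $a$ and $u$ of degree one in $a$. Consequently $D_u\Psi_m-\mu_m\id$ contains only terms of degree $\ge2$ in $a$, or terms involving $u$, which gives the bound $C(\norm a^2+\norm u)$. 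For \eqref{eq:selection-primary-secondary-simple} we look at the terms of $\Psi_1$ that are linear in $u$ and depend only on $a$. With $r-s=\pm1$ these satisfy $p-q=1\mp m$, so $p+q\ge m-1$. All other terms are at least quadratic in $u$, or contain $u$ together with some power of $a$, which gives a bound $C\norm a\norm u$ after we note that terms of pure degree $\ge2$ in $u$ have $D_u=O(\norm u)$.

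This last point is the subtle one and is where I expect the main obstacle. A term of the form $w\bar w$ times $z$ is admissible and yields $\norm a\norm u$, which is fine. However, a term $\bar w\,w^2$ in $\Psi_1$ would require $m=1$ and is therefore excluded, and a pure $u$-power term needs $m(r-s)=1$, which is impossible. Checking that every $u$-quadratic term in $\Psi_1$ carries at least one factor of $a$ is exactly this divisibility argument, and it must be done carefully. The remainder estimates must also be kept uniform in the parameters. This follows because the Taylor remainder bounds depend only on $C^{m+1}$ norms on a fixed compact neighbourhood.
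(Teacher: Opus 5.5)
The weight count on Taylor monomials is the same mechanism the paper uses, and it does give \eqref{eq:selection-secondary-forcing-simple}, \eqref{eq:selection-secondary-linearisation-simple} and \eqref{eq:selection-primary-simple}. In those three estimates the truncation remainder is $O(\norm{a}^{m+1})$ or $O(\norm{a}^2+\norm{u}^2)$, and the right-hand sides dominate this on a bounded neighbourhood. For \eqref{eq:selection-secondary-linearisation-simple} you exclude bilinear $a$--$u$ terms in $\Psi_m$ directly, whereas the paper splits $D_u\Psi_m(a,0)$ into complex-linear and antilinear parts; both work.

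\textbf{The gap is in \eqref{eq:selection-primary-secondary-simple}.} Its right-hand side $C(\norm{a}^{m-1}+\norm{a}\norm{u})$ vanishes identically on the slice $a=0$. The only information you have on the remainder there is $D_uR_N(0,u)=O(\norm{u}^{N})$, and nothing in your argument shows this is zero. So the claim that remainders ``can be absorbed into the lower powers claimed in each estimate'' fails for this estimate, and enlarging $N$ does not help. Your divisibility observation ($m(r-s)=1$ has no solution) only shows that every Taylor coefficient of $u\mapsto\Psi_1(0,u)$ vanishes, i.e.\ that this map is flat at $u=0$. For a merely smooth map, flatness does not imply vanishing.

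\textbf{What is missing is an exact, non-perturbative use of the symmetry.}
\begin{itemize}
\item[(i)] Rotation through $2\pi/m$ acts trivially on $E_m$ and by $e^{2\pi i/m}\neq1$ on $E_1$. Equivariance therefore gives $\Psi_1(0,u)=e^{2\pi i/m}\Psi_1(0,u)$.
\item[(ii)] Hence $\Psi_1(0,u)\equiv0$ and $D_u\Psi_1(0,u)\equiv0$ for all small $u$. Equivalently, you can apply the same argument to the equivariant remainder $R_N=\Psi-P_N$.
\item[(iii)] Set $g(a,u):=D_u\Psi_1(a,u)-D_u\Psi_1(a,0)$. It vanishes both at $a=0$ and at $u=0$, so
\[
g(a,u)=\int_0^1\!\!\int_0^1 D_aD_ug(sa,tu)[a,u]\dd s\dd t=O(\norm{a}\norm{u}).
\]
\item[(iv)] Your monomial count at $u=0$, where the remainder is only $O(\norm{a}^{m})$, gives $D_u\Psi_1(a,0)=O(\norm{a}^{m-1})$.
\end{itemize}
Combining (iii) and (iv) yields \eqref{eq:selection-primary-secondary-simple}. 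This is exactly how the paper closes the argument.
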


\begin{proof}
	Identify $E_1$ and $E_m$ with $\CC$, with complex coordinates $z$ and $q$,
	respectively, so that rotation through $\theta$ acts by
	\[
		(z,q)\longmapsto(e^{i\theta}z,e^{im\theta}q).
	\]
	We suppress the auxiliary scalar parameters, on which rotations act
	trivially. A monomial
	\[
		z^p\bar z^r q^s\bar q^t
	\]
	acquires under rotation the factor
	\[
		e^{i\left(p-r+m(s-t)\right)\theta}.
	\]
	Since the second component transforms with weight $m$, such a monomial can
	occur in $\Psi_m$ only if
	\begin{equation}
		\label{eq:secondary-monomial-weight-rule}
		p-r+m(s-t)=m.
	\end{equation}

	At $q=0$,
	condition
	\eqref{eq:secondary-monomial-weight-rule} becomes
	\begin{align}\label{eq:monomoiallalalal}
		p-r=m.
	\end{align}
	Consequently \(p+r\ge m\), so every Taylor term in
	$\Psi_m(z,0)$ of degree less than $m$ vanishes. Taylor's theorem gives
	\[
		\norm{\Psi_m(z,0)}\le C\abs{z}^m
	\]
	uniformly in the omitted parameters.

	We next consider the derivative in the secondary variable. At $q=0$, every
	real-linear map from the $q$-plane to itself has a unique decomposition
	\[
		D_q\Psi_m(z,0)[v]
		=
		A(z,\bar z)v+B(z,\bar z)\bar v,
		\qquad v\in\CC.
	\]
	Differentiating the equivariance relation in the direction $v$ gives
	\[
		D_q\Psi_m(e^{i\theta}z,0)[e^{im\theta}v]
		=
		e^{im\theta}D_q\Psi_m(z,0)[v].
	\]
	Comparison of the complex-linear and anti-complex-linear parts yields
	\[
		A(e^{i\theta}z,e^{-i\theta}\bar z)=A(z,\bar z),
		\qquad
		B(e^{i\theta}z,e^{-i\theta}\bar z)
		=
		e^{2im\theta}B(z,\bar z).
	\]
	Thus a Taylor monomial $z^p\bar z^r$ in $A$ must satisfy $p-r=0$.
	Every nonconstant such monomial has degree at least two, and hence
	\[
		A(z,\bar z)=A(0,0)+O(\abs{z}^2).
	\]
	A monomial in $B$ must instead satisfy $p-r=2m$, so its degree is at least
	$2m$. Therefore
	\[
		B(z,\bar z)=O(\abs{z}^{2m})=O(\abs{z}^2).
	\]
	The identity
	\[
		D_q\Psi_m(0,0)=\mu_m\id_{E_m}
	\]
	means that $A(0,0)=\mu_m$ and $B(0,0)=0$. It follows that
	\[
		\norm{D_q\Psi_m(z,0)-\mu_m\id_{E_m}}
		\le C\abs{z}^2.
	\]
	Finally, smoothness and the mean-value theorem give
	\[
		\norm{D_q\Psi_m(z,q)-D_q\Psi_m(z,0)}
		\le C\abs{q}.
	\]
	All these estimates are uniform  in the auxiliary parameters.
	Returning to the real variables $a$ and $u$ proves
	\eqref{eq:selection-secondary-linearisation-simple}.

	For the first component, the analogue of condition \eqref{eq:monomoiallalalal} is $p-r=1$. Thus, the first nonlinear pure \(z\)-monomial has
	degree three, which gives
	\eqref{eq:selection-primary-simple}. At \(u=0\), for a coefficient of $\Psi_1$,
	linear in \(q\), the analogue of \eqref{eq:secondary-monomial-weight-rule} is $p-r\pm m=1$. It therefore contains at least \(m-1\) factors of
	\(z,\bar z\). Hence
	\[
		\norm{D_u\Psi_1(a,0)}
		\le C\norm a^{m-1}.
	\]

	Moreover, rotation through \(2\pi/m\) fixes every \(u\in E_m\) and
	has no nonzero fixed vector in \(E_1\). Thus
	\[
		\Psi_1(0,u)=0,
		\qquad
		D_u\Psi_1(0,u)=0.
	\]
	The difference
	\[
		D_u\Psi_1(a,u)-D_u\Psi_1(a,0)
	\]
	vanishes both when \(a=0\) and when \(u=0\). Taylor expansion therefore gives
	\[
		\norm{D_u\Psi_1(a,u)-D_u\Psi_1(a,0)}
		\le C\norm a\norm u.
	\]
	This proves
	\eqref{eq:selection-primary-secondary-simple}.
\end{proof}

For $a\in K_1$ and $u\in K_6$, define the two components of the reduced map by
\begin{equation}\label{eq:inddividual_maps}
	\Phi_j(a,u,\alpha,V)
	:=
		{\proj_j}\cG(a+u,\alpha,V),
	\qquad j\in\set{1,6}.
\end{equation}
They satisfy the estimates of
Lemma~\ref{lem:sharp-weight-one-secondary-variable} with $m=6$. Indeed,
$\cG$ is smooth, equivariant, and is the gradient of the reduced invariant
functional, while Lemma~\ref{lem:derivative_G} gives its linearisation at the
axisymmetric branch. Before turning to the nonlinear problem, having the bifurcation in mind we first prepare the eigenvalues of the linearisation of $\cG$ by means of a suitable frequency choice $\alpha_1(V)$.

Set
\begin{equation}
	\label{eq:endpoint-kappas-simple}
	\gamma_6
	:=
	\frac{4174947}{284240}
	-\frac{27}{2}\frac{867}{980}
	=
	\frac{38228049}{13927760}
	>0.
\end{equation}

\begin{lemma}\label{lem:endpoint-crossings-simple}
	There are $V_1,c_1,c_6,c_*>0$ and a smooth function $\alpha_1(V)$ such
	that
	\begin{equation}
		\label{eq:endpoint-alpha1-simple}
		\mu_1(\alpha_1(V),V)=0,
		\qquad
		\alpha_1(V)
		=
		\alpha_*
		-\frac{2601}{1960}\frac{V^2}{2\sqrt3}
		+O(V^3).
	\end{equation}
	For $0<V<V_1$ and
	$\abs{\alpha-\alpha_1(V)}\le c_1V^2$,
	\begin{equation}
		\label{eq:endpoint-crossing-bounds-simple}
		\abs{\mu_6(\alpha,V)}\ge c_6V^2,
		\qquad
		\partial_\alpha\mu_1(\alpha,V)\le-c_*.
	\end{equation}
\end{lemma}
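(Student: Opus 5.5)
The function $\alpha_1$ will come from the implicit-function theorem applied to a rescaled version of $\mu_1$, after which both bounds follow from Lemma~\ref{lem:expansion_mu} and the explicit eigenvalues at $\tau=0$.

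\textbf{Setting up the rescaled equation.} Write $\alpha=\alpha_*+V^2\beta$ with $\abs\beta\le C_0$, and define
\[
m_1(\beta,V):=V^{-2}\mu_1(\alpha_*+V^2\beta,V),\qquad V\neq0 .
\]
At $\tau=0$ we have $\lambda_{21}(\alpha)=4-\alpha^2/3$. Hence
\[
\lambda_{21}(\alpha_*+V^2\beta)=-\tfrac{2\alpha_*}{3}V^2\beta+O(V^4).
\]
Combining this with \eqref{eq:endpoint-mu1-expansion} gives
\[
m_1(\beta,V)=-\tfrac{2\alpha_*}{3}\beta-\tfrac{867}{980}+O(V).
\]
To apply the implicit-function theorem I need $m_1$ to be smooth up to $V=0$, not merely continuous with an $O(V)$ remainder.

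\textbf{Smoothness of $m_1$ at $V=0$.} Since $\mu_1$ is smooth in $(\alpha,V)$, I would Taylor expand it in $V$ at fixed $\alpha$:
\[
\mu_1(\alpha,V)=\mu_1(\alpha,0)+V\partial_V\mu_1(\alpha,0)+V^2R(\alpha,V),
\]
with $R$ smooth. The terms are identified as follows.
\begin{itemize}
\item $\mu_1(\alpha,0)=\lambda_{21}(\alpha)$ exactly, from Lemma~\ref{lem:derivative_G} together with $h^{\rm ax}_{0,0}=0$ and $D_aw(0,\alpha,0)=0$.
\item $\partial_V\mu_1(\alpha,0)=0$ for all $\alpha$. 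This follows from Lemma~\ref{lem:conclusion_eigenvalue_expansion}, whose $O(V)$ term is the diagonal of $L_1^\alpha$. That diagonal vanishes for every $\alpha$: $L_1^\alpha$ is linear in $\alpha$, its $V$-coefficient is proportional to $\alpha$, and the argument of Lemma~\ref{lem:endpoint-L1} works for any $\alpha$ and shifts the degree by $\pm1$.
\end{itemize}
Consequently $m_1(\beta,V)=V^{-2}\lambda_{21}(\alpha_*+V^2\beta)+R(\alpha_*+V^2\beta,V)$ extends smoothly to $V=0$. Its partial derivative in $\beta$ there is $-2\alpha_*/3\neq0$.

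\textbf{Constructing $\alpha_1$.} The implicit-function theorem now gives a smooth root $\beta(V)$ with $\beta(0)=-\tfrac{3}{2\alpha_*}\cdot\tfrac{867}{980}$. With $\alpha_*=2\sqrt3$ this equals $-\tfrac{2601}{1960}\cdot\tfrac{1}{2\sqrt3}$, using $3\cdot867=2601$. Setting $\alpha_1(V):=\alpha_*+V^2\beta(V)$ yields $\mu_1(\alpha_1(V),V)=0$ and \eqref{eq:endpoint-alpha1-simple}, since $\beta(V)=\beta(0)+O(V)$.

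\textbf{The derivative bound on $\mu_1$.} I will not differentiate the asymptotic expansion in $\alpha$, because that is not justified. Instead I use smoothness directly: $\partial_\alpha\mu_1$ is continuous, and at $(\alpha_*,0)$ it equals $\lambda_{21}'(\alpha_*)=-8/\alpha_*$ by \eqref{eq:transversality}. Shrinking $V_1$ therefore gives $\partial_\alpha\mu_1\le-4/\alpha_*=:-c_*$ in the regime.

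\textbf{The bound on $\mu_6$.} Let $\abs{\alpha-\alpha_1(V)}\le c_1V^2$. Then $\alpha-\alpha_*=V^2(\beta(0)+\theta)$ with $\abs\theta\le c_1+O(V)$, so Lemma~\ref{lem:expansion_mu} applies with some fixed $C_0$. Since $\lambda_{76}(\alpha)=54-\alpha^2\cdot\tfrac{36}{8}$ and $\alpha_*=2\sqrt3$, we have $\lambda_{76}(\alpha_*)=0$ and $\lambda_{76}'(\alpha_*)=-9\alpha_*$. This gives
\[
\mu_6=-9\alpha_*V^2(\beta(0)+\theta)-\tfrac{4174947}{284240}V^2+O(V^3).
\]
Now $-9\alpha_*\beta(0)=\tfrac{27}{2}\cdot\tfrac{867}{980}$, so the $V^2$-coefficient is $-\gamma_6-9\alpha_*\theta$. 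Because $\gamma_6>0$ by \eqref{eq:endpoint-kappas-simple}, choosing $c_1$ with $9\alpha_*c_1\le\gamma_6/4$ and then $V_1$ small gives $\abs{\mu_6}\ge\tfrac{\gamma_6}{2}V^2=:c_6V^2$.

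\textbf{Main obstacle.} The delicate step is the smoothness of $m_1$ at $V=0$. It requires the exact vanishing of the $V$-linear part of $\mu_1$ for all $\alpha$ near $\alpha_*$, not just at $\alpha_*$. If that argument fails, I would instead solve $\mu_1(\alpha,V)=0$ by a contraction argument in $\beta$: this uses only the uniform expansion of Lemma~\ref{lem:expansion_mu} and the derivative bound above, and yields a continuous $\alpha_1$ with the stated expansion. Smoothness of $\alpha_1$ for $V>0$ then follows from the ordinary implicit-function theorem, since $\partial_\alpha\mu_1\neq0$.
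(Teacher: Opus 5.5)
Your proof is correct, but it reaches $\alpha_1$ by a different route than the paper.

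The paper applies the implicit-function theorem directly to $\mu_1(\alpha,V)=0$ at $(\alpha_*,0)$. No rescaling is needed, because $\mu_1$ is smooth and $\partial_\alpha\mu_1(\alpha_*,0)=\lambda_{21}'(\alpha_*)=-8/\alpha_*\neq0$. It then reads off \eqref{eq:endpoint-alpha1-simple} from \eqref{eq:endpoint-mu1-expansion}. That step implicitly needs $\alpha_1(V)-\alpha_*=O(V^2)$ to enter the window of Lemma~\ref{lem:expansion_mu}. This follows from $\alpha_1'(0)=-\partial_V\mu_1(\alpha_*,0)/\partial_\alpha\mu_1(\alpha_*,0)=0$, i.e.\ from the vanishing diagonal of $L_1^{\alpha}$ at the single value $\alpha=\alpha_*$.

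You instead solve for $\beta=(\alpha-\alpha_*)/V^2$. This forces you to show that $V^{-2}\mu_1(\alpha_*+V^2\beta,V)$ is smooth through $V=0$. You therefore need the stronger facts $\mu_1(\alpha,0)=\lambda_{21}(\alpha)$ and $\partial_V\mu_1(\alpha,0)=0$ for all $\alpha$ near $\alpha_*$. Your justification is sound: $L_1^\alpha=(\alpha/\alpha_*)L_1^{\alpha_*}$, and the degree-shifting structure of Lemma~\ref{lem:endpoint-L1} gives the vanishing diagonal.

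What your route buys is that the $O(V^2)$ localisation and the value $\beta(0)=-\frac{3}{2\alpha_*}\cdot\frac{867}{980}$ come out of the implicit-function theorem directly. This is the same rescaled form the paper later uses for $M_1$ in Proposition~\ref{prop:endpoint-anisotropic-reduction-simple}. What the paper's route buys is economy. The step you flag as the main obstacle is therefore not a real one, and the contraction fallback is unnecessary: the unscaled implicit-function theorem already gives a smooth $\alpha_1$.

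Your treatment of the two bounds in \eqref{eq:endpoint-crossing-bounds-simple} matches the paper's argument, with the details made explicit. These are continuity of $\partial_\alpha\mu_1$ at $(\alpha_*,0)$, and linearising $\lambda_{76}$ to get the coefficient $-\gamma_6-9\alpha_*\theta$.
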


\begin{proof}
	At $\tau=0$,
	\[
		\lambda_{21}(\alpha)=4-\frac{\alpha^2}{3},
		\qquad
		\lambda_{76}(\alpha)=54-\frac92\alpha^2
	\]
	by \eqref{eq:lambda-lm}.
	Thus \eqref{eq:endpoint-mu1-expansion} and
	$\partial_\alpha\lambda_{21}(\alpha_*)=-2\alpha_*/3\ne0$ give
	$\alpha_1$ by the implicit-function theorem and yield
	\eqref{eq:endpoint-alpha1-simple}. Substitution in
	\eqref{eq:endpoint-mu6-expansion} gives
	\[
		\mu_6(\alpha_1(V),V)
		=
		-\gamma_6V^2+O(V^3).
	\]
	The positivity in \eqref{eq:endpoint-kappas-simple}, continuity of the
	$\alpha$-derivatives, and a sufficiently small choice of $c_1$ now imply
	\eqref{eq:endpoint-crossing-bounds-simple}.
\end{proof}

\begin{proposition}
	\label{prop:endpoint-anisotropic-reduction-simple}
	Put $e:=XZ\in K_1$. After decreasing $V_1$, there are constants
	$\eta_{\rm an},\delta_{\rm an}>0$ and $0<c_{\rm an}<c_1$, together with
	smooth maps
	\[
		U_*:(-\eta_{\rm an},\eta_{\rm an})\times[0,V_1)\longrightarrow K_6,
		\qquad
		\beta:(-\eta_{\rm an},\eta_{\rm an})\times[0,V_1)\longrightarrow\R
	\]
	with
	$
		U_*(0,V)=0$, $
		\beta(0,V)=0
	$,
	and such that
	\begin{equation}
		\label{eq:endpoint-anisotropic-variables-simple}
		a=Vte,
		\qquad
		u=V^2U_*(t,V),
		\qquad
		\alpha=\alpha_1(V)+V^2\beta(t,V)
	\end{equation}
	defined for $0<V<V_1$ and $0<\abs t<\eta_{\rm an}$, satisfy
	\begin{align*}
		\cG(a+u,\alpha,V)=0.
	\end{align*}
	For these $V$ and $t$, the pair $(U_*(t,V),\beta(t,V))$ is the unique pair
	in the regime
	$\norm U_{\L^2}<\delta_{\rm an}$, $\abs{\widetilde\beta}<c_{\rm an}$
	that solves this equation after the substitutions
	$u=V^2U$ and $\alpha=\alpha_1(V)+V^2\widetilde\beta$.
	Moreover,
	\begin{equation}
		\label{eq:endpoint-anisotropic-estimates-simple}
		\norm{U_*(t,V)}_{\L^2}\le CV^2\abs t^6,
		\qquad
		\abs{\beta(t,V)}\le Ct^2.
	\end{equation}
	Consequently, the variables in
	\eqref{eq:endpoint-anisotropic-variables-simple} satisfy
	\begin{equation}
		\label{eq:endpoint-u-bounds-simple}
		\norm u_{\L^2}
		\le C\frac{\norm a_{\L^2}^6}{V^2}
	\end{equation}
	and
	\begin{equation}
		\label{eq:endpoint-alpha-selection-simple}
		\alpha=\alpha_1(V)+O(\norm a_{\L^2}^2).
	\end{equation}
	For $0<V<V_1$ and $t=0$, the original kernel equation $\cG=0$ instead has
	$U=0$ as the
	unique solution in $\norm U_{\L^2}<\delta_{\rm an}$ for
	every $\abs{\widetilde\beta}<c_{\rm an}$. Thus the original equation does
	not select the angular velocity on the axisymmetric set.
\end{proposition}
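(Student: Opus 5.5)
We substitute the anisotropic variables \eqref{eq:endpoint-anisotropic-variables-simple} into the two components $\Phi_1,\Phi_6$ of \eqref{eq:inddividual_maps} and divide by the appropriate powers of $V$, so that the result is a map that extends smoothly to $V=0$. Concretely, for $0<V<V_1$ and $t\ne0$ we set
\begin{equation*}
	\cH_6(t,U,\widetilde\beta,V):=V^{-4}\Phi_6\bigl(Vte,V^2U,\alpha_1(V)+V^2\widetilde\beta,V\bigr),
	\qquad
	\cH_1(t,U,\widetilde\beta,V):=(V^3t)^{-1}\bigl\langle\Phi_1(\cdots),e\bigr\rangle_{\L^2}.
\end{equation*}
By Remark~\ref{rem:LS_higher_dimensional_kernel}, once $\Phi_6=0$ and $\langle\Phi_1,e\rangle=0$ hold with $a=Vte\neq0$, the orbit identity forces $\Phi_1=0$. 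So these two scalar/plane equations suffice.

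\textbf{Smooth extension.} We write $\Phi_6(a,u)=\Phi_6(a,0)+\int_0^1D_u\Phi_6(a,\sigma u)\,\ddd\sigma\,[u]$ and apply Lemma~\ref{lem:sharp-weight-one-secondary-variable} with $m=6$. Equivariance and Taylor's theorem give $\Phi_6(a,0)=O(\norm a^6)=O(V^6t^6)$. The linear part equals $\mu_6V^2U+O((V^2t^2+V^2\norm U)V^2\norm U)$. Hence $\cH_6=V^{-2}\mu_6(\alpha,V)U+O(V^2t^6)+O(t^2+\norm U)\norm U$.

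Similarly, for the primary component we use $\Phi_1(a,0)=\mu_1a+O(\norm a^3)$, where the cubic term equals $A(\norm a^2,\alpha,V)a$-type radial terms of the form $c(\alpha,V)V^3t^3e$. We also use $D_u\Phi_1=O(\norm a^5+\norm a\norm u)$, which contributes $O(V^7t^5+V^3t\norm U)V^2\norm U$ and after division by $V^3t$ is $O(V^2\norm U)$. For the leading term, $\mu_1(\alpha_1+V^2\widetilde\beta,V)=V^2\widetilde\beta\,\partial_\alpha\mu_1+O(V^4\widetilde\beta^2)$. Thus $\cH_1=\widetilde\beta\,\partial_\alpha\mu_1(\alpha_1,V)+\kappa(V)t^2+O(\dots)$.

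To make these statements rigorous as smooth maps we write the Taylor remainders with integral formulas, using the vanishing orders guaranteed by the monomial weight rules in the proof of Lemma~\ref{lem:sharp-weight-one-secondary-variable}. Each coefficient is then a smooth function of $(t,U,\widetilde\beta,V)$, including $t=0$ and $V=0$. The quotient $V^{-2}\mu_6(\alpha_1+V^2\widetilde\beta,V)$ is smooth up to $V=0$ with limit $-\gamma_6+O(\widetilde\beta)$. This follows from Lemma~\ref{lem:expansion_mu} together with a smooth version of its remainder, obtained from Lemma~\ref{lem:endpoint-crossings-simple} and the fact that $\mu_6$ is smooth in $(\alpha,V)$ and even in $V$ to the relevant order.

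\textbf{Main obstacle: smoothness of the rescaled system at $V=0$.} This is where I expect the difficulty. The error terms in Lemma~\ref{lem:expansion_mu} are stated only as $O(V^3)$, not as smooth functions. I would handle this by expressing $\mu_6(\alpha_1(V)+V^2\widetilde\beta,V)=\mu_6(\alpha_1(V),V)+V^2\widetilde\beta\int_0^1\partial_\alpha\mu_6\,\ddd\sigma$, and noting that $\mu_6(\alpha_1(V),V)/V^2$ is $C^1$ on $[0,V_1)$ provided $\mu_6$ is smooth and $\mu_6(\alpha_*,0)=\partial_V\mu_6(\alpha_*,0)=0$. The vanishing of $\partial_V\mu_6$ holds because $L_1$ has no diagonal part. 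If full smoothness in $V$ fails, $C^1$ regularity suffices for the implicit-function theorem.

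\textbf{Solving the rescaled system.} At $(t,U,\widetilde\beta,V)=(0,0,0,0)$ the system is solved by $(U,\widetilde\beta)=(0,0)$. Its linearisation in $(U,\widetilde\beta)$ is the block-triangular matrix $\operatorname{diag}(-\gamma_6\id_{K_6},\partial_\alpha\lambda_{21}(\alpha_*))$, which is invertible by \eqref{eq:endpoint-kappas-simple} and \eqref{eq:endpoint-crossing-bounds-simple}. The implicit-function theorem therefore yields smooth functions $U_*(t,V)$ and $\beta(t,V)$, together with uniqueness in the ball of radii $\delta_{\rm an},c_{\rm an}$.

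\textbf{Estimates.} At $t=0$ the equation $\cH_6=0$ reduces to $V^{-2}\mu_6U+O(\norm U^2)=0$, forcing $U=0$; by uniqueness $U_*(0,V)=0$ and $\beta(0,V)=0$. Moreover, since $\cH_6$ with $U=0$ is $O(V^2t^6)$ and the inverse is bounded, $\norm{U_*}\le CV^2t^6$. The equation $\cH_1=0$ together with evenness in $t$ (rotation by $\pi$ maps $t\mapsto-t$) gives $\beta=O(t^2)$. Unwinding the rescaling with $\norm a\sim V\abs t$ yields \eqref{eq:endpoint-u-bounds-simple} and \eqref{eq:endpoint-alpha-selection-simple}. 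Finally, for $t=0$ the $\cH_1$ equation was divided by $t$, so the original kernel equation imposes no constraint on $\widetilde\beta$; the $\Phi_6$ equation alone gives $U=0$ for every $\widetilde\beta$.
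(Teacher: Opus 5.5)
Your proposal is correct and is essentially the paper's proof. It uses the same rescaled maps $V^{-4}\Phi_6$ and $(V^3t)^{-1}\langle\Phi_1,e\rangle_{\L^2}$, the exact integral-remainder factorisations from the weight rules of Lemma~\ref{lem:sharp-weight-one-secondary-variable}, the smooth extension of $\mu_j(\alpha_1(V)+V^2\widetilde\beta,V)/V^2$, the diagonal Jacobian $\operatorname{diag}(-\gamma_6\id_{K_6},-2\alpha_*/3)$ (up to your factor $\norm{e}_{\L^2}^2$), absorption for $\norm{U_*}_{\L^2}\le CV^2\abs t^6$, and Remark~\ref{rem:LS_higher_dimensional_kernel} for $t\neq0$.

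The differences are minor. Your $C^1$ fallback is unnecessary, and it would in any case fall short of the asserted smoothness: $f(V):=\mu_6(\alpha_1(V),V)$ is smooth with $f(0)=f'(0)=0$, so $f(V)/V^2=\int_0^1(1-r)f''(rV)\dd r$ is smooth, which is exactly the paper's $\nu_6$. Your evenness-in-$t$ argument (rotation by $\pi$ flips $e$ and fixes $K_6$, so $\beta$ is even with $\beta(0,V)=0$) is a valid substitute for the paper's direct bound on $\abs{\beta}$, which it reads off from $\mathfrak F_1=M_1+t^2r_1+V^4t^4\mathcal B_1[U]+V^2\mathcal C_1[U,U]$.
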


\begin{proof}
	Instead of $\cG=0$, or equivalently $\Phi_1=0$, $\Phi_6=0$, cf. \eqref{eq:inddividual_maps}, we consider the system
	\begin{align}
		\label{eq:endpoint-anisotropic-maps-simple}
		\mathfrak F_6(t,U,\widetilde\beta,V)
		 & :=V^{-4}\Phi_6
		(Vte,V^2U,\alpha_1(V)+V^2\widetilde\beta,V), \\
		\mathfrak F_1(t,U,\widetilde\beta,V)
		 & :=
		\frac{\left\langle
			\Phi_1(Vte,V^2U,\alpha_1(V)+V^2\widetilde\beta,V),e
			\right\rangle_{\L^2}}
		{V^3t\norm{e}_{\L^2}^2}, \nonumber
	\end{align}
	which is well-defined as long as $tV\neq 0$.

	We first show that the rescaled maps extend smoothly when $V,t\rightarrow 0$. The weight selection in
	Lemma~\ref{lem:sharp-weight-one-secondary-variable} gives the required
	vanishing orders. Indeed, by said Lemma Taylor's theorem with integral remainder, applied
	successively in $x$ and $u$ with $p=(\alpha,V)$ as a parameter,
	gives
	smooth coefficient maps for which
	\begin{align}
		\label{eq:endpoint-exact-factorisations-simple}
		\Phi_6(xe,0,p)
		 & =x^6R_6(x,p),                                        \\
		D_u\Phi_6(xe,u,p)
		 & =\mu_6(p)\id_{K_6}+x^2B_6(x,p)
		+C_6(x,u,p)[u,\mathord\cdot],\nonumber                  \\
		\Phi_1(xe,0,p)
		 & =\mu_1(p)xe+x^3R_1(x,p),\nonumber                    \\
		D_u\Phi_1(xe,u,p)
		 & =x^5B_1(x,p)+xC_1(x,u,p)[u,\mathord\cdot]. \nonumber
	\end{align}
	Here $C_j(x,u,p)[u,\mathord\cdot]$ is linear in the slot denoted by
	$\mathord\cdot$. These are exact identities. For example, the first and third
	identities follow by Taylor expansion in $x$. For the
	second identity, first factor
	$D_u\Phi_6(xe,0,p)-\mu_6(p)\id_{K_6}$ by $x^2$, and then factor its change
	from $u=0$ by $u$. For the fourth identity, factor
	$D_u\Phi_1(xe,0,p)$ by $x^5$. Its remaining change vanishes when $x=0$
	and when $u=0$, and therefore has the displayed factors $x$ and $u$.

	Next define, initially for $V\ne0$,
	\begin{equation}
		\label{eq:endpoint-rescaled-eigenvalues-simple}
		M_j(\widetilde\beta,V)
		:=\frac{\mu_j(\alpha_1(V)+V^2\widetilde\beta,V)}{V^2},
		\qquad j\in\set{1,6}.
	\end{equation}
	Using \eqref{eq:endpoint-alpha1-simple} and the fundamental theorem of
	calculus in $\alpha$, we obtain
	\[
		M_1(\widetilde\beta,V)
		=
		\widetilde\beta\int_0^1
		\partial_\alpha\mu_1
		(\alpha_1(V)+rV^2\widetilde\beta,V)\dd r.
	\]
	Moreover, the expansion in
	Lemma~\ref{lem:endpoint-crossings-simple}, together with Taylor's theorem
	with integral remainder applied to the smooth function
	$V\mapsto\mu_6(\alpha_1(V),V)$, gives a smooth function $\nu_6$ such that
	\[
		\mu_6(\alpha_1(V),V)=V^2\nu_6(V),
		\qquad
		\nu_6(0)=-\gamma_6.
	\]
	The mean-value formula in the $\alpha$ variable then gives
	\[
		M_6(\widetilde\beta,V)
		=
		\nu_6(V)+\widetilde\beta\int_0^1
		\partial_\alpha\mu_6
		(\alpha_1(V)+rV^2\widetilde\beta,V)\dd r.
	\]
	Thus both functions in \eqref{eq:endpoint-rescaled-eigenvalues-simple}
	extend smoothly to $V=0$.
	At the origin,
	\begin{equation}\label{eq:important_quantities_IFT}
		\partial_{\widetilde\beta}M_1(0,0)
		=-\frac{2\alpha_*}{3},
		\qquad
		M_6(0,0)=-\gamma_6.
	\end{equation}

	Now the first two identities in \eqref{eq:endpoint-exact-factorisations-simple} give
	\begin{align}\label{eq:endpoint-anisotropic-expanded-simple}
		\mathfrak F_6 & =\frac{1}{V^4}\left(\Phi_6(Vte,0,\alpha_1(V) \ldots)+\int_0^1D_u\Phi_6(Vte,\xi V^2U,\alpha_1(V)\ldots)[V^2U]\dd \xi\right) \\
		              & =M_6U+V^2t^6\mathcal R_6
		+t^2\mathcal B_6[U]+\mathcal C_6[U,U]\nonumber
	\end{align}
	with suitable coefficient functions depending smoothly on $(t,U,\widetilde\beta,V)$. In a similar way, the last two identities in \eqref{eq:endpoint-exact-factorisations-simple} give
	\begin{align}\label{eq:endpoint-anisotropic-expanded-simple_F1}
		\mathfrak F_1
		 & =M_1+t^2r_1
		+V^4t^4\mathcal B_1[U]+V^2\mathcal C_1[U,U].
	\end{align}

	Also here,
	all coefficient maps on the right-hand side are smooth in
	$(t,U,\widetilde\beta,V)$. Thus
	\eqref{eq:endpoint-anisotropic-maps-simple} has a smooth extension through
	$Vt=0$. At $(t,U,\widetilde\beta,V)=(0,0,0,0)$,
	\[
		D_{(U,\widetilde\beta)}(\mathfrak F_6,\mathfrak F_1)
		=
		\begin{pmatrix}
			-\gamma_6\id_{K_6} & 0            \\
			0                  & -2\alpha_*/3
		\end{pmatrix}
	\]
	by \eqref{eq:important_quantities_IFT}.
	This derivative is invertible. The finite-dimensional implicit-function
	theorem now gives the unique smooth pair
	$(U_*(t,V),\beta(t,V))$ in a fixed box.

	We record the quantitative bounds. After decreasing the fixed box,
	$M_6$ is uniformly invertible. Equation
	\eqref{eq:endpoint-anisotropic-expanded-simple} gives
	\[
		\norm{U_*}
		\le C\bigl(V^2\abs t^6+t^2\norm{U_*}
		+\norm{U_*}^2\bigr).
	\]
	Absorbing the last two terms proves the first estimate in
	\eqref{eq:endpoint-anisotropic-estimates-simple}. The integral formula for
	$M_1$ has the form $M_1=\widetilde\beta A(\widetilde\beta,V)$, where
	$\abs A$ is bounded below, see \eqref{eq:endpoint-crossing-bounds-simple}. Meanwhile, equation \eqref{eq:endpoint-anisotropic-expanded-simple_F1} gives
	\[
		\abs\beta
		\le C\bigl(t^2+V^4t^4\norm{U_*}
		+V^2\norm{U_*}^2\bigr)
		\le Ct^2.
	\]
	This proves the second estimate. Since $e$ is fixed and nonzero, the two
	physical estimates follow from $a=Vte$ and $u=V^2U_*$.

	At \(t=0\), the pair
	\((U,\widetilde\beta)=(0,0)\) solves the smoothly extended
	system. Uniqueness in the implicit-function theorem therefore gives
	\[
		U_*(0,V)=0,
		\qquad
		\beta(0,V)=0.
	\]

	It remains to compare the rescaled system with the full kernel
	equation. Suppose that \(t\ne0\). Then
	\(\mathfrak F_6=0\) gives \(\Phi_6=0\), while
	\(\mathfrak F_1=0\) gives
	\[
		\left\langle\Phi_1,a\right\rangle_{\L^2}=0,
	\]
	since \(a=Vte\) and \(Vt\ne0\).
	Remark~\ref{rem:LS_higher_dimensional_kernel}, applied with
	\(E=K_1\) and \(F=K_6\), therefore gives
	\[
		\Phi_1=\Phi_6=0.
	\]
	Thus, for \(t\ne0\), the rescaled system is equivalent to the
	full kernel equation.

	At \(t=0\), we return to the original kernel equation. Any solution
	must satisfy its weight-six component. After substituting \(u=V^2U\)
	and dividing by \(V^4\), this equation has the form
	\[
		M_6(\widetilde\beta,V)U+O(\norm U^2)=0.
	\]
	The crossing bound \eqref{eq:important_quantities_IFT} makes the linear term uniformly invertible, and a
	contraction argument therefore gives \(U=0\) as the unique solution in
	the fixed scaled ball.
\end{proof}

\subsection{Conclusion and local classification}

Define the endpoint coefficient
\begin{equation}
	\label{eq:endpoint-h11-ep-simple}
	h_{11}^{\rm ep}
	:=
	-\frac{9}{20\sqrt3}Y
	-\frac{5\sqrt3}{7}Y\left(Z^2-\frac15\right).
\end{equation}

\begin{proof}[Proof of Theorem~\ref{thm:endpoint-din-zero}]
	Choose $V_0\le V_1$ and $0<\eta_0<\eta_{\rm an}$. For
	$\abs s\le\eta_0V$, set
	\[
		t=\frac{s}{V},
		\qquad
		a=sXZ,
		\qquad
		u=V^2U_*(t,V),
		\qquad
		\alpha(s,V)=\alpha_1(V)+V^2\beta(t,V),
	\]
	and
	\[
		\xi(s,V)=a+u+w(a+u,\alpha(s,V),V),
	\]
	and $h(s,V):=\Theta(\xi(s,V))$. The definition of $w$ solves the
	$K_{16}^{\perp}$ equation. Proposition~\ref{prop:endpoint-anisotropic-reduction-simple}
	solves the remaining two kernel equations jointly.
	Thus
	\[
		\cF(\xi(s,V),\alpha(s,V),V)=0.
	\]
	Equation~\eqref{eq:recalled_relation_F_B} gives a solution of the Bernoulli
	free-boundary problem.

	First of all, \eqref{eq:endpoint-alpha1-simple} and
	\eqref{eq:endpoint-alpha-selection-simple} give the improved asymptotics
	\[
		\alpha(s,V)=\alpha_*-\frac{2601}{1960}\frac{V^2}{2\sqrt{3}}+O(s^2+V^3).
	\] Moreover,
	since \(K_6\) is finite-dimensional,
	\eqref{eq:endpoint-u-bounds-simple} and \(a=sXZ\) give
	\[
		\norm{u}_{\H^{k+2}}
		\le C\frac{\abs{s}^6}{V^2}
		\le C\abs{s}^2,
		\qquad
		\abs{s}\le\eta_0V.
	\]
	Thus the \(K_6\)-component is absorbed into the quadratic
	remainder.

	Using Lemmas~\ref{lem:endpoint_axisymmetric_branch_with_w}, \ref{lem:expansion_of_D_aw}, and
	\eqref{eq:Theta-expansion}, we therefore obtain,
	\begin{align*}
		h(s,V)
		=
		h^{\rm ax}_{0,V}
		+sXZ				+O_{\H^{k+2}}\bigl(s^2+\abs{s}V\bigr).
	\end{align*}
	Expanding also the axisymmetric branch by means of \eqref{eq:axisymmetric-branch-expansion}, this is precisely the starting point of the order-\(sV\) calculation in the proof of
	Lemma~\ref{lem:h11-coefficient-main-proof}. Its remaining part applies unchanged:
	the additional \(K_6\)-component is of quadratic order, and the
	angular velocity has no term linear in \(V\). Substituting
	\[
		\tau=0,
		\qquad
		\alpha_*=2\sqrt3,
		\qquad
		\lambda_{31}(\alpha_*)=7
	\]
	in the formula of that lemma gives
	\[
		h_{11}
		=
		-\frac{9}{20\sqrt3}Y
		-\frac{5\sqrt3}{7}
		Y\left(Z^2-\frac15\right)
		=
		h_{11}^{\rm ep}.
	\]

	Therefore,
	\begin{equation}
		\label{eq:endpoint-graph-expansion-simple-proof}
		h(s,V)
		=
		h^{\rm ax}_{0,V}
		+sXZ+sVh_{11}^{\rm ep}
		+O_{\H^{k+2}}\bigl(s^2+\abs{s}V^2\bigr).
	\end{equation}
	Substitution of
	\eqref{eq:axisymmetric-branch-expansion} and
	\eqref{eq:endpoint-h11-ep-simple} gives
	\eqref{eq:endpoint-h-expansion}.

	The centroid computation is the one in
	Lemma~\ref{lem:helical-centroid-main-proof}. The axisymmetric term and
	$sXZ$ have zero horizontal projection, and only the degree-one part
	$-9Y/(20\sqrt3)$ of $h_{11}^{\rm ep}$ contributes at order $sV$. This gives
	\eqref{eq:endpoint-centroid-expansion}.
	Lemma~\ref{lem:small-sobolev-solution-spatial-regularity} gives
	spatial smoothness. Finally, the local uniqueness modulo rotations follows
	from Lemma~\ref{lem:endpoint-local-uniqueness} below.
\end{proof}

\begin{lemma}\label{lem:endpoint-local-uniqueness}
	After decreasing the preceding constants, there are
	$V_{\rm u},c_{\rm u},\eta_{\rm u},\delta_{\rm u},\rho_{\rm u}>0$ such that the following
	holds. Fix $0<V<V_{\rm u}$, and let
	$(\widetilde\alpha,\widetilde h)$ be a sufficiently small radial-graph
	solution in the fixed-volume, vertically centred slice. Put
	\[
		\xi_0(V):=w(0,\widetilde\alpha,V),
		\qquad
		\widetilde x:=\Theta^{-1}(\widetilde h)-\xi_0(V),
	\]
	and decompose
	\[
		a={\proj_1}\widetilde x,
		\qquad
		u={\proj_6}\widetilde x,
		\qquad
		r={\idproj_{1,6}}\widetilde x.
	\]
	Assume
	\begin{equation}
		\label{eq:endpoint-local-tube-simple}
		\abs{\widetilde\alpha-\alpha_1(V)}\le c_{\rm u}V^2,
		\qquad
		\norm a_{\L^2}\le\eta_{\rm u}V,
		\qquad
		\norm u_{\L^2}\le\delta_{\rm u}V^2,
		\qquad
		\norm r_{\H^{k+2}}\le\rho_{\rm u}.
	\end{equation}
	If $a\ne0$, there are unique
	\[
		0<s=\frac{\norm a_{\L^2}}{\norm{XZ}_{\L^2}}<\eta_0V,
		\qquad
		\theta\in\R/(2\pi\mathbb Z),
	\]
	such that
	\begin{equation}
		\label{eq:endpoint-local-classification-simple}
		(\widetilde\alpha,\widetilde h)
		=
		\bigl(\alpha(s,V),\cU_\theta h(s,V)\bigr).
	\end{equation}
	If $a=0$, then $\widetilde h=h^{\rm ax}_{0,V}$, while
	$\widetilde\alpha$ is arbitrary in the strip in
	\eqref{eq:endpoint-local-tube-simple}. Thus the non-axisymmetric branch is locally unique modulo rotations about the
	$e_3$-axis. On the axisymmetric set, the shape is locally unique, while
	$\widetilde\alpha$ remains free.
\end{lemma}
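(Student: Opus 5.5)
The plan is to reduce the classification to the uniqueness parts of the two implicit-function arguments already in place: the range correction $w$ and the anisotropic system of Proposition~\ref{prop:endpoint-anisotropic-reduction-simple}. Set $\widetilde\xi=\Theta^{-1}(\widetilde h)$. By the equivalence following \eqref{eq:recalled_relation_F_B}, the Bernoulli equation gives $\cF(\widetilde\xi,\widetilde\alpha,V)=0$. Lemma~\ref{lem:endpoint_axisymmetric_branch_with_w} gives $\xi_0(V)=\xi^{\rm ax}(V)\in K_{16}^\perp$. Hence $\proj_{1,6}\widetilde\xi=a+u$ and $\idproj_{1,6}\widetilde\xi=\xi_0+r$.

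If $\rho_{\rm u}$ and $V_{\rm u}$ are small, $\xi_0+r$ lies in the uniqueness ball of the range equation \eqref{eq:range_equation_endpoint_case}. The projected equation then forces $\xi_0+r=w(a+u,\widetilde\alpha,V)$, so $\widetilde\xi=a+u+w(a+u,\widetilde\alpha,V)$ and $\cG(a+u,\widetilde\alpha,V)=0$. At this stage only the kernel equation remains.

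\textbf{Case $a\neq0$.} By $\so$-equivariance of $\cG$, $w$ and $\Theta$, together with the rotation action on $K_1$, I will rotate so that $a=sXZ$ with $s=\norm a_{\L^2}/\norm{XZ}_{\L^2}>0$. Then set $t=s/V$, which satisfies $\abs t\le\eta_{\rm u}/\norm{XZ}_{\L^2}$. Next put $U=u/V^2$, so $\norm U\le\delta_{\rm u}$, and $\widetilde\beta=(\widetilde\alpha-\alpha_1(V))/V^2$, so $\abs{\widetilde\beta}\le c_{\rm u}$. Since $Vt\neq0$, the full kernel equation $\Phi_1=\Phi_6=0$ implies the rescaled system $\mathfrak F_6=\mathfrak F_1=0$ of \eqref{eq:endpoint-anisotropic-maps-simple}.

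I choose $\eta_{\rm u}<\eta_{\rm an}\norm{XZ}_{\L^2}$, $\delta_{\rm u}<\delta_{\rm an}$ and $c_{\rm u}<c_{\rm an}$. The uniqueness assertion of Proposition~\ref{prop:endpoint-anisotropic-reduction-simple} then gives $U=U_*(t,V)$ and $\widetilde\beta=\beta(t,V)$. Therefore $\widetilde\alpha=\alpha(s,V)$, and the rotated $\widetilde\xi$ coincides with $\xi(s,V)$, i.e.\ \eqref{eq:endpoint-local-classification-simple} holds. Uniqueness of $(s,\theta)$ follows as in Lemma~\ref{lem:nonlinear-branch-smooth-main-proof}. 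The orbit map on $K_1\setminus\{0\}$ is free, and $\proj_1\Theta^{-1}$ of the rotated branch recovers $\cU_\theta(sXZ)$. Here I must check that $\proj_1\Theta^{-1}h(s,V)=sXZ$ exactly. This holds because $\Theta^{-1}h(s,V)=\xi(s,V)$, whose $K_1$ part is $a$, as $u\in K_6$ and $w\in K_{16}^\perp$. It also needs $s<\eta_0V$, which I secure by taking $\eta_{\rm u}\le\eta_0\norm{XZ}_{\L^2}$.

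\textbf{Case $a=0$.} The last assertion of Proposition~\ref{prop:endpoint-anisotropic-reduction-simple} gives $U=0$ for every admissible $\widetilde\beta$. Hence $\widetilde\xi=w(0,\widetilde\alpha,V)=\xi^{\rm ax}(V)$ and $\widetilde h=h^{\rm ax}_{0,V}$. Conversely, Lemma~\ref{lem:axisymmetric-translating-branch} shows that this surface solves the problem for every $\widetilde\alpha$ in the strip.

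The main obstacle is the bookkeeping of neighbourhoods. The constants in \eqref{eq:endpoint-local-tube-simple} are anisotropic in $V$, while the range uniqueness ball and the box of the implicit-function theorem are fixed. I must verify that the tube lies inside both uniformly as $V\to0$. This should work because $\xi_0=O(V^2)$, $\abs{\widetilde\alpha-\alpha_*}=O(V^2)$, and the rescaled variables stay in a $V$-independent box. The one technical point is that the rescaled maps extend smoothly to $V=0$, so the box from the implicit-function theorem does not shrink as $V\to0$.
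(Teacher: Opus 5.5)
Your proposal is correct and is essentially the paper's argument. The paper's proof only says to repeat the second half of the proof of Lemma~\ref{lem:nonlinear-branch-smooth-main-proof} in the refined anisotropic regime, and you carry this out explicitly: range uniqueness gives $\widetilde\xi=a+u+w(a+u,\widetilde\alpha,V)$, equivariance rotates $a$ to $sXZ$, and the uniqueness part of Proposition~\ref{prop:endpoint-anisotropic-reduction-simple} in the rescaled variables $(t,U,\widetilde\beta)$ settles $a\neq0$, while its $t=0$ assertion forces $u=0$ when $a=0$, with your choices $\eta_{\rm u}<\min(\eta_0,\eta_{\rm an})\norm{XZ}_{\L^2}$, $\delta_{\rm u}<\delta_{\rm an}$, $c_{\rm u}<c_{\rm an}$ correctly fitting the tube into the $V$-independent boxes.
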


\begin{proof}
	Minding the refined regimes in which our continuation arguments have been carried out, the proof follows as in the second part of the proof of Lemma \ref{lem:nonlinear-branch-smooth-main-proof}.
\end{proof}

\begin{proof}[Proof of Theorem~\ref{thm:din-pos-dimensional} in the case $\tau=0$]
	Apply Theorem~\ref{thm:endpoint-din-zero} with $k=4$ and use the inverse
	nondimensionalisation from Section~\ref{sec:nondimensionalisation}.
	We denote the dimensionless solution objects by hats.
	Since the physical speed satisfies $V>0$, we have
	\[
		\widehat V=\sqrt{\We},
		\qquad
		h(s,\We)=\widehat h(s,\widehat V),
		\qquad
		\frac{\bar y_{s,\We}}{R}
		=
		\widehat{\bar y}_{s,\widehat V},
	\]
	and we take $\We_0:=V_0^2$.

	Consequently, \eqref{eq:endpoint-h-expansion} and
	\eqref{eq:endpoint-centroid-expansion} become
	\eqref{eq:h-expansion} and
	\eqref{eq:centroid-expansion}, respectively.
	Moreover, since the dimensionless critical frequency is $2\sqrt3$,
	\eqref{eq:endpoint-alpha-expansion} gives
	\[
		\frac{\alpha(s,\We)}{\alpha_*}
		=
		1-\frac{2601}{1960\cdot12}\We
		+O\left(s^2+\We^{3/2}\right)
		=
		1-\frac{867}{7840}\We
		+O\left(s^2+\We^{3/2}\right).
	\]
	The condition $\abs{s}\le\eta\sqrt{\We}$ is precisely
	$\abs{s}\le\eta\widehat V$. Smoothness and the local classification are
	preserved by the rescaling.
\end{proof}

\section{Comparison with Benjamin's prediction}
\label{sec:endpoint-benjamin-comparison}

Benjamin's general model has an inviscid exterior liquid and a gas of negligible
inertia; the comparison here uses his fixed-volume specialisation
\cite[p.~350]{MR905816}. Write $\rho,c,\omega,r_{\rm e}$ for Benjamin's
liquid density, axial translation speed, angular speed, and equivalent spherical
radius, respectively. Since $\frac{4\pi}{3}r_{\rm e}^{3}$ is the prescribed
volume \cite[p.~365]{MR905816}, the dictionary with our endpoint problem is
\[
	\rho=\dout,\qquad c=V,\qquad \omega=\alpha,\qquad r_{\rm e}=R.
\]
Benjamin denotes the radius of the centroid helix by $R$; to avoid its clash
with our equivalent radius, we denote that quantity by $R_{\rm B}$ below.
His variational characterisation of steady spinning and spiralling
motions is \cite[\S2.7, eq.~(2.24)]{MR905816}. In \S5 Benjamin derives the
helical-trajectory relations \cite[Fig.~3 and pp.~369--370,
	eqs.~(5.12)--(5.14)]{MR905816} for axisymmetric spiralling bubbles; in
\S5.1 he then restricts the variational problem to oblate spheroids and takes
the small-eccentricity limit. Our goal is to compare this near-spherical
spheroidal approximation with our exact solution branch.

To avoid a second clash with our branch parameter, denote Benjamin's spheroidal
eccentricity by $\eps$. In his notation $\eps=\sin\eta$ and
$a/b=\cos\eta$, where $a$ and $b$ are the polar and equatorial semiaxes of
the oblate spheroid
\cite[p.~366]{MR905816}. Let $\vartheta$ be the
inclination of its symmetry axis to the screw axis. His Weber number is
$W_{\rm B}=2\rho r_{\rm e}c^2/\sigma=2\We$.

Restoring the $O(\eps^4)$ terms in his equation~(5.20), the leading small-eccentricity formula, gives
\begin{equation}
	\label{eq:endpoint-benjamin-weber-match}
	\begin{aligned}
		2\We
		 & =
		\frac{32\eps^2}{9\left(1+\frac32\sin^2\vartheta\right)}
		+O(\eps^4),                                                 \\
		\eps^2
		 & =
		\frac9{16}\We\left(1+\frac32\sin^2\vartheta\right)+O(\We^2) \\
		 & =
		\frac9{16}\We\left(1+O(\vartheta^2+\We)\right).
	\end{aligned}
\end{equation}

For the shape comparison, take the
oblate spheroid of
equivalent radius $R$ whose unit symmetry axis is
$n_{\vartheta}=\cos\vartheta\,e_3
	-\sin\vartheta\,e_1$. Expanding its radial
graph and retaining its $P_2(Z)$- and $XZ$-projections gives
\begin{equation}
	\label{eq:endpoint-benjamin-shape-expansion}
	h
	=
	-\frac{\eps^2}{3}\frac{3Z^2-1}{2}
	+
	\eps^2\vartheta XZ
	+O(\eps^2\vartheta^2+\eps^4).
\end{equation}
Comparison with \eqref{eq:h-expansion} therefore gives
the leading parameter identification
\begin{equation}
	\label{eq:endpoint-benjamin-parameter-match}
	s=
	\eps^2\vartheta+O(\eps^2\vartheta^2+\eps^4),
	\qquad
	\eps^2=\frac9{16}\We+O(\We\vartheta^2+\We^2),
\end{equation}
hence
\begin{equation}
	\label{eq:endpoint-benjamin-parameter-match-2}
	\frac{s}{\We}
	=
	\frac9{16}\vartheta+O(\vartheta^2+\We).
\end{equation}
Thus, the regime $\abs{s}\le\eta\We$, with
$\eta\ll1$, corresponds to the small-inclination part of Benjamin's
small-eccentricity spheroidal regime.

By \eqref{eq:centroid-expansion}, the radius of the
centroid helix is
\begin{equation}
	\label{eq:endpoint-benjamin-radius-match}
	\frac{\abs{\perpProj\bar y_{s,\We}
		}}{R}
	=
	\frac{3\sqrt3}{20}\sqrt{\We}\,\abs{s}
	+O(s^2+\abs{s}\We)
	=
	\frac{\sqrt3}{5}\eps^3\abs{\vartheta}
	+O(\eps^3\vartheta^2+\eps^4\abs{\vartheta}+\eps^5).
\end{equation}
Benjamin's formulas (5.22) and (5.23), with the orders suppressed in his
leading calculation restored from the coefficient expansions on the same page,
are
\begin{align*}
	\frac{R_{\rm B}}{r_{\rm e}}
	 & =
	\frac{\sqrt3}{10}\eps^3\sin(2\vartheta)+O(\eps^5), \\
	\frac{c}{\omega r_{\rm e}}
	 & =
	\frac{2}{3\sqrt3}\eps+O(\eps^3).
\end{align*}
Consequently, \eqref{eq:endpoint-benjamin-radius-match} agrees with (5.22),
because
\[
	\frac{\sqrt3}{10}\eps^3\abs{\sin(2\vartheta)}
	=
	\frac{\sqrt3}{5}\eps^3\abs{\vartheta}
	+O(\eps^3\abs{\vartheta}^3).
\]
Likewise, inside the above regime
the definition of $\alpha_*$, together with the endpoint expansion for
$\alpha$, gives
\[
	\frac{V}{\alpha R}
	=
	\frac{\sqrt{\We}}{2\sqrt3}\left(1+O(\We)\right)
	=
	\frac{2}{3\sqrt3}\eps
	+O(\eps\vartheta^2+\eps^3),
\]
which agrees with (5.23).

There is an apparent factor-of-two typographical error in Benjamin's printed
equation~(5.21): it gives $9\eps^2\sin(2\vartheta)/10$. However, his exact
identity (5.12) and the expansion
$h_{\rm B}-1=9\eps^2/10+O(\eps^4)$ displayed immediately before (5.20),
where $h_{\rm B}=A/B$ is his ratio of inertial coefficients, give instead
\[
	\tan\zeta
	=
	\frac{h_{\rm B}-1}{\cot\vartheta+h_{\rm B}\tan\vartheta}
	=
	\frac9{20}\eps^2\sin(2\vartheta)+O(\eps^4).
\]
The same coefficient is also obtained by combining (5.22) and (5.23),
since (5.12) gives $\tan\zeta=\omega R_{\rm B}/c$. Thus this typo does not affect the
radius and pitch comparisons above.

Finally, our endpoint expansion gives
\[
	\alpha^2
	=
	\frac{12\sigma}{\dout R^3}\left(1+O(\We)\right).
\]
Benjamin's (5.24), again interpreted at the order of his small-eccentricity
calculation, gives
\[
	\omega^2
	=
	\frac{12\sigma}
	{\rho r_{\rm e}^3\left(1+\frac32\sin^2\vartheta\right)}
	\left(1+O(\eps^2)\right)
	=
	\frac{12\sigma}{\dout R^3}
	\left(1+O(\vartheta^2+\eps^2)\right).
\]
Thus the frequency formulas, as well as the radius and pitch formulas above,
agree in the common regime
\[
	\We\ll1,\qquad \frac{\abs{s}}{\We}\ll1,
\]
or equivalently, by \eqref{eq:endpoint-benjamin-parameter-match}--\eqref{eq:endpoint-benjamin-parameter-match-2},
\[
	\eps\ll1,\qquad \abs{\vartheta}\ll1 .
\]
Thus our exact branch rigorously recovers the leading
small-eccentricity, small-inclination asymptotics in Benjamin's
formulas (5.20), (5.22)--(5.24)
for the small-eccentricity limit of his approximation obtained by restricting the variational problem to oblate spheroids.

\section{The resonant case \texorpdfstring{$\tau >0 $}{tau > 0}}

\label{sec:positive-resonant-density-ratios}

As for the endpoint case,
the five exceptional positive values in Lemma~\ref{lem:nonresonance} mean that,
at the round sphere and at $\alpha=\alpha_*$, the kernel is larger than the
desired weight-one plane.

Indeed, at one of the five resonant values
\[
	\tau\in
	\cO = \set{\frac{2}{77}, \frac{2}{7},
		\frac{23}{60},\frac{10}{9},\frac{46}{27}},
\]
the kernel of the linearisation $L_*$ at the spherical solution in the constrained slice is
\[
	K_1\oplus K_m,
\]
where $K_1=\Span\set{XZ,YZ}$, and $K_m$ is the real two-dimensional plane
corresponding to the additional resonant spherical harmonic $Y_\ell^m$:
\[
	\begin{array}{c|c}
		\tau  & (\ell,m) \\
		\hline
		10/9  & (5,4)    \\
		2/7   & (6,5)    \\
		46/27 & (8,8)    \\
		23/60 & (9,9)    \\
		2/77  & (10,10).
	\end{array}
\]
Thus one repeats the endpoint reduction with the fixed splitting
\[
	\H^q_{\rm lin}=K_1\oplus K_m\oplus(K_1\oplus K_m)^\perp.
\]

Restricting to $\abs V<V_{\rm ax}$,
Lemma~\ref{lem:axisymmetric-translating-branch} supplies the same
axisymmetric translating branch simultaneously for every $\tau\ge0$ and
every $\alpha\in\R$. We use its notation and set
\begin{equation}
	\label{eq:positive-resonant-axisymmetric-identification}
	\xi_0(V):=\xi^{\rm ax}(V),
	\qquad
	\Theta(\xi_0(V))=h^{\rm ax}_{0,V}.
\end{equation}
In particular, $\xi_0(V)$ is orthogonal to $K_1\oplus K_m$, the graph is
independent of $\alpha$, and its expansion is
\eqref{eq:axisymmetric-branch-expansion}. The vanishing of the
positive-density interior Bernoulli term on this branch is contained in
\eqref{eq:axisymmetric-bernoulli-reduction}. The resonant
Lyapunov--Schmidt reduction is again based at this already constructed
branch.

Relative to the endpoint computation, with the present value of $\alpha_*$
inserted, the only new contribution to the order-$V^2$ diagonal coefficient
is the interior rotational term
\[
	d_{\ell m}^{\rm in}(\tau)
	=
	\frac{3\tau\alpha_*^2m^2}{16}
	I_{\ell m}\frac{\ell+3}{\ell^2},
\]
With $d_{\ell m}^T$ and $d_{\ell m}^H$ as in
\eqref{eq:endpoint-dT} and \eqref{eq:endpoint-dH}, and using the first
line of \eqref{eq:endpoint-dR} before the endpoint simplification, the
total positive-density diagonal coefficient is
\begin{equation}
	\label{eq:positive-resonant-total-diagonal-coefficient}
	d_{\ell m}(\tau)
	:=
	d_{\ell m}^T+d_{\ell m}^H
	+\frac{3\alpha_*^2m^2}{16}
	I_{\ell m}\frac{\ell-2}{(\ell+1)^2}
	+d_{\ell m}^{\rm in}(\tau),
\end{equation}
where $I_{\ell m}$ is defined in \eqref{eq:endpoint-I-lm}. The order-$V$
coefficients $b_{n\ell}^{(m)}$ are unchanged. Hence, uniformly for
$\abs{\alpha-\alpha_*}\lesssim V^2$,
\[
	\begin{aligned}
		\mu_1(\alpha,V)
		 & =\lambda_{21}(\alpha)+\gamma_{21}(\tau)V^2+O(V^3),         \\
		\mu_m(\alpha,V)
		 & =\lambda_{\ell m}(\alpha)+\gamma_{\ell m}(\tau)V^2+O(V^3),
	\end{aligned}
\]
with
\[
	\gamma_{\ell m}(\tau)
	=
	d_{\ell m}(\tau)
	+
	\sum_{\substack{n\in\set{\ell-1,\ell+1}\\ n\ge\abs m}}
	\frac{b_{\ell n}^{(m)}b_{n\ell}^{(m)}}
	{-\lambda_{nm}(\alpha_*)}.
\]
Here the condition $n\ge\abs m$ retains only $n=\ell+1$ when $\ell=m$.
By \eqref{eq:transversality},
$\partial_\alpha\lambda_{21}(\alpha_*)=-8/\alpha_*\neq0$, so the weight-one
equation selects
\[
	\mu_1(\alpha_1(V),V)=0,
	\qquad
	\alpha_1(V)
	=
	\alpha_*+\frac{\alpha_*}{8}\gamma_{21}(\tau)V^2+O(V^3).
\]
At this value,
\[
	\mu_m(\alpha_1(V),V)
	=
	\Gamma_{\ell m}(\tau)V^2+O(V^3),
	\qquad
	\Gamma_{\ell m}(\tau)
	=
	\gamma_{\ell m}(\tau)
	-
	\frac{\ell(\ell+1)-2}{4}\gamma_{21}(\tau).
\]
Here we used
\[
	\frac{\partial_\alpha\lambda_{\ell m}(\alpha_*)}
	{\partial_\alpha\lambda_{21}(\alpha_*)}
	=
	\frac{\ell(\ell+1)-2}{4},
\]
which follows from \eqref{eq:resonance_condition_tau} and is independent of
$\tau$.

A direct evaluation of the same Schur-complement coefficient as in the endpoint
case gives
\[
	\begin{array}{c|c|c}
		\tau          & (\ell,m) & \Gamma_{\ell m}(\tau) \\
		\hline        &          &                       \\[-0.8em]
		\frac{10}{9}  & (5,4)
		              &
		-\dfrac{745655631939}{404745872960}
		\\[1.2em]
		\frac{2}{7}   & (6,5)
		              &
		-\dfrac{7741466661}{3867399536}
		\\[1.2em]
		\frac{46}{27} & (8,8)
		              &
		-\dfrac{82988030595}{6597344768}
		\\[1.2em]
		\frac{23}{60} & (9,9)
		              &
		-\dfrac{197821204433}{15834895860}
		\\[1.2em]
		\frac{2}{77}  & (10,10)
		              &
		-\dfrac{686527731}{50098048}.
	\end{array}
\]
It remains only to adapt the anisotropic reduction. Writing
\(\Phi_j^\tau\) for the two components of the reduced equation, with
\(a\in K_1\) and \(u\in K_m\),
Lemma~\ref{lem:sharp-weight-one-secondary-variable} supplies all the needed estimates
\eqref{eq:selection-secondary-forcing-simple}--\eqref{eq:selection-primary-secondary-simple}.
Consequently, the Taylor expansions in the proof
of Proposition~\ref{prop:endpoint-anisotropic-reduction-simple} carry
over with \(K_6\) and the weight \(6\) replaced by \(K_m\) and \(m\),
respectively.

Set
\[
	a=VtXZ,
	\qquad
	u=V^2U,
	\qquad
	\alpha=\alpha_1(V)+V^2\widetilde\beta.
\]
For \(j\in\set{1,m}\), put
\[
	M_j^\tau(\widetilde\beta,V)
	:=
	\frac{
		\mu_j(\alpha_1(V)+V^2\widetilde\beta,V)
	}{V^2}.
\]
As in the endpoint proof, these functions extend smoothly through
\(V=0\), with
\[
	M_m^\tau(0,0)=\Gamma_{\ell m}(\tau),
	\qquad
	\partial_{\widetilde\beta}M_1^\tau(0,0)
	=-\frac8{\alpha_*}.
\]

After dividing the secondary equation by \(V^4\) and the scalar
weight-one equation by \(V^3t\), the pure secondary forcing carries
the factor
\[
	V^{m-4}t^m,
\]
while the term linear in \(U\) in the scalar weight-one equation
carries the factor
\[
	V^{m-2}t^{m-2}.
\]
All remaining powers are the same as in the endpoint proof. Since
\(m\ge4\) in all five resonant cases, the rescaled system extends
smoothly through \(V=0\).

At the origin, its derivative with respect to
\((U,\widetilde\beta)\) is
\[
	\begin{pmatrix}
		\Gamma_{\ell m}(\tau)\id_{K_m} & 0           \\
		0                              & -8/\alpha_*
	\end{pmatrix},
\]
which is invertible by the table. The implicit-function theorem and
the same absorption argument as in
Proposition~\ref{prop:endpoint-anisotropic-reduction-simple} give
smooth functions \(U_*(t,V)\) and \(\beta(t,V)\) satisfying
\[
	\norm{U_*(t,V)}_{\L^2}
	\lesssim V^{m-4}\abs{t}^m,
	\qquad
	\abs{\beta(t,V)}
	\lesssim t^2.
\]
Consequently, with
\[
	u=V^2U_*(t,V),
	\qquad
	\alpha=\alpha_1(V)+V^2\beta(t,V),
\]
we have
\[
	\norm u_{\L^2}
	\lesssim\frac{\norm a_{\L^2}^m}{V^2},
	\qquad
	\alpha=\alpha_1(V)+O(\norm a_{\L^2}^2).
\]

For \(t\ne0\), the secondary equation and the scalar weight-one
equation give the full kernel equation by
Remark~\ref{rem:LS_higher_dimensional_kernel}, applied with
\(E=K_1\) and \(F=K_m\). At \(t=0\), symmetry and invertibility of the
\(K_m\)-block give \(u=0\) for every
\(\abs{\alpha-\alpha_1(V)}\lesssim V^2\), while leaving the angular
velocity undetermined. The reconstruction and local-classification
argument are then the same as in the endpoint case.
Further details are left to the reader.

\begin{theorem}
	\label{thm:din-pos-resonant-dimensionless}
	Assume
	\[
		\tau\in\cO
		=
		\set{\frac{2}{77}, \frac{2}{7},
			\frac{23}{60},\frac{10}{9},\frac{46}{27}},
		\qquad
		\alpha_*^2=\frac{24}{3\tau+2}.
	\]
	Set
	\begin{equation}
		\label{eq:positive-resonant-gamma21}
		\gamma_{21}(\tau)
		:=
		-\frac{3\left(1188\tau^3+6605\tau^2+5601\tau+1156\right)}
		{280(2\tau+1)(3\tau+2)(11\tau+7)} .
	\end{equation}
	{For every $k\ge4$, there exist constants $V_0,\eta_0>0$ and
		$0<\eta<\eta_0$, depending only on $k$ and $\tau$. The proof constructs
		smooth solution maps $(\alpha(s,V),h(s,V))$ on the larger tube
		$0<V<V_0$, $\abs{s}\le\eta_0V$. For every $0<V<V_0$ and every
		$\abs{s}\le\eta V$, these maps give smooth solutions of the
		nondimensional free-boundary problem
		\eqref{eq:main-problem-nondimensional}, with}
	prescribed volume $4\pi/3$, angular speed $\alpha=\alpha(s,V)>0$, and
	boundaries
	\[
		\Gamma(s,V)=\set{(1+h(s,V)(\omega))\omega:\omega\in\Sph}.
	\]
	After rotation about the $e_3$-axis, the branch is normalised by the
	$(\ell,m)=(2,1)$ direction $sXZ$. For $\omega=(X,Y,Z) \in \Sph$,
	\begin{equation}
		\label{eq:positive-resonant-h-expansion}
		\begin{aligned}
			h(s,V)
			={} &
			-\frac{3V^2}{16}\frac{3Z^2-1}{2}
			+sXZ                     \\
			    & \quad
			-\sqrt{6(3\tau+2)}\,Vs\,Y
			\left[
				\frac{3}{40(2\tau+1)}
				+
				\frac{5}{22\tau+14}
				\left(Z^2-\frac15\right)
				\right] \\
			    & \quad
			+O_{\H^{k+2}}\bigl(s^2+\abs{s}V^2+V^4\bigr).
		\end{aligned}
	\end{equation}
	The angular speed satisfies
	\begin{equation}
		\label{eq:positive-resonant-alpha-expansion}
		\alpha(s,V)
		=
		\alpha_*
		+
		\frac{\alpha_*}{8}\gamma_{21}(\tau)V^2
		+
		O(s^2+V^3).
	\end{equation}
	At $s=0$, this value of $\alpha$ is only the continuous selection inherited
	from the non-axisymmetric branch. The same axisymmetric surface solves the
	physical problem for every nearby $\alpha$.
		{If $0<\abs{s}\le\eta V$,}
	then
	\begin{equation}
		\label{eq:positive-resonant-centroid-expansion}
		\perpProj\left(
		\frac1{\abs{\Omega(s,V)}}\int_{\Omega(s,V)}y\,\dd y
		\right)
		=
		-\frac{3\sqrt{6(3\tau+2)}}{40(2\tau+1)}\,Vs\,e_2
		+
		O(s^2+\abs{s}V^2),
	\end{equation}
	and this horizontal centroid is non-zero after decreasing $V_0$ and $\eta$.
		{For each fixed $0<V<V_0$, the non-axisymmetric branch is locally unique
			modulo rotations about the $e_3$-axis in the fixed-volume, vertically centred
			radial-graph class. On the axisymmetric set, the shape is locally unique,
			while $\alpha$ remains free.
		}
\end{theorem}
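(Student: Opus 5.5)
The plan is to repeat the endpoint argument of Theorem~\ref{thm:endpoint-din-zero} with the splitting $K_1\oplus K_m\oplus(K_1\oplus K_m)^\perp$, where $(\ell,m)$ is the resonant pair from Lemma~\ref{lem:nonresonance} for the given $\tau\in\cO$. First, as in Lemma~\ref{lem:endpoint-complement-isomorphism-simple}, we check that $L_*$ is an isomorphism on $(K_1\oplus K_m)^\perp\cap\H^{k+2}_{\rm lin}$. Lemma~\ref{lem:nonresonance} shows that these are the only zero modes, and large $\ell$ is controlled by the $\ell^2$ growth of $\lambda_{\ell m}$. We then perform the Lyapunov--Schmidt reduction of Remark~\ref{rem:LS_higher_dimensional_kernel} with range correction $w(a,\alpha,V)$. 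As in Lemma~\ref{lem:endpoint_axisymmetric_branch_with_w}, we identify $w(0,\alpha,V)=\xi^{\rm ax}(V)$ via Lemma~\ref{lem:axisymmetric-translating-branch}, which holds for every $\tau\ge0$. Lemma~\ref{lem:derivative_G} then carries over verbatim, with weights $1$ and $m$: use the $\theta=\pi$ argument when $m$ is even, and rotation by $2\pi/m$ or a suitable angle otherwise to kill the off-diagonal blocks. This gives the diagonal eigenvalues $\mu_1,\mu_m$.

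Next we expand $\mu_1,\mu_m$ to order $V^2$ by Corollary~\ref{cor:eigenvalue_formula_general_lm}. The chart, Schur complement and $L_1$ analysis (Lemmas~\ref{lem:chart_axisymmetric_xi_vs_m}, \ref{lem:expansion_of_D_aw}, \ref{lem:endpoint-L1}) must be redone with the interior phase present. By \eqref{eq:boundary-gradient-expansion-inner}, the interior gradient has no $tV$ coupling with the translational term ($\uin_{V,\eta}$ is trivial and $\partial_r\nabla(Vy_3)=0$), so the coefficients $b^{(m)}_{n\ell}$ are unchanged. The only new diagonal term is the interior rotational analogue $d^{\rm in}_{\ell m}$ of Part~3 of Lemma~\ref{lem:endpoint-L2-diagonal}. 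We compute it with the interior version of Lemma~\ref{lem:rotational-shape-axisymmetric-background}, using the Neumann-to-Dirichlet factor $1/n$ instead of $-1/(n+1)$ and the interior base derivatives $\partial_{rr}\ein_0=(\ell-1)g$ and $(\nabla\ein_0)_{\tan}=\ell^{-1}\nablaS g$. This yields $\gamma_{21}(\tau)$ and $\gamma_{\ell m}(\tau)$. The $(2,1)$ equation selects $\alpha_1(V)=\alpha_*+\frac{\alpha_*}8\gamma_{21}V^2+O(V^3)$ by transversality \eqref{eq:transversality}. Along this selection, $\mu_m(\alpha_1(V),V)=\Gamma_{\ell m}(\tau)V^2+O(V^3)$.

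The main obstacle is verifying $\Gamma_{\ell m}(\tau)\neq0$ for each of the five cases. This is an exact rational computation of $d_{\ell m}(\tau)$ plus the two or one Schur terms, since $n=\ell-1$ drops when $\ell=m$, and I would carry it out with the $a^m_{\ell,\pm}$ formulas. One also has to confirm that the correct $\gamma_{21}$ reproduces the stated rational function of $\tau$.

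Given the nonvanishing of $\Gamma_{\ell m}(\tau)$, we apply Lemma~\ref{lem:sharp-weight-one-secondary-variable} with weight $m$ and use the blow-up $a=VtXZ$, $u=V^2U$, $\alpha=\alpha_1(V)+V^2\widetilde\beta$. The forcing term scales like $V^{m-4}t^m$ and the coupling like $V^{m-2}t^{m-2}$, which are smooth since $m\ge4$. The Jacobian is $\operatorname{diag}(\Gamma_{\ell m}\id,-8/\alpha_*)$, so the implicit-function theorem applies as in Proposition~\ref{prop:endpoint-anisotropic-reduction-simple}. This gives $\norm u\lesssim \abs s^m/V^2\lesssim s^2$ and $\alpha=\alpha_1(V)+O(s^2)$. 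The shape expansion then follows exactly as in Lemma~\ref{lem:h11-coefficient-main-proof}, with $h_{11}$ obtained from $\lambda_{11},\lambda_{31}$ at general $\tau$. The centroid expansion follows from Lemma~\ref{lem:helical-centroid-main-proof}, and the identities \eqref{eq:numerical_identities} convert the coefficients. Smoothness follows from Lemma~\ref{lem:small-sobolev-solution-spatial-regularity}, and local uniqueness follows as in Lemma~\ref{lem:endpoint-local-uniqueness}.
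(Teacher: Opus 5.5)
Your proposal is correct and follows the paper's own argument essentially step by step. The shared steps are: the splitting $K_1\oplus K_m\oplus(K_1\oplus K_m)^\perp$ based at $\xi^{\rm ax}(V)$; unchanged $b^{(m)}_{n\ell}$; the single new diagonal term $d^{\rm in}_{\ell m}(\tau)=\frac{3\tau\alpha_*^2m^2}{16}I_{\ell m}\frac{\ell+3}{\ell^2}$, which your interior data ($\partial_{rr}\ein_0=(\ell-1)g$, Neumann-to-Dirichlet factor $1/n$) indeed reproduce; the selection $\alpha_1(V)=\alpha_*+\frac{\alpha_*}{8}\gamma_{21}V^2+O(V^3)$; and the anisotropic blow-up with the factors $V^{m-4}t^m$ and $V^{m-2}t^{m-2}$.

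The step you flag as the main obstacle is settled in the paper by its table of $\Gamma_{\ell m}(\tau)$. All five values are negative, hence nonzero. Your remark that for odd $m$ the off-diagonal blocks in Lemma~\ref{lem:derivative_G} must be removed with $\theta=2\pi/m$ rather than $\theta=\pi$ is a correct refinement that the paper leaves implicit.
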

\begin{proof}[Proof of Theorem~\ref{thm:din-pos-dimensional} in the case $\tau \in \cO$]
	Apply Theorem~\ref{thm:din-pos-resonant-dimensionless} with $k=4$ and use
	the inverse nondimensionalisation from
	Section~\ref{sec:nondimensionalisation}, with
	\[
		\tau=\frac{\din}{\dout},
		\qquad
		\widehat V=\sqrt{\We},
		\qquad
		\widehat\alpha_*
		=
		\left(\frac{24}{3\tau+2}\right)^{1/2}.
	\]
	We denote the dimensionless solution objects by hats and set
	$\We_0:=V_0^2$. Under this rescaling,
	\[
		h(s,\We)=\widehat h(s,\widehat V),
		\qquad
		\frac{\alpha(s,\We)}{\alpha_*}
		=
		\frac{\widehat\alpha(s,\widehat V)}{\widehat\alpha_*},
		\qquad
		\frac{\bar y_{s,\We}}{R}
		=
		\widehat{\bar y}_{s,\widehat V}.
	\]
	Hence \eqref{eq:positive-resonant-h-expansion} gives
	\eqref{eq:h-expansion}, while
	\eqref{eq:positive-resonant-alpha-expansion} gives
	\[
		\frac{\alpha(s,\We)}{\alpha_*}
		=
		1+\frac{\gamma_{21}(\tau)}8\We
		+O\left(s^2+\We^{3/2}\right).
	\]
	Likewise, \eqref{eq:positive-resonant-centroid-expansion} becomes
	\eqref{eq:centroid-expansion}, and
	$\abs{s}\le\eta\sqrt{\We}$ is precisely
	$\abs{s}\le\eta\widehat V$. Smoothness and the local uniqueness are
	preserved by the rescaling.
\end{proof}

\begin{remark}
	\label{rem:nonresonant-sharp-alpha-expansion}
	The sharper angular-velocity expansion in \eqref{eq:alpha-expansion}
	holds true also in the nonresonant case $\tau>0$, $\tau\notin\cO$.
	Indeed, the calculation of the weight-one block in
	Section~\ref{sec:positive-resonant-density-ratios} involves only
	azimuthal weight one and is therefore independent of the additional
	resonant plane. Applying this calculation to
	\eqref{eq:kernel-equation-main-proof}, and using
	\eqref{eq:transversality} together with the smooth dependence on
	$\norm{a}_{\L^2}^2$, we obtain
	\[
		\alpha(s,V)
		=
		\alpha_*
		+\frac{\alpha_*}{8}\gamma_{21}(\tau)V^2
		+O\bigl(s^2+V^3\bigr).
	\]
	The inverse nondimensionalisation gives
	\eqref{eq:alpha-expansion}.
\end{remark}

\begin{proof}[Proof of Theorem~\ref{thm:main-informal}]
	For the prescribed volume $m>0$, set
	\[
		R=\left(\frac{3m}{4\pi}\right)^{1/3},
		\qquad
		\tau=\frac{\din}{\dout},
	\]
	and, for a sufficiently small prescribed $\We>0$, choose
	\[
		V=\left(\frac{\sigma\We}{\dout R}\right)^{1/2}>0.
	\]
	Then apply Theorem~\ref{thm:din-pos-dimensional}. Choose a nonzero
	amplitude $0<\abs s\le\eta\sqrt{\We}$, decreasing the upper bound on $\We$ if
	needed in the nonresonant case. The applicable centroid expansion has a
	nonzero horizontal component. Remark~\ref{rem:really_a_helix} and the
	converse construction following \eqref{eq:main-problem} therefore give a
	relative-equilibrium Euler solution of volume $m$ whose centroid follows a
	genuine helix.
		{The local classification in Theorem~\ref{thm:din-pos-dimensional} gives
			uniqueness.}
\end{proof}

\section{Rigidity in the absence of surface tension}\label{sec:no-bubbles}

In the present section we focus on the necessity of surface tension for the existence of bubbles moving on a helical path. Throughout we consider
the bubble regime $0\le\din\le\dout$,
no surface tension $\sigma=0$, and we impose the same topological assumption as in the rest of the manuscript. The inner and outer phases are simply connected and their common boundary is connected.

Before turning to the proof of Theorem \ref{thm:no-bubbles}, we note that \eqref{eq:main-problem} with $\sigma=0$ corresponds to continuity of the pressure across the interface. This imposes no additional restriction in Corollary \ref{cor:sheets}, which concerns regular material vortex sheets for the homogeneous Euler equations.
For a piecewise regular material sheet, the one-sided velocity and pressure fields extend continuously to the interface. We write
\[
	u=\uout+\jump u\,\mathbbm 1_{\Omegain(t)},
	\qquad
	p=\pout+\jump p\,\mathbbm 1_{\Omegain(t)} .
\]
Since $\partial_t\mathbbm 1_{\Omegain(t)}=V_{\cS}\delta_{\cS}$ and $\nabla\mathbbm 1_{\Omegain(t)}=-n\delta_{\cS}$ in the sense of distributions, the singular part of $\partial_t u+\divv(u\otimes u)+\nabla p$ on $\cS$ has coefficient
\[
	V_{\cS}\jump u-\jump{u\otimes u}\,n-\jump p\,n .
\]
The kinematic condition gives $\uin\cdot n=\uout\cdot n=V_{\cS}$, hence $\jump{u\otimes u}\,n=V_{\cS}\jump u$. Thus the singular part vanishes if and only if $\jump p=0$. Therefore, a piecewise regular material sheet can solve the homogeneous Euler equations distributionally only if the pressure is continuous across the interface.

\begin{proof}[Proof of Theorem \ref{thm:no-bubbles}.]
	Without surface tension the Bernoulli equation can be written as
	\begin{equation*}
		\jump{\rho\left(\frac{1}{2}\abs{\nabla\phi}^2-\nabla\phi\cdot W_{\alpha,V}\right)}= c
	\end{equation*}
	for some $c \in \R$.
	After dividing by $\dout$, we can again assume
	\begin{align*}
		\din=\tau,\quad \dout=1
	\end{align*}
	with $\tau =\din/\dout\in [0,1]$. At $\tau=0$, all interior-potential
	terms below are omitted, in accordance with the convention following
	\eqref{eq:main-problem}. Changing the constant and abbreviating $W:=W_{\alpha,V}$,
	we thus consider
	\begin{align}\label{eq:bernoulli_equation_A}
		\tau \left(\frac{1}{2}\abs{\nabla\phiin}^2-\nabla\phiin\cdot W\right)-\left(\frac{1}{2}\abs{\nabla\phiout}^2-\nabla\phiout\cdot W\right)=C
	\end{align}
	on $\Gamma=\partial\Omega$ and for some constant $C \in \R$. We define the energy
	\begin{align*}
		E & :=\tau \int_{\Omega}\abs{\nabla\phiin}^2\dd y+\int_{\R^3\setminus \overline{\Omega}}\abs{\nabla \phiout}^2\dd y.
	\end{align*}
	Our goal is to show that $E=0$.
	In this case it follows that $\nabla\phiout\equiv 0$ and, when
	$\tau>0$, $\nabla\phiin\equiv 0$. The Neumann boundary condition then implies
	\begin{align*}
		W_{\alpha,V}\cdot n=0\quad\text{on }\Gamma.
	\end{align*}
	Since $\Gamma$ is compact, the coordinate function $y\mapsto y_3$
	attains a maximum on $\Gamma$, at which $n=e_3$. Evaluation at this point gives
	\begin{align*}
		0=W_{\alpha,V}\cdot n=W_{\alpha,V}\cdot e_3=V.
	\end{align*}
	Moreover, it follows that
	\begin{align*}
		\alpha(e_3\times y)\cdot n=0,\quad y\in\Gamma.
	\end{align*}
	Consequently, if $\alpha\neq 0$, the vector field $y\mapsto e_3\times y$ is tangent to $\Gamma$. Its flow is $y\mapsto\cR_\theta y$, so $\cR_\theta\Gamma=\Gamma$ for every $\theta\in\R$. Hence $\Gamma$, and therefore $\Omega$, is axially symmetric.

	It thus remains to show that $E=0$. By Green's identity in $\Omega$
	when $\tau>0$, and in the exterior domain after exhaustion by large balls, using
	$\eout\to0$ and
	$\nabla\eout\in \L^2$, we obtain
	\begin{align*}
		E
		 & =
		\tau \int_{\Gamma}\ein\partial_n\ein\dd S
		-
		\int_{\Gamma}\eout\partial_n\eout\dd S \\
		 & =
		\int_{\Gamma}
		\left(\tau \ein-\eout\right)
		(W\cdot n)\dd S.
	\end{align*}
	The boundary terms on the large balls vanish in the limit by the decay
	$\eout=O(|y|^{-1})$, $\nabla\eout=O(|y|^{-2})$ following from
	standard exterior harmonic estimates.
	Thus
	\begin{equation}\label{eq:vortex-sheet-energy-boundary_A}
		E=\int_{\Gamma}d\,(W\cdot n)\dd S,
	\end{equation}
	where $d:=\tau \ein-\eout$ for $\tau \ge 0$.

	We now derive a Pohozaev-type identity using the stress tensor
	\[
		T(\nabla u)=\nabla u\otimes\nabla u
		-\frac12|\nabla u|^2 I
	\]
	and the horizontal dilation field
	\[
		X(y):=(y_1,y_2,0).
	\]
	We have
	\[
		\Div(T(\nabla u)X)
		=
		u_{y_1}^2+u_{y_2}^2-|\nabla u|^2
		=
		-u_{y_3}^2
	\]
	for any harmonic function $u$.

	When $\tau>0$, applying the divergence theorem to $T(\nabla\ein)X$ in $\Omega$ gives
	\begin{equation}\label{eq:vortex-sheet-interior-pohozaev_A}
		\int_{\Gamma}
		\left[
			(W\cdot n)X\cdot\nabla\ein
			-\frac12|\nabla\ein|^2 X\cdot n
			\right]\dd S
		=
		-\int_{\Omega}(\partial_{y_3}\ein)^2\dd y .
	\end{equation}
	Applying the same identity in the exterior domain, again by exhaustion and using the decay at infinity, gives
	\begin{equation}\label{eq:vortex-sheet-exterior-pohozaev_A}
		-\int_{\Gamma}
		\left[
			(W\cdot n)X\cdot\nabla\eout
			-\frac12|\nabla\eout|^2 X\cdot n
			\right]\dd S
		=
		-\int_{\R^3\setminus\overline{\Omega}}
		(\partial_{y_3}\eout)^2\dd y .
	\end{equation}

	For $\tau>0$, multiplying \eqref{eq:vortex-sheet-interior-pohozaev_A}
	by $\tau$ and adding \eqref{eq:vortex-sheet-exterior-pohozaev_A}, and for
	$\tau=0$ using only \eqref{eq:vortex-sheet-exterior-pohozaev_A}, we get
	\begin{align*}
		 & \int_{\Gamma}
		\left[
			(W\cdot n)X\cdot\delta
			-\frac12
			\left(\tau |\nabla\ein|^2
			-|\nabla\eout|^2\right)X\cdot n
			\right]\dd S \\
		 & \qquad =
		-\tau \int_{\Omega}(\partial_{y_3}\ein)^2\dd y
		-\int_{\R^3\setminus\overline{\Omega}}
		(\partial_{y_3}\eout)^2\dd y\ge -E,
	\end{align*}
	Here $\delta:=\tau\nabla\phiin-\nabla\phiout$ for $\tau>0$, while
	$\delta:=-\nabla\phiout$ for $\tau=0$.

	On the left-hand side we use \eqref{eq:bernoulli_equation_A} to find
	\begin{align*}
		-E & \le \int_\Gamma \left[
			                     (W\cdot n)X\cdot\delta
			                     -\left(C+\delta\cdot W\right)X\cdot n
			                     \right]\dd S                               \\
		   & =\int_{\Gamma}\delta\cdot \left((W\cdot n)X-(X\cdot n)W\right)\dd S-2C\abs{\Omega}.
	\end{align*}
	Now the vector field
	\begin{align*}
		Z:=(W\cdot n)X-(X\cdot n)W=n\times(X\times W)
	\end{align*}
	is tangential to $\Gamma$ with
	\begin{align*}
		\divv_\Gamma Z=-n\cdot\left(\nabla \times (X\times W)\right)=2W\cdot n.
	\end{align*}
	Here we used the identity
	\[
		\Div_{\Gamma}(n\times A)
		=
		-n\cdot(\nabla\times A),
	\]
	and
	\[
		X\times W
		=
		\left(Vy_2,-Vy_1,\alpha(y_1^2+y_2^2)\right).
	\]
	Since $Z$ is tangential to $\Gamma$, we have $\delta\cdot Z=\nabla_\Gamma d\cdot Z$. Therefore, integration by parts and \eqref{eq:vortex-sheet-energy-boundary_A} give
	\begin{align*}
		-E\le -\int_\Gamma d\divv_\Gamma Z\dd S-2C\abs{\Omega}=-2\int_\Gamma d\, (W\cdot n)\dd S-2C\abs{\Omega}=-2E-2C\abs{\Omega}.
	\end{align*}
	This implies
	\begin{align}\label{eq:energy_bounded_by_C_A}
		0\le E\le -2C\abs{\Omega}.
	\end{align}

	On the other hand, $d$ has a critical point on $\Gamma$. If $\tau>0$, then at such a point
	\[
		0=\nabla_\Gamma d
		=\tau \nabla_\Gamma\phiin-\nabla_\Gamma\phiout,
	\]
	and hence
	\begin{align*}
		\nabla\phiin   & =(W\cdot n)n+\nabla_{\Gamma}\phiin,      \\
		\nabla \phiout & =(W\cdot n)n+\tau \nabla_{\Gamma}\phiin.
	\end{align*}
	Evaluating \eqref{eq:bernoulli_equation_A} there gives
	\[
		C=\frac{1-\tau}{2}\left((W\cdot n)^2
		+\tau \abs{\nabla_\Gamma \phiin}^2\right)\ge0.
	\]
	If $\tau=0$, then $d=-\phiout$, so criticality gives
	$\nabla_\Gamma\phiout=0$. Hence
	$\nabla\phiout=(W\cdot n)n$ at that point, and the endpoint Bernoulli
	equation gives
	\[
		C=\frac12(W\cdot n)^2\ge0.
	\]
	Thus $C\ge0$ throughout the bubble regime $0\le\tau\le1$, and
	\eqref{eq:energy_bounded_by_C_A} implies $E=0$ as desired.
\end{proof}

\begin{proof}[Proof of Corollary~\ref{cor:sheets}.]
	Since $\Gamma$ is $\C^2$, its embedding in
	$S^3=\R^3\cup\{\infty\}$ is locally flat. The generalised Schoenflies
	theorem \cite{MR117695} therefore implies that the closures of both
	components of $S^3\setminus\Gamma$ are topological three-balls. In particular,
	both $\Omega$ and $\R^3\setminus\overline\Omega$ are simply connected. Hence
	$\uinout=\nabla\phinout$ for one-sided $\C^2$ potentials, and
	incompressibility together with the material condition gives
	\[
		\Delta\phinout=0,
		\qquad
		\partial_n\phiin=\partial_n\phiout=0.
	\]
	The assumption $u\in\L^2$ gives the exterior finite-energy condition, and
	after adding a constant we may normalise
	$\phiout(y)\to0$ as $\abs y\to\infty$.

	The jump calculation above gives $\jump p=0$ on $\Gamma$. Phasewise
	irrotationality and the stationary Euler equation also imply
	\[
		\nabla\!\left(p+\frac12\abs u^2\right)=0.
	\]
	Consequently,
	\[
		\frac12\abs{\nabla\phiin}^2
		-\frac12\abs{\nabla\phiout}^2=c
		\qquad\text{on }\Gamma
	\]
	for some $c\in\R$. Therefore Theorem~\ref{thm:no-bubbles} applies with
	$\din=\dout=1$, $\alpha=V=\sigma=0$, and
	$R=(3\abs{\Omega}/(4\pi))^{1/3}$. It yields
	$\nabla\phiin=\nabla\phiout=0$, hence $u\equiv0$.
\end{proof}

\appendix

\section{Derivation of the free boundary value problem}\label{sec:derivation}

We here address the derivation of \eqref{eq:main-problem}. Let us first
assume that we have a sufficiently regular solution to
\eqref{eq:original_euler}, \eqref{eq:finite_energy_condition}, which is
irrotational in each phase, i.e.\ which in addition satisfies
\eqref{eq:irrotational_condition}. We recall the topological convention
fixed above: for each $t$, both phases are simply connected; in particular
$\Omegain(t)$ is a bounded smooth simply connected domain with connected
boundary and $\Omegaout(t)$ is the simply connected exterior domain.

Since the phases are simply connected and
$\curl \uinout=0$, for each $t\in(0,T)$ there exist single-valued
velocity potentials
\[
	\phinout(t,\cdot)\colon \Omegainout(t)\to\R,
	\qquad
	\uinout(t,\cdot)=\nabla\phinout(t,\cdot).
\]
The potentials are unique up to additive functions of time. This is the standard potential-flow
criterion in simply connected fluid regions; see \cite[Sec.~1.8]{MR1217252}.

In the irrotational case, using
\begin{align*}
	u\cdot\nabla u=\nabla\left(\frac{1}{2}\abs{u}^2\right),
\end{align*}
the Euler equations imply, in each connected phase,
\[
	\nabla\left(
	\pinout
	+\rho^{\rm in/out}
	\left(\partial_t\phinout
	+\frac12\abs{\nabla\phinout}^2\right)
	\right)=0 .
\]
Combining this with the
Young--Laplace condition yields the Bernoulli system
\begin{align}\label{eq:bernoulli_system_original_coordinates}
	\Delta \phi                                                                   & =0\phantom{V_{\cS}c_0(t)}\quad\text{in }\Oin\cup\Oout,              \\
	\partial_n\phiin=\partial_n\phiout                                            & =V_{\cS}\phantom{0c_0(t)} \quad\text{on }\cS,             \nonumber \\
	\jump{\rho\left(\partial_t\phi+\frac{1}{2}\abs{\nabla\phi}^2\right)}+\sigma H & =c_0(t)\phantom{V_{\cS}0}\quad\text{on }\cS,             \nonumber  \\
	\nabla\phiout\in \L^2(\Omegaout                                               & (t);\R^3),\quad t\in(0,T),                 \nonumber
\end{align}
where we again understand
$\phi=\phiin\mathbbm{1}_{\Oin}+\phiout\mathbbm{1}_{\Oout}$,
$\partial_n\phinout=\nabla\phinout\cdot n$, and $c_0(t)$ denotes a time-dependent constant.

Next we phrase \eqref{eq:bernoulli_system_original_coordinates} in rotating and translating coordinates.
Recall that $\cR_\theta$ denotes the rotation around $\R e_3$ by angle $\theta\in\R$, cf. \eqref{eq:rotationmatrix}.
Fixing $\alpha,V\in\R$, we write
\begin{align*}
	x=x(t,y)=\cR_{\alpha t}y+tVe_3
\end{align*}
and introduce transformed sets and potentials according to
\begin{gather*}
	\tilde{\phi}(t,y):=\phi(t,x),\quad \tilde{\rho}(t,y):=\rho(t,x),\\
	\widetilde{\cD}^{\rm in / \rm out}:=\set{(t,y):(t,x)\in\Oinout},\quad
	\tilde{\cS}:=\set{(t,y):(t,x)\in\cS}.
\end{gather*}
Moreover, the slices $\tilde{\Omega}^{\rm in /\rm out}(t)$ are defined in analogy to $\Omegainout(t)$, such that, for example,
\begin{align*}
	\Omegain(t)=\cR_{\alpha t}\tilde{\Omega}^{\rm in}(t)+tVe_3.
\end{align*}
The first and last condition in \eqref{eq:bernoulli_system_original_coordinates}, then translate to
\begin{align}
	\Delta\tilde\phi
	 & =0
	\quad\text{in }
	\widetilde{\cD}^{\rm in}\cup\widetilde{\cD}^{\rm out},
	\label{eq:equations_transformed1}                      \\
	\nabla\tphiout
	 & \in\L^2\bigl(\tilde{\Omega}^{\rm out}(t);\R^3\bigr)
	\quad\text{for every }t\in(0,T).
	\label{eq:equations_transformed2}
\end{align}

Furthermore, the exterior unit normal to $\partial\tilde{\Omega}^{\rm in}(t)$ is given by $\tilde n(t,y)=\cR_{\alpha t}^T n(t,x)$, such that for both phases, $(t,x)\in\cS$, hence $(t,y)\in\tilde{\cS}$, there holds
\begin{align*}
	\partial_n\phi(t,x)=\nabla\phi(t,x)\cdot n(t,x)=\nabla\tilde \phi(t,y)\cdot \tilde{n}(t,y)=\partial_{\tilde{n}}\tilde \phi(t,y).
\end{align*}
On the other hand, recalling \eqref{eq:def-WalphaV}, one observes that
\begin{align}\label{eq:time_derivative_coordinate_change}
	\partial_tx=\alpha\cR_{\alpha t}\left(e_3\times y\right)+Ve_3=\cR_{\alpha t} W_{\alpha,V}(y),
\end{align}
The identity
$\partial_t\cR_{\alpha t}y
	=\alpha\cR_{\alpha t}(e_3\times y)$
and all subsequent coordinate identities hold for every
$\alpha\in\R$, including $\alpha=0$ and $\alpha<0$.
This implies that the normal velocity $V_{\cS}$ in $x$-coordinates
relates to the normal velocity $V_{\tilde{\cS}}$ in $y$-coordinates according to
\begin{align*}
	V_{\cS}(t,x)=V_{\tilde{\cS}}(t,y)+W_{\alpha,V}(y)\cdot\tilde{n}(t,y).
\end{align*}
It follows that the second line in \eqref{eq:bernoulli_system_original_coordinates} translates to
\begin{align}\label{eq:equations_transformed3}
	\partial_{\tilde n}\tphiin=\partial_{\tilde n}\tphiout & =V_{\tilde{\cS}}+W_{\alpha,V}\cdot\tilde{n}\quad \text{on }\tilde{\cS}.
\end{align}

Regarding the third equation in \eqref{eq:bernoulli_system_original_coordinates}, formula \eqref{eq:time_derivative_coordinate_change} and the fact that the mean curvature is invariant under rotations and translations imply
\begin{equation}\label{eq:equations_transformed4}
	\jump{\tilde{\rho}\left(\partial_t\tilde{\phi}
		-\nabla\tilde\phi\cdot W_{\alpha,V}
		+\frac{1}{2}\abs{\nabla \tilde{\phi}}^2\right)}
	+\sigma \tilde{H}
	=\tilde c(t)
	\quad\text{on }\tilde{\cS}.
\end{equation}
We observe that
\begin{equation*}
	-\nabla\tilde\phi\cdot W_{\alpha,V}
	+\frac{1}{2}\abs{\nabla \tilde{\phi}}^2
	+\frac{V^2}{2}
	=
	\frac{1}{2}\abs{\nabla \tilde{\phi}-Ve_3}^2
	+\alpha\, y\cdot(e_3\times\nabla \tilde{\phi}).
\end{equation*}
Since $\tilde\rho$ is constant in each phase, adding the term
$\tilde\rho V^2/2$ inside the jump changes the left-hand side of \eqref{eq:equations_transformed4} only by the
constant $(\din-\dout)V^2/2$, which can be absorbed into the Bernoulli
constant $\tilde{c}(t)$.

For a relative equilibrium, $\nabla\tilde\phi$ is time independent in each phase. Using the freedom to add a function of time to each phase potential, we may therefore choose $\tilde\phi$ itself to be time independent.
Finally, after redefining the Bernoulli constant as above, we restrict equations \eqref{eq:equations_transformed1},
\eqref{eq:equations_transformed2}, \eqref{eq:equations_transformed3}, and
\eqref{eq:equations_transformed4} to the stationary case, i.e.
$\tilde{\phi}(t,y)=\tilde{\phi}(y)$,
$\tilde{\Omega}^{\rm in / \rm out}(t)=\tilde{\Omega}^{\rm in / \rm out}$,
$V_{\tilde{\cS}}=0$, $\tilde{c}(t)=\tilde{c}$,
and simplify notation by replacing all quantities $\tilde{f}$ again by $f$.
We then arrive at system \eqref{eq:main-problem} with $\Omega:=\tilde\Omegain$ and $\Gamma:=\partial\tilde\Omegain$.
Note that \eqref{eq:main-problem} in addition normalises the additive constant for the outer potential, and that it specifies the volume of the interior phase.

Conversely, reversing the steps, any solution to \eqref{eq:main-problem} gives rise to a solution of the incompressible irrotational two-phase Euler system \eqref{eq:original_euler}, \eqref{eq:finite_energy_condition}, \eqref{eq:irrotational_condition} whose domains, density, pressure, and velocity satisfy \eqref{eq:rigid_motion_Omega_in}.

\bibliographystyle{abbrv}
\bibliography{daVinci_paradox}

{}

\vfill
\end{document}